\newcommand{\be}{\begin{enumerate}}
\newcommand{\ee}{\end{enumerate}}
\newcommand{\quot}[2]{#1/\!\!/#2}
\DeclareMathOperator{\gr}{gr}
\newcommand{\C}{\mathbb {C}}
\newcommand{\NN}{\mathcal N}
\newcommand{\OO}{\mathcal O}
\newcommand{\OOO}{\mathcal O}
\renewcommand{\SS}{\mathcal S}
\newcommand{\TT}{\mathcal T}
\newcommand{\N}{\mathbb N}
\newcommand{\Z}{\mathbb Z}
\newcommand{\Q}{\mathbb Q}
\DeclareMathOperator{\SL}{SL}
\DeclareMathOperator{\SLtwo}{SL_2}
\DeclareMathOperator{\SLthree}{SL_3}
\DeclareMathOperator{\GL}{GL}
\DeclareMathOperator{\PSL}{{PSL}}
\DeclareMathOperator{\PSp}{{PSp}}
\DeclareMathOperator{\Sp}{{Sp}}
\DeclareMathOperator{\SO}{{SO}}
\DeclareMathOperator{\Orth}{{O}}
\DeclareMathOperator{\PSO}{{PSO}}
\DeclareMathOperator{\Jac}{{Jac}}
\DeclareMathOperator{\Hom}{{Hom}}
\DeclareMathOperator{\Cent}{{Cent}}
\DeclareMathOperator{\Lie}{{Lie}}
\DeclareMathOperator{\gen}{gen}
\newcommand{\Cst}{{\C^*}}
\DeclareMathOperator{\Ad}{{Ad}}
\newcommand{\inv}{^{-1}}
\newcommand{\lie}[1]{{\mathfrak{#1}}}
\newcommand{\lieg}{{\lie g}}
\newcommand{\lieh}{{\lie h}}
\DeclareMathOperator{\sll}{{\mathfrak{sl}}}
\DeclareMathOperator{\sltwo}{{\mathfrak{sl}_{2}}}
\DeclareMathOperator{\hh}{{\mathfrak{h}}}
\DeclareMathOperator{\mm}{{\mathfrak{m}}}
\DeclareMathOperator{\nn}{{\mathfrak{n}}}
\DeclareMathOperator{\bb}{{\mathfrak{b}}}
\renewcommand{\phi}{{\varphi}}
\newcommand{\twomatrix}[4]{{\left(\smallmatrix #1 & #2 \\ #3 & #4\endsmallmatrix\right)}}
\newcommand{\I}{\mathcal I}
\DeclareMathOperator{\tr}{{tr}}
\renewcommand{\AA}{\mathsf{A}}
\newcommand{\CC}{\mathsf{C}}
\newcommand{\BB}{\mathsf{B}}
\newcommand{\DD}{\mathsf{D}}
\newcommand{\EE}{\mathsf{E}}
\newcommand{\FF}{\mathsf{F}}
\newcommand{\GG}{\mathsf{G}}
\newcommand{\Ffour}{\FF_{4}}
\newcommand{\Gtwo}{\GG_{2}}
\newcommand{\Cone}{\mathcal C}
\newcommand{\Bfour}{\BB_{4}}
\DeclareMathOperator{\codim}{{codim}}
\DeclareMathOperator{\Spin}{{Spin}}
\DeclareMathOperator{\rk}{{rank}}
\DeclareMathOperator{\Cov}{{Cov}}
\DeclareMathOperator{\id}{Id}
\DeclareMathOperator{\Sym}{Sym}
\DeclareMathOperator{\htt}{ht}
\newcommand{\margin}[1]{}
\newcommand{\lab}[1]{\label{#1}}
\renewcommand{\epsilon}{\varepsilon}
\newcommand{\eps}{\varepsilon}
\newcommand{\simto}{\overset{\sim}{\longrightarrow}}
\newcommand{\phm}{\phantom{-}}
\newcommand{\name}[1]{\textsc{#1\/}}
\numberwithin{equation}{subsection}
\newtheorem{theorem}[subsection]{Theorem}
\newtheorem{lemma}[subsection]{Lemma}
\newtheorem{proposition}[subsection]{Proposition}
\newtheorem{corollary}[subsection]{Corollary}
\newtheorem{conjecture}[subsection]{Conjecture}
\newtheorem{critA}[subsection]{Criterion}
\theoremstyle{definition}
\newtheorem{definition}[subsection]{Definition}
\newtheorem{example}[subsection]{Example}
\newtheorem{examples}[subsection]{Examples}
\theoremstyle{remark}
\newtheorem{remark}[subsection]{Remark}
\begin{document}
\title[Representations with a Reduced Null Cone]{Representations with a Reduced Null Cone} 
\author{Hanspeter Kraft}
 \address {Mathematisches Institut der
Universit\"at Basel,\newline\indent
Rheinsprung 21, CH-4051 Basel, Switzerland}
\email{Hanspeter.Kraft@unibas.ch}
\author{Gerald W. Schwarz}
\address{Department of Mathematics,
Brandeis University,\newline\indent
Waltham, MA 02454-9110, USA}
\email{schwarz@brandeis.edu} 
\subjclass[2000]{20G20, 22E46}
\keywords{null cone, null fiber, quotient morphism, semisimple groups, representations}
\date{November 6, 2011}

\begin{abstract}
Let $G$ be a complex reductive group and $V$ a $G$-module. Let $\pi\colon V\to\quot VG$ be the quotient morphism and set $\NN(V)=\pi\inv(\pi(0))$.  We consider the following question.   Is the null cone $\NN(V)$   reduced, i.e., is the ideal of $\NN(V)$ generated by $G$-invariant polynomials?   We have complete results when $G$ is $\SL_2$,  $\SL_3$ or a simple group of adjoint type, and also when $G$ is semisimple of adjoint type and the $G$-module $V$ is irreducible.
 \end{abstract}
\maketitle
{\footnotesize
\tableofcontents}

\section{Introduction}
Let $G$ be a reductive complex algebraic group and    let $V$ be a finite-dimensional $G$-module. Let $\pi \colon V\to\quot VG$ be the categorical quotient morphism given by the $G$-invariant functions on $V$, and let 
$$
\NN:=\pi\inv(\pi(0))=\{v\in V\mid \overline{Gv}\ni 0\}\subset V
$$ 
be the {\it null cone}. 
We say that $V$ is \emph{coreduced} if $\NN$ is reduced. This means that the ideal $I(\NN)\subset \OOO(V)$ of the set $\NN$ is generated by the invariant functions $\mm_{0}:=\OOO(V)^G\cap I(\NN)$, the homogeneous maximal ideal of $\OOO(V)^{G}$, and so $I(\NN)=I_{G}(\NN):=\mm_{0}\OOO(V)$, where we use $\OOO(X)$ to denote the regular functions on a variety $X$.
If it is important to specify the group or representation involved, we will use notation such as $(V,G)$, $\NN(V)$, $\NN_{G}(V)$, etc. 

We say that $V$ is {\it strongly coreduced\/} if every fiber of $\pi$ is reduced. We can reformulate this in terms of slice representations. Let $Gv$ be a closed orbit. Then $G_v$ is reductive and we have a splitting of $G_v$-modules $V=T_v(Gv)\oplus N_v$. Then $(N_v,G_v)$ is the \emph{slice representation of $G_v$ at $v$\/}. We show that the fiber $\pi\inv(\pi(v))$ is reduced if and only if $(N_v,G_v)$ is coreduced (Remark \ref{rem:reducedfiber}). Hence $V$ is strongly coreduced if and only if every slice representation of $V$ is coreduced.
 
 Recall that $V$ is \emph{cofree\/} if $\OOO(V)$ is a free module over $\OOO(V)^G$. Equivalently, $\pi\colon V\to\quot VG$ is flat. A main difficulty in our work is that, a priori, $V$ may be coreduced but $\pi$ may have a nonreduced fiber $F\neq\NN$.  This cannot happen in the cofree case (Proposition \ref{prop:slice.cofree}). We conjecture  that this is true in general:
 
 \begin{conjecture}
 A coreduced $G$-module is strongly coreduced.
 \end{conjecture}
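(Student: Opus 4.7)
The natural first step is to reduce the conjecture to a statement about reducedness of all fibers of $\pi\colon V\to\quot VG$. By Luna's étale slice theorem, for any $v\in V$ with $Gv$ closed and slice representation $(N_v,G_v)$, there is a $G_v$-stable locally closed subvariety $S\ni v$ of $V$ with $T_v S=N_v$ such that $G\times^{G_v}S\to V$ is strongly étale. Under this identification the fiber $\pi\inv(\pi(v))$ is étale-locally near $v$ of the form $G\times^{G_v}\NN_{G_v}(N_v)$, and since reducedness is étale-local, $\pi\inv(\pi(v))$ is reduced if and only if $(N_v,G_v)$ is coreduced. Hence $V$ is strongly coreduced if and only if every fiber of $\pi$ is reduced, and the conjecture becomes: \emph{if $\NN$ is reduced then every fiber of $\pi$ is reduced.}

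To propagate reducedness from $\NN$ to all fibers, I would exploit the scalar $\C^*$-action on $V$. This action commutes with $G$, descends to $\quot VG$, and for $t\in\C^*$ scalar multiplication by $t$ is a $G$-isomorphism $V\to V$ sending $\pi\inv(y)$ onto $\pi\inv(ty)$. Hence the non-reduced locus
\[
D := \{\,y\in\quot VG \mid \pi\inv(y) \text{ is not reduced}\,\}
\]
is $\C^*$-invariant. By hypothesis $0\notin D$. But $0\in\quot VG$ is the unique $\C^*$-fixed point and lies in the closure of every $\C^*$-orbit, so any nonempty $\C^*$-invariant \emph{closed} subset of $\quot VG$ contains $0$. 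Therefore it suffices to show that $D$ is closed.

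The main obstacle is precisely this closedness, i.e., the semicontinuity of reducedness of fibers of $\pi$. When $V$ is cofree, $\pi$ is flat and openness of the reduced-fiber locus follows from standard EGA-style results; combined with the $\C^*$-argument this is consistent with the cofree case resolved in Proposition \ref{prop:slice.cofree}. For non-cofree $V$, however, the fibers of $\pi$ have jumping dimension and openness is not automatic. A natural route would be to stratify $\quot VG$ by Luna strata, indexed by conjugacy classes of stabilizers of closed orbits; on each stratum the fibers are étale-locally isomorphic so reducedness is constant, and the question reduces to analyzing specializations from a larger to a smaller stratum. One would then attempt to compare, along such a specialization, the $\OOO(V)^G$-module structure of $\OOO(V)$ near the two fibers, with the goal of showing that any nonreduced fiber would force nonreducedness of $\NN$. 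That the statement is left only as a conjecture strongly suggests that no elementary semicontinuity argument is known to suffice and that some essentially new input from invariant theory will be required.
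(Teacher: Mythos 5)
You have not proved the statement, and neither does the paper: this is stated there as an open conjecture, with no proof given, so there is nothing to compare your argument against except the partial results the authors do establish. Your reductions are correct and coincide with those: the identification of $\pi\inv(\pi(v))$ with $G\times^{G_v}\NN(N_v)$ via Luna's slice theorem is exactly Remark \ref{rem:reducedfiber}, and your $\C^*$-argument is the mechanism behind Proposition \ref{prop:slice.cofree}, where flatness of $\pi$ (cofreeness) makes the locus of $v$ at which $\pi\inv(\pi(v))$ is reduced open in $V$ by \cite[12.1.7]{Gr1967Elements-de-geomet}; since that locus is a cone containing $0$, it is all of $V$.

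The genuine gap is the one you yourself name: without flatness there is no known semicontinuity statement forcing the nonreduced locus $D\subset\quot VG$ to be closed, and the $\C^*$-argument is powerless without it. This is precisely the difficulty the authors flag (``a priori, $V$ may be coreduced but $\pi$ may have a nonreduced fiber $F\neq\NN$''), and your proposed Luna stratification would still have to control specialization from a larger stratum to a smaller one, which is where the problem lives. The one substitute for semicontinuity that the paper does have is the associated-cone comparison: when $G_v$ has the same rank as $G$, the associated cone of $F=\pi\inv(\pi(v))$ equals $\NN(V)$ (Lemma \ref{associatedcone.lem}), and then the degree-lowering argument in the proof of Proposition \ref{prop:slice.zero.weight} shows that reducedness of $\NN$ forces reducedness of $F$. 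Extending that comparison to arbitrary closed orbits is the missing input. As written, your proposal is an accurate diagnosis of the obstruction, not a proof, and it should not be presented as one.
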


In the cofree case  the associated cone to any fiber $F$ (see \cite{BoKr1979Uber-Bahnen-und-de} or \cite[II.4.2]{Kr1984Geometrische-Metho}) is the null cone. From this one can immediately see that $\NN$ reduced implies that $F$ is reduced. There is another case in which the associated cone of $F$ is $\NN$: the case in which the isotropy group $H$ of the closed orbit $Gv\subset F$ has the same rank as $G$, i.e.,  contains a maximal torus $T$ of $G$. Thus if the slice representation of $H$ at $v$ is not coreduced, then neither is $(V,G)$ (Proposition \ref{prop:slice.zero.weight}). For an irreducible representation $V$ of $G$, having $V^T\neq 0$ means that the weights of $V$ are in the root lattice; equivalently, the center of $G$ acts trivially on $V$. Hence we  have a representation of the adjoint group $G/Z(G)$. This explains why many of our results require that the group be adjoint, or that at least one of the irreducible components of our representation contains a zero weight vector.
 
 Here is a summary of the contents of this paper. In \S \ref{sec:preliminary} we present elementary results and determine the coreduced representations of tori (Proposition \ref{prop:tori}). In section \ref{sec:covariants} we show how to use covariants to prove that a null cone is not reduced and as an application we determine the coreduced representations of $\SL_2$ (Theorem \ref{thm:SLtwo}). In \S \ref{sec:cofreerep} we show that every cofree irreducible representation of a simple algebraic group is coreduced (Theorem \ref{thm:cofree}) and that, sort of conversely, every irreducible representation of a simple group which is strongly coreduced is cofree (Theorem \ref{thm:stronglycoreduced}). In \S \ref{sec:slices} we consider modules $V$ with $V^T\neq 0$, $T$ a maximal torus of $G$. We develop methods (based on weight multiplicities) to show that a slice representation at a zero weight vector is not coreduced (we say that $V$ has a \emph{bad toral slice\/}). We then show that $V$ has a bad toral slice if all the roots of $G$ appear in $V$ with multiplicity two or more (Proposition \ref{prop:rootsmult2}). 
 
In \S \ref{sec:cored-exceptional} we apply  our techniques to find the maximal coreduced representations of the adjoint exceptional  groups (Theorem \ref{thm:coredexceptional}). The case of $\Ffour$ is rather complicated and needs some heavy computations (see \ref{AppA}).
In \S \ref{sec:cored-classical} we do the same thing for the classical adjoint groups (Theorem \ref{thm:cored.simple}), and  in \S \ref{sec:irreducible-semisimple} we determine the irreducible coreduced representations of semisimple adjoint groups (Theorem \ref{thm:adjoint-semisimple}). This is not straightforward, e.g., the representation $(\C^7\otimes\C^7,\Gtwo\times\Gtwo)$ is not coreduced,  but showing this is  difficult (see \ref{AppB}). 

In \S \ref{sec:CIT} we   show that, essentially, the classical representations of the classical groups are coreduced (with a restriction for  $\SO_n$). This is a bit surprising, since these representations are often far from cofree. In \S \ref{sec:sl3} we  classify the coreduced representations of $\SL_3$ (not just $\PSL_3$). To do this, we need to develop some techniques for finding irreducible components of null cones (see \S \ref{sec:comp-nullcone}). These techniques should be useful in other contexts.
\par\smallskip\noindent
{\small
{\bf Acknowledgement: } We 
thank Michel Brion and John Stembridge  for helpful discussions and remarks, and Jan Draisma for his computations. 
}

\bigskip
\section{Preliminaries and Elementary Results}\lab{sec:preliminary}
We begin with some positive results. Let $G$ be a reductive group and $V$ a finite-dimensional $G$-module.

\begin{proposition}\lab{prop:coreduced}
Suppose that $G$ is connected semisimple and that
$V$ satisfies one of the following conditions. 
\begin{enumerate}
\item $\dim\quot VG\leq 1$;
\item $V=\Ad G$.
\end{enumerate}
Then $V$ is  coreduced.
\end{proposition}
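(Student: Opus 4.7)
My plan is to treat the two cases separately. For case (1), if $\dim \quot VG = 0$ then $\OO(V)^G = \C$, so $\NN = V$ and both $I(\NN)$ and $\mm_0 \OO(V)$ are the zero ideal; the statement is trivial. So assume $\dim \quot VG = 1$. Then $\quot VG$ is a one-dimensional normal affine variety (normality coming from the fact that the ring of invariants of a reductive group acting on a polynomial ring is a normal domain), hence a smooth affine curve. It inherits a $\C^*$-action from the grading of $\OO(V)$, and $\pi(0)$ is a fixed point for this action, corresponding to the graded maximal ideal of $\OO(V)^G$. The only smooth affine curve admitting a $\C^*$-action with a fixed point is $\mathbb A^1$, so $\OO(V)^G = \C[f]$ for a single homogeneous invariant $f$ of positive degree, and set-theoretically $\NN = \{f = 0\}$.

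The remaining task in case (1) is to show that $f$ is squarefree in $\OO(V)$, for then $I(\NN) = (f) = \mm_0 \OO(V)$. I factor $f = \prod g_i^{n_i}$ into distinct irreducibles in the UFD $\OO(V)$. Since $G$ preserves $f$, it permutes the associate classes $[g_i]$; connectedness of $G$ forces each line $\C g_i$ to be a $G$-stable subspace of $\OO(V)$ affording a character $\chi_i \colon G \to \C^*$, and semisimplicity of $G$ then forces $\chi_i = 1$. Hence each $g_i$ lies in $\OO(V)^G = \C[f]$. A degree comparison (we have $\deg g_i \le \deg f$ on the one hand, and $g_i = c_i f^{k_i}$ nonconstant on the other) forces $g_i = c_i f$; distinctness of the associate classes $[g_i]$ and the equation $g_1 = c_1 g_1^{n_1}$ then yield a single factor with multiplicity $n_1 = 1$, so $f$ is irreducible.

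For case (2), I will invoke Kostant's theorem on the adjoint representation: $\lieg = \Ad G$ is cofree, with $\OO(\lieg)^G = \C[p_1, \ldots, p_r]$ a polynomial ring on $r = \rk G$ basic homogeneous invariants, and the nilpotent cone $\NN(\Ad G) \subset \lieg$ is the reduced complete intersection cut out by $p_1 = \cdots = p_r = 0$. This immediately gives $I(\NN) = (p_1, \ldots, p_r) = \mm_0 \OO(\lieg)$.

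The only genuinely nontrivial step is the identification $\quot VG \cong \mathbb A^1$ in the one-dimensional subcase of (1); the squarefreeness argument that follows relies only on the absence of characters of the connected semisimple group $G$, and case (2) is a direct appeal to Kostant's result.
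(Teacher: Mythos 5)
Your proposal is correct and follows essentially the same route as the paper: in the one-dimensional case the paper simply asserts that $\OO(V)^G=\C[f]$ with $f$ homogeneous irreducible (you supply the standard proofs via normality of the invariant ring and the absence of characters of a connected semisimple group), and in the adjoint case both arguments rest on Kostant's theorem that the nilpotent cone is a reduced complete intersection cut out by the basic invariants. No gaps.
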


\begin{proof} If $\dim\quot VG=0$, then $\NN=V$ is reduced.
If $\dim\quot VG=1$, then $\OO(V)^G$ is generated by a homogeneous irreducible polynomial $f$ and its zero set $\NN$ is reduced. If $V$ is the adjoint representation of $G$, then $\NN$ is irreducible of codimension equal to the rank $\ell$ of $G$. Since $\NN$ is defined by $\ell$ homogeneous invariants  and the rank of $d\pi$ is $\ell$ on an open dense subset of $\NN$, it follows that $\NN$ is reduced and even normal (\cite{Ko1963Lie-group-represen}, cf. Proposition~\ref{prop:Serre} below).
\end{proof}

\begin{example}
Suppose that $G$ is finite and acts nontrivially on $V$. Then $\NN=\{0\}$ (as set) is not reduced since not all the coordinate functions can be $G$-invariant.
\end{example}

Let $V$ be a $G$-module. Then $\quot VG$ parameterizes the closed $G$-orbits in $V$. Let $Gv$ be a closed orbit and let $N_v$ be the slice representation of $G_v$. We say that $Gv$ is a \emph{principal orbit\/} and that $G_v$ is a \emph{principal isotropy group\/} if $\OOO(N_v)^{G_v}=\OOO(N_v^{G_v})$. In other words, $N_v$ is the sum of a trivial representation and a representation $N_v'$ with $\OOO(N_v')^{G_v}=\C$. We say that $V$ is \emph{stable} if  $N_v'=(0)$; equivalently, there is an open dense subset of $V$ consisting of closed orbits. If $G$ is semisimple and there is a nonempty open subset of points with reductive isotropy group, then $V$ is stable. In particular, if the general point in $V$ has finite isotropy group, then the representation is stable with finite principal isotropy groups

Our example above generalizes to the following
\begin{remark}\lab{rem:non-connected}
Let $V$ be a $G$-module where $G/G^0\neq\{e\}$. If $G/G^{0}$ acts nontrivially on the quotient $\quot VG^{0}$, then $V$ is not coreduced. Note that, for example, $G/G^{0}$ acts nontrivially if the   principal isotropy group of $(V,G)$ is trivial.
 \end{remark}

\begin{proposition}\lab{prop:non-connected}
Assume that $(V,G^0)$ is not coreduced Then $(V,G)$ is not coreduced.
\end{proposition}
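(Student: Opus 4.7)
My plan is to show that both the null cone as a set and the invariant ideal behave monotonically under passage from $G$ to $G^0$, and then read off the conclusion by a simple ideal inclusion.

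The first step is to verify that $\NN_G(V)=\NN_{G^0}(V)$ as subsets of $V$. Since $G^0$ has finite index in $G$, pick a finite set of coset representatives $F\subset G$; then $\overline{Gv}=\bigcup_{f\in F}f\,\overline{G^0v}$. So $0\in\overline{Gv}$ if and only if $0\in f\,\overline{G^0v}$ for some $f$, and since $f\cdot 0=0$ this is equivalent to $0\in\overline{G^0v}$. Hence the underlying set $\NN$ of the null cone is the same for $G$ and $G^0$, and so the radical ideal $I(\NN)\subset\OOO(V)$ is intrinsic.

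The second step is purely formal. From $\OOO(V)^G\subseteq\OOO(V)^{G^0}$ we get $\mm_0^G:=\OOO(V)^G\cap I(\NN)\subseteq\OOO(V)^{G^0}\cap I(\NN)=:\mm_0^{G^0}$, and therefore
$$
I_G(\NN)=\mm_0^G\,\OOO(V)\ \subseteq\ \mm_0^{G^0}\,\OOO(V)=I_{G^0}(\NN)\ \subseteq\ I(\NN).
$$
Now assume $(V,G^0)$ is not coreduced, i.e.\ $I_{G^0}(\NN)\subsetneq I(\NN)$. The chain of inclusions then forces $I_G(\NN)\subsetneq I(\NN)$, which is exactly the statement that $(V,G)$ is not coreduced.

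There is essentially no obstacle here; the only thing worth double-checking is the set-theoretic equality $\NN_G=\NN_{G^0}$, which relies on the finiteness of $G/G^0$ and the fact that every $f\in G$ fixes the origin. Everything else is a one-line consequence of the tautological inclusion $\OOO(V)^G\subseteq\OOO(V)^{G^0}$.
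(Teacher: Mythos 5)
Your proof is correct and follows essentially the same route as the paper: both rest on the set-theoretic equality $\NN_G(V)=\NN_{G^0}(V)$ together with the inclusion $I_G(\NN)\subseteq I_{G^0}(\NN)$ coming from $\OOO(V)^G\subseteq\OOO(V)^{G^0}$. You simply spell out the two steps (the coset argument for the null cones and the chain of ideal inclusions) that the paper leaves implicit.
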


\begin{proof}
The null cones for $G^{0}$ and  $G$ are the same (as sets). There is an $f \in I(\NN)$ which is not in $ I_{G^0}(\NN)$. Hence $f\not\in I_G(\NN)$ and $(V,G)$ is not coreduced.
\end{proof}

For the next three more technical results we have to generalize the definition of coreducedness to pointed $G$-varieties.

\begin{definition}
A {\it pointed $G$-variety\/} is a pair $(Y,y_{0})$ where $Y$ is an affine $G$-variety and $y_{0}$ a fixed point. A pointed $G$-variety $(Y,y_{0})$ is {\it coreduced\/} if the fiber $\pi^{-1}(\pi(y_{0}))$ is reduced where $\pi\colon Y \to \quot YG$ is the quotient morphism.
\end{definition}
 
\begin{lemma}\lab{lem:invariant.ideal}
Let $(X,x_{0})$ be a pointed $G$-variety and $Y\subset X$ a closed $G$-stable subvariety containing  $x_{0}$. Assume that the ideal $I(Y)$ of $Y$ is generated by $G$-invariant functions. Then $(X,x_{0})$ is coreduced if and only if $(Y,x_{0})$ is coreduced.
\end{lemma}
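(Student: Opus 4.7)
The plan is to identify the two scheme-theoretic fibers $\pi_X^{-1}(\pi_X(x_0))$ and $\pi_Y^{-1}(\pi_Y(x_0))$ (where $\pi_X\colon X\to\quot XG$ and $\pi_Y\colon Y\to\quot YG$ are the two quotient morphisms), from which the equivalence of coreducedness is immediate. Write $\mm_X\subset\OO(X)^G$ and $\mm_Y\subset\OO(Y)^G$ for the maximal ideals at $\pi_X(x_0)$ and $\pi_Y(x_0)$ respectively, and let $q\colon\OO(X)\twoheadrightarrow\OO(Y)$ denote restriction.

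First I invoke reductivity of $G$: applying the Reynolds operator to the short exact sequence $0\to I(Y)\to\OO(X)\to\OO(Y)\to 0$ produces a surjection on invariants $\OO(X)^G\twoheadrightarrow\OO(Y)^G$ with kernel $I(Y)^G:=I(Y)\cap\OO(X)^G$. Because $x_0\in Y$ is a fixed point, evaluation at $x_0$ factors through both invariant rings, so this surjection sends $\mm_X$ onto $\mm_Y$, and therefore $q(\mm_X\OO(X))=\mm_Y\OO(Y)$.

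The crucial step, and essentially the only content of the lemma, is the inclusion $I(Y)\subset\mm_X\OO(X)$. Indeed, any $f\in I(Y)^G$ is $G$-invariant and vanishes at $x_0\in Y$, so $f\in\mm_X$; by the hypothesis that $I(Y)$ is generated by $G$-invariants,
$$
I(Y)=I(Y)^G\cdot\OO(X)\subset\mm_X\OO(X).
$$
Consequently $\mm_X\OO(X)+I(Y)=\mm_X\OO(X)$, and passing to the quotient by $I(Y)$ yields a canonical isomorphism
$$
\OO(X)/\mm_X\OO(X)\;\simto\;\OO(Y)/\mm_Y\OO(Y),
$$
identifying the two scheme-theoretic fibers as rings. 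One is reduced if and only if the other is, which is exactly the claim. There is no real technical obstacle here; the whole argument rests on the trivial but easily overlooked observation that the two hypotheses---$I(Y)$ generated by $G$-invariants together with $x_0\in Y$---force $I(Y)\subset\mm_X\OO(X)$.
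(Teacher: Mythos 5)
Your proof is correct and follows essentially the same route as the paper: the key observation in both is that $I(Y)\subset\mm_X\OO(X)$ (since the invariant generators of $I(Y)$ vanish at $x_0\in Y$), whence $\OO(X)/\mm_X\OO(X)\simeq\OO(Y)/\mm_Y\OO(Y)$. You merely make explicit, via the Reynolds operator, the surjectivity $\OO(X)^G\twoheadrightarrow\OO(Y)^G$ identifying the image of $\mm_X$ with $\mm_Y$, which the paper leaves implicit.
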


\begin{proof} Let $\mm \subset \OOO(X)^{G}$ be the maximal ideal of $\pi(x_{0})$ where $\pi\colon X \to \quot XG$ is the quotient morphism, and let $\nn\subset \OOO(Y)^{G}$ denote the image of $\mm$. By assumption, the ideal $\mm\OOO(X)$ contains $I(Y)$ and so $\OOO(Y)/(\nn\OOO(Y)) \simeq \OOO(X)/(I(Y) + \mm\OOO(X))=\OOO(X)/(\mm\OOO(X))$.
\end{proof}

\begin{example}\lab{ex:invariant.hypersurface}
Let $V$ be a $G$-module  Denote by $\theta_{n}$ the $n$-dimensional trivial representation 
and let $F \subset V\oplus\theta_{n}$ be a $G$-stable hypersurface containing $0$. If $G$ is semisimple, then $F$ is defined by a $G$-invariant polynomial. Hence $(F,0)$ is coreduced if and only if $V$ is coreduced.
\end{example}

\begin{lemma}\lab{lem:HnormalizingG}
Let $(X,x_{0})$ be a pointed $G$-variety. Let $H$ be a reductive group acting on $X$ such that $G$ sends $H$-orbits to $H$-orbits. Assume that  every $G$-invariant function on $X$ is $H$-invariant. If $(X,x_{0})$ is coreduced with respect to $G$, then so is $(\quot XH, \pi(x_{0}))$.
\end{lemma}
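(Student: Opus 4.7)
The plan is to exploit the factorization of the $G$-quotient $\pi_G\colon X\to \quot XG$ through the $H$-quotient $\pi_H\colon X\to \quot XH$ and then deduce reducedness of the relevant fiber in $\quot XH$ from that in $X$ by taking $H$-invariants.

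First I would unpack the hypotheses. By assumption $\OOO(X)^G\subset \OOO(X)^H$, so the inclusion induces a morphism $\bar\pi_G\colon \quot XH\to \quot XG$ satisfying $\bar\pi_G\circ \pi_H=\pi_G$. Because $G$ permutes $H$-orbits, it acts on $\OOO(X)^H$, and $(\OOO(X)^H)^G=\OOO(X)^G$; hence $\bar\pi_G$ is precisely the quotient morphism for the induced $G$-action on $\quot XH$. The coreducedness of $(\quot XH,\pi_H(x_0))$ therefore amounts to reducedness of the fiber $F_Y:=\bar\pi_G\inv(\pi_G(x_0))\subset \quot XH$, while the hypothesis gives reducedness of $F_X:=\pi_G\inv(\pi_G(x_0))\subset X$.

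Next I would compare the defining ideals. Let $\mm_0\subset \OOO(X)^G$ be the maximal ideal at $\pi_G(x_0)$; then $F_X$ is cut out by $\mm_0\OOO(X)$ and $F_Y$ by $\mm_0\OOO(X)^H$. The key commutative-algebra point is the identity $\mm_0\OOO(X)\cap \OOO(X)^H=\mm_0\OOO(X)^H$: any $H$-invariant element of $\mm_0 \OOO(X)$ can be written $f=\sum a_i b_i$ with $a_i\in\mm_0\subset\OOO(X)^G\subset\OOO(X)^H$ and $b_i\in\OOO(X)$, and applying the Reynolds operator $R_H$ (which exists because $H$ is reductive and is $\OOO(X)^H$-linear) yields $f=R_H(f)=\sum a_iR_H(b_i)\in \mm_0\OOO(X)^H$.

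Combining this with the exactness of $H$-invariants (reductivity of $H$) gives
\[
\OOO(F_X)^H = \bigl(\OOO(X)/\mm_0\OOO(X)\bigr)^H = \OOO(X)^H/\bigl(\mm_0\OOO(X)\cap\OOO(X)^H\bigr) = \OOO(X)^H/\mm_0\OOO(X)^H = \OOO(F_Y),
\]
so $\OOO(F_Y)$ is a subring of $\OOO(F_X)$. A subring of a reduced ring is reduced, and we are done. The only nonroutine step is the ideal identity $\mm_0\OOO(X)\cap\OOO(X)^H=\mm_0\OOO(X)^H$; after that the conclusion is a formal consequence of the reductivity of $H$.
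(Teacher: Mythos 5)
Your proof is correct and is essentially the paper's own argument in scheme-theoretic packaging: the key identity $\mm_{0}\OOO(X)\cap\OOO(X)^{H}=\mm_{0}\OOO(X)^{H}$, obtained by applying the $\OOO(X)^{H}$-linear Reynolds operator to $f=\sum_i a_{i}b_{i}$, is exactly the paper's step of ``averaging the $b_{i}$ over $H$.'' The only cosmetic difference is that the paper then descends the averaged expression to $\quot XH$ directly, while you conclude by embedding $\OOO(X)^{H}/\mm_{0}\OOO(X)^{H}$ into the reduced ring $\OOO(X)/\mm_{0}\OOO(X)$; both routes are valid and interchangeable.
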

 
\begin{proof}
Put $Y:=\quot XH$. Then $G$ acts on $Y$, because $G$ preserves the $H$-invariant functions $\OOO(X)^{H}\subset\OOO(X)$. Suppose that $f$ is an element of $\OO(Y)$ which vanishes on the null fiber $\NN_{G}(Y,\pi_{H}(x_{0}))$. By assumption, $\NN_{G}(X,x_{0}) = \pi_{H}^{-1}(\NN_{G}(Y,\pi_{H}(x_{0})))$ and so  $f\circ\pi_{H}$ vanishes on $\NN_{G}(X,x_{0})$. Hence $f \circ \pi_{H} = \sum_{i}a_{i}b_{i}$ where the $a_{i}$ are $G$-invariant and vanish at $x_{0}$. Since $f \circ \pi_{H}$ is $H$-invariant, we may average the $b_{i}$  over $H$ and still have $f \circ \pi_{H} = \sum_{i}a_{i}b_{i}$. But then $a_{i}=\bar a_{i}\circ\pi_{H}$ and $b_{i}= \bar b_{i}\circ\pi_{H}$ for unique $\bar a_{i} \in \OO(Y)^{G}$ and $\bar b_{i} \in \OO(Y)$. Thus $f  = \sum_{i}\bar a_{i}\bar b_{i}$ and so  $(Y, \pi_{H}(x_{0}))$ is coreduced.
\end{proof}

\begin{example}\lab{ex:HnormalG}
If $(V,G)$ is a coreduced representation and $H \subset G$ a closed normal subgroup, 
then $(\quot VH, \pi_{H}(0))$ is coreduced (with respect to $G/H$).
\end{example}

\begin{example}\lab{ex:quotientSOn}
 Let $(X,x_0)$ be a pointed $G$-variety, let $W$ be a $G$-module of dimension $n$ and let $H=\SO_n$ acting as usual on $\C^n$. Assume that $G\to\GL(W)$ has image in $\SL(W)$. Consider the pointed $(G\times H)$-variety $(Y,y_0):=(X\times (W\otimes\C^n),(x_0,0))$. We claim that if $(Y,y_0)$ is a coreduced $(G\times H)$-variety, 
then $(X\times S^2(W^{*}),(x_0,0))$ is a coreduced  $G$-variety. 
 
By classical invariant theory, the generators of $(n\C^n,\SO_n)$ are the inner product invariants $f_{ij}$ of the copies of $\C^n$ together with the determinant $d$. The relations are generated by the equality $d^2=\det(f_{ij})$. Identifying $n\C^n$ with $W\otimes\C^n$ we see that the quadratic invariants transform by the representation $S^2(W^*)$ of $G$, the determinant $d$ transforms by $\bigwedge^n(W^*)=\theta_1$ and the relation is $G$-invariant. Now applying   Lemmas  \ref{lem:HnormalizingG} and \ref{lem:invariant.ideal}  gives the claim. 
\end{example}

\begin{lemma}\lab{lem:retraction}
Let  $(Y,y_0)$ be a pointed $G$-variety and $Z\subset Y$ a closed $G$-stable subvariety containing $y_{0}$. Suppose that there is a $G$-equivariant retraction  $p\colon (Y,y_0)\to (Z,y_0)$. If $(Y,y_0)$ is coreduced, then so is $(Z,y_0)$.
\end{lemma}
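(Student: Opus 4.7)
The plan is to use the retraction $p$ to pull functions back from $Z$ to $Y$, invoke coreducedness of $(Y,y_0)$ there, and then restrict the resulting decomposition back to $Z$ via the inclusion $i\colon Z\hookrightarrow Y$. Since $p\circ i=\id_{Z}$, we have $i^{*}\circ p^{*}=\id_{\OOO(Z)}$, so any expression of $p^{*}f$ as a sum of products whose ``invariant factors'' vanish at $\pi_{Y}(y_0)$ will restrict to an analogous expression for $f$, which is exactly what coreducedness of $(Z,y_0)$ demands.

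The first step I would carry out is the set-theoretic inclusion $p(\NN_{Y})\subseteq \NN_{Z}$, where $\NN_{Y}:=\pi_{Y}^{-1}(\pi_{Y}(y_0))$ and $\NN_{Z}:=\pi_{Z}^{-1}(\pi_{Z}(y_0))$. Because $y_0$ is $G$-fixed, both null fibers coincide with the loci $\{y\mid \overline{Gy}\ni y_0\}$. Continuity and $G$-equivariance of $p$ yield $p(\overline{Gy})\subseteq \overline{Gp(y)}$ for every $y\in Y$; applied to $y\in\NN_{Y}$, together with $p(y_0)=y_0$, this forces $y_0\in\overline{Gp(y)}$, so $p(y)\in\NN_{Z}$. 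Consequently, for any $f\in I(\NN_{Z})$ the pullback $p^{*}f$ lies in $I(\NN_{Y})$.

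Coreducedness of $(Y,y_0)$ then gives $p^{*}f=\sum_{i}a_{i}b_{i}$ with $a_{i}\in\OOO(Y)^{G}$ vanishing at $\pi_{Y}(y_0)$ and $b_{i}\in\OOO(Y)$. Applying $i^{*}$ yields $f=\sum_{i}(a_{i}|_{Z})(b_{i}|_{Z})$. Since $i$ is $G$-equivariant, each $a_{i}|_{Z}$ belongs to $\OOO(Z)^{G}$, and since $y_0\in Z$ we have $a_{i}|_{Z}(y_0)=a_{i}(y_0)=0$, so $a_{i}|_{Z}$ lies in the maximal ideal $\mm_{Z}\subset\OOO(Z)^{G}$ of $\pi_{Z}(y_0)$. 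Therefore $I(\NN_{Z})\subseteq \mm_{Z}\OOO(Z)$, and the reverse inclusion is automatic, proving that $(Z,y_0)$ is coreduced. The only mildly substantive point is the containment $p(\NN_{Y})\subseteq\NN_{Z}$; once that is in hand, everything else is a formal manipulation using the splitting $i^{*}p^{*}=\id$.
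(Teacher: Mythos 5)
Your proof is correct and follows essentially the same route as the paper's: establish $p(\NN_Y)\subseteq\NN_Z$ (which the paper states as "clear" and you justify via $p(\overline{Gy})\subseteq\overline{Gp(y)}$ and $p(y_0)=y_0$), pull back $f$ along $p$, apply coreducedness of $(Y,y_0)$, and restrict the decomposition to $Z$ using $i^*p^*=\id$. No gaps.
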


\begin{proof}
Clearly, if $y$ is in the null cone of $Y$, then $p(y)$ is in the null cone of $Z$.  Thus if $f\in\OO(Z)$ vanishes on the null cone of $Z$ then $\tilde f:=p^*f\in\OOO(Y)$ vanishes on the null cone of $Y$. By hypothesis we have that $\tilde f=\sum_i a_i b_i$ where the $a_i$ are invariants vanishing at $y_{0}$. Restricting to $Z$ we get a similar sum for $f$. Hence $Z$ is coreduced. 
\end{proof}

\begin{examples}\lab{ex:retraction}
\be
\item If $(G,V_{1}\oplus V_{2})$ is a coreduced representation, then so is $(G,V_{i})$, $i=1,2$.
\item If $(G,V)$ is a coreduced representation and $H\subset G$ a closed normal subgroup,  then $(G, V^{H})$ is also coreduced.
\item Let $V_i$ be a $G_i$ module, $i=1$, $2$. Then  $(G_{1}\times G_{2},V_{1}\oplus V_{2})$ is coreduced if and only if both $(G_{1},V_{1})$ and $(G_{2},V_{2})$ are coreduced.  Here we use that $\NN(V_1\oplus V_2)=\NN(V_1)\times\NN(V_2)$. 
\ee
\end{examples}

We finish this section with the case of tori which is quite easy. We will then see in section~\ref{sec:slices} that this case can be applied to representations containing zero weights.

\begin{proposition}\lab{prop:tori}
Let $V$ be a $T$-module   where $T$ is a torus. Let $\alpha_1,\dots,\alpha_n$ be the nonzero weights of $V$. 
Then $V$ is coreduced if and only if the solutions of 
$$
\sum_i m_i\alpha_i=0, \quad m_i\in\N,
$$ 
are generated by solutions where the $m_i$ are zero or one. 
\end{proposition}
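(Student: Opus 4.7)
My plan is to translate the coreducedness of $(V,T)$ into the radicality of a monomial ideal, and then into the combinatorial statement on the semigroup of solutions.

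First I would reduce to the case where $V$ has no zero weights. Writing $V=V^T\oplus V'$ with $V'$ the span of the weight vectors of nonzero weight, the invariant ring factors as $\OOO(V)^T=\OOO(V^T)\otimes\OOO(V')^T$, so $\NN_T(V)=\{0\}\times\NN_T(V')$ set-theoretically, and both $I(\NN_T(V))$ and $\mm_0\OOO(V)$ differ from their $V'$-counterparts only by the linear coordinates on $V^T$; hence $V$ is coreduced if and only if $V'$ is. (Alternatively, Example~\ref{ex:retraction}(3) applies with the trivial group acting on $V^T$.) From now on assume $V$ has only nonzero weights.

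Next, choose a basis of weight vectors of $V$ with dual coordinates $x_1,\dots,x_n$, where $x_i$ has weight $\alpha_i$. The $T$-invariants are the $\C$-linear combinations of monomials $x^m:=x_1^{m_1}\cdots x_n^{m_n}$ with $m\in\overline S:=\{m\in\N^n\mid\sum_i m_i\alpha_i=0\}$, and so
$$
\mm_0\OOO(V)=(x^m\mid m\in S), \qquad S:=\overline S\setminus\{0\},
$$
is a monomial ideal. For any monomial ideal the radical is obtained by replacing each generator $x^m$ by its squarefree part $\prod_{i\in\operatorname{supp}(m)}x_i$; in particular, the ideal is radical if and only if its minimal monomial generators are already $0/1$ monomials. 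The minimal monomial generators of $\mm_0\OOO(V)$ are exactly the $x^m$ with $m$ componentwise-minimal in $S$, i.e., $m$ ranging over the Hilbert basis of the semigroup $S$. Therefore $V$ is coreduced if and only if every Hilbert basis element of $S$ is a $0/1$ vector.

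It remains to check that ``the Hilbert basis of $S$ consists of $0/1$ vectors'' is equivalent to the statement in the proposition that every element of $S$ is an $\N$-linear combination of $0/1$ solutions: the forward direction is immediate, and for the converse one observes that a Hilbert basis element is indecomposable, so in any such combination only one term can occur nontrivially, forcing the basis element itself to be $0/1$. The only step that requires any real care is the passage from radicality of $\mm_0\OOO(V)$ to the combinatorial condition on $S$, which is the standard dictionary for monomial ideals; the rest is bookkeeping.
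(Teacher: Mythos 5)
Your argument is correct and follows essentially the same route as the paper: both reduce to the observation that $\mm_0\OOO(V)$ is the monomial ideal generated by the invariant monomials $x^m$, $m\in S$, and that its radicality is equivalent to the squarefreeness of the indecomposable (equivalently, for this saturated semigroup, componentwise-minimal) solutions. The paper simply proves the relevant instance of the monomial-ideal dictionary by hand (exhibiting a witness monomial for necessity, and the ``power divisible by a squarefree invariant'' argument for sufficiency) rather than citing it.
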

It is well-known that the monoid of relations $\sum_i m_i\alpha_i=0$, $m_i\in\N$ is generated by the  {\it indecomposable\/} relations, i.e., by relations which cannot be written as a sum of two nontrivial relations. {\it So a necessary and sufficient condition for coreducedness is that the indecomposable relations $\sum_i n_i\alpha_i=0$, $n_i\in\N$, satisfy $n_{i}=0$ or $1$}.

\begin{proof} We may assume that $V^T=0$. Let $x_1,\dots,x_n$ be a weight basis of $V^{*}$ corresponding to the $\alpha_i$.
Suppose that there is an indecomposable relation where, say, $m_1\geq 2$. Then the monomial $x_1x_2^{m_2}\dots x_n^{m_n}$ vanishes on the null cone, but it is not in the ideal of the invariants. Hence our condition is necessary. 

On the other hand, suppose that the indecomposable relations are of the desired form. Now any polynomial vanishing on $\NN(V)$ is a sum of monomials with this property, and a monomial $p$ vanishing  on $\NN(V)$ has a power which is divisible by an invariant monomial $q$ without multiple factors. But then $p$ is divisible by $q$ and so $\NN(V)$ is reduced.
\end{proof}

\begin{corollary}\lab{cor:cstar.reps}
Let $T=\C^*$. Then $\NN(V)$ is reduced if and only if $\OO(V)^T=\C$ or the nonzero weights of $V$ are $\pm k$ for a fixed $k\in\N$.
\end{corollary}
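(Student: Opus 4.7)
The plan is to apply Proposition~\ref{prop:tori} with $T=\C^{*}$, for which the weight lattice is $\Z$. As in the proof of Proposition~\ref{prop:tori} we may assume $V^{T}=0$, so the weights $\alpha_{1},\dots,\alpha_{n}$ are nonzero integers; in this setting the condition ``$\OOO(V)^{T}=\C$'' is equivalent to saying that all $\alpha_{i}$ have the same sign, i.e.\ that no nontrivial $\N$-relation among them exists.

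For the easy direction: if all $\alpha_{i}$ have the same sign then $\quot{V}{T}$ is a point and $\NN(V)=V$ is trivially reduced. If the $\alpha_{i}$ take only the two values $\pm k$, then every indecomposable $\N$-relation must pair one weight $+k$ with one $-k$, so its coefficients lie in $\{0,1\}$, and Proposition~\ref{prop:tori} delivers coreducedness.

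For the converse, assume $\NN(V)$ is reduced and that weights of both signs occur. Pick any positive weight $p$ and any negative weight $-q$ with $p,q\geq 1$, and set $d=\gcd(p,q)$. Then
$$
R:\ (q/d)\cdot p + (p/d)\cdot(-q) = 0
$$
is an $\N$-relation supported on $\{p,-q\}$. The one step requiring thought is that $R$ is \emph{indecomposable}: if $R=R'+R''$ with $R',R''$ nontrivial $\N$-relations, then since the coefficients of the other $\alpha_{i}$ vanish in $R$ they must vanish in each of $R',R''$, so both are supported on $\{p,-q\}$; but $(q/d,p/d)$ is the minimal nonzero $\N$-solution of $mp=nq$ (as $q/d$ and $p/d$ are coprime), so no such decomposition exists. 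Proposition~\ref{prop:tori} then forces $q/d=p/d=1$, i.e.\ $p=q$. Since $p$ and $q$ were arbitrary, all positive weights coincide with some $k\in\N$ and all negative weights with $-k$.

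The only genuinely nontrivial step is the indecomposability of $R$; everything else is a direct translation of Proposition~\ref{prop:tori} to the rank-one torus.
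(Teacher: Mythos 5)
Your proof is correct and is exactly the intended derivation: the paper states this corollary as an immediate consequence of Proposition~\ref{prop:tori} with no separate argument, and your reduction to $V^T=0$, the $\{0,1\}$-coefficient check when the weights are $\pm k$, and the indecomposability of $(q/d)\,p+(p/d)(-q)=0$ via coprimality of $p/d$ and $q/d$ are precisely the details being left to the reader. Nothing further is needed.
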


\bigskip
\section{The Method of Covariants}\lab{sec:covariants}
In this section we explain how covariants can be used to show that a representation is not coreduced. As a first application we classify the coreduced representations of $\SLtwo$.

Let $G$ be a reductive group and $V$ a representation of $G$. A $G$-equivariant morphism $\phi\colon V \to W$ where $W$ is an irreducible representation of $G$ is called a {\it covariant of $V$ of type $W$\/}. Clearly, covariants of type $W$ can be added and multiplied with invariants and thus form an $\OO(V)^{G}$-module $\Cov(V,W)$ which is known to be finitely generated (see \cite[II.3.2~Zusatz]{Kr1984Geometrische-Metho}).

A nontrivial covariant $\phi\colon V \to W$ defines a $G$-submodule $\phi^{*}(W^{*})\subset \OO(V)$ isomorphic to the dual $W^{*}$, and every irreducible $G$-submodule of $\OO(V)$ isomorphic to $W^{*}$ is of the form $\phi^{*}(W^{*})$ for a suitable covariant $\phi$. Moreover, $\phi$ vanishes on the null cone $\NN$ if and only if $\phi^{*}(W^{*})\subset I(\NN)$. We say that $\phi$ is a {\it generating covariant\/} if $\phi$ is not contained in $\mm_{0}\Cov(V,W)$, or equivalently, if $\phi^{*}(W^{*})$ is not contained in $I_{G}(\NN)$. Thus we obtain the following useful criterion for non-coreducedness.

\begin{proposition}\lab{prop:gencovariants}
If $\phi$ is a generating covariant which vanishes on $\NN$ then $V$ is not coreduced.
\end{proposition}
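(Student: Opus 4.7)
The plan is to verify that the hypotheses directly force the strict inclusion $I_G(\NN)\subsetneq I(\NN)$. Since $\phi$ vanishes on $\NN$, its coordinate functions lie in $I(\NN)$, i.e.\ $\phi^{*}(W^{*})\subset I(\NN)$. Using the equivalence asserted just before the proposition, the generating hypothesis says exactly that $\phi^{*}(W^{*})\not\subset \mm_{0}\OO(V)=I_{G}(\NN)$. Combining the two, $I(\NN)$ properly contains $I_{G}(\NN)$, so $V$ is not coreduced. The only nontrivial point is thus to justify the parenthetical equivalence.

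To verify $\phi\in\mm_{0}\Cov(V,W)\Longleftrightarrow \phi^{*}(W^{*})\subset\mm_{0}\OO(V)$, I would identify $\Cov(V,W)=\Hom_{G}(V,W)$ with $(\OO(V)\otimes W)^{G}$ by sending $\phi$ to $\tilde\phi:=\sum_{i}\phi^{*}(w_{i}^{*})\otimes w_{i}$ for a chosen basis $\{w_{i}\}$ of $W$ with dual $\{w_{i}^{*}\}$. Under this identification the condition $\phi^{*}(W^{*})\subset\mm_{0}\OO(V)$ is just $\tilde\phi\in \mm_{0}\OO(V)\otimes W$, and since $\mm_{0}$ consists of $G$-invariants the latter submodule equals $\mm_{0}(\OO(V)\otimes W)$. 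Hence the equivalence reduces to
$$
\bigl(\mm_{0}(\OO(V)\otimes W)\bigr)^{G}=\mm_{0}\,\Cov(V,W).
$$

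For this it suffices to show the standard fact that $(\mm_{0}M)^{G}=\mm_{0}M^{G}$ for every rational $G$-module $M$. The inclusion $\supseteq$ is clear because $\mm_{0}\subset\OO(V)^{G}$. For $\subseteq$, write $x\in(\mm_{0}M)^{G}$ as $\sum_{\alpha} f_{\alpha}m_{\alpha}$ with $f_{\alpha}\in\mm_{0}$ and apply the Reynolds operator $R\colon M\to M^{G}$, which exists and is $\OO(V)^{G}$-linear because $G$ is reductive; then
$$
x=R(x)=\sum_{\alpha}f_{\alpha}R(m_{\alpha})\in\mm_{0}M^{G}.
$$
Applied with $M=\OO(V)\otimes W$ this closes the equivalence and hence the proof. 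There is no serious obstacle: the proposition is essentially a formal unpacking of the definitions, and the only ingredient with any content is the Reynolds-operator manipulation used to commute $G$-invariants with multiplication by the invariant ideal $\mm_{0}$.
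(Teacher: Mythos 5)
Your proof is correct and follows the same route the paper takes: the paper states the proposition as an immediate consequence of the two equivalences asserted in the preceding paragraph ($\phi$ vanishes on $\NN$ iff $\phi^{*}(W^{*})\subset I(\NN)$, and $\phi$ generating iff $\phi^{*}(W^{*})\not\subset I_{G}(\NN)$), which together give $I_{G}(\NN)\subsetneq I(\NN)$. The only difference is that you additionally verify the second equivalence via the identification $\Cov(V,W)\simeq(\OO(V)\otimes W)^{G}$ and the Reynolds operator, a standard fact the paper leaves implicit; your verification is accurate.
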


\begin{remark}\lab{rem:gendifferential}
Let $f\in\OOO(V)^{G}$ be a generating homogeneous invariant of positive degree, i.e., $f\in\mm_{0}\setminus\mm_{0}^{2}$. Then the differential $df\colon V\to V^{*}$ is a generating covariant. In fact,  using the contraction $(df,\id)= \deg f \cdot f$ we see that if $df = \sum_{i}f_{i}\phi_{i}$ where the $f_{i}$ are  homogeneous non-constant invariants, then $f = \frac{1}{\deg f}\sum_{i}f_{i}(\phi_{i},\id)\in\mm_{0}^{2}$.
\end{remark}

\begin{example}\lab{ex:twosltwo}
Let $G$ be $\SLtwo$ and $V = \sll_{2}\oplus \sll_{2}$ where $\sll_{2}=\Lie \SLtwo$ is the Lie algebra of $\SLtwo$. Then the null cone $\NN(V)$ consists of commuting pairs of nilpotent matrices and so the covariant 
$$
\phi\colon \sll_{2}\oplus \sll_{2}\to \sll_{2}, (A,B)\mapsto AB-\frac{1}{2}\tr(AB)   
\left[\begin{matrix}1&0\\0&1\end{matrix}\right]
$$  
vanishes on $\NN(V)$, i.e.,  $\phi^{*}(\sll_{2}) \subset I(\NN)$. But $\phi^{*}(\sll_{2})$ is bihomogeneous of degree $(1,1)$ and therefore is not contained in $I_{G}(\NN)$  because there are no invariants of degree 1.
\end{example}

\begin{example}\lab{ex:3D2}
Let $G$ be $\SO_{4}$ and $V=\C^4\oplus \C^4\oplus \C^4$. The weights of $\C^{4}$ relative to %
the maximal tours $T=\SO_2\times\SO_2$ are $\pm\eps_{1}$, $\pm\eps_{2}$, and the degree 2 invariants (dot products) $q_{ij}\colon (v_{1},v_{2},v_{3})\mapsto v_i\cdot v_j$, $1\leq i\leq j\leq 3$, generate the invariant ring. Let $V_{++}$ denote the span of the positive weight vectors and let $V_{+-}$ denote the span of the weight vectors corresponding to $\epsilon_1$ and $-\epsilon_2$. Then $\NN= G V_{++}\cup GV_{+-}$, and an easy calculation shows that every homogeneous covariant $V \to \C^{4}$ of degree 3 vanishes on the null cone $\NN$. Now, using LiE (see \cite{LeCoLi1992LiE:-A-package-for}), one finds that the covariants of type $\C^{4}$ have multiplicity 19 in degree 3, whereas there are 6  linearly independent invariants in degree 2 and obviously 3  linear covariants of type $\C^4$.  Therefore, there is a generating covariant of type $\C^{4}$ in degree 3 and so $V$ is not coreduced. (See  Theorem~\ref{CIT.thm}(4) for a more general statement.)
\end{example}

We now use our method to classify the cofree $\SLtwo$-modules. Starting with the natural representation on $\C^{2}$ we get a linear action on the coordinate ring $\OO(\C^{2})=\C[x,y]$ where $x$ has weight 1 with respect to the standard torus $T=\Cst \subset \SLtwo$. The homogeneous components $R_{m}:=\C[x,y]_{m}$ of degree $m$ give all  irreducible representations of $\SLtwo$ up to isomorphism. A binary form $f\in R_{m}$ will be written as 
$$
f=\sum_{i=0}^{m}a_{i}\binom{m}{i}x^{i}y^{m-i}
$$
so that the corresponding coordinate functions $x_{i}$ are weight vectors of weight $m-2i$. The null cone of $R_{m}$ consists of those forms $f$ which have a linear factor of multiplicity strictly greater than  $\frac{m}{2}$. More generally, for any representation $V$ of $\SLtwo$ we have
$$
\NN= \SLtwo V_{+}
$$
where $V_{+}$ is the sum of all weight spaces of strictly positive weight. In particular, $\NN$ is always irreducible.
\begin{example}\lab{ex:R4}
The binary forms of degree 4 have the following invariants 
$$
A:=x_{0}x_{4} - 4x_{1}x_{3}+ 3x_{2}^{2} \quad\text{and}\quad H:=\det\begin{bmatrix} x_{0}&x_{1}&x_{2}\\x_{1}&x_{2}&x_{3}\\x_{2}&x_{3}&x_{4}\end{bmatrix}
$$
classically called ``Apolare'' and ``Hankelsche Determinante'' which generate the invariant ring (see \cite{Sc1968Vorlesungen-uber-I}). It is easy to see that the null cone $\NN(R_{4}) =\SLtwo(\C x^{3}y\oplus\C x^{4})$ is the closure of the 3-dimensional orbit of $x^{3}y$ and thus has codimension 2. 
A simple calculation shows that the Jacobian $\Jac(H,A)$ has rank 2 at $x^{3}y$. It follows that $\NN(R_{4})$ is a reduced complete intersection. (One can deduce from $\rk\Jac(H,A) =2$  that $A,H$ generate the invariants.)
\end{example}

\begin{example}\lab{ex:kR1}
The representation $k R_{1}= R_{1}\oplus R_{1}\oplus \cdots\oplus R_{1}$ ($k$ copies) can be identified with the space $M_{2\times k}(\C)$ of $2\times k$-matrices where $\SLtwo$ acts by left multiplication. Then the null cone $\NN$ is the closed subset of matrices of rank $\leq 1$ which is the determinantal variety defined by the vanishing of the $2\times 2$-minors $M_{ij}= x_{1i}x_{2j}-x_{2i}x_{1j}$, $1\leq i<j\leq k$. It is known that the ideal of $\NN$ is in fact generated by the minors $M_{ij}$. This is an instance of the so-called Second Fundamental Theorem, see \cite[Chap.~11, section~5.1]{Pr2007Lie-groups}. Thus $\NN$ is reduced, and the minors $M_{ij}$ generate the invariants. 
\end{example}

\begin{theorem}\lab{thm:SLtwo}
Let $V$ be a nontrivial coreduced representation of $\SL_2$ where $V^{\SLtwo}=(0)$. Then $V$ is isomorphic to $R_2$, $R_3$, $R_4$ or $kR_1$, $k\geq 1$.
\end{theorem}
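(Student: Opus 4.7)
The proof splits into sufficiency and necessity.

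\emph{Sufficiency.} The representations $R_{2}\cong\sll_{2}$, $R_{3}$, $R_{4}$, and $kR_{1}$ are coreduced by, respectively, Proposition~\ref{prop:coreduced}(2) (adjoint representation), Proposition~\ref{prop:coreduced}(1) (one-dimensional quotient), Example~\ref{ex:R4}, and Example~\ref{ex:kR1}.

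\emph{Necessity.} Assume $V$ is coreduced with $V^{\SLtwo}=0$ and decompose $V=R_{m_{1}}\oplus\cdots\oplus R_{m_{s}}$ with $m_{j}\geq 1$. By Examples~\ref{ex:retraction}(1), every sub-sum is coreduced, so it suffices to prove: (A) $R_{m}$ is not coreduced for $m\geq 5$; and (B) $R_{m}\oplus R_{n}$ is not coreduced whenever $m\geq 2$ and $n\geq 1$.

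For (A) with $m\geq 6$ even, take $v=x^{m/2}y^{m/2}$: its weight is zero, so $\SLtwo_{v}\supseteq T$; since $\SLtwo_{v}$ is $T$ or $N(T)$, both reductive, the orbit $\SLtwo\cdot v$ is closed, and Proposition~\ref{prop:slice.zero.weight} reduces to showing the $T$-slice at $v$ is not coreduced. The slice's $T$-weights are those of $R_{m}$ with the tangent directions $\pm 2$ removed. Proposition~\ref{prop:tori} then provides the obstruction: when $m\equiv 2\pmod 4$ the relation $(n_{m},n_{-4})=(2,m/2)$ is indecomposable with both coefficients $\geq 2$ (for $m\geq 6$); when $m\equiv 0\pmod 4$ the relation $(n_{m},n_{-4})=(1,m/4)$ is indecomposable with coefficient $m/4\geq 2$ (for $m\geq 8$). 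For (A) with $m\geq 5$ odd, $R_{m}$ has no weight-zero vector and the slice trick is unavailable. Instead one applies Proposition~\ref{prop:gencovariants} directly: since the first invariant of $R_{m}$ has degree $\geq 4$, any nonzero covariant of nontrivial type and total degree $\leq 4$ vanishing on $\NN(R_{m})$ is automatically generating. Such a covariant is constructed as an iterated transvectant built from $f$, $H(f)=(f,f)_{2}$, and $(f,f)_{4}$, and its vanishing is verified on each stratum of $\NN(R_{m})$ (orbits indexed by the linear-factor multiplicity $k>m/2$, parametrized by the $B$-stable subspace $V_{+}\subset R_{m}$).

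For (B), the case $R_{2}\oplus R_{2}$ is Example~\ref{ex:twosltwo}. If some summand has even order $\geq 4$, apply Proposition~\ref{prop:slice.zero.weight} at a weight-zero vector in that summand, combined with Proposition~\ref{prop:tori} exactly as in (A). The remaining cases, principally $R_{1}\oplus R_{m}$ for $m\in\{2,3,4\}$ and $R_{3}\oplus R_{n}$ for $n\in\{3,4\}$, are handled by small explicit covariants: for $V=R_{1}\oplus R_{2}$, the bidegree-$(1,1)$ covariant $(v,A)\mapsto Av$ of type $R_{1}$ vanishes on $\NN(V)$, and since the smallest invariant has degree $2$ and the smallest type-$R_{1}$ covariant has degree $1$, the module $\mm_{0}\cdot\Cov(V,R_{1})$ is zero in degrees $\leq 2$, making the degree-$2$ covariant $(v,A)\mapsto Av$ generating. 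Analogous low-bidegree transvectants dispose of the remaining combinations. The principal technical obstacle is part (A) for odd $m\geq 5$, where the $T$-slice method fails and one must construct the generating covariant by hand and verify vanishing against the multi-stratum structure of $\NN(R_{m})$.
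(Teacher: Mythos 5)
Your architecture --- reduce via Examples~\ref{ex:retraction}(1) to statements (A) and (B), kill even-order summands by toral slices at zero weight vectors, and kill everything else with explicit low-degree covariants --- is genuinely different from the paper's proof, which runs one uniform argument: by Lemma~\ref{lem:trivStab} any $V$ off the list has principal isotropy group trivial or $\{\pm\id\}$, Frobenius reciprocity then forces $\Cov(V,R_{1})$ (resp.\ $\Cov(V,R_{2})$) to have rank $\geq 2$ (resp.\ $\geq 3$) over $\OOO(V)^{G}$, hence there is a generating covariant of type $R_{1}$ or $R_{2}$ of degree $\geq 2$, and Lemma~\ref{lem:vanishingcov} makes it vanish on $\NN$. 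Your reduction to (A) and (B) is logically complete, the sufficiency part is the paper's, and the even-$m$ slice computations are correct (one caveat: ``the stabilizer is reductive, hence the orbit is closed'' is not a valid implication in general; what you need is the standard fact, quoted at the start of \S\ref{sec:slices}, that zero weight vectors have closed orbits).

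The genuine gap is (A) for odd $m\geq 5$, which you flag as the principal obstacle but do not execute. Your stated plan --- a nonzero covariant of nontrivial type and total degree $\leq 4$ that vanishes on $\NN(R_{m})$ --- cannot be realized, at least not by the only vanishing mechanism available to you, namely Lemma~\ref{lem:vanishingcov}(1), which needs the order to be strictly less than the degree. For the quintic one checks that $\Cov(R_{5},R_{j})_{d}=0$ whenever $d\leq 4$ and $0<j<d$: e.g.\ the multiplicity of $R_{2}$ in $S^{4}(R_{5})$ is zero because the weight spaces of weights $2$ and $4$ in $S^{4}(R_{5})$ both have dimension $11$; the same vanishing occurs for $R_{7}$. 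So no iterated transvectant of $f$, $(f,f)_{2}$, $(f,f)_{4}$ of total degree $\leq 4$ does the job, and covariants of order $\geq$ degree need not vanish on $V_{+}$ (indeed $(x^{3}y^{2},x^{3}y^{2})_{4}\neq 0$, so $(f,f)_{4}$ does not vanish on $\NN(R_{5})$). The case can be repaired --- e.g.\ the classical degree-$5$, order-$1$ covariant of the quintic is generating because $\mm_{0}\Cov(R_{5},R_{1})_{5}=I_{4}\cdot\Cov(R_{5},R_{1})_{1}=0$, and it vanishes on $\NN$ since $5>1$ --- but this must be argued separately for each odd $m$, which is exactly what the paper's principal-isotropy/Frobenius device avoids. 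The same criticism applies to your case (B): only $R_{1}\oplus R_{2}$ is actually verified, the pair $R_{2}\oplus R_{3}$ is missing from your enumeration, and ``analogous low-bidegree transvectants dispose of the remaining combinations'' is an assertion, not a proof (for $R_{3}\oplus R_{3}$, say, one needs to exhibit a nonzero covariant of bidegree $(2,1)$ and order $1$ and check it is not a multiple of the degree-$2$ invariant $(f,g)_{3}$).
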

The proof is  based on the following results. 

\begin{lemma}\lab{lem:vanishingcov}
Let $V$ be a representation of $\SLtwo$ and $\phi\colon V \to R_{m}$ a homogeneous covariant of degree $d$.
\be
\item If $d>m$, then $\phi(\NN)=(0)$.
\item If $\pm\id$ acts trivially on $V$ and $2d>m$, then $\phi(\NN)=(0)$.
\ee
\end{lemma}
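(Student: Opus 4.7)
The plan is to exploit the fact, noted just before Example \ref{ex:R4}, that $\NN(V)=\SLtwo\cdot V_{+}$ where $V_{+}$ is the sum of the strictly positive weight spaces of $V$. Since $\phi$ is $\SLtwo$-equivariant, it suffices to show that $\phi$ vanishes identically on $V_{+}$. The whole argument is really a weight count inside $R_{m}$, together with the observation that the weights of $R_{m}$ all lie in $\{-m,-m+2,\dots,m\}$, so any vector of $R_{m}$ whose $T$-weight exceeds $m$ in absolute value must be zero.

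To make this precise, I would pick a weight basis $e_{1},\dots,e_{s}$ of $V_{+}$ with weights $k_{1},\dots,k_{s}>0$, and let $x_{1},\dots,x_{s}$ be the dual coordinates. Writing $\phi|_{V_{+}}$ as a homogeneous polynomial of degree $d$ in the $x_{i}$ with coefficients in $R_{m}$,
$$
\phi(x_{1}e_{1}+\cdots+x_{s}e_{s})=\sum_{|\alpha|=d}x^{\alpha}\,\phi_{\alpha},\qquad \phi_{\alpha}\in R_{m},
$$
$T$-equivariance forces each coefficient $\phi_{\alpha}$ to be a weight vector of weight $\sum_{i}\alpha_{i}k_{i}$ in $R_{m}$. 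This is the one computation I would actually carry out carefully, by comparing $\phi(t\cdot v)=t\cdot\phi(v)$ on both sides.

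For (1), every $k_{i}\geq 1$, so the weight of $\phi_{\alpha}$ is at least $|\alpha|=d>m$, forcing $\phi_{\alpha}=0$ for all $\alpha$. For (2), the hypothesis that $-\id$ acts trivially on $V$ means every weight of $V$ is even, so $k_{i}\geq 2$, giving a weight of at least $2d>m$, again forcing $\phi_{\alpha}=0$. In either case $\phi|_{V_{+}}\equiv 0$, hence $\phi(\NN)=0$ by equivariance.

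I do not expect a real obstacle here; the only thing to be careful about is the bookkeeping with degree versus weight (degree $d$ in the dual coordinates, but the $T$-weight of a monomial picks up one factor of $k_{i}$ per variable, not just one per monomial). The argument as sketched is self-contained once $\NN=\SLtwo\cdot V_{+}$ is in hand.
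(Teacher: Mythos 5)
Your proof is correct and follows essentially the same route as the paper: reduce to showing $\phi$ vanishes on $V_{+}$ using $\NN=\SLtwo V_{+}$, then do a weight count in $R_{m}$ showing that each coefficient of $\phi$ supported on positive-weight coordinates would have $T$-weight at least $d$ (resp.\ $2d$), exceeding the maximal weight $m$ of $R_{m}$. The only cosmetic difference is that you restrict to $V_{+}$ before expanding in monomials, whereas the paper expands over all of $V$ and observes that any surviving monomial must involve a non-positive-weight variable; the two bookkeepings are equivalent.
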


\begin{proof} Let $V_{+}\subset V$ be the sum of the positive weight spaces. Since $\NN=\SLtwo V_{+}$ it suffices to show that $\phi$ vanishes on $V_{+}$. Choose coordinates $x_{1},\ldots,x_{n}$ on $V$ consisting of weight vectors and let $z=x_{1}^{k_{1}}x_{2}^{k_{2}}\cdots x_{n}^{k_{n}}$ be a monomial occurring in a component of $\phi$. Then $\sum_{i}k_{i}=d$ and the weight of $z$ occurs in $R_{m}$. Since $m<d$ the monomial $z$ must contain a variable $x_{i}$ with a weight $\leq 0$ and so $z$ vanishes on $V_{+}$. This proves (1).

For (2) we remark that $V$  contains only even weights and so a variable $x_{i}$ with non-positive weight has to appear in $z$ as soon as $2d>m$.
\end{proof}

\begin{lemma}\lab{lem:trivStab}
Let $V$ be a nontrivial representation of $\SLtwo$ not isomorphic to $R_{1},R_{2},R_{3}$ or $R_{4}$. Then the principal isotropy group  is either trivial or equal to $\{\pm\id\}$.
\end{lemma}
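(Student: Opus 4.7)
The plan is to handle the irreducible and reducible cases separately. If $V = R_{m}$ is irreducible with $m\geq 5$, I would reduce the computation of the principal isotropy to a question about permutations of roots on $\mathbb{P}^{1}$: a generic $f\in R_{m}$ has $m$ distinct roots, and any $g\in\SL_{2}$ with $g\cdot f = f$ descends to an element of $\PSL_{2}$ permuting these roots. Because $\PSL_{2}$ is simply $3$-transitive on $\mathbb{P}^{1}$, a dimension count shows that the setwise stabilizer of $m\geq 5$ generic points is trivial: for each fixed $\sigma\in S_{m}$, three of the $m$ constraints determine a M\"obius transformation uniquely, and the remaining $m-3\geq 2$ constraints cut out a subvariety of positive codimension in the configuration space. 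Hence the stabilizer of $[f]$ in $\PSL_{2}$ is trivial, and the stabilizer of $f$ itself in $\SL_{2}$ lies in $\{\pm\id\}$.

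For reducible $V$ the plan has two branches. If some summand $R_{m}$ with $m\geq 5$ appears, write $V = R_{m}\oplus V'$; then the stabilizer of a generic $(f,v')\in V$ is contained in $\SL_{2}^{f}$, which is a conjugate of the principal isotropy of $R_{m}$. Since that already lies in $\{\pm\id\}$ and the center is conjugation-invariant, we are done. Otherwise every irreducible summand is among $R_{1},R_{2},R_{3},R_{4}$, with respective principal isotropies $U$ (a unipotent subgroup), $T$ (a maximal torus), $\Z/3$, and $Q_{8}$ (the quaternion group of order $8$, realized as the preimage in $\SL_{2}$ of the Klein four-group in $\PSL_{2}$ permuting the four roots of a generic binary quartic). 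By hypothesis $V$ has at least two irreducible summands counted with multiplicity, so I would pick a nontrivial splitting $V = V_{1}\oplus V_{2}$ and use that the principal isotropy of $V$ is conjugate into $H_{1}\cap gH_{2}g^{-1}$ for generic $g\in\SL_{2}$, where $H_{i}$ is the principal isotropy of $V_{i}$.

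The combinatorial core is then a small pairwise-intersection table for $H_{1},H_{2}\in\{U,T,\Z/3,Q_{8}\}$ under generic conjugation. Using elementary facts about $\SL_{2}$, I would verify that $U$ contains no nonidentity torsion and that two generic unipotents intersect only in $\{e\}$; that two generic maximal tori intersect in $\{\pm\id\}$; that $\Z/3$ meets $T$, $Q_{8}$, or another generic $\Z/3$ only in $\{e\}$ (the containing tori meet in $\{\pm\id\}$, which has no order-$3$ element, and $Q_{8}$ has no order-$3$ element at all); and that two generic conjugates of $Q_{8}$ share only the central $\{\pm\id\}$. In every case the intersection is contained in $\{\pm\id\}$, and an easy induction on the number of summands extends the conclusion to all reducible $V$.

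The main obstacle is pinning down the principal isotropies of $R_{3}$ and $R_{4}$, where one must carefully track the scalars by which lifts from $\PSL_{2}$ to $\SL_{2}$ act on the binary form. For $R_{3}$, a $3$-cycle on the three roots lifts to two elements of $\SL_{2}$ acting on $f$ by opposite cube roots of unity; one lift fixes $f$ and has order $3$, yielding a $\Z/3$-stabilizer, while a lift of any transposition has order $4$ in $\SL_{2}$ and acts on $f$ by a fourth root of unity (since $-\id$ acts as $-1$ on $R_{3}$), so does not fix $f$. For $R_{4}$ the lifts of the Klein four-group act on $f$ by characters of $Q_{8}$, and one checks that the trivial character occurs generically, producing the full $Q_{8}$. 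Once these identifications are in place, the rest is routine.
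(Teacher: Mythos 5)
Your proof is correct and follows essentially the same route as the paper: identify the generic stabilizers $U$, $T$, $\Z/3$, $\tilde Q_8$ of $R_1,\dots,R_4$, handle irreducible $R_m$ ($m\geq 5$) separately, and reduce the reducible case to a generic-position computation — your intersection $H_1\cap gH_2g^{-1}$ for generic $g$ is just a group-theoretic reformulation of the paper's "generic stabilizer of $H_1$ acting on the remaining summand." The only substantive addition is that you actually prove the $m\geq 5$ case (the paper cites it as well known); there your dimension count is sound, but note it genuinely needs $m\geq 5$ rather than just "positive codimension per extra constraint," since for $m=4$ the condition that a M\"obius transformation realize a double transposition of the four roots is automatically satisfied (whence $\tilde Q_8$), so one should verify that for $m\geq 5$ at least one of the $m-3$ residual constraints is non-vacuous for every nontrivial permutation.
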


\begin{proof}
This is well-known for the irreducible representations $R_{j}$, $j\geq 5$. Let $T$ and $U$ denote the usual maximal torus and maximal unipotent subgroup  of $\SL_2$. Denote by $H_{i}$ the generic stabilizer of $R_{i}$, $i=1,2,3$ and $4$. Then we have $H_{1}=U$, $H_{2}=T$, $H_{3}= \{\begin{bmatrix}\zeta&0\\0&\zeta^{2}\end{bmatrix}\mid\zeta^{3}=1\}\simeq \Z/3$ and $H_{4}=\{\begin{bmatrix}\zeta&0\\0&\zeta^{3}\end{bmatrix}\mid\zeta^{4}=1\}\cup\{\begin{bmatrix}0&\zeta\\-\zeta^{3}&0\end{bmatrix}\mid\zeta^{4}=1\}\simeq\tilde Q_{8}$, the group of quaternions of order 8. It is easy to see that the generic stabilizer of $H_{1}$ and $H_{3}$ on any nontrivial representations of $\SLtwo$ is trivial, and that the generic stabilizer of $H_{2}$ and $H_{4}$ on the $R_{2j}$, $j>0$, is $\{\pm\id\}$.
\end{proof}

\begin{proof}[Proof of Theorem~\ref{thm:SLtwo}] For $V=R_{2}$ or $R_{3}$ the quotient $\quot{V}{\SLtwo}$ is one-dimensional and so both are coreduced. In the examples~\ref{ex:R4} and \ref{ex:kR1} we have seen that $R_{4}$ and $kR_{1}$ are coreduced. So it remains to show that any other representation $V$ of $\SLtwo$ is not coreduced.

By Lemma~\ref{lem:trivStab} we can assume that the principal isotropy group is trivial or $\{\pm\id\}$. In the first case, $V$ contains a closed orbit isomorphic to $\SLtwo$. By Frobenius reciprocity, we know that the multiplicity of $R_{m}$ in $\OOO(\SLtwo)$ is equal to $\dim R_{m}=m+1$. This implies that the rank of the $\OOO(V)^{G}$-module $\Cov(V,R_{m})$ is at least $m+1$. Since we can assume that $V$ contains at most one summand isomorphic to $R_{1}$ this implies that there is a generating homogeneous covariant $\phi\colon V \to R_{1}$ of degree $>1$. By Lemma~\ref{lem:vanishingcov}(1) and Proposition~\ref{prop:gencovariants}  it follows that $V$ is not coreduced.

Now assume that the principal isotropy group is $\{\pm\id\}$. As above this implies that the rank of $\Cov(V,R_{2})$ is at least $3$. Since $R_{2}\oplus R_{2}$ is not coreduced (Example~\ref{ex:twosltwo}) we can assume that $V$ contains at most one summand isomorphic to $R_{2}$. It follow that there is a generating homogeneous covariant $\phi\colon V \to R_{2}$ of degree $>1$ and the claim follows by Lemma~\ref{lem:vanishingcov}(2) and Proposition~\ref{prop:gencovariants}.
\end{proof}

\bigskip
\section{Cofree Representations}\lab{sec:cofreerep}
Let $G$ be a (connected) reductive group, $V$ a $G$-module and $\pi\colon V \to \quot{V}{G}$ the quotient morphism. 

\begin{remark}\lab{rem:reducedfiber}
Let $Gv$ be a closed orbit with slice representation $(N_v,G_v)$. Then, by \name{Luna}'s slice theorem, the fiber $F:=\pi\inv(\pi(v))$ is isomorphic to $G\times^{G_v}\NN(N_v)$ which is   a bundle over $G/G_v\simeq Gv$ with fiber $\NN(N_v)$. Hence $F$ is reduced if and only if $N_v$ is coreduced.
\end{remark}

If the fiber $F=\pi^{-1}(z)$ is reduced, then $F$ is smooth in a dense open set $U\subset F$ which means that the rank of the differential $d\pi_{u}$ equals $\dim V - \dim_{u}F$ for $u\in U$. Thus we get the following criterion for non-coreducedness.

\begin{lemma}\lab{lem:differential}
If $X$ is an irreducible component of $\NN(V)$ and the rank of $d\pi$ on $X$ is less than the codimension of $X$ in $V$, then $V$ is not coreduced.
\end{lemma}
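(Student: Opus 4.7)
Plan: I argue by contradiction. Assume $V$ is coreduced, so that $I(\NN) = \mm_{0}\OOO(V)$; equivalently, as schemes, $\NN$ coincides with the scheme-theoretic fiber $\pi^{-1}(\pi(0))$ of $\pi\colon V\to\quot VG$, and this scheme is reduced.

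I would then select a point $u\in X$ satisfying two openness conditions: (a) $u$ lies in no other irreducible component of $\NN$, and (b) $u$ is a smooth point of $X$ as a reduced variety. Each condition cuts out a dense open subset of the irreducible variety $X$, so a common $u$ exists. Near this $u$ the scheme $\NN$ agrees locally with $X$, by (a) together with coreducedness; and by (b), $X$ is smooth of dimension $\dim X$ at $u$. Hence $\dim T_{u}\NN = \dim X$. On the other hand, viewing $\NN$ as the scheme-theoretic fiber of $\pi$, the standard identification gives $T_{u}\NN = \ker d\pi_{u}$, so $\rk d\pi_{u} = \dim V - \dim X = \codim_{V}X$. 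This contradicts the hypothesis that the rank of $d\pi$ on $X$ is strictly less than $\codim_{V}X$: since rank is upper semicontinuous (so the generic rank is the maximum attained on the irreducible $X$), the inequality at the generic point forces $\rk d\pi_{u} < \codim_{V}X$ at every point of $X$, and in particular at our $u$.

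The crux is the scheme-theoretic identification $\NN = \pi^{-1}(\pi(0))$ furnished by coreducedness, together with the elementary fact that the tangent space to a scheme-theoretic fiber is the kernel of the differential. I do not anticipate any real obstacle, as the argument reduces to a dimension count once the scheme-theoretic language is in place; the only subtlety is keeping the set-theoretic null cone distinct from its possibly non-reduced scheme structure, since without reducedness one could have extra nilpotent tangent directions in $T_{u}\NN$ that invalidate the equality $\dim T_{u}\NN = \dim X$.
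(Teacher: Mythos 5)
Your argument is correct and is essentially the paper's own: the paper notes that if the (scheme-theoretic) fiber is reduced then it is smooth on a dense open set, where consequently $\rk d\pi_u=\dim V-\dim_u\NN$, and your proof just makes this explicit by choosing $u$ generic in the component $X$ and identifying $T_u$ of the scheme-theoretic fiber with $\ker d\pi_u$. No gaps.
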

 
Recall that a $G$-module $V$ is said to be \emph{cofree\/} if $\OO(V)$ is a free $\OO(V)^G$-module. Equivalently, $\OO(V)^G$ is a polynomial ring ($V$ is \emph{coregular}) and the codimension of $\NN(V)$ is $\dim\quot VG$. See \cite{Sc1978Representations-free} for more details and a classification of cofree representations of simple groups.

\begin{proposition}\lab{prop:slice.cofree}
Let $V$ be a cofree $G$-module. If the null cone is reduced, then so is every fiber of $\pi\colon V\to\quot VG$, and every slice representation of $V$ is coreduced.
\end{proposition}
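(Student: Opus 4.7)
The plan is to reduce both assertions to showing that every scheme-theoretic fiber $F_{z}:=\pi\inv(z)$ of $\pi$ is reduced, i.e., that $\OOO(V)/\mm_{z}\OOO(V)$ is a reduced $\C$-algebra for every $z\in\quot VG$, where $\mm_{z}\subset\OOO(V)^{G}$ denotes the maximal ideal of $z$. Given this, the slice statement is immediate from Remark~\ref{rem:reducedfiber}, which identifies reducedness of the fiber through $v$ with coreducedness of the slice representation $(N_{v},G_{v})$.

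Since $V$ is cofree, I would first choose a graded complement $H\subset\OOO(V)$ of $\mm_{0}\OOO(V)$; graded Nakayama together with freeness of $\OOO(V)$ over $\OOO(V)^{G}$ then yields a graded $\OOO(V)^{G}$-module isomorphism $\OOO(V)\simeq\OOO(V)^{G}\otimes_{\C} H$, under which $\mm_{z}\OOO(V)$ corresponds to $\mm_{z}\otimes_{\C} H$. I would then equip $\OOO(F_{z})=\OOO(V)/\mm_{z}\OOO(V)$ with the quotient of the degree filtration on $\OOO(V)$. The key technical step is to verify a natural graded-algebra isomorphism
\[
\gr\OOO(F_{z})\;\simeq\;\OOO(V)/\mm_{0}\OOO(V)\;=\;\OOO(\NN).
\]
The main observation: for every homogeneous $b\in(\OOO(V)^{G})_{d}$ with $d\geq 1$, the element $b-b(z)\in\mm_{z}$ has $b$ as its top-degree piece, so every homogeneous element of $\mm_{0}$ arises as a leading term of some element of $\mm_{z}$. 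Via the tensor decomposition this upgrades to the equality $\gr(\mm_{z}\otimes H)=\mm_{0}\otimes H$ of graded subspaces of $\OOO(V)^{G}\otimes H$, which gives the displayed isomorphism.

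With this in hand I would invoke the standard principle that reducedness descends from $\gr A$ to $A$ for a filtered algebra whose filtration is bounded below and multiplicative: if $x\in A$ is a hypothetical nonzero nilpotent of minimal filtration degree, its principal symbol $\sigma(x)\in\gr A$ is nonzero, while $x^{N}=0$ forces $\sigma(x)^{N}=0$, contradicting reducedness of $\gr A$. Applied here, the assumption that $\OOO(\NN)$ is reduced gives reducedness of $\OOO(F_{z})$ for every $z$, and Remark~\ref{rem:reducedfiber} then yields coreducedness of every slice representation.

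The only step demanding genuine care is the identification of the associated graded; this is exactly where the cofreeness hypothesis does the real work, because without the global tensor splitting $\OOO(V)\simeq\OOO(V)^{G}\otimes H$ there is no reason for the leading-term ideal $\gr(\mm_{z}\OOO(V))$ to agree with the graded ideal $\mm_{0}\OOO(V)$. Once that identification is in place, the reduction to Remark~\ref{rem:reducedfiber} and the nilpotency argument are entirely formal.
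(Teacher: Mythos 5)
Your proof is correct, but it follows a genuinely different route from the paper's. The paper's argument is very short: cofreeness makes $\pi$ flat, so by [EGA IV, 12.1.7] the locus of points $v$ at which $\pi\inv(\pi(v))$ is reduced is open in $V$; its complement is a closed $\Cst$-stable cone, hence contains $0$ if nonempty, and reducedness of $\NN$ rules that out. Your argument instead realizes $\OOO(\NN)=\OOO(V)/\mm_{0}\OOO(V)$ as the associated graded of every fiber ring $\OOO(V)/\mm_{z}\OOO(V)$ and descends reducedness from $\gr A$ to $A$; this is precisely the ``associated cone'' mechanism from Borho--Kraft that the paper's introduction alludes to but does not spell out in the proof of this proposition. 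The one step that deserves the care you give it is the identification $\gr(\mm_{z}\otimes H)=\mm_{0}\otimes H$: writing elements in a fixed homogeneous basis $\{h_{j}\}$ of $H$, the graded components decouple as $\bigoplus_{j}\OOO(V)^{G}\otimes\C h_{j}$, so the leading term of $\sum_{j}a_{j}\otimes h_{j}$ with $a_{j}\in\mm_{z}$ is $\sum\mathrm{lt}(a_{j})\otimes h_{j}$ over the $j$ realizing the top degree, and each $\mathrm{lt}(a_{j})$ is homogeneous of positive degree (a nonzero constant cannot lie in $\mm_{z}$), hence lies in $\mm_{0}$; the reverse inclusion follows from your observation about $b-b(z)$. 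Your approach buys a self-contained, explicitly algebraic proof that isolates exactly where cofreeness enters (the global splitting $\OOO(V)\simeq\OOO(V)^{G}\otimes H$, without which $\gr(\mm_{z}\OOO(V))$ need not equal $\mm_{0}\OOO(V)$); the paper's approach buys brevity at the cost of invoking a nontrivial openness theorem for flat morphisms.
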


\begin{proof}  Since $V$ is cofree, the map $\pi$ is flat. By \cite[12.1.7]{Gr1967Elements-de-geomet}, the set $\{v\in V\mid \pi\inv(\pi(v)) \text{ is reduced at }v\}$ is open in $V$. But this set is a cone. Thus if the null cone is reduced, then so is any fiber of $\pi$, and every slice representation is coreduced. 
 \end{proof}

For a cofree representation $V$ the (schematic) null cone $\NN(V)$ is a complete intersection. 
Using \name{Serre}'s criterion \cite[Ch.~8]{Ma1989Commutative-ring-t} one can characterize quite precisely when $\NN(V)$ is reduced.  

\begin{proposition}\lab{prop:Serre}
Let $V$ be a cofree $G$-module.
Then $V$ is coreduced if and only if  rank  $d\pi=\codim\NN(V)$ on an open dense subset of $\NN(V)$. 
\end{proposition}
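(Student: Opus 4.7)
The plan is to reduce the statement to \name{Serre}'s criterion (R$_0$ + S$_1$) for reducedness, exploiting that in the cofree case the scheme-theoretic null fiber is automatically a complete intersection.

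First I would fix notation: since $V$ is cofree, $\OO(V)^G$ is a polynomial ring $\C[f_1,\dots,f_r]$ with $r=\dim\quot VG=\codim\NN(V)$, and the quotient morphism $\pi\colon V\to\C^r$ sends $v$ to $(f_1(v),\dots,f_r(v))$. The ideal $I_G(\NN)=\mm_0\OO(V)$ is then generated by $f_1,\dots,f_r$, and the scheme-theoretic null fiber $\mathcal Z:=\Spec\OO(V)/(f_1,\dots,f_r)$ has underlying set $\NN(V)$. By definition, $V$ is coreduced precisely when $I(\NN)=I_G(\NN)$, i.e., when $\mathcal Z$ is a reduced scheme.

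Next I would invoke that $\mathcal Z$ is cut out of the smooth variety $V$ by exactly $r=\codim\NN(V)$ equations, so $\mathcal Z$ is a complete intersection. Complete intersections are Cohen--Macaulay, hence satisfy \name{Serre}'s condition $S_k$ for every $k$; in particular $\mathcal Z$ has no embedded associated primes. By \name{Serre}'s criterion (\cite[Ch.~8]{Ma1989Commutative-ring-t}), a Noetherian scheme satisfying $S_1$ is reduced if and only if it satisfies $R_0$, i.e., is regular at each of its generic points. Equivalently, $\mathcal Z$ is reduced if and only if it is smooth on a dense open subset of $\NN(V)$.

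Finally I would translate smoothness into a Jacobian condition. At a point $v\in\NN(V)$, the scheme $\mathcal Z$ is smooth of the expected dimension $\dim V-r$ if and only if the Jacobian matrix $\bigl(df_1(v),\dots,df_r(v)\bigr)$ has full rank $r$; but this rank is exactly $\rk d\pi_v$. Combining these equivalences, $V$ is coreduced if and only if $\rk d\pi=r=\codim\NN(V)$ on a dense open subset of $\NN(V)$, which is the desired statement. The only step requiring care is the appeal to Serre's criterion, but this is routine once the complete intersection property has been identified, so no serious obstacle is expected.
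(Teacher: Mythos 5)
Your proof is correct and follows exactly the route the paper indicates: the cofree hypothesis makes the schematic null cone a complete intersection (hence Cohen--Macaulay, hence $S_1$), and Serre's criterion reduces the question to $R_0$, i.e.\ generic smoothness, which is the stated Jacobian rank condition. The paper does not write out the argument but cites precisely these two ingredients, so there is nothing to add.
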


\begin{example}\lab{ex:quadraticform}
Let $G=\SL_{n}$ and $V := S^{2}(\C^{n})^{*}\oplus \C^{n}$. The quotient map $\pi\colon V \to \C^{2}$ is given by the two invariants $f:=\det(q)$ and $h:=q(v)$ of bidegrees $(n,0)$ and $(1,2)$ where $(q,v)\in V$. An easy calculation shows that for $n=2$ the differential $d\pi$ has rank $\leq 1$ on the null cone. Hence $(S^{2}(\C^{2})^{*}\oplus \C^{2}, \SLtwo)$ is not coreduced, which we already know from Theorem \ref{thm:SLtwo}.

We claim that for $n\geq 3$ the null cone is irreducible and reduced. Set $q_{k}:=x_k^{2}+ x_{k+1}^{2}+ \cdots + x_{n}^{2}$,
$$
X_{k}:=\{q_{k}\} \times \{v\in\C^{n}\mid q_{k}(v)=0\}\subset V\text{ and } X_{n+1}:=\{0\}\times \C^{n}.
$$
Then $\NN(V) = \bigcup_{k=2}^{n+1} \SL_{n}\cdot X_{k}$. Since $\dim \SL_{n}\cdot X_{k}= \dim \SL_{n}q_{k}+ n-1$ for $2\leq k \leq n$ we get $\codim  \SL_{n}\cdot X_{2} = 2 < \codim  \SL_{n}\cdot X_{k}$ for all $k>2$, and so $\NN(V) = \overline{ \SL_{n}\cdot X_{2}}$ is irreducible. Moreover, 
$$
d f_{(q_{2},v)}(q,w)= a_{11} \quad \text{and}\quad d h_{(q_{2},v)}(q,w)= \sum_{i=2}^{n} 2v_{i}y_{i}+q(v)
$$ 
where 
$v=(v_{1},\ldots,v_{n})$, $q = \sum a_{ij}x_{i}x_{j}$ and $w = (y_{1},\ldots, y_{n})$. It follows that the two linear maps
$d f_{(q_{2},v)}$ and $d h_{(q_{2},v)}$ are linearly independent on a dense open set of $X_{2}$, hence the claim.
\end{example}

In order to see that the null cone is reduced in a dense set we can use the 
following result due to \name{Knop}  \cite{Kn1986Uber-die-Glattheit} which goes back to 
\name{Panyushev} \cite{Pa1985Regular-elements-i}. Recall that the {\it regular sheet\/} $\SS_{V}$ of a representation 
$(V,G)$ of an algebraic group is the union of $G$-orbits of maximal dimension.

\begin{proposition}\lab{prop:kanModul} 
Let $(V,G)$ be a representation of a semisimple group and let $\pi\colon V \to\quot VG$ be the quotient map. Assume that  $x\in V$ belongs to the regular sheet and that $\pi(x)$ is a smooth point of the quotient. Then $\pi$ is smooth at $x$.
\end{proposition}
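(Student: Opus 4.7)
The plan is to show that $d\pi_x\colon V \to T_{\pi(x)}(\quot VG)$ is surjective; this is equivalent to the smoothness of $\pi$ at $x$, since $V$ is smooth and $\pi(x)$ is a smooth point of $\quot VG$ by hypothesis. The kernel of $d\pi_x$ always contains $T_x(Gx) = \lieg\cdot x$, and $\dim T_x(Gx) = \dim Gx$. Since $x\in\SS_V$, the orbit $Gx$ realizes the maximum orbit dimension on $V$, which coincides with the generic fiber dimension $r := \dim V - \dim\quot VG$ of $\pi$. Thus $\dim\ker d\pi_x \geq r$, and the proposition amounts to the equality $\ker d\pi_x = \lieg\cdot x$.

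First I would establish flatness of $\pi$ at $x$. The regular sheet is open, $G$-invariant, and consists entirely of orbits of dimension $r$, so in a neighborhood of $x$ the set-theoretic fiber $\pi^{-1}(\pi(x))$ coincides with $Gx$; in particular $\dim_x\pi^{-1}(\pi(x)) = r$. Combined with the Cohen--Macaulayness of $V$ and the regularity of $\quot VG$ at $\pi(x)$, miracle flatness gives flatness of $\pi$ at $x$. For such a flat morphism with smooth source and target at the relevant points, smoothness of $\pi$ at $x$ is equivalent to geometric regularity of the scheme fiber $\pi^{-1}(\pi(x))$ at $x$.

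To establish this regularity, I would apply Luna's slice theorem at the unique closed orbit $Gy \subset \pi^{-1}(\pi(x))$. Setting $H = G_y$ and $N = N_y$, Luna produces a $G$-equivariant \'etale morphism from a neighborhood of $[e,0]$ in $G\times^H N$ onto a $G$-stable neighborhood of $y$ through $x$, together with an \'etale map $\quot{N}{H} \to \quot VG$ on quotients. Under this identification $x$ corresponds to some $[e,v]$ with $v\in\NN(N)$ (as an $H$-module), and both hypotheses transfer: $v$ lies in the regular $H$-sheet of $N$, and $\pi_H(v) = 0$ is a smooth point of $\quot{N}{H}$. Since \'etale morphisms preserve smoothness of maps, it suffices to verify smoothness of $\pi_H\colon N \to \quot{N}{H}$ at $v$. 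Smoothness of $\quot{N}{H}$ at the cone vertex $0$, combined with the $\Cst$-scaling on $N$, forces the slice to be coregular with $\quot{N}{H} \cong \C^d$; writing the invariant ring as $\C[f_1,\dots,f_d]$, the null fiber $\NN(N) = \pi_H^{-1}(0)$ is a complete intersection of codimension $d$ in $N$, and the regular-sheet hypothesis on $v$ should force the Jacobian $(df_1,\dots,df_d)_v$ to have rank $d$, yielding $\ker d(\pi_H)_v = \lieh\cdot v$.

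The hard part is this last step: a complete intersection of the right dimension whose underlying reduction is smooth can fail to be reduced, so the regular-sheet hypothesis must enter essentially. The key input, due to Panyushev and Knop, is that on the regular sheet the differentials of a minimal homogeneous system of generators of the invariant ring are linearly independent modulo $\lieh\cdot v$; this is proved via a weight analysis combined with the openness of the regular sheet and generic smoothness in characteristic zero.
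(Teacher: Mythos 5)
The paper does not prove this proposition at all: it is quoted from Knop's paper \emph{\"Uber die Glattheit von Quotientenabbildungen}, going back to Panyushev. So your proposal has to stand on its own, and it has two genuine gaps. The first is the claim that, because the regular sheet is open and consists of orbits of dimension $r=\dim V-\dim\quot VG$, the set-theoretic fiber $\pi\inv(\pi(x))$ coincides with $Gx$ near $x$ and hence has local dimension $r$. As you state it, this inference uses only the regular-sheet hypothesis, and in that generality it is false. Take $V=R_{6}$, the binary sextics under $\SLtwo$, and $x=x^4(x-y)(x-2y)$. The stabilizer of $x$ is finite, so $Gx$ has the maximal orbit dimension $3=7-4=r$ and $x\in\SS_V$; but $x$ lies in the null cone, which is the irreducible $4$-dimensional variety $\SLtwo\cdot\bigl(x^4\cdot\spann(x^2,xy,y^2)\bigr)$, so every neighborhood of $x$ in its fiber is a one-parameter family of $3$-dimensional orbits, not a single orbit, and $\dim_x\pi\inv(\pi(x))=4>r$. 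This does not contradict the proposition, because $0\in\quot{R_6}{\SLtwo}$ is a singular point of the quotient; it shows that your fiber-dimension claim, and with it the miracle-flatness step, cannot be obtained from the regular-sheet hypothesis alone. Making the smoothness of $\quot VG$ at $\pi(x)$ do this work is already a substantial part of the theorem, and you never do it.

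The second gap is the step you yourself label ``the hard part'': after the Luna reduction you must show that at a regular point $v$ of the null cone of a coregular slice representation the differentials $df_1,\dots,df_d$ of the generating invariants are independent modulo $\lieh\cdot v$. That is not a ``key input'' to the proposition --- for the slice it essentially \emph{is} the proposition --- and you dispose of it by citing Panyushev and Knop together with a one-line gesture at ``weight analysis'' and ``generic smoothness,'' without any argument. In sum, the surrounding reductions (submersion criterion, miracle flatness, Luna slice, coregularity of the slice quotient from smoothness at the cone vertex) are sensibly organized, but of the two load-bearing steps one is incorrectly justified and the other is absent.
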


\begin{corollary}\lab{cor:cofree} 
Let  $(V,G)$ be a cofree representation of a semisimple group. Assume that the regular sheet $\SS_{V}$ of $V$ meets the null cone $\NN(V)$ in a dense set. Then $(V,G)$ is coreduced.
\end{corollary}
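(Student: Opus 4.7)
The plan is to combine Proposition~\ref{prop:Serre} with Proposition~\ref{prop:kanModul}. Since $V$ is cofree, the invariant ring $\OOO(V)^{G}$ is polynomial, so the quotient $\quot VG$ is an affine space and in particular smooth everywhere. Hence every point of $\quot VG$ is a smooth point, and the hypothesis of Proposition~\ref{prop:kanModul} on the image is automatic.

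Next, I would invoke Proposition~\ref{prop:kanModul} to conclude that $\pi$ is smooth at every point of the regular sheet $\SS_{V}$. Smoothness of $\pi$ at $x$ means that the differential $d\pi_{x}\colon V\to T_{\pi(x)}\quot VG$ is surjective, so its rank equals $\dim\quot VG$. Because $V$ is cofree, $\dim\quot VG=\codim\NN(V)$.

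By assumption, $\SS_{V}\cap\NN(V)$ is dense in $\NN(V)$, and on this dense subset the rank of $d\pi$ equals $\codim\NN(V)$. Applying Proposition~\ref{prop:Serre} now yields that $(V,G)$ is coreduced.

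No single step looks like a serious obstacle here: the work is already done in Propositions~\ref{prop:Serre} and \ref{prop:kanModul}, and the only mild point is observing that cofreeness simultaneously makes $\quot VG$ smooth (so the Knop--Panyushev hypothesis is free) and identifies $\dim\quot VG$ with $\codim\NN(V)$ (so that the rank condition delivered by smoothness is exactly the one demanded by Serre's criterion).
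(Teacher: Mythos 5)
Your argument is correct and is exactly the intended derivation: the paper states the corollary as an immediate consequence of Propositions~\ref{prop:Serre} and \ref{prop:kanModul}, and your proof supplies precisely the two observations (cofreeness makes $\quot VG$ smooth and gives $\dim\quot VG=\codim\NN(V)$) needed to chain them together. Note only that $\SS_V\cap\NN(V)$ is automatically open in $\NN(V)$ since the regular sheet is open in $V$, so the ``open dense'' hypothesis of Proposition~\ref{prop:Serre} is indeed met.
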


Let $\theta$ be a finite automorphism of a semisimple group $H$ and let $G$ denote the identity component of the fixed points $H^{\theta}$. Given any eigenspace $V$ of $\theta$ on the Lie algebra $\lieg$ of $G$, we have a natural representation of $G$ on $V$. These representations  $(V,G)$ are called $\theta$-representa\-tions. They have been introduced and studied by \name{Vinberg}, see \cite{Vi1976The-Weyl-group-of-}. Among other things he proved that $\theta$-representations are cofree and that every fiber of the quotient map contains only finitely many orbits. As a consequence of Corollary~\ref{cor:cofree} above we get the following result.

\begin{corollary}\lab{thm:theta} 
Every $\theta$-representation $(V,G)$ where $G$ is semisimple is coreduced.
\end{corollary}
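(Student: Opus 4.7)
The plan is to apply Corollary~\ref{cor:cofree}. By \name{Vinberg}'s results recalled just above, $V$ is cofree, so the task reduces to showing that the regular sheet $\SS_V$ meets the null cone $\NN(V)$ in a dense subset.

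Since $V$ is cofree, $\pi$ is flat and every fiber has pure dimension $\dim V - \dim \quot V G$; in particular $\NN(V)$ does. Now I would invoke \name{Vinberg}'s second assertion, that each fiber of $\pi$ is a finite union of $G$-orbits. Applied to a generic fiber, this identifies the maximum $G$-orbit dimension on all of $V$ with $\dim V - \dim \quot V G$, so $\SS_V$ consists precisely of orbits of that dimension.

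Applied instead to the null fiber, the same finiteness forces each irreducible component of $\NN(V)$ to be the closure of a single orbit $Gv$, which on dimensional grounds satisfies $\dim Gv = \dim \NN(V) = \dim V - \dim \quot V G$; hence $Gv \subset \SS_V$. Therefore $\SS_V \cap \NN(V)$ contains an open dense subset of each irreducible component of $\NN(V)$ and is dense in $\NN(V)$, so Corollary~\ref{cor:cofree} finishes the argument. The only place where any real input is needed is matching the maximal orbit dimension inside $\NN(V)$ with that on all of $V$, and both are delivered directly by Vinberg's finite-orbit property, so no genuine obstacle arises.
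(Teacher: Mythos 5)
Your proof is correct and is exactly the argument the paper intends: cofreeness gives equidimensional fibers, Vinberg's finite-orbit property gives a dense orbit of full fiber dimension in each component of $\NN(V)$ (and identifies that dimension as the maximal orbit dimension), so the regular sheet meets $\NN(V)$ densely and Corollary~\ref{cor:cofree} applies. The paper leaves these details implicit; you have simply spelled them out.
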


Finally we can prove the main result of this section.
\begin{theorem}\lab{thm:cofree}
An irreducible cofree representation $V$ of a simple group $G$ is coreduced.
\end{theorem}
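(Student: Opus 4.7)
The plan is to invoke Corollary~\ref{cor:cofree}: it suffices to show that the regular sheet $\SS_V$ meets every irreducible component of the null cone $\NN(V)$ in a dense subset. Combined with Schwarz's classification of irreducible cofree representations of simple groups \cite{Sc1978Representations-free}, this reduces the theorem to a finite case check.

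First I would dispose of the easy reductions. Proposition~\ref{prop:coreduced} already covers all cases with $\dim\quot{V}{G}\le 1$, so we may assume $\dim\quot{V}{G}\ge 2$. A substantial fraction of the remaining entries on Schwarz's list arise as Vinberg $\theta$-representations---the adjoint representations and many isotropy representations of symmetric spaces sit here---and these are coreduced by Corollary~\ref{thm:theta}. What is left is a short list of ``exceptional'' irreducible cofree representations of simple groups.

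For each remaining representation I would verify the hypothesis of Corollary~\ref{cor:cofree} directly. Since $V$ is cofree, every irreducible component $X$ of $\NN(V)$ has the expected dimension $\dim V-\dim\quot{V}{G}$, so it is enough to exhibit in each such $X$ a point $v$ whose $G$-orbit has the generic (maximal) dimension. Proposition~\ref{prop:kanModul} then shows $\pi$ is smooth at $v$, and Proposition~\ref{prop:Serre} (equivalently Lemma~\ref{lem:differential}) upgrades smooth-on-a-dense-set to coreducedness. For each of the representations still under consideration, the components of $\NN(V)$ are classically understood---often closures of single orbits---so a regular representative in every component can be written down by hand.

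The main obstacle is the hands-on case analysis in the last step: one must work with an explicit list of the non-$\theta$ cofree simple-group representations (a small but nontrivial collection containing items like $\Lambda^{3}\C^{n}$ for certain $\SL_{n}$ and some spin representations), and for each one pin down the irreducible components of $\NN(V)$ and certify, component by component, the presence of a generic-dimension orbit. Conceptually this is routine, but it is the sort of bookkeeping that lengthens the argument rather than deepening it.
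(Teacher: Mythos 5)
Your proposal follows essentially the same route as the paper: reduce via Proposition~\ref{prop:coreduced} and Corollary~\ref{thm:theta} to the irreducible cofree representations that are neither $\theta$-representations nor of one-dimensional quotient, then apply Corollary~\ref{cor:cofree} by checking that every component of the null cone contains a dense (generic-dimension) orbit. The paper makes the residual list precise --- it consists only of the spin representation of $\Spin_{13}$ and the half-spin representations of $\Spin_{14}$ --- and outsources the dense-orbit verification to Gatti--Viniberghi, which is exactly the bookkeeping step you anticipate.
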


\begin{proof}
It follows from the classification (see \cite{Po1976Representations-wi}, \cite{KaPoVi1976Sur-les-groupes-li}, \cite{Sc1978Representations-free}) that the only irreducible cofree representations of the simple groups which are not $\theta$-representations (or have one-dimensional quotient) are the spin representation of  $\Spin_{13}$ and the half-spin representations of $\Spin_{14}$. For these representations, \name{Gatti-Viniberghi} \cite{GaVi1978Spinors-of-13-dime} show that every irreducible component of the null cone has a dense orbit.
\end{proof}

\begin{example}\lab{ex:twosltwoB} We give an example of a {\it cofree but not coreduced\/} representation.
Let $V=\sll_{2}\oplus\sll_{2}\simeq 2R_{2}$ as in Example~\ref{ex:twosltwo}. Each copy of $R_{2}$ has a weight basis $\{x^2,xy,y^2\}$ relative to the action of the maximal torus $T=\C^*$. The null cone is $U^-(\C x^2\oplus \C x^2)$ where $U^-$ is the maximal unipotent subgroup of $G$ opposite to the usual Borel subgroup. One can easily see that $d\pi_{(\alpha x^2,\beta x^2)}$ is nontrivial only on the vectors $(\gamma y^2,\delta y^2)$, giving a rank of two. But $V$ is cofree with $\codim \NN(V)=3$. Thus $V$ is not coreduced.
\end{example}

We can prove a sort of converse to the theorem above. 
Recall that $V$ is {\it strongly coreduced\/} if every fiber of $\pi$ is reduced; equivalently, every slice representation of $V$ is coreduced.

\begin{theorem}\lab{thm:stronglycoreduced}
A strongly coreduced irreducible representation of a simple group $G$ is cofree.
\end{theorem}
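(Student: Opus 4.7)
My plan is to prove the contrapositive via a case analysis: every irreducible representation $V$ of a simple group $G$ which is coreduced but not cofree admits a closed orbit $Gv$ whose slice representation $(N_v,G_v)$ fails to be coreduced, contradicting strong coreducedness.

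First I would dispose of the low-dimensional quotient cases. If $\dim\quot VG=0$ then $\OO(V)^G=\C$ and $V$ is trivially cofree. If $\dim\quot VG=1$, then $\OO(V)^G=\C[f]$ for a single homogeneous irreducible $f$ (so the invariant ring is polynomial), and $\codim \NN(V)=1=\dim\quot VG$, which forces $V$ to be cofree. Thus we may assume $\dim\quot VG\geq 2$. At this point, since strong coreducedness implies coreducedness (take $v=0$, whose slice is $(V,G)$ itself), $V$ lies in the list of irreducible coreduced representations of simple groups.

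Next I would combine the classification of irreducible cofree representations of simple groups (Kac--Popov--Vinberg \cite{KaPoVi1976Sur-les-groupes-li}, Schwarz \cite{Sc1978Representations-free}, Popov \cite{Po1976Representations-wi}) with the classification of irreducible coreduced representations that is the subject of this paper (Theorems \ref{thm:cofree}, \ref{thm:coredexceptional}, \ref{thm:cored.simple}, and the $\SL_2$ and $\SL_3$ classifications in Theorem \ref{thm:SLtwo} and Section \ref{sec:sl3}). Intersecting the two lists and removing the cofree entries leaves a finite collection of coreduced but non-cofree candidates, together with the natural-type families from Section \ref{sec:CIT}. For each candidate I would exhibit an explicit closed orbit $Gv$ with non-coreduced slice: when $V$ contains a zero weight (so $V$ factors through the adjoint group), I would use a toral slice at a nonzero vector $v\in V^T$ and invoke Proposition \ref{prop:tori} together with the bad-toral-slice machinery of Section \ref{sec:slices}; otherwise I would choose $v$ on a highest-weight or next-to-highest weight orbit and use the covariant method of Proposition \ref{prop:gencovariants} applied to $(N_v,G_v)$, or the rank-of-differential criterion of Lemma \ref{lem:differential}.

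The principal obstacle is that there is no uniform abstract argument: one must ensure that the case analysis is complete and that for each non-cofree irreducible coreduced $(V,G)$ some explicit closed $G$-orbit with non-coreduced slice can be written down. Even identifying a useful closed orbit can be delicate, since the isotropy group $G_v$ must be reductive and the slice $(N_v,G_v)$ may decompose in complicated ways. The classical families (for example $V=S^k(\C^n)$ or $\bigwedge^k(\C^n)$ beyond the short cofree list) are the main place where this is not immediate, and the verification in those cases relies on the same toral-slice and covariant techniques already employed in Sections \ref{sec:slices}--\ref{sec:cored-classical}.
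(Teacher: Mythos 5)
There is a genuine gap in your plan, and it sits exactly where you locate the ``principal obstacle.'' Your reduction to coreduced candidates is logically valid (strong coreducedness does imply coreducedness, via the fiber over $\pi(0)$), but the list you propose to intersect against does not exist in this paper: Theorems \ref{thm:coredexceptional} and \ref{thm:cored.simple} classify coreduced representations only of the \emph{adjoint} simple groups, i.e.\ representations with a zero weight, and beyond that only $\SL_2$ and $\SL_3$ are treated completely. For a simply connected group such as $\Sp_{2n}$, $\Spin_n$ or $\SL_{n+1}$ with $n\geq 3$, the irreducible representations on which the center acts nontrivially are not classified as coreduced or not, and Section \ref{sec:CIT} only supplies some coreduced examples, not a classification. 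So the ``finite collection of coreduced but non-cofree candidates'' cannot be assembled, and the case analysis cannot be certified complete. Moreover, even granting a list, the assertion that for each entry one can ``write down'' a closed orbit with non-coreduced slice is precisely the hard content that your proposal defers.

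The paper avoids this entirely by not restricting to coreduced $V$ at all. It invokes the structural output of the Kac--Popov--Vinberg method (as organized in the appendix of \cite{Sc1978Representations-reg}): every non-coregular irreducible representation of a simple group, apart from $\phi_1^3(\AA_3)$ and $\phi_2^3(\AA_3)$, admits a closed orbit $Gv$ with $v\in V^T$ for some $T=\C^*\subset\SL_2\subset G$ such that $G_v^0$ has rank one, and the slice representation restricted to $T$ carries at least three (resp.\ four, when $G_v^0=\SL_2$) pairs of nonzero weights $\pm a,\pm b,\pm c,\dots$. By Corollary \ref{cor:cstar.reps} such a slice is non-coreduced unless all these weights are $\pm a$ for a single $a$; the representations where that degeneracy occurs form a short explicit list ($\phi_i(\AA_n)$, $\phi_n(\BB_n)$, $\phi_n(\DD_n)$, $\phi_i(\CC_n)$ in certain ranges), which is then dispatched by choosing a different $\SL_2\subset G$ whose slice representation contains two copies of $R_2$, hence is non-coreduced by Example \ref{ex:twosltwo}. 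This uniform rank-one slice statement is the key lemma your proposal is missing; without it (or an equivalent substitute), the contrapositive cannot be closed out by the classification results available in the paper.
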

If $G$ is simple, then we use the ordering of Bourbaki \cite{Bo1968Elements-de-mathem} for the simple roots $\alpha_j$ of $G$ and we let $\phi_j$ denote the corresponding fundamental representations. We use the notation $\nu_j$ to denote the 1-dimensional representation of $\C^*$ with weight $j$.

\begin{proof} 
We use the techniques of  \cite{KaPoVi1976Sur-les-groupes-li} (but we follow the Appendix of \cite{Sc1978Representations-reg}). Let $V$ be non-coregular (which is the same as $V$ not being cofree). If $V$ is $\phi_1^3(\AA_3)$ or $\phi_2^3(\AA_3)$, then there is a closed orbit with finite stabilizer whose slice representation is not coregular. Thus the slice representation is certainly not trivial, hence $V$ is not strongly coreduced. Otherwise,  there is a copy $T=\C^*\subset\SL_2\subset G$ and a closed orbit $Gv$, $v\in V^T$, such that the identity component $G_v^0$ of the stabilizer $G_v$ of $v$ has rank $1$. Moreover, one of the following occurs:
 \begin{enumerate}
\item $G_v^0=T$ and the slice representation of $G_v$, restricted to $T$, has at least 3 pairs of nonzero weights $\pm a$, $\pm b$, $\pm c$ (where we could have $a=b=c$).
\item The module is $\phi_1\phi_2(\AA_3)$ or $\phi_2\phi_3(\AA_3)$,  $G_v$ centralizes $G_v^0=T$    and the slice representation is $\theta_2+\nu_1+\nu_{-1}+\nu_{2}+\nu_{-2}$ where $\theta_n$ denotes the $n$-dimensional trivial representation.
\item $G_v^0=\SL_2$ and the slice representation of $G_v$, restricted to $T$, contains at least 4 pairs of weights $\pm a$, $\pm b$, $\ldots$.
\end{enumerate}
If, in case (1) above, the weights are not of the form $\pm a$ for a fixed $a$, then the $G$-module $V$ is not strongly coreduced. The same remark holds in case (3). Of course, in case (2), the module is not strongly coreduced. 
We went through the computations again and saw in which cases the weights of the slice representations were of the form $\pm a$ for a fixed $a$. One gets a list of representations as follows. (The list is complete up to  automorphisms of the group.)
\begin{enumerate}
\addtocounter{enumi}{3}
\item $\phi_i(\AA_n)$, $5\leq i$, $2i\leq n+1$.
\item $\phi_n(\BB_n)$, $n\geq 7$.
\item $\phi_n(\DD_n)$, $n\geq 9$.
\item $\phi_i(\CC_n)$, $3\leq i\leq n$, $n\geq 5$.
\end{enumerate}
For the groups of type $\AA$ and $\CC$, consider $\SL_2\subset G$ such that  the fundamental representation restricted to $\SL_2$ is $2R_1+\theta_{1}$. For the groups of type $\BB$ and $\DD$ consider $\SL_2\subset G$ such that the fundamental representation restricted to $\SL_2$ is $4R_1+\theta$. Then using the techniques of \cite{KaPoVi1976Sur-les-groupes-li} one sees that there is a closed orbit in $V^{\SL_2}$ whose stabilizer is a finite extension of $\SL_2$ such that the slice representation restricted to $\SL_2$ contains at least two copies of $R_2$. Hence the slice representation is not coreduced.
\end{proof}

\bigskip
\section{The Method of Slices and Multiplicity of Weights}\lab{sec:slices}

Let $G$ be a reductive group and $T\subset G$ a maximal torus. 
It is well-known that  the orbit $Gv$ is closed for any zero weight vector $v\in V^T$.
We say that {\it $V$ is a $G$-module with a zero weight\/} if $V^{T}\neq (0)$. The basic result for  such modules  is the following.

\begin{proposition}\lab{prop:slice.zero.weight}
Let $V$ be a $G$-module with a zero weight. If the slice representation at a zero weight vector is not coreduced then neither is  $V$.
\end{proposition}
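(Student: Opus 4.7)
The plan is to prove the contrapositive: assuming $(V,G)$ is coreduced, I would show that the slice representation $(N_v,H)$ at any zero-weight vector $v$ is coreduced, where $H:=G_v$. Since $v\in V^T$, the stabilizer $H$ contains $T$ and is therefore reductive of maximal rank, so $Gv$ is closed. By Luna's slice theorem (Remark~\ref{rem:reducedfiber}), coreducedness of $(N_v,H)$ is equivalent to the fiber $F:=\pi^{-1}(\pi(v))$ being reduced, i.e.\ to the equality $I(F)=\mm_{\pi(v)}\OOO(V)$. This is what I would verify.

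Given $h\in\OOO(V)$ vanishing on $F$ (set-theoretically), I would homogenize it with an auxiliary variable $t$: writing $h=\sum_{i=0}^{d}h_i$ in homogeneous components, set
$$
\tilde h(w,t):=\sum_{i=0}^{d} t^{d-i}h_i(w)\in\OOO(V)[t].
$$
The identity $\tilde h(tw,t)=t^d h(w)$ shows that $\tilde h$ vanishes on $\{(tw,t):w\in F,\,t\in\C\}$, and hence on its Zariski closure in $V\times\C$. The fiber of this closure over $t=0$ is, by construction, the asymptotic / Borho--Kraft associated cone $\Cone(F)\subseteq\NN(V)$, and therefore the top-degree part $h_d=\tilde h(\cdot,0)$ vanishes on $\Cone(F)$.

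The crucial input is the Borho--Kraft identification $\Cone(F)=\NN(V)$ for closed orbits whose stabilizer has maximal rank, as recalled in the introduction. Granting this, $h_d$ vanishes on $\NN(V)$; since $V$ is assumed coreduced, $h_d\in\mm_0\OOO(V)$, and we may write $h_d=\sum_i f_i p_i$ with $f_i\in\OOO(V)^G$ of positive degree $d_i$ and $p_i$ homogeneous of degree $d-d_i$. Then
$$
h':=h-\sum_i (f_i-f_i(v))p_i
$$
still vanishes on $F$, is congruent to $h$ modulo $\mm_{\pi(v)}\OOO(V)$, and has $\deg h'<d$. A descending induction on $d$ (base case: a constant vanishing on nonempty $F$ is zero) forces $h\in\mm_{\pi(v)}\OOO(V)$, so $F$ is reduced.

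The main obstacle I expect is the identification $\Cone(F)=\NN(V)$. Geometrically it asserts that scaling the closed orbit $Gv$ to the origin does not ``miss'' any irreducible component of $\NN(V)$, and this relies essentially on the maximal-rank hypothesis $T\subset H$, which is exactly what the assumption $v\in V^T$ provides. Without it $\NN(V)$ can have components of dimension larger than $\dim F$, and the degeneration argument breaks down---as it must, since the proposition itself genuinely needs the zero-weight hypothesis.
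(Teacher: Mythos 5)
Your argument is correct and is essentially the paper's own proof: reduce via Remark~\ref{rem:reducedfiber} to showing $F=\pi\inv(\pi(v))$ is reduced, note that the leading term of any $h\in I(F)$ vanishes on the associated cone $\Cone(F)=\NN(V)$, and descend on degree by replacing $h$ with $h-\sum_i(f_i-f_i(v))p_i$. The one input you grant without proof, namely $\Cone(F)=\NN(V)$ when $T\subset G_v$, is exactly the paper's Lemma~\ref{associatedcone.lem}, whose short proof (for $v_0\in\NN(V)$ pick $g$ with $0\in\overline{Tgv_0}$, so the line $\C gv_0+v$ lies in $F$ and $gv_0\in\overline{\Cst(\C gv_0+v)}\subset\Cone(F)$) you should include to make the argument self-contained.
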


For the proof we use the following result. If $X \subset V$ is a closed subset of a vector space $V$, then the {\it associated cone\/} $\Cone X$ of $X$ is defined to be the zero set of $\{\gr f \mid f\in I(X)\}$ where $\gr f$ denotes  the (nonzero) homogeneous term of $f$ of highest degree. If $V$ is a $G$-module and $X$ a closed subset of a fiber $F \neq \NN(V)$ of the quotient map, then $\Cone X = \overline{\Cst X} \setminus \Cst X$ (cf.\ \cite[\S 3]{BoKr1979Uber-Bahnen-und-de}).

\begin{lemma}\lab{associatedcone.lem}
Suppose that  $G_v$ has the same rank as $G$. Then the associated cone of $F:=\pi\inv(\pi(v))$ is equal to $\NN(V)$.
\end{lemma}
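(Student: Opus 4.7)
The plan is to prove the two set-theoretic inclusions $\Cone F \subseteq \NN(V)$ and $\NN(V) \subseteq \Cone F$ separately. The first holds in full generality for a non-null fiber $F$ and does not use the rank hypothesis, while the second is where $T \subseteq G_v$ enters.

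For $\Cone F \subseteq \NN(V)$, I would pick homogeneous generators $f_1,\ldots,f_k$ of positive degree for $\OOO(V)^G$ and set $c_i := f_i(v)$. Each $f_i - c_i$ lies in $I(F)$ with top-degree term $\gr(f_i - c_i) = f_i$, so each $f_i$ belongs to the ideal defining $\Cone F$; hence $\Cone F \subseteq V(f_1,\ldots,f_k) = \NN(V)$.

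For the reverse inclusion, let $w \in \NN(V)$. Since both $\NN(V)$ and $\Cone F$ are $G$-stable, I may replace $w$ by any $G$-translate. By the Hilbert--Mumford criterion there is a one-parameter subgroup destabilizing $w$ in some maximal torus of $G$, and after conjugating so that this torus becomes $T$, I may assume there is a cocharacter $\lambda \colon \Cst \to T$ with $\lim_{t \to 0}\lambda(t)w = 0$. Because $T \subseteq G_v$, the element $\lambda(t)$ fixes $v$, hence $\lambda(t)(v + sw) = v + s\lambda(t)w \to v$ as $t\to 0$ for every $s \in \C$. Since $\pi$ is $G$-invariant, $\pi(v+sw) = \pi(\lambda(t)(v+sw))$ is independent of $t$, and continuity of $\pi$ gives $\pi(v+sw) = \pi(v)$. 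Thus the entire affine line $v + \C w$ is contained in $F$. Now for every $f \in I(F)$ the polynomial $s \mapsto f(v + sw)$ vanishes identically; the coefficient of its leading term $s^{\deg f}$ equals $(\gr f)(w)$, so $(\gr f)(w) = 0$. This is precisely the condition $w \in \Cone F$.

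The one point requiring care is the conjugation step: Hilbert--Mumford only produces a destabilizing 1-PS in \emph{some} maximal torus, so one must invoke the $G$-conjugacy of maximal tori together with the $G$-stability of $\NN(V)$ and $\Cone F$ in order to pick $w$ in the $G$-orbit so that a destabilizing 1-PS actually lies in the prescribed torus $T \subseteq G_v$. Once this is arranged, the key geometric fact that $v + \C w \subseteq F$ is immediate from $T$-invariance of $v$ and $G$-invariance of $\pi$, and the extraction of $(\gr f)(w)$ as the leading coefficient is purely formal.
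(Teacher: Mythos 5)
Your proof is correct and takes essentially the same route as the paper's: the heart of both arguments is that $T\subseteq G_v$ forces the entire line $v+\C w$ into the fiber $F$ whenever $w$ is destabilized by a one-parameter subgroup of $T$ (after a harmless $G$-translation supplied by Hilbert--Mumford). The only difference is in the bookkeeping at the end --- the paper concludes via the characterization $\Cone F=\overline{\Cst F}\setminus\Cst F$ from \cite{BoKr1979Uber-Bahnen-und-de} and cites the inclusion $\Cone F\subseteq\NN(V)$ as known, whereas you extract $(\gr f)(w)=0$ directly as the leading coefficient of $s\mapsto f(v+sw)$ and prove the easy inclusion explicitly from homogeneous generators; both are valid.
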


\begin{proof} 
We know that the associated cone of every fiber of $\pi$ is contained in $\NN(V)$. For the reverse inclusion we can assume that $T\subset G_{v}$. Let $v_{0}\in\NN(V)$. Then $\overline{Tgv_{0}}\ni 0$ for a suitable $g\in G$. This implies that $\overline{T(gv_{0}+v})\ni v$ and so $\C gv_{0}+v\subset F$. The lemma follows since $gv_{0}\in \overline{{\Cst}(\C gv_{0}+v)}$.
\end{proof}

\begin{proof}[Proof of Proposition~\ref{prop:slice.zero.weight}]
Suppose that $\NN(V)$ is reduced, and let $0\neq f\in\I(F)$ where $F$ is as in the lemma above. Then the leading term $\gr f$ lies in the ideal of $\NN(V)$, so that there are homogeneous $f_i\in\mm_{0}$ and homogeneous $h_i\in \OO(V)$ such that $\gr f=\sum_i f_i h_i$ where $\deg f_i+\deg h_i=\deg \gr f$ for all $i$. Then $\tilde f:=\sum_i (f_i-f_i(v))h_i$ lies in $\I_G(F)$ and $\gr f = \gr\tilde f$. Replacing $f$ by $f-\tilde f$ we are able to reduce the degree of $f$. Hence by induction we can show that $f\in \I_G(F)$. Thus $F$ is reduced. 
\end{proof}

\begin{example}\lab{ex:binaryformsviazeroweight}
We use the proposition above to give another proof that the irreducible representations $R_{2m}$ of $\SLtwo$ are not coreduced for $m\geq 3$ (see Theorem~\ref{thm:SLtwo}).
We have $R_{2m}^{T} = \C x^{m}y^{m}$, and a zero weight  vector $v$ has stabilizer $T\simeq\C^*$ if $m$ is odd and  $N(T)$ if $m$ is even. The slice representation of $T$ at $v$ has the weights $\pm 4, \dots, \pm 2m$ (each with multiplicity one), and so, for $m\geq 3$, we have at least the weights $\pm 4$ and $\pm 6$. But then the slice representation restricted to $T$ is not coreduced, hence neither are the representations $R_{2m}$ of $\SLtwo$ for $m\geq 3$. 
\end{example}

\begin{example}\lab{ex:twoadjointreps}
Let $G$ be a semisimple groups and $\lieg$ its Lie algebra. Then the representation of $G$ on $\lieg\oplus\lieg$ is not coreduced.
\begin{proof}
Let $T\subset G$ be a maximal torus and $\alpha$ a root. Put  $T_{\alpha}:=(\ker\alpha)^{0}$. Then for a generic $x\in \Lie T_{\alpha}$ we have $G_x = \Cent_{G}T_{\alpha}= G_{\alpha}\cdot T_{\alpha}$ where $G_{\alpha}\simeq \SL_{2}$ or $\simeq \PSL_{2}$. The slice representation at $x$ is $\Lie(G_x)\simeq \sltwo+\theta_{\ell-1}$ where $\ell=\rk G$. Thus the slice representation at $(x,0)\in\lieg\oplus\lieg$ restricted to $G_{\alpha}$ contains two copies of $\sltwo$, and the result follows from Example~\ref{ex:twosltwo} or Example~\ref{ex:twosltwoB}.
\end{proof}
\end{example}

Let $G$ be semisimple with Lie algebra $\lieg$. If $\mu$ is a dominant weight of $\lieg$, let $V(\mu)$ denote the corresponding simple $G$-module.  Recall that the following are equivalent: 
\be
\item[(i)] $V(\mu)$ has a zero weight; 
\item[(ii)] All weights of $V(\mu)$ are in the root lattice; 
\item[(iii)] $\mu$ is in the root lattice;
\item[(vi)] The center of $G$ acts trivially on $V(\mu)$.
\ee

\begin{remark}\lab{rem:shortroots}
Let $V$ be a non-trivial simple $G$-module with a zero weight. Then the short roots are weights of $V(\mu)$ and the highest short root is the smallest nontrivial dominant weight. This follows from the following result due to \name{Steinberg}, see \cite{St1998The-partial-order-}. (We thank John Stembridge for informing us of this result.)
\end{remark}

\begin{lemma}\label{lem:domweights}  Let $\nu\prec\mu$ be dominant weights. Then there are positive roots $\beta_{i}$, $i=1,\ldots,n$, such that 
\be
\item $\mu - \nu = \beta_{1}+\beta_{2}+\cdots+\beta_{n}$, and
\item $\mu-\beta_{1}-\cdots-\beta_{j}$ is dominant for all $j=1,\ldots,n$.
\ee
\end{lemma}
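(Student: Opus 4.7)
My plan is to proceed by induction on $n := \htt(\mu - \nu)$. The base case $n = 0$ is vacuous (take the empty sequence). For the inductive step it suffices to establish the following \emph{One-Step Lemma}: if $\nu \prec \mu$ are dominant, then there exists a positive root $\beta$ such that $\mu - \beta$ is dominant and $\nu \preceq \mu - \beta$. Granted this, I would set $\beta_1 := \beta$; the pair $(\mu - \beta_1, \nu)$ again satisfies the hypotheses with strictly smaller height difference $n - \htt(\beta_1) < n$, so induction produces $\beta_2, \ldots, \beta_m$, and the concatenation $\beta_1, \beta_2, \ldots, \beta_m$ is the required sequence.

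To set up the One-Step Lemma, I would write $\sigma := \mu - \nu = \sum_i c_i \alpha_i$ with $c_i \in \N$ and set $S := \{i : c_i > 0\}$, which is nonempty. Two observations organize the analysis: (a) every positive root $\beta$ with $\beta \preceq \sigma$ must be supported in $S$; (b) for such a $\beta$, dominance of $\mu - \beta$ at simple roots $\alpha_j$ with $j \notin S$ is automatic, because $\langle \alpha_i, \alpha_j^\vee \rangle \leq 0$ whenever $i \in S$ and $j \notin S$, so $\langle \mu - \beta, \alpha_j^\vee \rangle \geq \langle \mu, \alpha_j^\vee \rangle \geq 0$. Therefore the dominance condition on $\mu - \beta$ reduces to verifying $\langle \beta, \alpha_j^\vee \rangle \leq \langle \mu, \alpha_j^\vee \rangle$ for every $j \in S$.

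The lemma then splits into two cases. The easy case is when some $i \in S$ satisfies $\langle \mu, \alpha_i^\vee \rangle \geq 2$: then $\beta := \alpha_i$ works, because for any $j \in S$ with $j \neq i$ we have $\langle \alpha_i, \alpha_j^\vee \rangle \leq 0 \leq \langle \mu, \alpha_j^\vee \rangle$, and $\nu \preceq \mu - \alpha_i$ since $i \in S$ means $c_i \geq 1$. The hard case --- and the main obstacle of the whole argument --- is when $\langle \mu, \alpha_i^\vee \rangle \in \{0,1\}$ for every $i \in S$. Here the dominance of $\nu = \mu - \sigma$ forces the tight inequalities $\langle \sigma, \alpha_i^\vee \rangle \leq \langle \mu, \alpha_i^\vee \rangle \leq 1$ for all $i \in S$, which severely constrain the coefficients $c_i$ and the Cartan integers $\langle \alpha_j, \alpha_i^\vee \rangle$ for $i, j \in S$.

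One must then exhibit a non-simple positive root $\beta$ in the sub-root system $\Phi_S$ spanned by $\{\alpha_i : i \in S\}$ with $\beta \preceq \sigma$ and $\langle \beta, \alpha_j^\vee \rangle \leq \langle \mu, \alpha_j^\vee \rangle$ for all $j \in S$. I would construct $\beta$ by a combinatorial walk along the Dynkin sub-diagram supported on $S$: starting from a simple root $\alpha_i$ at which dominance of $\mu - \alpha_i$ fails at exactly one adjacent node $j \in S$, one adds $\alpha_j$ (verifying that $\alpha_i + \alpha_j$ is a root and that $c_j \geq 1$), and iterates, using the structural inequalities above to guarantee that the process terminates in a positive root $\beta \preceq \sigma$ satisfying the required dominance condition. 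This combinatorial analysis --- which is the genuinely delicate part --- is the content of Stembridge's theorem in \cite{St1998The-partial-order-}, to which I would ultimately appeal to close the argument.
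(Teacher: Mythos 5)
The paper gives no proof of this lemma at all: it is attributed to Steinberg and the reader is simply referred to Stembridge's article \cite{St1998The-partial-order-}, so there is no in-paper argument to measure you against. Your reduction is correct as far as it goes: the induction on $\htt(\mu-\nu)$ does reduce everything to the one-step claim, your observations (a) and (b) are right, and the case where some $i$ in the support $S$ has $\langle\mu,\alpha_i^\vee\rangle\geq 2$ is handled correctly (as is the observation that no simple root can work otherwise, since $\langle\mu-\alpha_i,\alpha_i^\vee\rangle=\langle\mu,\alpha_i^\vee\rangle-2<0$). But you should recognize that this setup concentrates, rather than removes, the difficulty: the remaining case --- producing a non-simple positive root $\beta\preceq\sigma$ supported in $S$ with $\langle\beta,\alpha_j^\vee\rangle\leq\langle\mu,\alpha_j^\vee\rangle\leq 1$ for all $j\in S$ --- is precisely the assertion that covering relations in the dominance order on dominant weights are given by subtracting single positive roots, i.e.\ it is the entire content of the theorem, and your ``combinatorial walk'' is only sketched before you fall back on the same citation. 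Since citing \cite{St1998The-partial-order-} is exactly what the authors themselves do, your proposal is no less complete than the paper's treatment, and the scaffolding you add is sound; but as a self-contained proof it has a hole exactly where the real work lies, namely the construction and termination of the walk when every $\langle\mu,\alpha_i^\vee\rangle$ with $i\in S$ is $0$ or $1$.
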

If there is a $v\in V^{T}$ such that $G_{v}^{0}=T$, then we can use  Proposition~\ref{prop:tori} to show that the slice representation at $v$ is not coreduced by giving an indecomposable relation of the weights of the slice representation which involves coefficients $\geq 2$. We will see that this is a very efficient method to prove non-coreducedness in many cases.

\begin{lemma}\lab{lem:torus.slice}
Let $G$ be a semisimple group and let $V$ be a $G$-module. Then all the roots of $\lieg$ are weights of $V$ if and only if there is a zero weight vector $v\in V^{T}$ whose isotropy group is a finite extension of the maximal torus $T$ of $G$. 
\end{lemma}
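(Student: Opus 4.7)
The plan is to handle the two implications separately. The reverse direction is essentially immediate: if $G_v$ is a finite extension of $T$ for some $v \in V^T$, then $\lieg_v = \Lie G_v = \lieh$, so every root vector $e_\alpha$ fails to lie in $\lieg_v$ and hence $e_\alpha v \neq 0$. Since $v \in V_0$ and $e_\alpha$ shifts the $T$-weight by $\alpha$, we have $0 \neq e_\alpha v \in V_\alpha$, so every root $\alpha$ is a weight of $V$.

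For the forward direction, assume that every root $\alpha$ is a weight of $V$. The heart of the argument is to show that for each $\alpha$, the linear map $e_\alpha \colon V^T \to V_\alpha$ is nonzero, equivalently that $K_\alpha := \ker(e_\alpha|_{V^T})$ is a proper subspace of $V^T$. I would do this by restricting $V$ to the $\sltwo$-triple $\langle e_\alpha, h_\alpha, f_\alpha\rangle$ associated with $\alpha$ and isolating the $\alpha$-string through $0$, namely the $\sltwo$-stable subspace $\bigoplus_{k \in \Z} V_{k\alpha}$. Because all $T$-weights occurring here are integer multiples of $\alpha$, the corresponding $h_\alpha$-weights are even, and this submodule therefore decomposes into $\sltwo$-irreducibles $R_n$ with $n$ even. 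Within any such $R_n$, the weight-$0$ line for $h_\alpha$ has $T$-weight $0$ and hence lies in $V^T$, while the weight-$2$ line (present exactly when $n \geq 2$) has $T$-weight $\alpha$ and lies in $V_\alpha$. Since $V_\alpha \neq 0$ by hypothesis, at least one summand must have $n \geq 2$, and on that summand standard $\sltwo$-theory says $e_\alpha$ maps the weight-$0$ line isomorphically onto the weight-$2$ line. This establishes nontriviality (in fact surjectivity) of $e_\alpha \colon V^T \to V_\alpha$, and as a byproduct $V^T \neq 0$.

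The rest is a genericity argument. Since the set of roots is finite and $\C$ is infinite, $V^T$ is not the union of the finitely many proper subspaces $K_\alpha$, so I can choose $v \in V^T$ lying outside every $K_\alpha$. For such $v$, $e_\alpha v \neq 0$ for every root $\alpha$. Now $\lieg_v$ contains $\lieh$ (because $v \in V^T$) and is $\Ad(T)$-stable (because $T \subset G_v$), so it decomposes according to the root-space decomposition $\lieg = \lieh \oplus \bigoplus_\alpha \lieg_\alpha$. Each $\lieg_\alpha$ is one-dimensional, and $e_\alpha v \neq 0$ rules out $\lieg_\alpha \subset \lieg_v$, so $\lieg_v = \lieh$. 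Hence $G_v^0 = T$ and $G_v$ is a finite extension of $T$, as desired. The main obstacle is the nonvanishing step, which is precisely where the hypothesis that all roots occur is used; the surrounding computation with the isotropy Lie algebra is formal.
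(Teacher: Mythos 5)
Your proof is correct and takes essentially the same route as the paper's: restrict to the $\sltwo$ attached to $\alpha$ acting on $\bigoplus_k V_{k\alpha}$ to see that $x_\alpha|_{V^T}\neq 0$, note that $V^T$ is not the union of the finitely many kernels, and compute that $\lieg_v=\lieh$ for a generic $v\in V^T$. You merely spell out the $\sltwo$-weight-string details that the paper leaves implicit.
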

   
\begin{proof} 
Clearly if $(G_v)^0=T$, then the roots of $\lieg$ are weights of $V$. Conversely, assume all the roots appear  and let $\alpha$ be a root of $\lieg$. The weight spaces with weight a multiple of  $\alpha$ form a  submodule of $V$ for the action of the corresponding copy of $\SL_2$. Since $\alpha$ occurs as a weight of $V$, this module is not the trivial module. Hence there is a   $v\in V^{T}$ such that $x_{\alpha}(v)\neq 0$ where $x_\alpha\in\lieg$ is a root vector of $\alpha$. Thus the kernel of $x_\alpha$ is a proper linear subspace of  $V^{T}$ and there is a $v\in V^{T}$ which is not annihilated by any $x_\alpha$. Then the isotropy subalgebra of $v$ is $\lie t$.
\end{proof}

\begin{definition} We say that a representation $V$ of $G$ has {\it a toral slice\/} if there is a $v\in V^{T}$ such that $G_{v}^{0}=T$. We say that $V$ has {\it a bad slice\/} if there is a $v \in V^{T}$ such that the slice representation at $v$ restricted to $G_{v}^{0}$ is not coreduced, and that $V$ has {\it a bad toral slice} if, in addition, $G_{v}^{0}= T$.
\end{definition}

Now Proposition~\ref{prop:slice.zero.weight} can be paraphrased by saying that {\it a representation with a bad slice is not coreduced}.

\begin{example}\lab{ex:s3kc3}
Consider the representation $(S^{3k}(\C^3),\SL_3)$, $k\geq 2$. Then the isotropy group of the zero weight vector is a finite extension of the maximal torus $T$ of $\SL_3$, and the slice representation $W$ of the torus contains the highest weight $2k\alpha+k\beta$ as well as the weights $-k\alpha$ and $-k\beta$. Thus there is the ``bad'' relation
\[
(2k\alpha+k\beta) + 2(-k\alpha) + (-k\beta) = 0
\]
and so $V$ has a bad toral slice.
\end{example}

\begin{example}\lab{ex:A1A1} The following representations are not coreduced.
\be
\item $G = \SL_{2}\times\SL_{2}$ on  $(\sltwo\otimes\sltwo)\oplus(R_{i}\otimes R_{j})$, $i+j\geq 1$;
\item $G=\SL_{2}\times\SL_{2}\times\SL_{2}$ on  $(\sltwo\otimes\sltwo\otimes\C)\oplus (\C\otimes\sltwo\otimes \sltwo)$.
\ee
\begin{proof}
(1) Let $t\in\sltwo$ be a nonzero diagonal matrix. The stabilizer of $v=t\otimes t\in \sltwo\otimes\sltwo$ is $\Cst\times\Cst$. If $i$ is odd then the slice representation contains the weights $(\pm2,\pm2)$ and $(\pm1,0)$ or $(\pm1,\pm1)$, and so we find the bad relations
$$
(2,2) + (2,-2) + 4(-1,0)=0 \quad\text{or}\quad (2,2) + 2(-1,-1)=0.
$$
The same argument applies if $j$ is odd.
If $i$ and $j$ are both even and $i>0$, then the slice representation contains the weights $(\pm2,\pm2)$ and $(\pm2,0)$, and so we find the bad relation
$$
(2,2) + (2,-2) + 2(-2,0)=0.
$$

(2) The stabilizer of $v=t\otimes t\otimes x + x\otimes t \otimes t$ where $0\neq x\in\C$   is $\Cst\times\Cst\times\Cst$. The slice representation contains the weights $(\pm 2,\pm 2, 0)$ and $(0,\pm 2,0)$, and we can proceed as in (1).
\end{proof}
\end{example}

\begin{example}\lab{ex:2c3}
Consider the second fundamental representation of $\Sp_{6}$: $\phi_{2}(\CC_{3})=\bigwedge_{0}^{2}\C^{6}:=\bigwedge^2(\C^6)/\C\beta$ where $\beta\in \bigwedge^2(\C^6)$ is the invariant form. It has the isotropy group $\Sp_{2}\times\Sp_{4}$ with slice representation $\bigwedge_{0}^{2}\C^{4}+\theta_{1}$. We claim that $(2\bigwedge_{0}^{2}\C^{6} ,\Sp_{6})$ is not coreduced, although it it cofree (\cite{Sc1978Representations-free}). 
\newline
In fact, the slice representation is $(2\bigwedge_{0}^{2}\C^{4}+\C^{2}\otimes \C^{4}+\theta_{2},\Sp_{2}\times\Sp_{4})$. Quotienting by $\Sp_{2}$ we get a hypersurface $F \subset 3\bigwedge_{0}^{2}\C^{4}+\theta_{3}$ defined by an $\Sp_{4}$-invariant function. Now the claim follows from   Example~\ref{ex:invariant.hypersurface}, because $(3\bigwedge_{0}^{2}\C^{4},\Sp_{4})=(3 \C^{5},\SO_{5})$ is not coreduced as we will see in Theorem~\ref{CIT.thm}(4).
\end{example}

Next we want to show that a representation $V$ is not coreduced if the weights contain all roots with multiplicity at least 2. This needs some preparation.

\begin{lemma} \lab{lem:slice.tensor}
Let $(V,G)$ and $(W,H)$ be two representations. Let $v\in V$ and $w\in W$ be nonzero zero weight vectors with slice representations
$(N_{V}\oplus\theta_{n},G_{v})$ and $(N_{W}\oplus\theta_{m},H_{w})$ where $N_{V}^{G_{v}}=0$ and $N_{W}^{H_{w}}=0$. Then the slice representation $N_{V\otimes W}$ of $G_{v}\times H_{w}$ at $v\otimes w$ contains
\begin{multline*}
(V^{\oplus(m-1)}\oplus N_{V},G_{v}) \oplus (W^{\oplus(n-1)}\oplus N_{W},H_{w}) \oplus \\((\lieg/\lieg_{v}\oplus N_{V})\otimes(\lieh/\lieh_{w}\oplus N_{W}),G_{v}\times H_{w}).
\end{multline*}
\end{lemma}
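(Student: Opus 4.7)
The plan is to exhibit a concrete $G_v\times H_w$-equivariant complement to the tangent space of the orbit $(G\times H)(v\otimes w)$ in $V\otimes W$, and then to recognize each of the three claimed summands inside this complement.

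I would begin by computing the tangent space. Since $v$ is a zero weight vector, any $X\in\lieg$ satisfying $Xv=\lambda v$ must have $\lambda=0$: writing $X=X_0+\sum_\alpha c_\alpha x_\alpha$ with $X_0$ in the Cartan and $x_\alpha$ root vectors, the contributions $c_\alpha x_\alpha v$ lie in distinct nonzero weight spaces and cannot sum to a nonzero multiple of the zero weight vector $v$. The same argument applies to $w$. Hence the infinitesimal stabilizer of $v\otimes w$ in $\lieg\oplus\lieh$ is exactly $\lieg_v\oplus\lieh_w$, and the tangent space at $v\otimes w$ is the direct sum
\[
\mathrm{T}\;:=\;\bigl((\lieg/\lieg_v)v\bigr)\otimes\C w\;\oplus\;\C v\otimes\bigl((\lieh/\lieh_w)w\bigr),
\]
isomorphic as a $G_v\times H_w$-module to $(\lieg/\lieg_v)\oplus(\lieh/\lieh_w)$.

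Next I would use the given slice decompositions $V=(\lieg/\lieg_v)v\oplus N_v$ with $N_v=N_V\oplus\theta_n$ and $v\in\theta_n=N_v^{G_v}$, and likewise $W=(\lieh/\lieh_w)w\oplus N_w$ with $N_w=N_W\oplus\theta_m$ and $w\in\theta_m$. Choose further $G_v$- and $H_w$-equivariant splittings $N_v=\C v\oplus N_v'$ and $W=\C w\oplus W'$, so that $N_v'\cong N_V\oplus\theta_{n-1}$ and $W'\cong(\lieh/\lieh_w)\oplus N_W\oplus\theta_{m-1}$. Tensoring and observing that $\mathrm{T}$ is contained in $(\lieg/\lieg_v)v\otimes\C w$ together with $\C v\otimes(\lieh/\lieh_w)w$ alone, one obtains
\[
V\otimes W \;=\; \mathrm{T}\;\oplus\;(\lieg/\lieg_v)\otimes W'\;\oplus\;N_v'\otimes(\lieh/\lieh_w)\;\oplus\;N_v\otimes N_w,
\]
so that as a $G_v\times H_w$-module
\[
N_{V\otimes W}\;\cong\;(\lieg/\lieg_v)\otimes W'\;\oplus\;N_v'\otimes(\lieh/\lieh_w)\;\oplus\;N_v\otimes N_w.
\]

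Finally, substituting the explicit forms of $W'$, $N_v'$, $N_v$, and $N_w$ and regrouping, the ``full cross'' pieces $(\lieg/\lieg_v)\otimes(\lieh/\lieh_w)$, $(\lieg/\lieg_v)\otimes N_W$, $N_V\otimes(\lieh/\lieh_w)$, $N_V\otimes N_W$ assemble into the third claimed summand $(\lieg/\lieg_v\oplus N_V)\otimes(\lieh/\lieh_w\oplus N_W)$. The copies $(\lieg/\lieg_v)^{\oplus(m-1)}$ arising from $(\lieg/\lieg_v)\otimes\theta_{m-1}$ and the copies $N_V^{\oplus m}$ arising from $N_V\otimes\theta_m\subset N_v\otimes N_w$, together with a suitable number of trivial summands drawn from the $\theta_n\otimes\theta_m=\theta_{nm}$ inside $N_v\otimes N_w$, realize $V^{\oplus(m-1)}\oplus N_V$ as a $G_v$-submodule; symmetrically one obtains $W^{\oplus(n-1)}\oplus N_W$ as an $H_w$-submodule. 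The main bookkeeping obstacle is the distribution of the $\theta_{nm}$ trivial summands among the three factors on the right; the identification of the nontrivial pieces is direct, and is what will matter for subsequent applications of the lemma to non-coreducedness via Proposition~\ref{prop:slice.zero.weight}.
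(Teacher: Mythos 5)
Your proof is correct and follows essentially the same route as the paper's: decompose $V=\lieg/\lieg_v\oplus N_V\oplus\theta_n$ and $W=\lieh/\lieh_w\oplus N_W\oplus\theta_m$ as $G_v$- resp.\ $H_w$-modules, observe that the tangent space to the orbit at $v\otimes w$ is exactly $\lieg/\lieg_v\otimes\C w\oplus\C v\otimes\lieh/\lieh_w$, and read off the complement. The one caveat you flag --- whether the $\theta_{nm}$ available from $\theta_n\otimes\theta_m$ suffices for the $\theta_{n(m-1)}\oplus\theta_{m(n-1)}$ demanded by the stated regrouping --- is already present in the paper's own formulation and is immaterial, since every application (Corollary \ref{cor:slice.tensor} and its uses) relies only on the nontrivial constituents.
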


\begin{proof}
The lemma  follows from the decomposition $(V,G_{v})=(\lieg/\lieg_{v}\oplus N\oplus \theta_{n})$ and similarly for $(W,H_{w})$, and the fact that 
$$
T_{v\otimes w}((G\times H)v\otimes w)=\lieg(v)\otimes w+v\otimes\lieh(w) \subset \lieg/\lieg_v\otimes\theta_m+\theta_n\otimes\lieh/\lieh_w.
$$
\end{proof}

\begin{corollary}\lab{cor:slice.tensor}
\begin{enumerate}
\item The two slice representations $(N_V,G_v)$ and $(N_W,H_w)$ occur  as   subrepresentations of the slice representation   at $v\otimes w$. In particular, if $(V, G)$ has a bad slice, then so does $(V\otimes W,G\times H)$.
\item The slice representation at $v\otimes w$ contains $N_V\otimes N_W$,   $\lieg/\lieg_v\otimes\lieh/\lieh_w$, $\lieg/\lieg_v\otimes N_W$ and $N_V\otimes\lieh/\lieh_w$.
\item If $n>1$ (resp.\ $m>1$), then the slice representation contains a copy of $W$ (resp.\ $V$).
\end{enumerate}
\end{corollary}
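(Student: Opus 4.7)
The plan is to deduce all three parts from the direct sum decomposition of $N_{V\otimes W}$ provided by Lemma~\ref{lem:slice.tensor}, which realises
\[
\bigl(V^{\oplus(m-1)}\oplus N_V\bigr)\oplus\bigl(W^{\oplus(n-1)}\oplus N_W\bigr)\oplus\bigl((\lieg/\lieg_v\oplus N_V)\otimes(\lieh/\lieh_w\oplus N_W)\bigr)
\]
as a subrepresentation of the slice, where $G_v$ acts on the first piece with $H_w$ trivial, $H_w$ acts on the second with $G_v$ trivial, and both groups act on the third.

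For (1), the slice modules $(N_V,G_v)$ and $(N_W,H_w)$ are visible as direct summands of the first and second pieces respectively. For the bad slice assertion, assume $(V,G)$ has a bad slice at $v$, i.e.\ $(N_V,G_v^0)$ is not coreduced. The projection $p\colon N_{V\otimes W}\to N_V$ onto this $N_V$ summand is $G_v^0\times H_w^0$-equivariant (with the $H_w^0$-action trivial on the target), so the contrapositive of the retraction Lemma~\ref{lem:retraction} forces $(N_{V\otimes W},G_v^0\times H_w^0)$ to be non-coreduced, whence $(V\otimes W,G\times H)$ acquires a bad slice at $v\otimes w$. For (2), distributing the tensor product in the third summand exhibits each of the four listed modules $\lieg/\lieg_v\otimes\lieh/\lieh_w$, $\lieg/\lieg_v\otimes N_W$, $N_V\otimes\lieh/\lieh_w$, and $N_V\otimes N_W$ as a direct summand of the slice. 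For (3), $n>1$ guarantees that $W^{\oplus(n-1)}$ already contains a copy of $W$, and symmetrically $m>1$ forces $V^{\oplus(m-1)}$ to contain a copy of $V$.

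The one genuinely delicate point is the bad slice conclusion in (1): the retraction argument naturally produces non-coreducedness only for the subgroup $G_v^0\times H_w^0$ of the full identity component $(G\times H)_{v\otimes w}^0$. In practice this is enough for the intended applications to non-coreducedness of $V\otimes W$, since any extra elements of the larger group come from one-parameter subgroups scaling $v$ and $w$ by reciprocal characters, which respect the direct sum decomposition used by the retraction.
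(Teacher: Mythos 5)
Your proof is correct and follows exactly the route the paper intends: the corollary is stated without proof precisely because all three parts are read off from the direct sum decomposition in Lemma~\ref{lem:slice.tensor}, combined with the retraction Lemma~\ref{lem:retraction} for the bad-slice assertion. The ``delicate point'' you flag at the end is in fact settled cleanly by Remark~\ref{rem:tensor}: because $Gv$ and $Hw$ are closed orbits, no one-parameter subgroup can scale $v$ or $w$ by a nontrivial character, so $(G\times H)_{v\otimes w}$ is a finite extension of $G_v\times H_w$ and the identity components $G_v^0\times H_w^0$ and $\bigl((G\times H)_{v\otimes w}\bigr)^0$ coincide exactly.
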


\begin{remark}\lab{rem:tensor} Since $G_{v}$ and $H_{w}$ have maximal rank, the isotropy group of $v\otimes w$ can at most be a finite extension of $G_{v}\times H_{w}$. Note also that
the corollary generalizes in an obvious way to a representation of the form $(V_{1}\otimes V_{2}\otimes\cdots\otimes V_{k}, G_{1}\times G_{2} \times \cdots \times G_{k})$ where each $(V_{i},G_{i})$ is a representation with a zero weight.  
\end{remark}

\begin{proposition}\lab{prop:product}
Let $G = G_{1} \times \cdots \times G_{k}$ be a product of simple groups and $V = V_{1} \otimes \cdots \otimes V_{k}$ a simple $G$-module where $k>1$. Assume that all roots of $G$ occur in $V$.
Then $V$ is coreduced if and only if $G$ is of type $\AA_{1}\times \AA_{1}$ and $V = \sll_{2}\otimes\sll_{2}$.
\end{proposition}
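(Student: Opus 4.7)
The plan is split into the ``if'' and ``only if'' directions.

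\emph{The ``if'' direction.} Identify $V=\sll_2\otimes\sll_2$ with $M_3(\C)=\C^3\otimes\C^3$ via the covering $\SL_2\twoheadrightarrow\SO_3$, so that $(A,B)\in\SL_2\times\SL_2$ acts by $M\mapsto AMB^{-1}$. Classical invariant theory identifies the invariant ring with the free polynomial algebra $\C[f_1,f_2,f_3]$ where $f_1=\det M$, $f_2=\tr(M^TM)$ and $f_3=\tr((M^TM)^2)$ (the fourth a priori invariant $\det(M^TM)=(\det M)^2$ giving the unique relation), so $V$ is cofree with $\dim\quot VG=3$. By Proposition~\ref{prop:Serre} it suffices to exhibit a single $M_0\in\NN(V)$ where $df_1,df_2,df_3$ are linearly independent, and an explicit computation at $M_0=\left(\begin{smallmatrix} 1 & i & 1 \\ 0 & 0 & i \\ 0 & 0 & 0 \end{smallmatrix}\right)$ (at which $M_0^TM_0=\left(\begin{smallmatrix} 1 & i & 1 \\ i & -1 & i \\ 1 & i & 0 \end{smallmatrix}\right)$ is nonzero nilpotent and $\det M_0=0$) verifies this.

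\emph{The ``only if'' direction.} Assume $V=V_1\otimes\cdots\otimes V_k$ ($k\geq 2$) is coreduced and contains all roots of $G$. Since the root $\alpha_i\in\Phi(G_i)\subset\Phi(G)$ corresponds to the weight $(0,\ldots,\alpha_i,\ldots,0)$, each $V_i$ contains all roots of $G_i$ and each $V_j$ has a zero weight. By Lemma~\ref{lem:torus.slice} pick $v_j\in V_j^{T_j}$ with $(G_{j,v_j})^0=T_j$ and set $v=v_1\otimes\cdots\otimes v_k\in V^T$; by Remark~\ref{rem:tensor}, $(G_v)^0=T$. By Propositions~\ref{prop:slice.zero.weight}, \ref{prop:non-connected} and~\ref{prop:tori}, it suffices to exhibit an indecomposable relation with some coefficient $\geq 2$ among the $T$-weights of the slice representation $N_v$, whose structure comes from iterating Lemma~\ref{lem:slice.tensor}.

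The core is a three-step reduction. \textbf{(i) Each $V_j$ has all weights in $\{0\}\cup\Phi(G_j)$.} If some $V_{j_0}$ had a weight $\mu\notin\{0\}\cup\Phi(G_{j_0})$, let $W\mu=\{\mu_1,\ldots,\mu_s\}$ be its Weyl orbit (so $\sum\mu_i=0$) and pick $\ell\neq j_0$ and $\alpha\in\Phi(G_\ell)$. Lemma~\ref{lem:slice.tensor} forces $N_v$ to contain the $T$-weights $(\mu_i)_{j_0}$ (from $N_{V_{j_0}}$) and $(\pm\alpha)_\ell+(\mu_1)_{j_0}$ (from $(\lieg_\ell/\lie t_\ell)\otimes N_{V_{j_0}}$), and
\[
(\mu_1,\alpha)+(\mu_1,-\alpha)+2\sum_{i=2}^{s}(\mu_i,0)=0
\]
is indecomposable, contradicting coreducedness. \textbf{(ii) Each $V_j$ is the adjoint $\lieg_j$.} Combined with the requirement that \emph{all} roots of $G_j$ occur, (i) rules out the small representation (which misses long roots) for non-simply-laced $G_j$ and rules out every other candidate in the simply-laced case. \textbf{(iii) Every $G_j=\AA_1$.} If some $G_j$ has $\rk G_j\geq 2$, then $\mult_0(\lieg_j)\geq 2$, and for any $\ell\neq j$ the $(T_j,T_\ell)$-weights of $N_v$ include $(0,\alpha_\ell)$ (multiplicity $\geq\rk G_j-1\geq 1$, coming from the extra copy of $V_\ell$ in the slice) together with all $(\pm\alpha_j,\pm\alpha_\ell)$; then
\[
2(0,\alpha_\ell)+(\alpha_j,-\alpha_\ell)+(-\alpha_j,-\alpha_\ell)=0
\]
is an indecomposable bad relation, a contradiction.

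Hence $G=\AA_1^k$ and $V=R_2^{\otimes k}$. For $k=2$ this is $\sll_2\otimes\sll_2$, coreduced by the ``if'' direction. For $k\geq 3$, the $T$-slice weights are precisely the tuples in $\{0,\pm 2\}^k$ with at least two nonzero entries, and
\[
2(2,2,0,\ldots,0)+(-2,-2,2,0,\ldots,0)+(-2,-2,-2,0,\ldots,0)=0
\]
is the required indecomposable bad relation, completing the proof. The main obstacle is step~(i): one must carefully verify indecomposability of the Weyl-orbit relation for arbitrary orbit size $s\geq 2$, handling both the self-dual case and the non-self-dual case (e.g.\ $V_j=V(3\omega_1)$ of $\SL_3$, for which $-\mu$ need not itself be a weight of $V_j$) in a uniform way.
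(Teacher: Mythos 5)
Your overall strategy for the ``only if'' direction (pass to the toral slice at $v_1\otimes\cdots\otimes v_k$ and exhibit an indecomposable weight relation with a coefficient $\geq 2$) is the paper's strategy, but your execution has a genuine gap exactly where you flag ``the main obstacle'': step (i) is not merely unverified, it fails as stated. The relation $(\mu_1,\alpha)+(\mu_1,-\alpha)+2\sum_{i=2}^{s}(\mu_i,0)=0$ is decomposable whenever the multiset $\{\mu_2,\dots,\mu_s\}$ admits a nontrivial zero-sum sub-multiset with coefficients $\leq 2$, and this is the typical situation, not an edge case. For instance, take $G_{j_0}=\SO_5$ of type $\BB_2$ and $\mu=2\eps_1+\eps_2$ (a non-root weight of the root lattice whose module contains all roots): the orbit $\{(\pm2,\pm1),(\pm1,\pm2)\}$ contains the pair $(2,-1)$, $(-2,1)$ among the $\mu_i$ with $i\geq 2$, which already sums to zero. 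The same happens for every group with $-1$ in its Weyl group once the orbit has size $\geq 4$, and also in non-self-dual cases (e.g.\ $\mu=3\eps_1+\eps_2$ for $\SL_4$, where $(1,3,0,0)+(3,0,0,1)+(0,1,3,0)+(0,0,1,3)\equiv 0$ avoids $\mu_1$). After splitting off such pieces one is left with $(\mu_1,\alpha)+(\mu_1,-\alpha)+\sum c_i(\mu_i,0)=0$ with smaller $c_i$, and nothing guarantees that an indecomposable constituent retains a coefficient $\geq 2$. Since steps (ii) and (iii) rest entirely on (i), the ``only if'' direction is incomplete. The paper sidesteps all of this: it never classifies the $V_j$, but writes down bad relations purely among root weights of $\lieg_1/\lie t_1\otimes\lieg_2/\lie t_2$ (etc.), which occur in the slice by Corollary~\ref{cor:slice.tensor} under the sole hypothesis that all roots appear --- namely $(\alpha+\beta)+(\beta+\gamma)+(\alpha+\gamma)+2(-\alpha-\beta-\gamma)=0$ for $k>2$, and $(\alpha+\gamma)+(\alpha-\gamma)+2(\beta-\gamma)+2(-(\alpha+\beta)+\gamma)=0$ for $k=2$ with $\alpha,\beta$ adjacent simple roots of a factor of rank $\geq 2$. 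Replacing your (i)--(iii) by these two relations reduces you at once to $\AA_1\times\AA_1$ and $V=R_{2r}\otimes R_{2s}$, where your slice computation does finish the job.

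There is a second, smaller gap in the ``if'' direction. The null cone of $(\C^3\otimes\C^3,\SO_3\times\SO_3)$ is \emph{not} irreducible: it is the union of two $6$-dimensional components $GZ_{\rho}$ and $GZ_{\rho'}$, interchanged by $M\mapsto M^T$ and separated by the semicontinuous invariant $\rk(MM^T)$ versus $\rk(M^TM)$ (generically $1$ and $2$ on one component, $2$ and $1$ on the other). Proposition~\ref{prop:Serre} therefore requires a maximal-rank point on \emph{each} component, and your $M_0$ (with $\rk(M_0M_0^T)=1$, $\rk(M_0^TM_0)=2$) witnesses only one of them. This is easily repaired --- the transpose symmetry permutes your three generators, so $M_0^T$ works on the other component --- or avoided altogether by observing that this is the isotropy representation of the symmetric space $\SO_6/(\SO_3\times\SO_3)$ and invoking Corollary~\ref{thm:theta}.
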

 
\begin{proof} By Lemma~\ref{lem:torus.slice}
the product  $T = T_1 \times \cdots \times T_k$ of the maximal tori appears 
as the connected component of the isotropy group  of an element $v:=v_{1}\otimes\cdots\otimes v_{k}\in V^{T}$ where  $v_{i}$ is a generic element in $V_{i}^{T_{i}}$. Denote by $W_{i}:=N_{V_{i}}$ the slice representation  at $v_{i}$. Then the tensor products $W_{i_{1}}\otimes \cdots\otimes W_{i_{m}}$ where  $i_{1}<\cdots<i_{m}$ appear as subrepresentations of the slice representation at $v$ (see Remark~\ref{rem:tensor} above).

First assume that $k>2$. Choose simple roots $\alpha,\beta,\gamma$ of $G_1$, $G_2$, $G_3$, respectively. Then
$$
(\alpha+\beta) + (\beta+\gamma) + (\alpha+\gamma) + 2(-\alpha-\beta-\gamma) = 0
$$
is an indecomposable relation with a coefficient $>1$.

Now assume that $k=2$ and that $\rk G_{1}>1$ and choose two adjacent simple roots $\alpha,\beta$ of $G_{1}$ so that $\alpha+\beta$ is again a root. Let $\gamma$ be a simple root of $G_{2}$. Then the relation
$$
(\alpha + \gamma) + (\alpha - \gamma) + 2(\beta - \gamma) + 2(-(\alpha+\beta)+\gamma) = 0
$$
is indecomposable, but contains coefficients $>1$.

As a consequence, $G$ is of type $\AA_{1}\times\AA_{1}$ and $V = R_{2r}\otimes R_{2s}$. A simple calculation shows that this is coreduced only for $r=s=1$.
\end{proof}

\begin{proposition}\lab{prop:rootsmult2}
Let $G$ be a semisimple group and let $V$ be a $G$-module. Assume that all roots of $G$ are weights of $V$ with multiplicity at least $2$. Then $V$ admits a bad toral slice.
\end{proposition}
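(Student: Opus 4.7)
My plan is to produce a toral slice whose $T$-module structure fails Proposition~\ref{prop:tori}.

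By Lemma~\ref{lem:torus.slice}, since every root of $G$ is a weight of $V$, there is a $v\in V^{T}$ with $G_{v}^{0}=T$. To compute the slice representation $N_{v}=V/\lieg v$ as a $T$-module, observe that the orbit map $\phi_{v}\colon\lieg\to V$, $X\mapsto Xv$, has kernel $\lieg_{v}=\lie{t}$ and sends each root space $\lieg_{\gamma}$ isomorphically onto $\C\,e_{\gamma}v\subset V_{\gamma}$. Hence $\lieg v\cap V_{\gamma}$ is one-dimensional for every root $\gamma$, so each $\gamma$ survives in $N_{v}$ as a weight of multiplicity $m_{\gamma}-1\geq 1$, where $m_{\gamma}\geq 2$ is its multiplicity in $V$.

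By Proposition~\ref{prop:tori}, $(N_{v},T)$ is not coreduced as soon as one can exhibit an indecomposable $\N$-relation among its nonzero weights carrying a coefficient $\geq 2$. The key input I would use is an integer identity
\[
n\alpha+\beta=\gamma,\qquad n\geq 2,\ \alpha,\beta,\gamma\in\Phi,
\]
arising from an $\alpha$-string through $\beta$ of length $\geq 3$; this rearranges to the $\N$-relation $(n,1,1)$ on the weights $\alpha,\beta,-\gamma$ in $N_{v}$. Indecomposability follows immediately: a splitting $(n,1,1)=(a,b,c)+(a',b',c')$ into two nontrivial $\N$-relations must have both summands supported on $\{\alpha,\beta,-\gamma\}$ (every other coordinate is already zero), and the linear constraint $a\alpha+b\beta=c\gamma$ forces $a=nc$ and $b=c$, leaving $(n,1,1)$ itself as the only nontrivial $\N$-solution bounded by it.

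The main obstacle is producing the identity $n\alpha+\beta=\gamma$ with $n\geq 2$ uniformly in the Cartan type. It is immediate in the non-simply laced root systems $B_{\ell}$, $C_{\ell}$, $F_{4}$, $G_{2}$ from Cartan integers of absolute value $2$ or $3$; for $G_{2}$ with $\alpha$ short simple and $\beta$ long simple, for instance, $\gamma=3\alpha+\beta$ is a root and $n=3$ works. In the simply laced types one argues differently, combining two root strings or invoking non-root weights of $V$ that descend to $N_{v}$ with their full multiplicity and participate in a similar indecomposable relation. In any case, once the indecomposable relation is in hand, Proposition~\ref{prop:tori} yields that $N_{v}$ is not coreduced as a $T$-module, which is precisely the statement that $v$ provides a bad toral slice.
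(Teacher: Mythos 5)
Your setup is sound and matches the paper's: for a generic $v\in V^{T}$ one has $G_{v}^{0}=T$ by Lemma~\ref{lem:torus.slice}, each root survives in $N_{v}$ with multiplicity $m_{\gamma}-1\geq 1$ while non-root weights survive with full multiplicity, and your indecomposability check for a relation $n\alpha+\beta-\gamma=0$ is valid (any $\N$-splitting is supported on the support of the relation, and $\alpha,\beta$ are independent). So the non-simply-laced types are genuinely handled.

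The gap is the simply laced case, which you explicitly defer (``one argues differently, combining two root strings or invoking non-root weights'') without giving an argument --- and that is exactly where the work lies. For types $\DD$ and $\EE$ there is a quick fix you miss, and it also subsumes your root-string argument: some root $\alpha=\sum_{i}n_{i}\alpha_{i}$ has a simple-root coefficient $n_{j}\geq 2$, and $\alpha+\sum_{i}n_{i}(-\alpha_{i})=0$ is an indecomposable relation among weights of $N_{v}$ with a coefficient $\geq 2$; this is the paper's uniform argument for every type other than $\AA$. For type $\AA$ and products of type-$\AA$ factors no such shortcut exists: every indecomposable $\N$-relation among the roots of $\AA_{n}$ has coefficients in $\{0,1\}$ (they correspond to simple directed cycles on the index set), so no relation purely among roots can be bad, and ``invoking non-root weights'' cannot be made uniform either --- in the extreme case $V=\lieg\oplus\lieg$ there are no non-root weights at all, and the toral slice at a generic zero weight vector is actually coreduced (for $\sll_{2}\oplus\sll_{2}$ its nonzero weights are a single $\alpha$ and a single $-\alpha$). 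The paper handles type $\AA$ by a separate analysis: if some highest weight has a simple-root coefficient $\geq 2$ one gets a bad toral relation as above; otherwise every irreducible summand is a tensor product of adjoint representations, Proposition~\ref{prop:product} disposes of the genuine tensor products, and the residual configurations are treated by Examples~\ref{ex:twoadjointreps} and \ref{ex:A1A1}, which resort to slices at vectors whose stabilizers are strictly larger than $T$. None of this is recoverable from your sketch, so the simply laced half of the statement remains unproved.
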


\begin{proof} 
Choose a generic element $v$ of the zero weight space $V^{T}$ of $V$. Then $(G_{v})^{0}=T$ by Lemma~\ref{lem:torus.slice}, and all roots occur in the slice representation $W$ of $T$ at $v$ as well as the highest weights of $V$. We will show that there is a bad relation.

If not all simple factors of $G$ are of type $\AA$, then  there is always a root $\alpha$ which expressed in terms of simple roots has some coefficient $\geq 2$: $\alpha = \sum_{i}n_{i}\alpha_{i}$ where $\{\alpha_{1},\ldots,\alpha_{r}\}$ is a set of simple roots of $\lieg$, $n_{i}\in\N$, and $n_{j}>1$ for some $j$. But then $\alpha + \sum_{i} n_{i}(-\alpha_{i}) = 0$ is a bad relation and thus $N$ is not coreduced.

We may thus assume that $G$ is of type $\AA_{n_{1}}\times\AA_{n_{2}}\times\cdots\times\AA_{n_{k}}$, $n_{1}\geq n_{2}\geq\cdots\geq n_{k}\geq 1$. Let $\{\alpha_{1},\ldots,\alpha_{n}\}$ be a set of simple roots, $n:=n_{1}+n_{2}+\cdots + n_{k}$. We can assume that the highest weights of the irreducible components of $V$ are all of the form $\lambda=\sum_{i}n_{i}\alpha_{i}$ where $n_{i}\in\{0,1\}$. It is easy to see that such a weight is dominant if and only if $\lambda$ is a sum of highest roots.  Thus each irreducible component $V_k$ of $V$ is a tensor product of certain $\sll_{n}$'s. Now it follows from the previous proposition that either  $V_k$ is isomorphic to $\sll_{j}$  or isomorphic to $\sltwo\otimes\sltwo$. If $n_{1}>1$, then $\sll_{n_{1}}\oplus\sll_{n_{1}}$ must occur and so $V$ is not coreduced (Example~\ref{ex:twoadjointreps}). The remaining cases where $G$ is of type $\AA_{1}\times\AA_{1}\times\cdots\times\AA_{1}$ follow immediately from Example~\ref{ex:A1A1}.
\end{proof}

We finish this section with a criterion   for the non-coreducedness of an irreducible representation of a simple group. 
We begin with a lemma about weights and multiplicities.

\begin{lemma}\lab{lem:multweights}
Let $\mu,\nu$ be nonzero dominant weights of $\lieg$. 
\begin{enumerate}
\item If there is a weight of $V(\mu)$ of multiplicity $m$, then there are nonzero weights in $V(\mu+\nu)$ with multiplicity $\geq m$.
\item Suppose that zero is a weight of $V(\mu)$. Then the  multiplicities of the nonzero weights of $V(\mu)$ are bounded above by the multiplicities of the (short) roots.
\end{enumerate}
\end{lemma}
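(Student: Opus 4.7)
The plan for (1) is to construct an injective linear map $\psi \colon V(\mu) \to V(\mu+\nu)$ that shifts weight spaces by $\nu$. Fix a highest weight vector $v_\nu \in V(\nu)$ and let $\pi \colon V(\mu) \otimes V(\nu) \to V(\mu+\nu)$ be the $G$-equivariant projection onto the Cartan summand. Set $\psi(v) := \pi(v \otimes v_\nu)$. Since $v_\nu$ is annihilated by $\lie{n}$ and has weight $\nu$, the map $\psi$ is $\lie{n}$-equivariant and sends $V(\mu)_\lambda$ into $V(\mu+\nu)_{\lambda+\nu}$; in particular $\ker\psi$ is a $\lie{b}$-stable subspace of $V(\mu)$. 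Any nonzero $\lie{b}$-stable subspace of the simple module $V(\mu)$ must contain the highest weight line, yet $\psi(v_\mu)$ is a nonzero scalar multiple of $v_{\mu+\nu}$ (because $v_\mu \otimes v_\nu$ has weight $\mu+\nu$, which appears only in the Cartan component of $V(\mu) \otimes V(\nu)$). Hence $\ker\psi = 0$ and
\[
\dim V(\mu+\nu)_{\lambda+\nu} \geq \dim V(\mu)_\lambda
\]
for every weight $\lambda$ of $V(\mu)$. To deduce (1), replace the given weight of multiplicity $m$ by its dominant Weyl representative $\bar\lambda$; then $\bar\lambda + \nu$ is a nonzero dominant weight of $V(\mu+\nu)$ of multiplicity at least $m$, and its Weyl orbit supplies further nonzero weights with the same multiplicity.

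For (2), the plan rests on a monotonicity statement for dominant weight multiplicities: if $\lambda' \preceq \lambda$ are two dominant weights of $V(\mu)$, then $\dim V(\mu)_{\lambda'} \geq \dim V(\mu)_\lambda$. Granting this, fix a nonzero weight $\lambda$ of $V(\mu)$ and let $\bar\lambda$ be its dominant Weyl representative, so that $\dim V(\mu)_\lambda = \dim V(\mu)_{\bar\lambda}$. By Remark~\ref{rem:shortroots} the highest short root $\sigma$ is the smallest nonzero dominant weight of $G$, so $\sigma \preceq \bar\lambda \preceq \mu$ with all three dominant. Monotonicity then yields $\dim V(\mu)_\sigma \geq \dim V(\mu)_{\bar\lambda} = \dim V(\mu)_\lambda$, and since all short roots lie in one Weyl orbit they share this multiplicity, which is the desired upper bound.

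It remains to establish monotonicity. I would induct using Steinberg's Lemma~\ref{lem:domweights} to reduce to the case $\lambda' = \lambda - \beta$ for a single positive root $\beta$ with both $\lambda$ and $\lambda - \beta$ dominant. Because $\beta^\vee$ lies in the nonnegative span of the simple coroots, every dominant weight pairs nonnegatively with $\beta^\vee$, so $\langle \lambda - \beta, \beta^\vee\rangle \geq 0$ forces $k := \langle \lambda, \beta^\vee\rangle \geq 2$. Regarding $V(\mu)$ as an $\sll_2^\beta$-module and counting how many irreducible $\sll_2^\beta$-summands cover each $h_\beta$-eigenvalue then yields $\dim V(\mu)_{\lambda-\beta} \geq \dim V(\mu)_\lambda$. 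The careful $\sll_2^\beta$-bookkeeping for possibly non-simple $\beta$ is where the bulk of the work lies and is the main obstacle I anticipate.
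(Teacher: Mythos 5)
Your proof is correct and follows essentially the same route as the paper: part (1) is the paper's multiplication-by-$v_\nu$ embedding (realized there inside $\OO(G/U)$, here via the Cartan-component projection, with injectivity checked by a direct $\lie b$-module argument), and part (2) is the same combination of Remark~\ref{rem:shortroots} with root-string monotonicity of dominant weight multiplicities. The $\sll_2^\beta$ bookkeeping you flag as an obstacle is in fact routine: restrict to the string $\bigoplus_{j}V(\mu)_{\lambda-j\beta}$, which is an $\sll_2^\beta$-submodule on which the $h_\beta$-eigenvalues $\langle\lambda,\beta^\vee\rangle-2j$ are pairwise distinct, so the weight multiplicities are literal $h_\beta$-eigenspace dimensions and the inequality $\dim V(\mu)_{\lambda-\beta}\geq\dim V(\mu)_\lambda$ follows from $\sll_2$-theory once $\langle\lambda,\beta^\vee\rangle\geq 2$.
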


\begin{proof}
Let $v_{\mu}\in V(\mu),v_{\nu}\in V(\nu)$ be highest weight vectors. Recall that the coordinate ring $\OO(G/U)$ is a domain and contains every irreducible representation of $G$ exactly once. Therefore, the multiplication with $v_{\nu}$ is injective and sends $V(\mu)$ into $V(\mu+\nu)$, i.e., $V(\mu)\otimes \C v_{\nu}\hookrightarrow V(\mu+\nu)$ as a $T$-submodule, and we have (1).

For (2), recall that the highest short root is the smallest dominant weight (Remark~\ref{rem:shortroots}). Looking at root strings we see that the multiplicity of the highest short root has to be at least that of an arbitrary nonzero weight.
\end{proof}

The following criterion will be constantly used for the classifications in the following sections. Let $G$ be a simple group. We use the notation $\phi,\psi,\ldots$ for irreducible representations of $G$ and denote by $\phi\psi$ the Cartan product of $\phi$ and $\psi$.

\begin{critA}\lab{critA}
Let $\phi,\psi$ be irreducible representations of $G$ with a zero weight. Then $\phi\psi$ has a bad toral slice in the following cases.
\be
\item[(i)] $\phi$ has a bad toral slice.
\item[(ii)] $\phi\psi$ contains a nonzero weight of multiplicity $>1$.
\item[(iii)] The zero weight of $\phi$ has multiplicity $>1$.
\ee
\end{critA}
\begin{proof} As in the proof above, every nonzero weight vector $w\in\psi$ defines an embedding $\phi \hookrightarrow \phi\psi$ which shows that $\phi\psi$ contains all sums of two (short) roots and therefore all roots. Thus $\phi\psi$ has a toral slice.  In case (i) we choose for $w\in\psi$ a weight vector of weight 0 and obtain a $T$-equivariant embedding $\phi \hookrightarrow \phi\psi$ which shows that $\phi\psi$ has a bad toral slice.

In case (ii)  the (short) roots occur in $\phi\psi$ with multiplicity at least 2. Now let $\alpha$ be a short root. Then $2\alpha$ and $\alpha$ occur in a toral slice representation, and we have the bad relation $(2\alpha) + 2(-\alpha) = 0$.

Finally, (iii) implies (ii) by Lemma~\ref{lem:multweights}(1).
\end{proof}

\begin{remark}\lab{rem:cartanproduct}
Let $G$ be of type $\AA$, $\DD$ or $\EE$. 
If $\phi = \phi_{i_{1}}\phi_{i_{2}}\cdots\phi_{i_{k}}$ is a coreduced representation with a zero weight, then either $k=1$ or all $\phi_{i_{j}}$ are multiplicity-free. In all other cases, $\phi$ has a bad toral slice.
\par\noindent
(If $k>1$ and if one of the $\phi_{i_{j}}$ has a weight space of multiplicity $\geq 2$, then the roots occur in $\phi$ with multiplicity $\geq 2$, by Lemma~\ref{lem:multweights}, and thus $\phi$ is not coreduced, by Proposition~\ref{prop:rootsmult2}.)
\end{remark}

\bigskip
\section{Coreduced Representations of the Exceptional Groups}\lab{sec:cored-exceptional}
Let $G$ be an exceptional simple group. In this section we classify the coreduced representations $V$ of $G$ which contain a zero weight. We know that each irreducible summand of $V$ is coreduced (Example~\ref{ex:retraction}(1)). Therefore it suffices to describe the {\it maximal\/} coreduced representations. The types $\EE_{n}$ and $\Gtwo$ are easy consequences from what we have done so far, but the type $\FF_{4}$ turns out to be quite involved.

\begin{proposition}\lab{prop:typeE}
Let $G$ be a simple group of type $\EE$ and let $V$ be a $G$-module with a zero weight. If $V$ is coreduced, then $V$ is the adjoint representation of $G$. Any other $V$ with a zero weight has a bad toral slice. 
\end{proposition}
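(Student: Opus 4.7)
The plan is to reduce to the irreducible case using Example~\ref{ex:retraction}(1), which forces every direct summand of a coreduced module to be coreduced, and then to handle reducible combinations via Example~\ref{ex:twoadjointreps}. Since the adjoint $V(\theta)$ is coreduced by Proposition~\ref{prop:coreduced}(2), the core task is to show that for every irreducible $V(\mu)$ with $\mu$ in the root lattice and $\mu\notin\{0,\theta\}$ there is a bad toral slice.

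Since $G$ is simply laced, every root is short, so by Remark~\ref{rem:shortroots} the highest root $\theta$ is the smallest nontrivial dominant weight in the root lattice; hence $\mu\succ\theta$ strictly. I would split this into two subcases. First, if $\mu$ is not fundamental, write $\mu=\mu_{1}+\mu_{2}$ with $\mu_{1},\mu_{2}$ dominant and nonzero, so that $V(\mu)=V(\mu_{1})V(\mu_{2})$ is a Cartan product. By Remark~\ref{rem:cartanproduct}, coreducedness of this Cartan product forces both $V(\mu_{i})$ to be multiplicity-free. In type $\EE$ the only multiplicity-free irreducible representations are the minuscule ones---$V(\omega_{1}),V(\omega_{6})$ for $\EE_{6}$, $V(\omega_{7})$ for $\EE_{7}$, none for $\EE_{8}$---and none has a zero weight. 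Consequently $\mu$ must be a sum of at least two minuscule weights landing in the root lattice; the possibilities form a short finite list (none for $\EE_{8}$; for $\EE_{6}$ examples such as $\omega_{1}+\omega_{6}$, $3\omega_{1}$, $3\omega_{6}$; for $\EE_{7}$, $2\omega_{7}$ and its multiples), and each is handled by Criterion~\ref{critA}(ii), since such a Cartan product already realises the roots of $\lieg$ with multiplicity at least two.

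Second, if $\mu=\omega_{i}\neq\theta$ is fundamental with $\omega_{i}$ in the root lattice, there are only a few candidates per $\EE_{n}$---$\omega_{4}$ for $\EE_{6}$; the handful of non-adjoint root-lattice fundamentals for $\EE_{7}$; the seven non-adjoint fundamentals for $\EE_{8}$---and for each I verify by direct weight computation (Freudenthal's formula or LiE~\cite{LeCoLi1992LiE:-A-package-for}) that every root of $\lieg$ appears as a weight of $V(\omega_{i})$ with multiplicity at least two. Proposition~\ref{prop:rootsmult2} then supplies the bad toral slice. For reducible $V$ with a zero weight, the irreducible classification forces every summand with a zero weight to equal $\lieg$, so if $V\neq\lieg$ either $V\supset\lieg\oplus\lieg$ (Example~\ref{ex:twoadjointreps}) or $V=\lieg\oplus W$ with $W$ a direct sum of nontrivial irreducibles without zero weight; the latter is treated by combining the $T$-slice at a regular semisimple point of $\lieg$ with Proposition~\ref{prop:tori} applied to the $T$-weights of $W$.

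The main obstacle is the root-multiplicity verification for the non-adjoint fundamentals in the second subcase: this does not seem to follow purely formally from the structural results assembled so far, and constitutes the real computational content of the proposition. The case-splits themselves are driven by Remarks~\ref{rem:shortroots} and~\ref{rem:cartanproduct} and are straightforward; what needs care is that for each of the finitely many non-adjoint fundamentals in type $\EE$ with a zero weight, one must confirm that roots appear with multiplicity $\geq 2$, which is a finite but non-trivial check done one representation at a time.
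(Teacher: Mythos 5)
Your treatment of the irreducible case follows essentially the same route as the paper: both arguments rest on Lemma~\ref{lem:multweights}, Criterion~\ref{critA} and Proposition~\ref{prop:rootsmult2}, and both ultimately delegate a finite list of multiplicity checks to LiE. Your organization via the classification of weight-multiplicity-free (i.e.\ minuscule) representations of type $\EE$ is a legitimate variant --- the paper instead checks root multiplicities of the fundamental representations directly and then disposes of $\phi_1^2$, $\phi_1\phi_6$, $\phi_6^2$ (for $\EE_6$) and $\phi_7^{2k}$ (for $\EE_7$) by hand --- but note that the minuscule classification is an external fact you would need to cite, and that the Cartan products of minuscule weights ($\phi_1\phi_6$ for $\EE_6$, $\phi_7^2$ for $\EE_7$) still require exactly the kind of explicit multiplicity computation you acknowledge only for the fundamentals, since Remark~\ref{rem:cartanproduct} does not rule out a product of multiplicity-free factors.

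The genuine gap is in the reducible case. After reducing to $V=\lieg\oplus W$ with $W$ a nonzero sum of irreducibles without a zero weight (a case that occurs only for $\EE_6$ and $\EE_7$), you assert that the $T$-slice at a regular semisimple element together with Proposition~\ref{prop:tori} finishes the argument. But Proposition~\ref{prop:tori} only yields non-coreducedness of that slice \emph{if} the nonzero $T$-weights of $W$ admit an indecomposable relation with a coefficient $\geq 2$, and you never establish that such a bad relation exists. This is precisely the content of the paper's Lemma~\ref{lem:weightsE}, whose proof needs two nontrivial inputs: first, that every dominant weight of $\EE_7$ (resp.\ $\EE_6$) not in the root lattice dominates $\omega_7$ (resp.\ $\omega_1$ or $\omega_6$), so that by Lemma~\ref{lem:domweights} the weights of $W$ contain the corresponding Weyl orbit; and second, a computation exhibiting a bad relation inside that Weyl orbit. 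Without these your reducible case is incomplete. A smaller point: disposing of $\lieg\oplus\lieg$ via Example~\ref{ex:twoadjointreps} produces a bad slice whose isotropy group is not a torus, so to obtain the literal conclusion ``bad toral slice'' one again wants the Lemma~\ref{lem:weightsE} route, which handles all reducible cases uniformly.
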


\begin{proof}
Since the groups of type $\EE$ are simply laced, every irreducible representation $\phi$ with a zero weight contains all roots and thus has a toral slice. Now it follows from Lemma~\ref{lem:weightsE} below that every representation of the form $\phi\oplus V$ where $V$ is nontrivial  has a bad toral slice. Hence  a coreduced representation with a zero weight is irreducible.

(a) Let $G=\EE_8$. One can check   with LiE that the fundamental representations of $G$ except for the adjoint representation $\phi_{1}(\EE_{8})$ contain the roots with multiplicity $\geq 2$. Since the zero weight of $\phi_{1}(\EE_{8})$ has multiplicity $\geq 2$, it follows from Criterion \ref{critA} that every irreducible representation except the adjoint representation has a bad toral slice. 

(b) Let $G = \EE_{7}$.
Of the fundamental representations only  $\phi_{1}=\lieg=\Ad G$, $\phi_{3}$, $\phi_{4}$ and $\phi_{6}$ are representations of the adjoint group. Using LiE one shows that every fundamental representation except $\phi_{1}$ and the 56-dimensional representation $\phi_{7}$ has a nonzero weight of multiplicity at least 6. Hence, by Remark~\ref{rem:cartanproduct}, the only other candidates for a coreduced representation besides $\phi_{1}$ are $\phi_{7}^{2k}$, $k\geq 1$. But $\phi_{7}^{2}$ contains the roots with multiplicity 5. Thus every irreducible representation except the adjoint representation has a bad toral slice.

(c) Let $G=\EE_6$.  From the fundamental representations only  $\phi_{2}=\lieg$ and $\phi_{4}$ are representations of the adjoint group. By LiE, $\phi_{3},\phi_{4}$ and $\phi_{5}$ have nonzero weights of multiplicity at least 5, and $\phi_{1}^{2}, \phi_{1}\phi_{6}$ and $\phi_{6}^{2}$ have nonzero weights of multiplicities at least 4. Thus all irreducible representations with a zero weight except the adjoint representation $\phi_{2}$ have a bad toral slice. 
\end{proof}

\begin{lemma}\lab{lem:weightsE} 
Let $G$ be a simple group of type $\EE$, $V$ a $G$-module and $T\subset G$ a maximal torus. Then $V$\!, considered as a representation of $T$, is not coreduced. 
\end{lemma}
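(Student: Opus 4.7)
\emph{Plan.} By Proposition~\ref{prop:tori} it suffices to exhibit an indecomposable $\N$-relation $\sum m_j\alpha_j=0$ among the $T$-weights of $V$ with some $m_j\geq 2$. First I reduce to $V$ irreducible: a bad indecomposable relation carried by a sub-multiset of weights persists in the ambient $T$-module, because nonnegativity forces every $\N$-decomposition in the larger monoid to stay supported inside the sub-multiset, giving back a decomposition in the smaller monoid.

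For a nontrivial irreducible $V=V(\mu)$ whose highest weight $\mu$ lies in the root lattice (equivalently, $V$ has a zero weight), Remark~\ref{rem:shortroots} gives that all short roots of $\lieg$ are weights of $V$. Since $\EE$ is simply laced every root is short, so in particular the highest root $\tilde\alpha$ and the negatives $-\alpha_1,\ldots,-\alpha_\ell$ of the simple roots all appear as $T$-weights of $V$.

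Writing $\tilde\alpha=\sum_{i=1}^{\ell}n_i\alpha_i$ yields the $\N$-relation
\[
\tilde\alpha+\sum_{i=1}^{\ell}n_i(-\alpha_i)=0.
\]
In $\EE_6,\EE_7,\EE_8$ the tuple $(n_i)$ is $(1,2,2,3,2,1)$, $(2,2,3,4,3,2,1)$, $(2,3,4,6,5,4,3,2)$ respectively (Bourbaki numbering), so some $n_i\geq 2$ and the relation is bad. For indecomposability, suppose it splits as $r_1+r_2$ with $r_j$ nontrivial $\N$-relations. By nonnegativity both $r_j$ are supported on $\{\tilde\alpha,-\alpha_1,\ldots,-\alpha_\ell\}$; writing $r_j=a_j\tilde\alpha+\sum_i b_{ji}(-\alpha_i)$ with $a_1+a_2=1$ and $b_{1i}+b_{2i}=n_i$, the relation condition on each $r_j$ reads $a_j\tilde\alpha=\sum_i b_{ji}\alpha_i$, and the uniqueness of the simple-root expansion of $\tilde\alpha$ forces $b_{ji}=a_j n_i$. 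Combined with $a_1+a_2=1$, this makes one $a_j=0$ and the corresponding $r_j=0$, a contradiction.

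\emph{Main obstacle.} The plan handles every nontrivial irreducible $V$ whose highest weight lies in the root lattice, which in $\EE_8$ (weight and root lattices coincide) is every nontrivial $V$. In $\EE_6$ and $\EE_7$ there remain the irreducibles with highest weight in a nontrivial coset of the root lattice, for instance $V(\omega_1)$ of $\EE_6$ or the $56$-dimensional $V(\omega_7)$ of $\EE_7$; their weight sets miss the roots entirely, so the root-based relation above is unavailable. Producing a bad indecomposable relation in these minuscule-type cases is the principal technical difficulty of the lemma in full generality, and would require a separate analysis of the weight polytope (for example exploiting self-duality in $V(\omega_7)$, or the $W(\EE_6)$-orbit structure under the Levi $\EE_6\subset\EE_7$). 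Within the use in Proposition~\ref{prop:typeE} the difficulty does not actually arise: the lemma is applied to the $T$-slice at a toral slice vector of $\phi\oplus V$, which already contains all the roots of $\lieg$ coming from $\phi$, so the bad relation above applies regardless of what $V$ contributes.
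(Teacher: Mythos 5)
Your argument for the case where the roots of $\lieg$ occur among the weights of $V$ is correct and is essentially the paper's own observation (the paper disposes of it in one sentence: ``this is clear if $\Lambda$ contains the roots''); your indecomposability check for $\tilde\alpha+\sum_i n_i(-\alpha_i)=0$ and your reduction to a sub-multiset of weights are both fine. But this only proves the lemma for $\EE_8$, and for those $\EE_6$- and $\EE_7$-modules having a summand whose highest weight lies in the root lattice. The lemma as stated covers \emph{every} nontrivial $G$-module, including for instance the $27$-dimensional representation of $\EE_6$ and the $56$-dimensional representation $V(\omega_7)$ of $\EE_7$, whose weight sets avoid the roots entirely. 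These are exactly the cases you leave unproven, and they are where the paper does its real work: using the partial order on dominant weights it shows that every dominant weight of $\EE_7$ not in the root lattice satisfies $\omega_7\prec\lambda$ (resp.\ $\omega_1\prec\lambda$ or $\omega_6\prec\lambda$ for $\EE_6$), so the weight set of $V$ always contains the full Weyl orbit of a minuscule fundamental weight, and then an explicit computation (with LiE) exhibits a bad indecomposable relation inside that single Weyl orbit. Without some substitute for this step the lemma is not proved.

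Your fallback claim --- that the missing cases are not needed for Proposition~\ref{prop:typeE} because the $T$-slice of $\phi\oplus V$ ``already contains all the roots of $\lieg$ coming from $\phi$'' --- is incorrect. If $v\in\phi^T$ is generic with $G_v^0=T$, then $T_v(Gv)=\lieg v\subset\phi$ is the sum of \emph{all} the root spaces of $\lieg$, each occurring once, so the slice module is $(\phi/\lieg v)\oplus V$ and a root $\alpha$ survives in the $\phi$-part only with multiplicity $(\text{mult of }\alpha\text{ in }\phi)-1$. For the one candidate that actually matters, $\phi=\Ad G$, this multiplicity is $0$: the slice representation restricted to $T$ is $\theta_\ell\oplus V$ and contains no roots at all. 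The failure of coreducedness must therefore be supplied by $V|_T$ alone, which is precisely what the lemma asserts and why the minuscule cases of $\EE_6$ and $\EE_7$ cannot be sidestepped.
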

\begin{proof} We have to show that the weights $\Lambda=\{\lambda_{i}\}$ of $V$ admit a ``bad relation,'' i.e., an indecomposable relation $\sum_{i}n_{i}\lambda_{i}=0$ where $n_{i}\geq 0$ and at least one $n_{j}\geq 2$ (Proposition~\ref{prop:tori}). 
This is clear if $\Lambda$ contains the roots, in particular for all representations of $\EE_{8}$.

For $\EE_{7}$ we first remark that $\omega_{1},\omega_{3},\omega_{4},\omega_{6}$ are in the root lattice and $\omega_{7}\prec \omega_{2}, \omega_{5}$ in the usual partial order. This implies that for every dominant weight $\lambda$ we have either $\omega_{1}\prec\lambda$ or $\omega_{7}\prec\lambda$. Thus the weights of $V$ either contain the roots or the Weyl orbit of $\omega_{7}$. Using LiE one calculates the Weyl orbit of $\omega_{7}$ and shows that there is a ``bad relation'' among these weights. 

Similarly, for  $\EE_{6}$ one shows that for a dominant weight $\lambda$ not in the root lattice one has either $\omega_{1}\prec\lambda$ or $\omega_{6}\prec\lambda$. Then, using LiE, one calculates the Weyl orbit of $\omega_{1}$ and shows that there is a ``bad relation'' among these weights. Since $\omega_{6}$ is dual to $\omega_{1}$ its weights also have a ``bad relation.''
\end{proof}

We prepare to consider $\FF_4$. The following result will be used several times  in connection with slice representations at zero weight vectors.

\begin{lemma}\lab{lem:maxsubgroup}
Let $G$ be simple and $V$ a $G$-module where $V^{G}=0$. Let $H\subset G$ be a maximal connected reductive subgroup which fixes a nonzero point $v\in V$. Then $Gv \subset V$ is closed with stabilizer a finite extension of $H$.
\end{lemma}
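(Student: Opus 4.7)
The plan is to show that the identity component of the stabilizer, $G_v^0$, coincides with $H$, after which Matsushima's criterion yields that $Gv$ is closed and $G_v$ is a finite extension of $H$. I would begin by setting $L := G_v^0$ and writing its Levi decomposition $L = L_r \ltimes R_u(L)$. Since $H$ is reductive and sits inside $L$, after conjugating by an element of $R_u(L)$---which fixes $v$ and hence preserves the hypothesis---I may assume $H \subseteq L_r$. Now $L_r$ is a connected reductive subgroup of $G$ fixing the nonzero vector $v$, and $L_r \neq G$ because $V^G = 0$; maximality of $H$ therefore forces $L_r = H$, giving $L = H \ltimes R_u(L)$.

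It remains to prove that $R_u(L) = 1$, equivalently that $Gv$ is closed. Suppose instead that $Gv$ is not closed and apply the Hilbert--Mumford criterion combined with Kempf's optimality theorem (which, since $H \subseteq G_v$ is reductive, allows the destabilizing one-parameter subgroup to be chosen to centralize $H$) to produce $\lambda \colon \Cst \to G$ centralizing $H$ such that $w := \lim_{t \to 0} \lambda(t) v$ exists and lies in a $G$-orbit different from $Gv$. Because $\lambda$ centralizes $H$, the limit $w$ belongs to $V^H$, and $\widetilde{H} := H \cdot \lambda(\Cst)$ is a connected reductive subgroup of $G$. Since $\lambda$ cannot fix $v$ (otherwise $w = v \in Gv$), we have $\lambda(\Cst) \not\subseteq H$, so $\widetilde{H}$ properly contains $H$. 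If $w \neq 0$, then $\widetilde{H}$ is a connected reductive subgroup of $G$ properly containing $H$ and fixing the nonzero vector $w$, contradicting the maximality of $H$.

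The main obstacle is the remaining case $w = 0$, i.e., $v \in \NN(V)$, because then $\widetilde{H}$ fixes only the origin and maximality does not directly apply. The cleanest way around this is to show separately that $V^H \cap \NN(V) = 0$ for any maximal $H$. This is automatic when $\rk H = \rk G$: then $H$ contains a maximal torus $T$ of $G$ and $V^H \subseteq V^T$, and a direct computation shows that for any $v \in V^T$ the set $\{\alpha : x_\alpha v = 0\}$ is symmetric under $\alpha \mapsto -\alpha$ (since $v$ lies in the zero-weight space of each root $\SL_2$-subgroup), so $G_v^0$ is reductive of maximal rank and $Gv$ is already closed. For $\rk H < \rk G$ one argues similarly after replacing $v$ by its lowest-$\lambda$-weight component $v_d$, using Kempf's leading-term theorem that $Z_G(\lambda) \cdot v_d$ is closed in the weight space $V_d$ and iterating the maximality argument.
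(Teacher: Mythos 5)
Your argument is sound up through the case $w\neq 0$, but the case $w=0$ contains a genuine gap, and it is exactly the case you cannot dismiss, since a priori $v$ could lie in $\NN(V)$. Your proposed patch is to prove $V^{H}\cap\NN(V)=0$ directly, and neither half of that patch works as written. In the equal-rank case you conclude that ``$G_v^0$ is reductive \dots\ and $Gv$ is already closed'': the implication ``reductive stabilizer implies closed orbit'' is the \emph{converse} of Matsushima's criterion and is false in general (for instance $x^2y\in R_3$ has trivial, hence reductive, stabilizer in $\SL_2$ but its orbit is not closed), so this step requires Luna's criterion or a Hilbert--Mumford argument, not Matsushima. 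In the lower-rank case, the appeal to a ``leading-term theorem'' asserting that $Z_G(\lambda)\cdot v_{d}$ is \emph{closed} in the weight space $V_{d}$ is not a correct statement of the Kempf--Hesselink--Ness theory (which gives semistability of the leading term for a twisted linearization, not closedness of its orbit), and ``iterating the maximality argument'' is left entirely unexecuted. As written, this part is a sketch rather than a proof.

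The irony is that your own construction already closes the gap with no case distinction on $w$. Once Kempf optimality produces a nontrivial $\lambda$ centralizing $H$ with $\lambda(\Cst)\not\subseteq H$, the group $\widetilde H:=H\cdot\lambda(\Cst)$ is a connected reductive subgroup of $G$ properly containing $H$; maximality (which is maximality among \emph{all} connected reductive subgroups, not merely those fixing a nonzero vector, so it applies even when $\widetilde H$ fixes only the origin) forces $\widetilde H=G$. But $\lambda(\Cst)$ is central in $\widetilde H$, and the center of the simple group $G$ is finite, so $\lambda$ would be trivial --- a contradiction that is indifferent to whether $w$ vanishes. You should also be aware that the paper's own proof is two lines and avoids Kempf's machinery altogether: maximality gives that $N_G(H)/H$ is finite, so $N_G(H)v$ is a finite, hence closed, set, and Luna's criterion \cite{Lu1975Adherences-dorbite} then yields that $Gv$ is closed; Matsushima and maximality finish exactly as in your final step.
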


\begin{proof}
Since $H$ is maximal, $N_G(H)/H$ is finite so that $Gv$ is closed \cite{Lu1975Adherences-dorbite}. Similarly, $G_v$ can only be a finite extension of $H$.
\end{proof}

For the maximal subgroups of the simple Lie groups see the works of Dynkin \cite{Dy1952Semisimple-subalge,Dy1952Maximal-subgroups-}.

\begin{example}\lab{ex:phi2cnslice}
Let $V=\phi_2(\CC_n)$, $n\geq 3$. Then $H:=\CC_1\times\CC_{n-1}$ is a maximal subgroup of $\CC_n$ where $(\phi_1(\CC_n),\CC_1\times\CC_{n-1})=\phi_1(\CC_1)\oplus\phi_1(\CC_{n-1})$. Now $H$  fixes a line in $V$. Thus a finite extension of $H$ (actually $H$ itself) is the stabilizer of a closed orbit, and one easily sees that the slice representation is $\theta_1+\phi_2(\CC_{n-1})$. By induction one sees that the principal isotropy group of $\phi_2(\CC_n)$ is a product of $n$ copies of $\SL_2$.
\end{example}

\begin{example}\lab{ex:F4}
Let $G = \FF_{4}$ which is an  adjoint group. Now $\phi_{1}=\lieg$  and $\phi_{4}$ is the irreducible 26-dimensional representation whose nonzero weights are the short roots. The representations $\phi_{2}$ and $\phi_{3}$ contain the roots with multiplicities at least two. Moreover, $\phi_{1}^{2}, \phi_{1}\phi_{4}$ and $\phi_{4}^{2}$ contain the roots with multiplicities at least 3. Hence every irreducible representation of $G$ except for $\phi_{1}$ and $\phi_{4}$ has a bad toral slice.
\end{example}

\begin{proposition}\lab{prop:F4}
The representations   $\phi_{1}(\Ffour)$ and $2\phi_{4}(\Ffour)$ are the maximal coreduced representations %
of $\FF_4$. 
Moreover, the representation $2\phi_{4}(\Ffour)$ contains a dense orbit in  the null cone.
\end{proposition}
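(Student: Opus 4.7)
The plan is to isolate $\phi_{1}$ and $2\phi_{4}$ as the only candidates and then verify $2\phi_{4}$ directly. By Example~\ref{ex:F4}, every irreducible representation of $\FF_{4}$ with a zero weight other than $\phi_{1}=\lieg$ and $\phi_{4}$ has a bad toral slice and hence is not coreduced (Proposition~\ref{prop:slice.zero.weight}). Since every direct summand of a coreduced representation is coreduced (Examples~\ref{ex:retraction}(1)), any candidate is of the form $a\phi_{1}\oplus b\phi_{4}$ up to a trivial summand, and Example~\ref{ex:twoadjointreps} immediately yields $a\leq 1$.

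It remains to show that $\phi_{1}\oplus\phi_{4}$ and $3\phi_{4}$ are not coreduced, and that $2\phi_{4}$ is coreduced with a dense $\FF_{4}$-orbit in its null cone. For the first two, I would use the method of slices. The nonzero weights of $\phi_{4}$ are precisely the short roots of $\FF_{4}$, so for a generic $v\in\phi_{4}^{T}$ the connected isotropy $(\FF_{4})_{v}^{0}$ is the rank-$4$ subgroup $\Spin_{8}\subset\FF_{4}$ whose Lie algebra is $\lie{t}$ together with the long-root spaces. Placing $v$ in one copy of $\phi_{4}$ and zeros in the remaining summand(s), one computes the slice representation at this closed orbit and analyzes it via Proposition~\ref{prop:tori}: for $3\phi_{4}$ the two extra copies of $\phi_{4}|_{\Spin_{8}}$ force (short) roots to appear in the slice with multiplicity $\geq 2$, yielding a bad $\N$-relation; for $\phi_{1}\oplus\phi_{4}$ a parallel weight-multiplicity analysis (or a generating covariant of the type used in Proposition~\ref{prop:gencovariants}) produces either a bad toral slice or a generating covariant vanishing on the null cone.

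The positive step is to verify $2\phi_{4}$. Proposition~\ref{prop:coreduced}(2) handles $\phi_{1}$, so only $2\phi_{4}$ remains. The plan is first to check that it is cofree (via the classification in \cite{Sc1978Representations-free}) and then to apply Corollary~\ref{cor:cofree}: it suffices to exhibit an element of $\NN(2\phi_{4})$ whose $\FF_{4}$-orbit has dimension $\dim V-\dim\quot{V}{\FF_{4}}$, for then the regular sheet meets the null cone in a dense set. Realizing $\phi_{4}$ as the trace-zero part of the Albert algebra $H_{3}(\mathbb{O})$, I would write down an explicit pair $(x_{1},x_{2})$ of ``nilpotent'' trace-zero Hermitian matrices, compute its stabilizer, and verify by a dimension count that its $\FF_{4}$-orbit is open in the null cone. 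This simultaneously yields the dense-orbit claim and, via Corollary~\ref{cor:cofree}, the coreducedness of $2\phi_{4}$.

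The main obstacle is precisely this computational verification for $2\phi_{4}$---identifying $\NN(2\phi_{4})$, producing the dense orbit, and performing the stabilizer/dimension accounting---together with the parallel weight-theoretic book-keeping needed to rule out $\phi_{1}\oplus\phi_{4}$ and $3\phi_{4}$. This is the ``heavy computation'' foreshadowed in the introduction and carried out in Appendix~\ref{AppA}.
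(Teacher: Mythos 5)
Your overall skeleton is the paper's: reduce to $a\phi_{1}\oplus b\phi_{4}$ via Example~\ref{ex:F4} and Example~\ref{ex:twoadjointreps}, kill $\phi_{1}\oplus\phi_{4}$ and $3\phi_{4}$, and establish $2\phi_{4}$ by cofreeness plus a dense orbit in the null cone (Corollary~\ref{cor:cofree}), deferring the explicit orbit computation. That last part is exactly what the paper does in Appendix~A, and deferring it is legitimate. For $\phi_{1}\oplus\phi_{4}$ your sketch is vague but repairable: the right move is to put the generic zero weight vector in the copy of $\phi_{1}=\lieg$ (where the isotropy \emph{is} a finite extension of $T$, since all roots occur in $\lieg$); the toral slice then contains $\phi_{4}|_{T}$, whose nonzero weights are the short roots, and these admit a bad relation such as $\tfrac12(\eps_1+\eps_2+\eps_3+\eps_4)+\tfrac12(\eps_1-\eps_2-\eps_3-\eps_4)+2(-\eps_1)=0$. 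This is the paper's one-line argument.

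The genuine gap is in your treatment of $3\phi_{4}$. You propose to place $v$ in one copy of $\phi_{4}$, observe that the short roots occur with multiplicity $\geq 2$ in the slice, and conclude via Proposition~\ref{prop:tori}. But Proposition~\ref{prop:tori} (and the whole ``bad toral slice'' machinery, Proposition~\ref{prop:rootsmult2}) requires the isotropy group of the slice point to be a finite extension of a torus. As you yourself note, the connected isotropy of a generic $v\in\phi_{4}^{T}$ is the long-root subgroup $\Spin_{8}$ of type $\DD_4$ --- and since the long roots of $\Ffour$ are \emph{never} weights of $\phi_{4}$, no zero weight vector of any number of copies of $\phi_{4}$ has toral isotropy (Lemma~\ref{lem:torus.slice}). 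The slice representation at $v$ is the $\Spin_{8}$-module $2(\phi_{1}+\phi_{3}+\phi_{4})(\DD_4)\oplus\theta_{6}$, whose nontrivial part contains no roots of $\DD_4$ and no zero weights, so one cannot iterate the toral method either; and non-coreducedness of the restriction to $T\subset\Spin_{8}$ does not imply non-coreducedness as a $\Spin_{8}$-module. This is precisely why the paper cannot avoid the ``heavy computation'': it passes through $\BB_4$ and $\DD_4$ and then runs a covariant argument (Proposition~\ref{prop:gencovariants} applied to a generating covariant of type $\phi_{2}(\DD_4)$ in tridegree $(2,2,2)$ inside $\bigwedge^{2}\phi_{1}\otimes\bigwedge^{2}\phi_{3}\otimes\bigwedge^{2}\phi_{4}$, shown to vanish on every maximal positive weight space). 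Your proposed shortcut for $3\phi_{4}$ would therefore not go through as stated.
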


\begin{proof}
The sum $\phi_1+\phi_4$ is not coreduced because the slice representation of the maximal torus is not coreduced (the nonzero weights are the short roots and these contain a bad relation). This leaves us to consider copies of $\phi_4$. We know that $2\phi_{4}$ is cofree (\cite{Sc1978Representations-free}). So it suffices to show that $3\phi_{4}$ is not coreduced and that $2\phi_{4}$ contains a dense orbit in the null cone. For both statements we use some heavy calculations which are given in \ref{AppA}, see Proposition~\ref{Aprop:F4}.
\end{proof}

\begin{example}\lab{ex:G2}
Let $G = \Gtwo$ which is an adjoint group. The fundamental representation
$\phi_{1}$ of dimension 7 and $\phi_{2}$ (adjoint representation) are the only coreduced irreducible representations. This follows from Criterion \ref{critA}, because $\phi_{1}^{2}$ contains a nonzero weight of multiplicity $\geq 2$.
\end{example}

\begin{proposition}\lab{prop:G2}
Let 
$G=\Gtwo$. Then  $2\phi_{1}$ and the adjoint representation $\phi_{2}$ are the maximal coreduced representations of $G$.
\end{proposition}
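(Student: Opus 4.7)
Since the weight lattice of $\Gtwo$ coincides with the root lattice, every nontrivial irreducible representation has a zero weight. By Example~\ref{ex:retraction}(1) each summand of a coreduced representation is coreduced, so Example~\ref{ex:G2} forces every coreduced representation of $\Gtwo$ to decompose as $a\phi_1 \oplus b\phi_2$. To identify the maximal coreduced representations I will show that $2\phi_1$ and $\phi_2$ are coreduced, while $3\phi_1$, $\phi_1 \oplus \phi_2$ and $2\phi_2$ are not. The adjoint representation $\phi_2$ is coreduced by Proposition~\ref{prop:coreduced}(2), and $2\phi_2 = \lieg \oplus \lieg$ fails to be coreduced by Example~\ref{ex:twoadjointreps}.

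For $\phi_1 \oplus \phi_2$, the plan is to produce a bad toral slice. Since every root of $\Gtwo$ appears among the weights of $\phi_2$, Lemma~\ref{lem:torus.slice} supplies a zero weight vector $v \in (\phi_1 \oplus \phi_2)^T$ with $(G_v)^0 = T$. The weights of the toral slice at $v$ include the long highest root $3\alpha_1 + 2\alpha_2$ and the simple roots $-\alpha_1, -\alpha_2$ (Bourbaki convention, $\alpha_1$ short). The relation
\[
(3\alpha_1 + 2\alpha_2) + 3(-\alpha_1) + 2(-\alpha_2) = 0
\]
is indecomposable -- any nontrivial splitting would enforce $3a = b$ and $2a = c$ in the $\alpha_1$ and $\alpha_2$ coefficients, forcing one summand to be trivial -- and contains a coefficient exceeding $1$. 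By Proposition~\ref{prop:tori} the toral slice is not coreduced, and Proposition~\ref{prop:slice.zero.weight} then shows that $\phi_1 \oplus \phi_2$ is not coreduced.

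For $2\phi_1$ and $3\phi_1$ I would follow the template of Proposition~\ref{prop:F4}. Both are cofree by Schwarz's classification \cite{Sc1978Representations-free}, so Corollary~\ref{cor:cofree} and Proposition~\ref{prop:Serre} apply. For $2\phi_1$, whose principal isotropy is $\SL_2$, the null cone is cut out by the three quadratic invariants $q(v_1), q(v_2), (v_1, v_2)$; one expects $\NN(2\phi_1)$ to be the closure of a single $\Gtwo$-orbit meeting the regular sheet, whence Corollary~\ref{cor:cofree} yields coreducedness. For $3\phi_1$, the quotient has dimension $7$, with generating invariants $q_{ij}$ for $1 \le i \le j \le 3$ together with $\omega(v_1, v_2, v_3)$; to show non-coreducedness I must exhibit an irreducible component $X \subset \NN(3\phi_1)$ on which $\rk d\pi|_X < 7 = \codim \NN(3\phi_1)$, after which Proposition~\ref{prop:Serre} delivers the conclusion.

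The main obstacle is this last step, which is the $\Gtwo$-analogue of the computation for $3\phi_4(\Ffour)$ deferred to Appendix~\ref{AppA} in Proposition~\ref{Aprop:F4}. Identifying the bad component of $\NN(3\phi_1)$ and carrying out the Jacobian rank calculation is likely to require an analogous explicit computation, presumably organized by $\Gtwo$-orbit type on $3\phi_1$.
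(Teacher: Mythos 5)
There are two genuine gaps. First, your argument for $\phi_1\oplus\phi_2$ fails because the long root $3\alpha_1+2\alpha_2$ is not a weight of the toral slice. In $V=\phi_1\oplus\phi_2$ the long roots occur with multiplicity one (only in $\phi_2$) while the short roots occur with multiplicity two; since the tangent space $\lieg v\simeq\lieg/\lie t$ at a generic $v\in V^T$ absorbs each root exactly once, the slice representation of $T$ at $v$ has nonzero weights equal to the six short roots, each with multiplicity one. These form an $\AA_2$-type system, all of whose indecomposable relations have coefficients $0$ or $1$, so by Proposition~\ref{prop:tori} the generic toral slice is \emph{coreduced}: no bad toral slice exists for $\phi_1\oplus\phi_2$, and your relation cannot be formed inside the slice. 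The paper instead takes a degenerate zero weight vector $h\in\lie t\subset\phi_2$ annihilated by a short root $\alpha$, so that $G_h$ has rank $2$ and semisimple rank $1$; the slice representation then contains two copies of $(R_2,\AA_1)$ (one from $\phi_1$ restricted to $G_\alpha$, one from $\lieg_h$), which is not coreduced by Theorem~\ref{thm:SLtwo}.

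Second, your treatment of $3\phi_1$ stops exactly where the proof has to happen: you state that one must exhibit a component of $\NN(3\phi_1)$ on which the rank of $d\pi$ drops, but you do not produce it. The paper avoids any such computation by the method of covariants (Proposition~\ref{prop:gencovariants}): the $\Gtwo$-invariant $3$-form gives a copy of $\phi_1$ inside $\bigwedge^3\phi_1$, hence a trilinear covariant $3\phi_1\to\phi_1$ which vanishes on the null cone; being alternating of multidegree $(1,1,1)$ it is a generating covariant, since $S^{2}(\phi_{1}\otimes\C^{3})^{\Gtwo}\cdot(\phi_{1}\otimes\C^{3})=\phi_{1}\otimes(S^{2}\C^{3}\cdot\C^{3})$ does not contain $\phi_{1}\otimes \bigwedge^{3}\C^{3}$. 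Finally, for $2\phi_1$ your ``one expects a dense orbit in the null cone'' is left unverified; the paper's one-line argument is that the $\Gtwo$-invariants of $2\phi_1$ coincide with the $\SO_7$-invariants of $2\C^7$, so coreducedness follows from Theorem~\ref{CIT.thm}(4). The remaining reductions (to sums $a\phi_1\oplus b\phi_2$, the coreducedness of $\phi_2$, and the failure of $2\phi_2$) are correct and agree with the paper.
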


\begin{proof}
The invariants of $2\phi_1$ are just the $\SO_7$-invariants, so this representation is coreduced (see Theorem~\ref{CIT.thm}(4)). Now $\bigwedge^3(\phi_1)$ contains a copy of $\phi_1$, and it is easy to see that the corresponding covariant vanishes on the null cone of $3\phi_1$.  
In  fact, this holds for any covariant of type $\phi_{1}$ of degree $\geq 3$.
Since the covariant is alternating of degree three, it cannot be in the ideal of the quadratic invariants. More precisely, we have 
$S^{2}(\phi_{1}\otimes\C^{3})^{G }= \theta_{1}\otimes S^{2}\C^{3}$ and so
$$
S^{2}(\phi_{1}\otimes\C^{3})^{G }\cdot(\phi_{1}\otimes\C^{3}) =
\phi_{1}\otimes(S^{2}\C^{3}\cdot\C^{3})
$$
and this space does not contain $\phi_{1}\otimes \bigwedge^{3}\C^{3}$. Thus $3\phi_1$ is not coreduced. 

To see that $\phi_1+\phi_2$ is not coreduced we choose a nontrivial zero weight vector in $\phi_{2}=\lieg$ which is annihilated by a short root $\alpha$. Then the isotropy group has rank 2 and semisimple rank 1, and the slice representation contains two copies of $(R_{2},\AA_{1})$, hence is not coreduced (Theorem~\ref{thm:SLtwo}).
\end{proof}

Let us summarize our results.

\begin{theorem}\lab{thm:coredexceptional}
The following are the maximal coreduced representations of the exceptional groups containing a zero weight.
\be
\item For $\EE_{n}$: the adjoint representations $\phi_{2}(\EE_{6}), \phi_{1}(\EE_{7}), \phi_{1}(\EE_{8})$.
\item For $\FF_{4}$: $\Ad \FF_{4}=\phi_{1}(\FF_{4})$ and $2\phi_{4}(\FF_{4})$.
\item For $\Gtwo$: $\Ad \Gtwo=\phi_{2}(\Gtwo)$ and $2\phi_{1}(\Gtwo)$.
\ee
\end{theorem}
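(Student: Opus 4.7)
The plan is to assemble the theorem as a direct consequence of Propositions~\ref{prop:typeE}, \ref{prop:F4} and \ref{prop:G2}, together with the general reduction principles already established. By Examples~\ref{ex:retraction}(1) a coreduced representation restricted to any direct summand is still coreduced, so the task reduces to (a)~showing each listed representation is coreduced, and (b)~showing that no strictly larger representation with a zero weight is coreduced.

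For~(a), the adjoint representations are coreduced by Proposition~\ref{prop:coreduced}(2). The representation $2\phi_4(\FF_4)$ is cofree, and Proposition~\ref{prop:F4} (via the computation in \ref{AppA}) supplies a dense orbit in the null cone, so it is coreduced by Corollary~\ref{cor:cofree}. The representation $2\phi_1(\Gtwo)$ is coreduced because the generators of its invariants are the $\SO_7$-invariants of $2\C^7$, as noted in the proof of Proposition~\ref{prop:G2} and established in Theorem~\ref{CIT.thm}(4).

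For~(b), I would go through the three groups in turn. For $\EE_n$, Proposition~\ref{prop:typeE} already shows that the only irreducible candidate with a zero weight is the adjoint representation, and Example~\ref{ex:twoadjointreps} shows $\lieg\oplus\lieg$ is not coreduced, so $\Ad G$ is itself maximal. For $\FF_4$, Example~\ref{ex:F4} restricts the irreducible candidates to $\phi_1$ and $\phi_4$; Example~\ref{ex:twoadjointreps} rules out $2\phi_1$, the proof of Proposition~\ref{prop:F4} rules out $\phi_1+\phi_4$ via the bad toral slice coming from the short roots, and Proposition~\ref{prop:F4} itself rules out $3\phi_4$. For $\Gtwo$, Example~\ref{ex:G2} reduces us to $\phi_1$ and $\phi_2$; Example~\ref{ex:twoadjointreps} rules out $2\phi_2$, Proposition~\ref{prop:G2} rules out $\phi_1+\phi_2$ and $3\phi_1$, giving exactly the two maximal families listed.

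Since all the ingredients are in place, the main ``obstacle'' is bookkeeping: one has to verify that every way of enlarging a listed representation by a single irreducible constituent with a zero weight has already been excluded by one of the cited results (adjoint-plus-anything by Lemma~\ref{lem:weightsE} and Example~\ref{ex:twoadjointreps}, the $\FF_4$ case by Example~\ref{ex:F4} together with Proposition~\ref{prop:F4}, and the $\Gtwo$ case by Example~\ref{ex:G2} together with Proposition~\ref{prop:G2}). No new arguments are needed beyond what has been developed in Sections~\ref{sec:slices}--\ref{sec:cored-exceptional}.
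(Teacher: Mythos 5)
Your proposal is correct and follows essentially the same route as the paper: Theorem~\ref{thm:coredexceptional} is stated there explicitly as a summary of Propositions~\ref{prop:typeE}, \ref{prop:F4} and \ref{prop:G2} (together with Examples~\ref{ex:F4}, \ref{ex:G2}, \ref{ex:twoadjointreps} and the reduction to summands from Examples~\ref{ex:retraction}(1)), which is exactly the assembly you describe. The only small imprecision is that Proposition~\ref{prop:typeE} already rules out \emph{all} non-adjoint modules with a zero weight for type $\EE$, not just the irreducible ones, so the extra appeal to Example~\ref{ex:twoadjointreps} there is unnecessary though harmless.
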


\begin{remark}\lab{rem:irred-cored-exceptional}
The proofs above 
and in \ref{AppA} show that if an irreducible representation $(V,G)$ of an adjoint exceptional group $G$ is not coreduced, then $V$ has a bad slice. 
\end{remark}

\bigskip
\section{Coreduced Representations of the Classical Groups}\lab{sec:cored-classical}

In this section we classify the coreduced representations $V$ of the {\it simple adjoint groups\/} of classical type. 
If $G$ is adjoint and simply laced, i.e., of type $\AA$ or $\DD$, then a reducible representation $V$ is not coreduced by Proposition~\ref{prop:rootsmult2}, and so the maximal coreduced representations are all irreducible. We will see that this is also true  for $G$ of type $\CC_{n}$, $n\geq 3$, but not for type $\BB_{n}$.

The case of $\SLtwo$ has been settled in Theorem~\ref{thm:SLtwo} even without assuming that the center acts trivially. So we may assume that the rank  of $G$ is at least $2$.

\begin{theorem}\lab{thm:cored.simple}
Let $G$ be a simple classical group of rank at least $2$. Then, up to automorphisms, the following representations are the maximal coreduced representations of $G/Z(G)$.
\begin{enumerate}
\item $G=\AA_n$, $n\geq 2$: $\Ad \AA_{n}=\phi_{1}\phi_{n}$, $\phi_2^2(\AA_3)$, $\phi_1^3(\AA_2)$;
\item $G=\BB_n$, $n\geq 2$: $\Ad \BB_{n}=\phi_{2}(\BB_{n})$ $(\phi_2^2$ if $n$=$2)$,  $\phi_1^2(\BB_{n})$,  $n\phi_1(\BB_{n})$;
\item $G=\CC_n$, $n\geq 3$: $\Ad\CC_{n}=\phi_{1}^{2}(\CC_{n})$, $\phi_2(\CC_{n})$, $\phi_4(\CC_4)$;
\item $G=\DD_n$, $n\geq 4$: $\Ad \DD_{n}=\phi_{2}(\DD_{n})$, $\phi_1^2(\DD_{n})$;
\end{enumerate}
\end{theorem}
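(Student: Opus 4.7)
My plan is to verify the classification in two stages: each listed representation is coreduced (positive direction), and every other $(G/Z(G))$-module with a zero weight has a bad slice (negative direction).

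For the positive direction, the adjoint representations $\phi_{1}\phi_{n}(\AA_{n})$, $\phi_{2}(\BB_{n})$ (or $\phi_{2}^{2}$ when $n=2$), $\phi_{1}^{2}(\CC_{n})$ and $\phi_{2}(\DD_{n})$ are coreduced by Proposition~\ref{prop:coreduced}(2). The family $n\phi_{1}(\BB_{n})$ is the classical invariant theory setting for $\SO_{2n+1}$, which will be established in Theorem~\ref{CIT.thm}(4). All remaining entries are, up to isogeny, $\theta$-representations and hence coreduced by Corollary~\ref{thm:theta}: $\phi_{1}^{2}(\BB_{n})$, $\phi_{1}^{2}(\DD_{n})$, and $\phi_{2}^{2}(\AA_{3})$ are the spaces of trace-free symmetric tensors on $\C^{2n+1}$, $\C^{2n}$ and $\C^{6}$, each arising as the $(-1)$-eigenspace of the involution of $\sll_{N}$ fixing $\lie{so}_{N}$ (using $\AA_{3}\simeq\DD_{3}$ in the last case); $\phi_{2}(\CC_{n})=\bigwedge^{2}_{0}\C^{2n}$ arises from the involution of $\sll_{2n}$ fixing $\lie{sp}_{2n}$; $\phi_{1}^{3}(\AA_{2})=S^{3}\C^{3}$ comes from the $\Z/3$-grading $\EE_{6}=\sll_{3}\oplus S^{3}\C^{3}\oplus(S^{3}\C^{3})^{*}$; and $\phi_{4}(\CC_{4})$ is the complement of $\lie{sp}_{8}$ in the $\Z/2$-grading of $\EE_{7}$.

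For the negative direction, the principal tool is Proposition~\ref{prop:rootsmult2}: if every root of $G$ occurs in $V$ with multiplicity $\geq 2$, then $V$ admits a bad toral slice. By Remark~\ref{rem:shortroots}, every nontrivial irreducible module with a zero weight already contains all short roots, so for the simply laced types $\AA$ and $\DD$ any reducible candidate with two nontrivial summands having zero weights is immediately eliminated. For irreducibles of type $\AA$ or $\DD$, Remark~\ref{rem:cartanproduct} forces a coreduced candidate to be a Cartan product of multiplicity-free fundamental representations; a direct weight count (with LiE assistance) exhibits a nonzero weight of multiplicity $\geq 2$ in every such product except the listed ones, at which point Criterion~\ref{critA}(ii) produces a bad toral slice.

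The non-simply laced cases demand more care. In type $\CC_{n}$, representations of $\PSp_{2n}$ are precisely those whose highest weight lies in the root lattice; a LiE computation shows that every $\phi_{i}$ with $i$ even, $i\geq 4$ and $n\geq 5$ contains a short root with multiplicity $\geq 2$, and Example~\ref{ex:2c3} rules out $2\phi_{2}(\CC_{n})$ for $n\geq 3$, leaving only $\phi_{2}(\CC_{n})$ and $\phi_{4}(\CC_{4})$. In type $\BB_{n}$ the new phenomenon is that $\phi_{1}(\BB_{n})$ has only the weights $0,\pm\epsilon_{i}$, which are short roots (and zero); hence $k\phi_{1}$ alone avoids the long roots and is coreduced for $k\leq n$ by the classical invariant theory of $\SO_{2n+1}$, while adding any further irreducible summand with a zero weight introduces the long roots and simultaneously doubles the short-root multiplicities, forcing a bad toral slice. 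The main obstacle I expect is organizing the case analysis of Cartan products for $\BB_{n}$ and $\CC_{n}$ concisely while correctly handling the low-rank coincidences $\AA_{3}\simeq\DD_{3}$ and $\BB_{2}\simeq\CC_{2}$, which account for both the $\phi_{2}^{2}$ exception in the $\BB_{2}$ case and the sporadic entry $\phi_{2}^{2}(\AA_{3})$ in the $\AA$-list.
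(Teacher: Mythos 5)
Your positive direction coincides with the paper's: the adjoint representations are handled by Proposition~\ref{prop:coreduced}(2), $n\phi_1(\BB_n)$ by Theorem~\ref{CIT.thm}(4), and the remaining entries are cofree irreducible representations of simple groups, hence coreduced by Theorem~\ref{thm:cofree} (equivalently, they are $\theta$-representations, so Corollary~\ref{thm:theta} applies). Two of your explicit gradings are wrong, though: $\sll_3\oplus S^3\C^3\oplus(S^3\C^3)^*$ has dimension $8+10+10=28$, so it is a $\Z/3$-grading of $\mathfrak{so}_8$ (coming from a triality-type automorphism with fixed algebra $\sll_3$), not of $\EE_6$; and $\phi_4(\CC_4)=\bigwedge^4_0\C^8$ has dimension $42$, so $\mathfrak{sp}_8\oplus\phi_4(\CC_4)$ has dimension $36+42=78$ and the relevant involution lives on $\EE_6$, not $\EE_7$. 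These slips are harmless since one can simply invoke Theorem~\ref{thm:cofree}.

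The negative direction, however, has genuine gaps: you rely throughout on root multiplicities and bad \emph{toral} slices, and there are three places where that machinery provably cannot work. (i) Type $\AA$: the modules $\phi_1^{k(n+1)}=S^{k(n+1)}\C^{n+1}$ are multiplicity-free as $T$-modules (distinct monomials have distinct weights), so no weight count produces a nonzero weight of multiplicity $\geq 2$ and Criterion~\ref{critA}(ii),(iii) say nothing; the paper must exhibit explicit indecomposable relations with a coefficient $\geq 2$ among the weights of the toral slice (proof of Proposition~\ref{prop:PSLn}), and your proposal has no substitute for this. (ii) Type $\CC$: the representations $\phi_{2i}(\CC_n)$, $i\geq 2$, contain the short roots but \emph{not} the long roots, so by Lemma~\ref{lem:torus.slice} there is no toral slice at all and a short root of multiplicity $\geq 2$ proves nothing; the paper instead uses the slice at a zero weight vector whose isotropy is covered by the product of the long-root copies of $\SL_2$, together with the non-coreducedness of $\bigoplus_{j<k}V_j\otimes V_k$ (Lemma~\ref{lem:HiHj}). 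The same non-toral slices are what rule out $2\phi_2(\CC_n)$ for $n\geq 4$ and $\phi_2(\CC_3)+\phi_1^2(\CC_3)$, and the products $\phi_i\phi_j$ with $i,j$ odd need the explicit relation~\eqref{eq:badrelation}; none of this appears in your sketch. (iii) Type $\BB$: your claim that adding any further summand to $k\phi_1$ forces a bad toral slice fails for $\Ad\BB_n\oplus\phi_1$: in its toral slice the long roots disappear and the short roots survive with multiplicity one, so the slice representation of $T$ is coreduced by Proposition~\ref{prop:tori}; the paper has to pass to the slice at a zero weight vector with isotropy $\SO_3\times(\SO_2)^{n-1}$, whose slice representation contains two copies of $\C^3$ for $\SO_3$. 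Likewise the irreducible case for $\BB_n$ is not finished by multiplicity counts: $\bigwedge^3\C^{2n+1}$, $\phi_1\phi_2$ and $\phi_1^r$ ($r\geq3$) contain the long roots only once, and the paper eliminates them with explicit bad relations such as~\eqref{eq:badweights}.
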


In section~\ref{sec:cofreerep} we showed that every irreducible cofree representation of a simple group is coreduced. Looking at the list above and the one in Theorem~\ref{thm:coredexceptional} we see that we have the following partial converse.

\begin{corollary}\lab{cor:adjoint}
Let $G$ be a simple adjoint group and $V$ an irreducible representation of $G$. 
Then $V$ is coreduced if and only if  $V$ is cofree. 
\end{corollary}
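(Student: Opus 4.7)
The plan is to deduce the corollary directly from the work already assembled. The ``if'' direction, that a cofree irreducible representation of a simple group is coreduced, is exactly Theorem~\ref{thm:cofree} (and no adjointness hypothesis is needed). So the task is to establish the converse: every irreducible coreduced representation of a simple adjoint group is cofree.

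First I would reduce to the classification lists already compiled. Since $G$ is adjoint and simple, the highest weight of the irreducible $G$-module $V$ lies in the root lattice, and hence (as noted in Section~\ref{sec:slices}) zero is a weight of $V$. Therefore Theorems~\ref{thm:coredexceptional} and~\ref{thm:cored.simple} apply, and $V$ must appear as an irreducible summand of one of the maximal coreduced representations listed there. Reading off the irreducible possibilities, the candidates are: the adjoint representation $\Ad G$ in every case; plus $\phi_1(\Gtwo)$, $\phi_4(\Ffour)$ for the exceptional types; and $\phi_2^2(\AA_3)$, $\phi_1^3(\AA_2)$, $\phi_1^2(\BB_n)$, $\phi_2(\CC_n)$, $\phi_4(\CC_4)$, $\phi_1^2(\DD_n)$ for the classical types.

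Next, I would verify that each representation on this finite list is cofree by comparing with the classification of cofree irreducible representations of simple groups (\cite{KaPoVi1976Sur-les-groupes-li}, \cite{Sc1978Representations-free}). The adjoint representations are cofree by Chevalley's theorem (the invariants form a polynomial ring and the codimension of the null cone equals the rank). The remaining entries each appear explicitly on the Schwarz list; most of them are in fact $\theta$-representations (cf.\ Corollary~\ref{thm:theta}), and $\phi_1(\Gtwo)$, $\phi_4(\Ffour)$ are the classical instances singled out in the proof of Theorem~\ref{thm:cofree}.

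The only real obstacle is the case-by-case bookkeeping in the second step; there is no new conceptual content. One minor point to be careful about is to not misread Theorem~\ref{thm:cored.simple}: the entry $n\phi_1(\BB_n)$ is reducible (hence irrelevant here), and similarly $2\phi_4(\Ffour)$ and $2\phi_1(\Gtwo)$ contribute to the list only through their irreducible summand. Once this is sorted out, each irreducible candidate matches an entry of the Schwarz list and the corollary follows.
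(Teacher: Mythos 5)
Your proposal is correct and is essentially the paper's own argument: the ``if'' direction is Theorem~\ref{thm:cofree}, and the ``only if'' direction is read off by comparing the classification lists of Theorems~\ref{thm:coredexceptional} and~\ref{thm:cored.simple} (applicable because adjointness forces a zero weight) with the known list of cofree irreducible representations. The only slip is that $\phi_1(\BB_n)$, the irreducible summand of $n\phi_1(\BB_n)$, should also appear among your irreducible candidates rather than being dismissed as irrelevant, but it is trivially cofree (one-dimensional quotient), so nothing is affected.
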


We start with   type $\AA_{n}$, $n\geq 2$. Recall that $\phi_{p}:=\bigwedge^{p}\C^{n+1}$, $p=1,\ldots, n$. 

\begin{lemma}\lab{lem:multSL}
Consider the representations $\phi_p$ and $\phi_q$ of $\SL_{n+1}$ where $1\leq p\leq q\leq n$ and $n\geq 2$. Then there is a nonzero weight of $\phi_p\phi_q$ of multiplicity $\geq 2$ except in the cases
\begin{enumerate}
\item $\phi_1^2$ or $\phi_{n}^2$,
\item $\phi_1\phi_{n}$,
\item $\phi_2^2(\SL_4)$,
\end{enumerate}
where the zero weight has multiplicity greater than one in (2) and (3).
\end{lemma}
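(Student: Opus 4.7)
The plan is to realize the Cartan product $\phi_p\phi_q = V(\omega_p+\omega_q)$ as the $\GL_{n+1}$-module associated to the two-column partition $\lambda = (2^p,1^{q-p})$ (with $p\leq q$), so that the multiplicity of a weight $\mu$ equals the Kostka number $K_{\lambda\mu}$ counting semistandard Young tableaux of shape $\lambda$ and content $\mu$. Outside the four exceptional cases the task is to exhibit a nonzero $\mu$ with $K_{\lambda\mu}\geq 2$.

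First I would dispose of the exceptional cases by direct inspection. For $\phi_1^2 = \Sym^2\C^{n+1}$ and, by duality, for $\phi_n^2$, the weights $2e_i$ and $e_i+e_j$ are multiplicity-free. For the adjoint module $\phi_1\phi_n = \sll_{n+1}$ the nonzero weights are the roots $e_i-e_j$ with multiplicity one while the zero weight has multiplicity $n$. For $\phi_2^2(\SL_4)$ a brief enumeration of semistandard fillings of the $2\times 2$ square shows that only the zero weight (content $(1,1,1,1)$) has multiplicity two.

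For the remaining pairs I would use the duality $\phi_p\phi_q\simeq(\phi_{n+1-p}\phi_{n+1-q})^*$ to reduce to $p+q\leq n+1$, and then split on whether this inequality is strict. If $p+q\leq n$, take $\mu = e_1+\cdots+e_{p+q}$, which is nonzero in $\SL_{n+1}$; then $K_{\lambda\mu}$ equals the number $f^\lambda$ of standard tableaux of shape $\lambda$, and the hook-length formula gives $f^{(2^p,1^{q-p})} = (p+q)!(q-p+1)/((q+1)!\,p!)$, which is $\geq 2$ whenever $\lambda$ is not a single row or column, i.e.\ whenever $(p,q)\neq(1,1)$. If $p+q = n+1$ and $(p,q)\neq(2,2)$, that weight vanishes in $\SL_{n+1}$, so I take instead $\mu = 2e_1+e_2+\cdots+e_n$, which is manifestly nonzero; column-strictness forces both $1$'s of any SSYT of shape $\lambda$ and content $\mu$ into the top row, and deleting that row yields a bijection with standard tableaux of the smaller shape $\lambda' = (2^{p-1},1^{q-p})$ filled with $\{2,\ldots,n\}$. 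Thus $K_{\lambda\mu} = f^{\lambda'}$, which is $\geq 2$ unless $\lambda'$ is a row or column—exactly the forbidden configurations $(p,q) = (2,2)$ (case (3)) and $p=1$, the latter combined with $p+q = n+1$ giving $(p,q) = (1,n)$ (case (2)).

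The only delicate point is the choice of weight in each subrange: the two natural candidates $(1^{p+q},0^{n+1-p-q})$ and $(2,1^{n-1},0)$ each become zero in $\SL_{n+1}$ in exactly one subrange, forcing the split, and it is a pleasant accident that the two-column Young diagrams for which $f^\lambda = 1$ or $f^{\lambda'} = 1$ correspond precisely to the four exceptional representations listed in the lemma—no further ad hoc analysis is needed.
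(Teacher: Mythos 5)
Your proof is correct, but it runs along a genuinely different track from the paper's. The paper works inside the tensor product: it uses the decomposition $\phi_p\otimes\phi_q=\phi_p\phi_q\oplus\bigl(\phi_{p-1}\otimes\phi_{q+1}\bigr)$ and computes by hand the multiplicity of the single weight $2\epsilon_1+\dots+2\epsilon_{p-1}+\epsilon_p+\dots+\epsilon_{q+1}$ in both factors, getting multiplicity $q-p+1$ in the Cartan product; the residual cases $\phi_p^2$ are then killed by two further ad hoc weight computations (with multiplicities $\frac1p\binom{2p}{p-1}$ and $\frac1{p-1}\binom{2p-2}{p-2}$). You instead identify $\phi_p\phi_q$ with the Schur module of the two-column shape $\lambda=(2^p,1^{q-p})$, so that weight multiplicities become Kostka numbers, and then reduce everything to the single standard fact that $f^\lambda=1$ exactly for row and column shapes, via the hook-length formula and the row-stripping bijection $K_{\lambda,(2,1^{n-1},0)}=f^{\lambda'}$. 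All the steps check out: the homogeneity of $S^\lambda(\C^{n+1})$ guarantees that $\GL$- and $\SL$-weight multiplicities agree, the duality reduction to $p+q\le n+1$ is legitimate, and the four exceptional shapes fall out exactly as the lemma's list. (One cosmetic remark: the clause ``and $(p,q)\neq(2,2)$'' in your $p+q=n+1$ case is spurious --- the weight $e_1+\cdots+e_{p+q}$ vanishes there for every $(p,q)$ --- but your closing analysis of $\lambda'$ handles $(2,2)$ correctly anyway.) What your route buys is uniformity: one weight per regime and one classification of trivial tableau counts, rather than three separate multiplicity computations; what the paper's buys is that it needs nothing beyond the Pieri-type recursion and stays entirely inside the language of fundamental representations used elsewhere in that section.
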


\begin{proof}
It is easy to calculate that the weight $2\epsilon_1+\dots+2\epsilon_{p-1}+\epsilon_p+\dots+\epsilon_{q+1}$ occurs in $\phi_p\otimes\phi_q$ with multiplicity $q-p+2$ and that it occurs in $\phi_{p-1}\otimes\phi_{q+1}$ once. Since $\phi_p\otimes \phi_q=\phi_p\phi_q+\phi_{p-1}\otimes\phi_{q+1}$ we see that our weight occurs with multiplicity $q-p+1$ in $\phi_p\phi_q$. This gives us a nonzero weight of multiplicity at least two, except in the following two cases:
\begin{enumerate}
\item $\phi_1\phi_{n}$ where the above weight is the zero weight, and
\item $\phi_p^2$ where  $1\leq p\leq n$.
\end{enumerate}
However, in the second case, we can suppose, by duality, that $2p\leq n+1$. If $2p\leq n$, then one sees as above that $\epsilon_1+\dots+\epsilon_{2p}$ occurs with multiplicity $\frac{1}{p}\binom {2p}{p-1}$ which is at least $2$ as long as $p>1$. If $2p=n+1$, then one computes that $\epsilon_{1}-\epsilon_{2}=2\epsilon_{1}+\epsilon_{3}+\cdots+\epsilon_{2p}$ occurs with multiplicity $\frac{1}{p-1}\binom{2p-2}{p-2}$ which is $\geq 2$ as long as $p> 2$. Thus the only 
possibilities are $\phi_1^2$ and $\phi_2^2(\SL_4)$. 
\end{proof}

The next lemma was proved by \name{Stembridge}. We give a slightly different version of his proof.

\begin{lemma}\lab{lem:stembridge}
Let $\phi$ be an irreducible representation of $\PSL_{n+1}$, $n\geq 2$. Then the roots of $G$ occur with multiplicity at least two in $\phi$, except in the following cases.
\begin{enumerate}
\item The adjoint representation $\phi_{1}\phi_{n}$;
\item $\phi_{1}^{k(n+1)}(\SL_{n+1})$ or its dual, $k=1,2,\ldots$;
\item $\phi_2^2(\SL_4)=\phi_1^2(\DD_3)$.
\end{enumerate}
\end{lemma}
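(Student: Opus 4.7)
The plan is to combine Lemma~\ref{lem:multweights} with Lemma~\ref{lem:multSL} and argue by splitting $\mu$ into a sum of fundamental weights. Since $V(\mu)$ is a $\PSL_{n+1}$-module, $\mu$ lies in the root lattice and hence zero is a weight of $V(\mu)$; by Lemma~\ref{lem:multweights}(2) the multiplicity of the (short, hence all) roots is at least the multiplicity of any nonzero weight, so it suffices to produce one nonzero weight of multiplicity $\geq 2$ in $V(\mu)$ outside the listed exceptions (1)--(3). Whenever $\mu = \nu + \lambda$ with $\nu, \lambda$ dominant and $\lambda \neq 0$, Lemma~\ref{lem:multweights}(1) turns \emph{any} weight of $V(\nu)$ of multiplicity $m$ into a \emph{nonzero} weight of $V(\mu)$ of multiplicity $\geq m$; so the task reduces to finding such a splitting with $V(\nu)$ having a weight of multiplicity $\geq 2$.

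Write $\mu = \sum a_i\omega_i$ and let $k = \sum a_i$ be its Cartan length. First, if $k = 2$, then $V(\mu) = \phi_p\phi_q$ and Lemma~\ref{lem:multSL} directly yields a nonzero weight of multiplicity $\geq 2$ unless $\phi_p\phi_q$ is on its exception list; among those, the only ones whose highest weight lies in the root lattice for $n\geq 2$ are the adjoint $\phi_1\phi_n$ (case (1)) and $\phi_2^2(\SL_4)$ (case (3)), since $\phi_1^2$ and $\phi_n^2$ are excluded by $n \geq 2$. Next, for $k \geq 3$, I try to split off a pair $\omega_p + \omega_q$ from the support of $\mu$ so that $\phi_p\phi_q$ is \emph{not} a Lemma~\ref{lem:multSL} exception and $\lambda := \mu - \omega_p - \omega_q$ is nonzero. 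This fails exactly when every pair drawn from the support of $\mu$ lies in $\{\phi_1^2, \phi_n^2, \phi_1\phi_n, \phi_2^2(\SL_4)\}$, which forces the support of $\mu$ into $\{\omega_1, \omega_n\}$, with one extra possibility $\mu = a_2\omega_2$ in $\SL_4$.

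In the $\{\omega_1, \omega_n\}$ case, write $\mu = a\omega_1 + b\omega_n$ with $a + b \geq 3$. If $a, b \geq 1$, I instead split off $\omega_1 + \omega_n$: the adjoint representation has zero weight of multiplicity $n \geq 2$, and Lemma~\ref{lem:multweights}(1) closes the argument. If $a = 0$ or $b = 0$, the root-lattice condition forces $\mu = k(n+1)\omega_1$ or $k(n+1)\omega_n$, which is exactly case (2). In the $\SL_4$ leftover case $\mu = a_2\omega_2$, the root-lattice condition forces $a_2$ even, and for $a_2 \geq 4$ I split off $2\omega_2$; the crucial input here is that $V(2\omega_2) = \phi_2^2(\SL_4) = \phi_1^2(\DD_3)$ equals the traceless symmetric square $S^2_0(\C^6)$ and its zero weight has multiplicity $2$, so Lemma~\ref{lem:multweights}(1) again applies.

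It remains to verify multiplicity one for the roots in each of the three listed exceptions. For the adjoint representation this is immediate from $\lieg = \lie t \oplus \bigoplus_\alpha \lieg_\alpha$. For $\phi_1^{k(n+1)} = S^{k(n+1)}(\C^{n+1})$ (and its dual), every weight of $S^m(\C^{n+1})$ has multiplicity one as a $\GL$-weight, and lifting the root $\epsilon_1 - \epsilon_2$ to a weight of $S^{k(n+1)}(\C^{n+1})$ with nonnegative entries summing to $k(n+1)$ is unique. For $\phi_2^2(\SL_4)$, one decomposes $\phi_2 \otimes \phi_2 = \phi_2^2 \oplus \phi_1\phi_3 \oplus \theta_1$ and counts the multiplicity of $\epsilon_1 - \epsilon_2$ in $\phi_2\otimes\phi_2$ as $2$ by direct pair-counting of weights in $\bigwedge^2\C^4$; subtracting the contribution $1$ from the adjoint $\phi_1\phi_3$ gives multiplicity $1$ in $\phi_2^2$. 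The main obstacle in the argument is the $\SL_4$ subcase $\mu = a_2\omega_2$, where the pairing trick based on Lemma~\ref{lem:multSL} alone is blocked by the very exception $\phi_2^2(\SL_4)$ and one must compute its zero-weight multiplicity directly (using the $\DD_3$ realization) in order to escape the loop.
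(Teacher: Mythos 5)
Your proof is correct and follows essentially the same route as the paper's: the paper's (very terse) argument likewise writes the highest weight as a sum of fundamental weights and combines Lemma~\ref{lem:multSL} with the multiplicity-propagation of Lemma~\ref{lem:multweights} (packaged there as Criterion~\ref{critA}(ii),(iii)) to force a nonzero weight of multiplicity at least two outside the listed cases. You have simply made explicit the case analysis on the support of the highest weight and the verification that the three exceptions really have root multiplicity one, both of which the paper leaves to the reader.
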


\begin{proof}
The representation $\phi$ has highest weight $\lambda=\sum_i\lambda_i\omega_i$ where the $\omega_i$ are the fundamental dominant weights and $\sum_i i\lambda_i$ is a multiple of $n$. Now, Criterion \ref{critA} together with Lemma~\ref{lem:multSL} above implies that the only irreducible representations of $\PSL_{n+1}$ containing the roots with multiplicity one are those listed.
\end{proof}

\begin{proposition}\lab{prop:PSLn}
Let $n\geq2$. The nontrivial irreducible coreduced representations of $\PSL_{n+1}$ are the adjoint representation $\phi_{1}\phi_{n}$, $\phi_{2}^{2}(\SL_{4})$, $\phi_{1}^{3}(\SL_{3})$ and $\phi_{2}^{3}(\SL_{3})$. All other irreducible representations  admit a bad toral slice.
\end{proposition}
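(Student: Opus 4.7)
The plan is to combine Lemma~\ref{lem:stembridge} with direct analysis of the few candidate representations it leaves open. Applying Lemma~\ref{lem:stembridge}, any irreducible $\PSL_{n+1}$-module other than the adjoint $\phi_1\phi_n$, the symmetric powers $\phi_1^{k(n+1)}$ and their duals $\phi_n^{k(n+1)}$, and $\phi_2^2(\SL_4)$ has every root of $\lieg$ appearing as a weight of multiplicity at least two; Proposition~\ref{prop:rootsmult2} then yields a bad toral slice. So the problem reduces to (a) verifying coreducedness of the four listed representations and (b) showing that the remaining symmetric powers admit a bad toral slice.

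For (a), the adjoint $\phi_1\phi_n$ is coreduced by Proposition~\ref{prop:coreduced}(2). The $\SL_3$-modules $\phi_1^3=S^3\C^3$ and $\phi_2^3$ are classical irreducible cofree representations of a simple group, hence coreduced by Theorem~\ref{thm:cofree}. For $\phi_2^2(\SL_4)=\phi_1^2(\DD_3)$, I would use the exceptional isomorphism $\PSL_4\simeq\PSO_6$ to identify this module with the space $S^2_0\C^6$ of traceless symmetric bilinear forms on $\C^6$, which is the isotropy representation of the symmetric space $\SL_6/\SO_6$ and hence a $\theta$-representation of $\SO_6$, so Corollary~\ref{thm:theta} applies.

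For (b), the outstanding cases are $\phi_1^{k(n+1)}$ (and dually $\phi_n^{k(n+1)}$) with $n=2$, $k\geq 2$ or $n\geq 3$, $k\geq 1$. The case $n=2$, $k\geq 2$ is exactly Example~\ref{ex:s3kc3}. For $n\geq 3$ I would first handle $k=1$ and then bootstrap to all $k\geq 2$ via Criterion~\ref{critA}(i), using the Cartan product decomposition $\phi_1^{k(n+1)}=\phi_1^{n+1}\cdot\phi_1^{(k-1)(n+1)}$, whose second factor has a zero weight.

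The heart of the proof is therefore the base case $V=S^m\C^m$ with $m=n+1\geq 4$. At the zero weight vector $v=x_1\cdots x_m$ one checks directly that every root vector $E_{ij}$ acts nontrivially on $v$, so $(G_v)^0=T$ and the slice is the $T$-module obtained from $V$ by removing the zero weight and the roots (each of multiplicity one in $V$). I would exhibit the slice weights $w_j=(m-1)\epsilon_j-\sum_{i\neq j}\epsilon_i$ for $j=1,2$, $\nu_r=-\epsilon_1-\epsilon_2+2\epsilon_r$ for $3\leq r\leq m-2$, and $\mu=-\epsilon_1-\epsilon_2+\epsilon_{m-1}+\epsilon_m$, arising from the monomials $x_j^m$, $x_r^3\prod_{i\neq 1,2,r}x_i$, and $x_{m-1}^2x_m^2\prod_{i=3}^{m-2}x_i$, and verify the relation
\[
w_1+w_2+\sum_{r=3}^{m-2}\nu_r+2\mu=0.
\]
Proposition~\ref{prop:tori} will finish the job provided this relation is indecomposable. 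The principal obstacle is the uniform indecomposability check: because every coordinate of every $T$-weight of $S^m\C^m$ is at least $-1$, no shorter bad relation exists once $m\geq 5$, so I would use a rank computation showing that the only linear relations among the $m-1$ vectors $w_1,w_2,\nu_3,\ldots,\nu_{m-2},\mu$ are rational multiples of $(1,1,\ldots,1,2)$, forcing the exhibited relation to be indecomposable.
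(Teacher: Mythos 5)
Your proposal is correct and follows essentially the same route as the paper: the reduction to symmetric powers via Lemma~\ref{lem:stembridge} and Proposition~\ref{prop:rootsmult2}, followed by an explicit indecomposable weight relation in a toral slice of $S^{m}\C^{m}$ (your relation for $k=1$ is exactly the paper's, which is stated uniformly for all $k\geq 1$ instead of bootstrapping via Criterion~\ref{critA}(i)). The only other differences are cosmetic: the paper leaves the coreducedness of the four listed representations implicit, and it merely asserts indecomposability where you supply the rank computation.
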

 
 \begin{proof}
 By Proposition~\ref{prop:rootsmult2} we know that the only candidates for coreduced irreducible representations of $\PSL_{n}$ are those listed in Lemma~\ref{lem:stembridge} above. So it remains to show that $S^{km}(\C^{m})$ is not coreduced for $m>3$ and for $m=3,k>2$.  For $m\geq 4$ the slice representation at a generic fixed point of the maximal torus $T$ contains the weights $\beta_{i}:=km\epsilon_i$ and the weight $\alpha:=-k(2\epsilon_{1}+2\epsilon_{2}+\epsilon_{3}+\cdots+\epsilon_{m-2})$ of the monomial $(x_{3}\cdots x_{m-2}x_{m-1}^{2}x_{m}^{2})^{k}$ which satisfy the indecomposable relation $m\alpha+2\beta_{1}+2\beta_{2}+\beta_{3}+\cdots+\beta_{m-2}=0$, and so the slice representation is not coreduced. 
 
 For $m=3$ and $k>1$ we have the weights $\beta_{i}:=3k\epsilon_{i}$ and the weight $\alpha:=-3(k-1)\epsilon_{1}-3(k-2)\epsilon_{2}$ of the monomial $x_{2}^{3}x_{3}^{3(k-1)}$ which satisfy the indecomposable relation $k\alpha+(k-1)\beta_{1}+(k-2)\beta_{2}= 0$. Again it follows that the slice representation is not coreduced.  
 \end{proof}

Now we look at type $\BB_n$. 

\begin{proposition}\lab{prop5}
Let $G=\SO_{2n+1}$ be the adjoint group of type $\BB_n$, $n\geq 2$. Then the only nontrivial irreducible coreduced representations are the adjoint representation $\phi_{2}$, the standard representation $\phi_1$ and $\phi_1^2$.
All other irreducible representations  admit a bad toral slice.

The representations $\phi_{2}$ and $\phi_{1}^{2}$ are maximal coreduced whereas $k\phi_{1}$ is coreduced if and only if $k\leq n$.
\end{proposition}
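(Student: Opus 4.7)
The plan is to combine Criterion \ref{critA} and the weight-multiplicity techniques of \S \ref{sec:slices} with the method of slices, following the pattern established for the exceptional and type-$\AA$ adjoint groups in Propositions \ref{prop:typeE} and \ref{prop:PSLn}. Since $\SO_{2n+1}$ has trivial center, every irreducible $G$-module factors through the adjoint group and carries a zero weight. A direct weight count shows that $\phi_1$, $\phi_2$ and $\phi_1^2$ all have nonzero weights of multiplicity one, with the zero weight of multiplicity one in $\phi_1$ and of multiplicity $n$ in $\phi_2$ and $\phi_1^2$. Applying Criterion \ref{critA}(iii), every Cartan product $\phi_2\cdot\psi$ or $\phi_1^2\cdot\psi$ with $\psi$ nontrivial admits a bad toral slice, and the same criterion applied to $\phi_1^k=\phi_1^{k-1}\cdot\phi_1$ eliminates $\phi_1^k$ for $k\geq 3$. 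For the remaining candidates---the higher fundamentals $\phi_i=\bigwedge^i\C^{2n+1}$ with $3\leq i\leq n-1$ and the representation $V(2\omega_n)=\bigwedge^n\C^{2n+1}$ for $n\geq 3$---a direct computation exhibits the indecomposable bad relation
\[
(\epsilon_1+\epsilon_2+\epsilon_3)+(\epsilon_1-\epsilon_2-\epsilon_3)+2(-\epsilon_1)=0
\]
among the weights of the toral slice, exploiting the fact that the short root $\epsilon_1$ occurs in each of these representations with multiplicity at least two. Coreducedness of $\phi_1$ (one-dimensional quotient) and $\phi_2=\Ad G$ then follows from Proposition \ref{prop:coreduced}, while coreducedness of $\phi_1^2=V(2\omega_1)$ follows from Corollary \ref{thm:theta} since $V(2\omega_1)$ is the isotropy representation of the symmetric space $\SL_{2n+1}/\SO_{2n+1}$.

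For the maximality of $\phi_2$, let $\psi$ be a nontrivial irreducible $G$-module and pick a singular $v_2\in\lie t$ on the wall $\ker\alpha$ of a short root $\alpha=\epsilon_i$, so that $G_{v_2}^0=G_\alpha\cdot T_\alpha$ with $G_\alpha\simeq\SLtwo$ and the slice $N_{v_2}(\phi_2)\simeq\sltwo+\theta_{n-1}$. By Remark \ref{rem:shortroots} the short root $\epsilon_i$ is a weight of $\psi$, so the $T_\alpha$-invariant part of $\psi|_{G_{v_2}}$ contains a copy of $R_2$ as an $\SLtwo$-module. Hence the slice at $(v_2,0)$ contains $2R_2$ as a direct $G_{v_2}^0$-summand on which $T_\alpha$ acts trivially; this summand is not $\SLtwo$-coreduced by Example \ref{ex:twosltwo}, and therefore not $G_{v_2}^0$-coreduced by Example \ref{ex:HnormalG} applied to the normal subgroup $T_\alpha\subset G_\alpha\cdot T_\alpha$, so the whole slice fails to be coreduced by Example \ref{ex:retraction}(1). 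Proposition \ref{prop:slice.zero.weight} then finishes the argument. For the maximality of $\phi_1^2$, Remark \ref{rem:shortroots} again ensures that every nontrivial $\psi$ has the weights $\pm\epsilon_i$, so the toral slice of $\phi_1^2\oplus\psi$ contains both $\pm 2\epsilon_i$ and $\pm\epsilon_i$ and therefore admits the indecomposable bad relation $2\epsilon_i+2(-\epsilon_i)=0$, forcing non-coreducedness via Proposition \ref{prop:tori}.

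Finally, $k\phi_1$ is coreduced for $k\leq n$ by the classical invariant theory to be established in \S \ref{sec:CIT}. For $k=n+1$ the null cone consists of $(n+1)$-tuples of vectors lying in a common isotropic subspace of $\C^{2n+1}$, and since maximal isotropic subspaces have dimension $n$ such tuples are necessarily linearly dependent; the resulting determinantal covariant of type $\phi_1$ vanishes on the null cone while a degree count shows it cannot lie in the ideal generated by the quadratic invariants, which is the $\SO_{2n+1}$-analogue of Example \ref{ex:3D2}. The main obstacle, as in that example, will be the careful verification that this covariant is genuinely generating; this requires a LiE-based multiplicity count (or equivalently a Frobenius reciprocity calculation using the principal isotropy group), and is the technically heaviest step of the argument.
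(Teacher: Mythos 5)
Your strategy is essentially the paper's (Criterion \ref{critA} and bad toral slices for the irreducible classification, a slice at a subregular semisimple element for the maximality of $\phi_2$, Theorem \ref{CIT.thm}(4) for $k\phi_1$), but two steps need repair. First, the enumeration is incomplete: after Criterion \ref{critA}(iii) has disposed of all Cartan products divisible by $\phi_2$ or by $\phi_1^2$, the surviving highest weights are not only $\omega_3,\dots,\omega_{n-1},2\omega_n$ but all $\lambda=\delta\omega_1+\sum_{i\geq 3}m_i\omega_i$ with $\delta\leq 1$ and $m_n$ even, e.g.\ $\omega_1+\omega_3$, $2\omega_3$, $\omega_3+\omega_5$. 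These are eliminated only after one shows that each higher fundamental has a bad toral slice and then invokes Criterion \ref{critA}(i) to propagate this to every Cartan product containing one of them; that propagation step is missing from your write-up. (Your relation $(\eps_1+\eps_2+\eps_3)+(\eps_1-\eps_2-\eps_3)+2(-\eps_1)=0$ is a correct indecomposable bad relation for the fundamentals themselves: it needs, and has, the short root $\eps_1$ occurring with multiplicity $\geq 2$ so that $-\eps_1$ survives in the toral slice after the tangent space to the orbit is removed.) Second, the claim that $\psi^{T_\alpha}$ contains a copy of $R_2$ for every nontrivial irreducible $\psi$ is false: for $\psi=\phi_1^2$ one has $\dim\psi_{\eps_i}=\dim\psi_{2\eps_i}=1$, so $\psi^{T_\alpha}\cong R_4\oplus\theta_{n-1}$ contains no $R_2$. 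The conclusion survives, since $\psi^{T_\alpha}$ always contains some nontrivial $R_{2k}$ (the weight $\eps_i$ occurs by Remark \ref{rem:shortroots}) and $R_2\oplus R_{2k}$ is never coreduced by Theorem \ref{thm:SLtwo}; moreover for maximality one only needs $\psi\in\{\phi_1,\phi_2,\phi_1^2\}$, and the last two already follow from Proposition \ref{prop:rootsmult2}. The $R_2$ claim should be restricted to $\psi=\phi_1$, where it is correct and reproduces the paper's argument for $\Ad G+\phi_1$.

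Where your route genuinely differs, it is sometimes an improvement: for the maximality of $\phi_1^2$ the toral-slice relation $(2\eps_i)+2(-\eps_i)=0$, available as soon as the short roots acquire multiplicity $\geq 2$ in $\phi_1^2\oplus\psi$, is cleaner than the paper's passage through the maximal subgroup $\SO_3\times\SO_{2n-2}$ and the non-coreducedness of $(\phi_1^4+\phi_1^2,\AA_1)$. On the other hand your sketch for $(n+1)\phi_1$ misidentifies the obstructing covariant: the wedge $v_1\wedge\cdots\wedge v_{n+1}$ has type $\bigwedge^{n+1}\C^{2n+1}\cong\bigwedge^{n}\C^{2n+1}$, not $\phi_1$, and the paper's proof of Theorem \ref{CIT.thm}(4) shows it lies outside the ideal of the invariants by a short $\GL_{m+1}$-module computation with Cauchy's formula rather than a LiE count; since you, like the paper, ultimately just cite Theorem \ref{CIT.thm} for this part, the inaccuracy does not affect the proposition.
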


\begin{proof}
The highest weights of irreducible representations of $G$ are just sums of the highest weights $\omega_{1},\ldots,\omega_{n-1}, 2\omega_{n}$ of the representations $W_\ell:=\bigwedge^\ell(\C^{2n+1})$ for $1\leq \ell\leq n$. For $\ell=2m+1$ or $2m$, $m\geq 2$, one can compute that the weights of $W_\ell$ contain the roots of $G$ with multiplicity $\binom  {n-2}{m-1}$ which is at least $2$. Thus $W_\ell$ admits a non-coreduced slice representation of a maximal torus and is therefore not coreduced for $\ell\geq 4$. For $\ell=3$, hence $n\geq 3$, we have the weights $\pm \epsilon_i\pm\epsilon_j\pm\epsilon_k$ where $1\leq i< j<k\leq n$ as well as the weights $\pm\epsilon_i$, $1\leq i\leq n$ where the latter have multiplicity $\geq 2$. Now the relation 
\begin{equation}\label{eq:badweights}
(\epsilon_1+\epsilon_2+\epsilon_3)+(-\epsilon_1+\epsilon_2+\epsilon_3)+2(-\epsilon_2)+2(-\epsilon_3)=0
\end{equation}
is indecomposable and so the slice representation of the maximal torus is not coreduced. 

Now let $V$ be an irreducible representation of $G$ with highest weight $\lambda=\sum_{i}m_{i}\omega_{i}$. 
If $m_{i}>0$ for some $i\geq 3$ then, by Criterion \ref{critA}, $V$ has a non-coreduced slice representation of a maximal torus, and thus is not coreduced. 

Hence we are left with $\lambda=r\omega_{1}+s\omega_{2}$ where $s$ is even in case $n=2$. Let us first assume that $n>2$. Since $\phi_{2}^{2}$ contains the roots with multiplicity $\geq \dim W_{2}^{T} = n\geq 3$ and since $\phi_{1}\phi_{2}$ contains the indecomposable weight relation $ (2\epsilon_1+\epsilon_2)+2(-\epsilon_1)+(-\epsilon_2)
= 0$ and the short roots occur with multiplicity $>1$, we are reduced to the highest weights $r\omega_{1}$. If $r\geq 3$
 we have the roots and the weights $3\epsilon_1$ and $-2\epsilon_1$ which lead one to see that the slice representation is not coreduced. 

The arguments in the case $n=2$ are the same (one has to replace $\phi_2$ by $\phi_2^2$ everywhere).

Finally, we have to look at direct sums of $\phi_{1}$, $\phi_{2}$ and $\phi_{1}^{2}$. We will see in Theorem~\ref{CIT.thm}(2) that  $k\phi_{1}$ is coreduced if and only if $k\leq n$. Since $\phi_{2}$ and $\phi_{1}^{2}$ contain all roots it remains to show that $\phi_{1}^{2}+\phi_{1}$ and $\phi_{2}+\phi_{1}$ are not coreduced. First consider $\phi_1^2$, $n\geq 4$. The subgroup $\SO_3\times\SO_{2n-2}$ is maximal in $\SO_{2n+1}$, it has rank $n$ and has slice representation  
$\phi_1^4(\AA_1) \oplus\phi_1^2(\DD_{n-1})+\theta_{1}$. If we add a copy of $\phi_1(\BB_n)$, then we have a  subrepresentation $(\phi_1^4+\phi_1^2,\AA_1)$ which is not coreduced. 
The details work out similarly for $n=2$ and $n=3$.  We are left with $\Ad G+\phi_{1}$.   The slice representation   of the group $\SO_3\times(\SO_2)^{n-1}$ contains two copies of the standard representation of $\SO_3$ on $\C^3$  which is not coreduced (Theorem~\ref{thm:SLtwo}). Hence $\Ad G+\phi_1$ is not coreduced.
\end{proof}

For  type $\DD_{n}$ we get the following result. Recall that only irreducible representations of $\PSO_{2n}$ can be coreduced (Proposition \ref{prop:rootsmult2}).

\begin{proposition}\lab{prop6}
Let $G=\PSO_{2n}$ be the adjoint group of type $\DD_{n}$, $n\geq 4$. Then the only nontrivial coreduced representations are  the adjoint representation $\phi_{2}$, $\phi_1^2$, $\phi_{3}^{2}(\DD_{4})$ and $\phi_{4}^{2}(\DD_{4})$, and these are maximal coreduced. All other irreducible representations  admit a bad toral slice.
\end{proposition}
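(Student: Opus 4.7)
The plan is, in three stages, to first verify that the four listed representations are coreduced, then to reduce the classification to the irreducible case, and finally to rule out every remaining irreducible candidate by exhibiting a bad toral slice.

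\emph{Stage 1 (coreducedness).}  The adjoint representation $\phi_2=\Ad G$ is coreduced by Proposition~\ref{prop:coreduced}(2).  The representation $\phi_1^2$ is isomorphic to the space of traceless symmetric $2n\times 2n$-matrices, i.e.\ the isotropy representation of the symmetric pair $(\SL_{2n},\SO_{2n})$ coming from the involution $g\mapsto(g^{T})^{-1}$; hence $\phi_1^2$ is a $\theta$-representation of the semisimple group $\SO_{2n}$ and is coreduced by Corollary~\ref{thm:theta}.  For $\DD_4$ the triality outer automorphisms cyclically permute the three $8$-dimensional fundamentals $\phi_1,\phi_3,\phi_4$, so $\phi_3^2$ and $\phi_4^2$ are obtained from $\phi_1^2$ by an outer automorphism and are likewise coreduced.

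\emph{Stage 2 (reduction to irreducibles).}  Every nontrivial irreducible $\PSO_{2n}$-representation contains a zero weight, and by Remark~\ref{rem:shortroots} contains every root of $\DD_n$ since all roots are short in a simply laced group.  Consequently a direct sum of two nontrivial $\PSO_{2n}$-representations contains each root with multiplicity $\ge 2$, so by Proposition~\ref{prop:rootsmult2} has a bad toral slice.  It follows that every coreduced $\PSO_{2n}$-representation is irreducible, and maximality of the four listed representations is then immediate.

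\emph{Stage 3 (rule out other irreducibles).}  By Remark~\ref{rem:cartanproduct}, a coreduced irreducible $\PSO_{2n}$-representation is either a single fundamental or a Cartan product of the multiplicity-free fundamentals $\phi_1,\phi_{n-1},\phi_n$.  Among fundamentals in the root lattice the only one other than $\phi_2$ is $\phi_{2k}$ with $k\ge 2$; for these the root $\epsilon_1-\epsilon_2$ occurs with multiplicity $\binom{n-2}{k-1}\ge 2$ (count the pairings of the remaining indices), giving a bad toral slice via Proposition~\ref{prop:rootsmult2}.  For Cartan products $\phi_1^a\phi_{n-1}^b\phi_n^c$ with $a+b+c\ge 2$ lying in the root lattice we apply Criterion~\ref{critA}(ii): for $\phi_1^a$ with even $a\ge 4$ an explicit monomial count in $V(a\omega_1)\subset S^{a}\C^{2n}$ gives multiplicity $n-1\ge 3$ for the weight $\epsilon_1-\epsilon_2$.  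The main obstacle is Stage 3 for Cartan products involving a spin fundamental when $n\ge 5$: the tensor products $\phi_n\otimes\phi_n$ and $\phi_{n-1}\otimes\phi_n$ decompose into many irreducible summands, and one must show that after projecting onto the Cartan component a suitable short weight retains multiplicity $\ge 2$.  For $n\ge 5$ the spin-rep weight-multiplicities grow roughly as $2^{n-3}$, so the inequality holds uniformly (by sign-counting or a direct LiE computation), leaving $\phi_3^2$ and $\phi_4^2$ in $\DD_4$ as the only exceptional coreduced Cartan squares, produced by triality.
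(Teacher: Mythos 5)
Your Stages 1 and 2 are correct and follow the same route as the paper: the reduction to irreducibles is exactly the paper's argument (every nontrivial irreducible of $\PSO_{2n}$ has a zero weight, hence contains all roots, so a direct sum has all roots with multiplicity $\geq 2$ and Proposition~\ref{prop:rootsmult2} applies), and your identification of $\phi_1^2$ as the $\theta$-representation of the pair $(\SL_{2n},\SO_{2n})$ is a legitimate way to get coreducedness via Corollary~\ref{thm:theta}. The case split in Stage 3 (single fundamentals $\phi_{2k}$ versus Cartan products of the multiplicity-free fundamentals $\phi_1,\phi_{n-1},\phi_n$) is also the right one, and your multiplicity count $\binom{n-2}{k-1}$ for the root $\epsilon_1-\epsilon_2$ in $\bigwedge^{2k}\C^{2n}$ matches the paper.

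The genuine gap is the case you yourself flag as ``the main obstacle'': Cartan products involving $\phi_{n-1}$ or $\phi_n$. You assert that ``the spin-rep weight-multiplicities grow roughly as $2^{n-3}$, so the inequality holds uniformly,'' but the half-spin representations are minuscule --- every weight has multiplicity one --- so what you actually need is the multiplicity of a root in the \emph{Cartan component} $\phi_n^2$, which is the multiplicity $2^{n-3}$ in $\phi_n\otimes\phi_n$ \emph{minus} the contributions of all the lower summands $\bigwedge^{n-2j}\C^{2n}$. This subtraction is precisely what separates the exceptional case $n=4$ (where $2-1=1$, and $\phi_3^2,\phi_4^2$ really are coreduced) from $n=6$ (where $8-5=3$); an argument that ``holds uniformly'' cannot be right, since it would also rule out $\DD_4$. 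You also never treat the parity issue: for $n$ odd the squares $\phi_{n-1}^2,\phi_n^2$ are not in the root lattice, so the relevant products are things like $\phi_{n-1}\phi_n=\bigwedge^{n-1}\C^{2n}$ and $\phi_1\phi_n^2$, which need a separate multiplicity bound. The paper closes all of this by using the isomorphism $\phi_{n-1}^2\oplus\phi_n^2\cong\bigwedge^n\C^{2n}$ to control the root multiplicities of the Cartan squares when $n$ is even ($\geq 3$ for $n\geq 6$), and, when $n$ is odd, by showing $\phi_{n-1}^2$ and $\phi_n^2$ contain the weights of $W_1=\C^{2n}$ with multiplicity at least $3$, so that any admissible Cartan product inherits root multiplicity $\geq 3$ via Lemma~\ref{lem:multweights} and Criterion~\ref{critA}. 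Without an explicit computation of this kind your Stage 3 does not establish the result for any product involving a spin fundamental.
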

  
\begin{proof}
The highest weights of representations of $\SO_{2n}$ are just sums of the highest weights $\omega_{1},\ldots,\omega_{n-2},\omega_{n-1}+\omega_{n}$ of the representations  $W_\ell:=\bigwedge^\ell(\C^{2n})$ for $1\leq \ell\leq n-1$ and twice the highest weights $\omega_{n-1}$ and $\omega_n$ of the two half-spin representations. Moreover,   $\phi_{n-1}^{2}\oplus\phi_{n}^2\simeq W_n:=\bigwedge^n(\C^{2n})$. 
  
The representations $W_{2m}$ for $m>1$ contain the roots of $G$ with multiplicity $\binom {n-2}{m-1}\geq 2$. The representations $W_k$ for $k$ odd, $k>1$, have no zero weights, but they contain the weights of $\phi_1$ with multiplicity greater than one. Hence the Cartan products $W_kW_\ell$ for $k,\ell\leq n-1$ odd, $k+\ell\geq4$,   contain  the adjoint representation more than once, so that the representations are not coreduced. We already know that $\phi_1^2$ is   coreduced and by Criterion \ref{critA} no power $\phi_1^{2k}$ is coreduced for $k\geq 2$.

It remains to consider those representations $\phi$ of $\PSO_{2n}$ which are Cartan products with $\phi_{n-1}^2$ or $\phi_n^2$. If $n\geq 6$ is even   then both contain the roots at least three times, hence $\phi$ is not coreduced.     If $n=4$, then $\phi_3^2$ and $\phi_4^2$ are outer isomorphic to $\phi_1^2$ which is  coreduced.  If $\phi$ is not exactly one of these representations, then it is not coreduced by Criterion \ref{critA}.
If $n$ is odd then  $\phi_{n-1}^{2}$ and $\phi_{n}^{2}$ both contain the weights of $W_1$ at least three times and so $\phi$ contains the roots with multiplicity at least 3 and is not coreduced.
\end{proof}
  
For type $\CC_n$ we will use the following lemma

\begin{lemma}\lab{lem:HiHj}
Let $H_1,\dots,H_4$ be copies of $\SL_2$ and let $V_i\simeq\C^2$ have the standard action of $H_i$. Let $H=\prod_i H_i$ and $V=\bigoplus_{i<j} V_{ij}$ where $V_{ij}=V_i\otimes V_j$. Then $(V,H)$ is not coreduced.
\end{lemma}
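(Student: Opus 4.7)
The plan is to apply the method of covariants from Section~\ref{sec:covariants} (Proposition~\ref{prop:gencovariants}): I seek a generating covariant $\phi\colon V \to W$ (for some nontrivial irreducible $H$-module $W$) which vanishes on the null cone $\NN(V,H)$. Note that the toral-slice method of Section~\ref{sec:slices} is not directly applicable: the $T$-weights of $V$ are $\{\pm\epsilon_i \pm \epsilon_j : 1 \leq i < j \leq 4\}$, so $V^{T} = (0)$ and there is no zero weight vector at which to form a slice.

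The natural setup is to view $H = \SLtwo^{4}$ as a subgroup of $\Sp(U,\omega)$, where $U := V_1 \oplus V_2 \oplus V_3 \oplus V_4 \cong \C^{8}$ and $\omega = \omega_1 + \omega_2 + \omega_3 + \omega_4$. Identifying each $V_{ij}$ with $\Hom(V_i, V_j)$ via $\omega_i$, the components $v_{ij}$ of a point $v \in V$ assemble into a block endomorphism $M = M(v) \in \End(U)$ (with $M_{ji} := M_{ij}^{T\omega}$ for $i < j$). Since $H \subset \Sp(U,\omega)$, each trace $\tr(M^k)$ is an $H$-invariant polynomial on $V$, so $\tr(M(v)^k) = 0$ for all $v \in \NN(V,H)$ and every $k \geq 1$. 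Hence $M(v)$ is nilpotent for every $v \in \NN(V,H)$, and in particular $M^{8} \equiv 0$ on $\NN(V,H)$. Decomposing the powers $M^k$ under the $H$-action yields a family of covariants $V \to W$ all vanishing on $\NN(V,H)$, one for each irreducible $H$-summand $W$ of $\End(U) \cong U \otimes U$ --- namely the $V_{ij}$'s (from off-diagonal blocks) and the $\sll_2(H_i)$'s (from the traceless parts of the diagonal blocks).

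To finish, I must isolate a projection $\phi$ of some $M^k$ that is a generator of $\Cov(V,W)$, i.e.\ not contained in $\mm_{0}\Cov(V,W)$. Since $V$ has no linear $H$-invariants and its quadratic $H$-invariants are exactly the six $q_{ij} := \det(v_{ij})$, the submodule $\mm_{0}\Cov(V,W)$ in low degrees is highly constrained, and a multiplicity count of $W$ in $S^d V^*$ (via the branching rules for $\SLtwo^{4}$) pins down the generators. The principal obstacle is the vanishing step: a direct computation shows that the simplest candidates --- the degree-$2$ off-diagonal blocks $(M^2)_{ij}$ --- do \emph{not} vanish on all of $\NN(V,H)$, so one must pass to a higher-degree projection (for instance a degree-$3$ cyclic product like $M_{ki}^{T\omega} M_{jk} M_{ij}$) and carry out a careful case analysis over the positive-weight subspaces $V_{+,\lambda}$ for $1$-parameter subgroups $\lambda \in X_*(T)$ of the maximal torus of $H$, using the $H$-equivariance of $\phi$ to reduce to such subspaces and verify vanishing on every irreducible component of $\NN(V,H)$.
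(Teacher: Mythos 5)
Your setup is internally consistent: $H\subset\Sp(U,\omega)$ with $U=\C^8$, the power traces $\tr(M^k)$ are $H$-invariants (indeed $\tr(M^2)$ recovers $\sum_{i<j}q_{ij}$), so $M(v)$ is nilpotent on $\NN(V,H)$, and Proposition~\ref{prop:gencovariants} is a legitimate tool here. But the argument stops exactly where the work begins, and \emph{both} steps that the method of covariants requires are left unverified. First, the only covariants you actually know vanish on the null cone are the blocks of $M^8$: nilpotency of an $8\times 8$ matrix gives nothing below the eighth power, you concede that the degree-$2$ blocks $(M^2)_{ij}$ do \emph{not} vanish on $\NN$, and the degree-$3$ cyclic products are offered only as candidates --- the ``careful case analysis over the positive-weight subspaces'' that would establish their vanishing on every component $HZ_\rho$ of the null cone is never carried out. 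Second, you never verify that any vanishing covariant is a generator: the multiplicity count of $W$ in $S^dV^*$ against the contribution of $\mm_{0}\Cov(V,W)$ (in degree $3$ this means comparing with the six products $q_{kl}\cdot\mathrm{pr}_{W}$) is announced but not performed, and for the degree-$8$ covariants that you do know vanish there is no reason at all to expect them to lie outside $\mm_0\Cov(V,W)$. Since \emph{any} non-coreduced module automatically admits a generating covariant vanishing on its null cone, exhibiting one with both properties is the entire content of the lemma; as written this is a plan, not a proof.

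For contrast, the paper sidesteps this computation: it restricts to the subrepresentation $V_{12}\oplus V_{14}\oplus V_{23}\oplus V_{34}\oplus V_{24}$, quotients successively by $H_1$ and by $H_3$ (the quotient of $\C^2\otimes\C^4$ by $\SL_2$ being a hypersurface cut out by an invariant equation, so Lemmas~\ref{lem:invariant.ideal} and~\ref{lem:HnormalizingG} apply), and arrives at $(3\C^4\oplus\theta_4,\SO_4)$, which is already known to be non-coreduced by the covariant count of Example~\ref{ex:3D2}. If you wish to complete your direct approach you must supply, at minimum: a specific covariant of a specific type and degree, a verification that it vanishes on $Z_\rho$ for every relevant one-parameter subgroup $\rho$ (not just that $M$ is nilpotent there), and an explicit multiplicity count showing that not all such covariants lie in the ideal generated by the invariants.
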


\begin{proof}
Consider the subrepresentation $V':=V_{12}\oplus V_{14}\oplus V_{23}\oplus V_{34}\oplus V_{24}$. We have the quotient mapping (by $H_{1}$) from $V_{12}\oplus V_{14}$ to   $V'_{24}\oplus\theta_2$ where $V_{24}'$ is another copy  of $V_{24}$.  The image is a hypersurface $F$ defined by an equation saying that the invariant of $(V'_{24},H_2\times H_4)$ is the product of the coordinate functions on $\theta_2$. By Lemmas \ref{lem:invariant.ideal} and \ref{lem:HnormalizingG} (see Examples \ref{ex:invariant.hypersurface} and \ref{ex:HnormalG})   the representation $V_{24}'\oplus V_{23}\oplus V_{34}\oplus V_{24}\oplus\theta_2$ of $H_2\times H_3\times H_4$ is coreduced if 
$V'$ is coreduced. Quotienting by the action of $H_3$ we similarly obtain   a representation $(V_{24}'\oplus V_{24}''\oplus V_{24}\oplus \theta_4,H_2\times H_4)\simeq (3\C^4\oplus\theta_4,\SO_4)$ which is  not coreduced (Example~\ref{ex:3D2}). Hence $(V',H)$ and $(V,H)$ are not coreduced.
\end{proof}

The fundamental representations $\phi_i$ of $\CC_n$ are given by $\phi_{1}=\C^{2n}$, $\phi_{2}=\bigwedge^{2}\C^{2n}/\C\beta$, and $\phi_{i}=\bigwedge^{i}\C^{2n}/\beta\wedge\bigwedge^{i-2}\C^{2n}$ for $i=3,\ldots,n$ where $\beta\in\bigwedge^{2}\C^{2n}$ is the invariant form. They can be realized  as the irreducible subspaces $\bigwedge_{0}^i(\C^{2n})\subset \bigwedge^i(\C^{2n})$ of highest weight $\omega_{i}:=\eps_{1}+\cdots+\eps_{i}$. The generators of the representations of the adjoint group $G = \PSp_{2n}$ are the $\phi_i$ for $i$ even and the $\phi_i\phi_j$ for $i$ and $j$ odd.

\begin{proposition}\lab{prop:Cn}
Let $G=\PSp_{2n}$ be the adjoint group of type $\CC_n$, $n\geq 3$. Then the nontrivial irreducible coreduced representations of $G$ are the adjoint representation $\phi_1^2$,  $\phi_2$ and $\phi_4(\CC_4)$, and these are all maximal. Moreover, all other irreducible representations admit a bad slice.
\end{proposition}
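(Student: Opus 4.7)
Proof plan: The coreducedness of the three listed representations follows quickly: $\phi_1^2 = \Ad G$ is coreduced by Proposition~\ref{prop:coreduced}(2), and both $\phi_2(\CC_n)$ and $\phi_4(\CC_4)$ appear on Schwarz's list of cofree irreducible representations of simple groups (\cite{Sc1978Representations-free}) and are thus coreduced by Theorem~\ref{thm:cofree}.

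The bulk of the argument rules out every other irreducible $V(\lambda)$ of $\PSp_{2n}$, where $\lambda = \sum m_i\omega_i$ with $\sum_{i\text{ odd}} m_i$ even. Since $\Ad$ has zero weight of multiplicity $n$ and $\phi_2$ has zero weight of multiplicity $n-1$, both $\geq 2$ for $n \geq 3$, Criterion~\ref{critA}(iii) yields a bad toral slice whenever $V(\lambda)$ admits a Cartan-product decomposition with $\Ad$ or $\phi_2$ as a factor. The other Cartan products (such as $V(2\omega_4)$ or $V(\omega_4 + \omega_{2k})$) and highest weights like $V(\omega_i + \omega_j)$ for $i, j$ odd which do not admit such a decomposition are treated via Lemma~\ref{lem:multweights}, which produces a nonzero weight of multiplicity $\geq 2$ and hence a bad toral slice by Criterion~\ref{critA}(ii) or Proposition~\ref{prop:rootsmult2}. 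The remaining irreducibles are the fundamental representations $\phi_{2k}$ with $k \geq 2$ and $(k, n) \neq (2, 4)$: a direct weight count gives short root multiplicity $\binom{n-2}{k-1} - \binom{n-2}{k-2} \geq 2$ in $\phi_{2k}(\CC_n)$, yet long roots $\pm 2\eps_i$ never appear, so Proposition~\ref{prop:rootsmult2} is not directly available. I instead take the closed orbit of the $\Sp_{2k} \times \Sp_{2n-2k}$-invariant element $e_1 \wedge f_1 \wedge \cdots \wedge e_k \wedge f_k \in \phi_{2k}$ (or its projection into $\phi_{2k}$), compute the slice representation, and, restricting the $\mathfrak{sp}_2 \oplus (\C^2 \otimes \C^{2n-2k})$-subrepresentation to the $\Sp_2 = \SL_2$-factor, obtain $R_2 + k R_1$ with $k \geq 2$, which is not coreduced by Theorem~\ref{thm:SLtwo}. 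The generating covariant $(A, v) \mapsto A v$ (vanishing on the null cone but outside the invariant ideal, by Proposition~\ref{prop:gencovariants}) lifts to a bidegree-$(1,1)$ covariant of $\Sp_{2k} \times \Sp_{2n-2k}$ on the slice; its vanishing on the full slice null cone follows from the Hilbert--Mumford criterion applied to 1-parameter subgroups of each factor, and its non-membership in the invariant ideal follows from the fact that the only degree-$1$ $\Sp_{2k} \times \Sp_{2n-2k}$-invariants come from trivial summands of the remainder and cannot produce an element of bidegree $(1,1)$.

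For maximality: by Examples~\ref{ex:retraction}(1) it suffices to check sums $U \oplus W$ where both summands are drawn from $\{\Ad, \phi_2, \phi_4(\CC_4)\}$. The cases $2\Ad$ and $2\phi_2$ are Examples~\ref{ex:twoadjointreps} and~\ref{ex:2c3}. For the remaining pairs, I consider the closed orbit through $(0, v_0)$ where $v_0$ is the $H$-invariant element of $W$, with $H = \Sp_2 \times \Sp_{2n-2}$ when $W = \phi_2$ and $H = \Sp_4 \times \Sp_{2n-4}$ when $W = \phi_4(\CC_4)$; since $v_0 \in W^T$, the point $(0, v_0)$ is a zero weight vector and Proposition~\ref{prop:slice.zero.weight} reduces the question to showing the slice representation at $(0, v_0)$ is not coreduced. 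A direct computation shows that this slice contains $\mathfrak{sp}_2 \oplus (\C^2 \otimes \C^{2n-2k}) \oplus \cdots$ as an $H$-subrepresentation (coming from $\Ad|_H$), and the same covariant lifting as above produces a generating covariant of type $\C^2 \otimes \C^{2n-2k}$ that vanishes on the $H$-null cone but not in the $H$-invariant ideal. The main obstacle is the second step for the fundamental representations $\phi_{2k}$ with $k \geq 2$: the absence of long roots precludes the uniform toral-slice method of section~\ref{sec:slices}, and the combined slice computation and covariant lifting must be carried out case by case.
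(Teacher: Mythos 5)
Your treatment of the even fundamental representations $\phi_{2k}$, $k\geq 2$, $n\geq 5$ --- which you correctly identify as the crux --- has a gap that I do not think can be repaired along the lines you propose. First, the slice representation at the $\Sp_{2k}\times\Sp_{2n-2k}$-fixed vector is a subquotient of $\bigwedge^{2k}\C^{2n}$ restricted to $\Sp_{2k}\times\Sp_{2n-2k}$, which decomposes into pieces $\bigwedge^{a}\C^{2k}\otimes\bigwedge^{2k-a}\C^{2n-2k}$; no symmetric square, hence no copy of $\mm\lie{sp}_{2}=S^{2}\C^{2}$, ever appears, so the claimed restriction $R_{2}+kR_{1}$ to an $\SL_2$-factor is unjustified. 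Second, and more seriously, even granting a non-coreduced $\SL_2$-restriction, this does not imply non-coreducedness of the slice representation of the full isotropy group, and your proposed fix --- lifting the bidegree-$(1,1)$ covariant $(A,v)\mapsto Av$ and checking it vanishes on the null cone via Hilbert--Mumford --- fails: for a one-parameter subgroup $\rho$ acting with weight $t>0$ on $\C^2$ and a sufficiently large weight $s>t$ on the other factor, the positive weight space $Z_\rho$ contains the raising operator $e\in\mm\lie{sp}_2$ together with vectors of $\C^2$-weight $-t$ (and second-factor weight $s$), and $e$ maps these to nonzero vectors; so the covariant does not vanish on $Z_\rho$, hence not on the null cone of the larger group. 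This is exactly the phenomenon that makes "non-coreduced on restriction to a subgroup" useless on its own. The paper's route is different: at a \emph{generic zero weight vector} of $\phi_{2k}$ the identity component of the isotropy group is covered by the product $H=\prod_{j=1}^{n}H_j$ of the long-root copies of $\SL_2$ (since the long roots $\pm2\eps_i$ are not weights of $\phi_{2k}$), the slice contains $\bigoplus_{j<k}\C^2_j\otimes\C^2_k$ as a genuine $H$-subrepresentation, and Lemma~\ref{lem:HiHj} --- proved by successive quotients reducing to $(3\C^4,\SO_4)$ --- shows this is not coreduced. Your maximality argument for $\Ad+\phi_2$, etc., relies on the same covariant lifting and fails for the same reason; the paper again uses the generic zero weight vector of $\phi_2$ and Lemma~\ref{lem:HiHj} (with an ad hoc quotienting argument for $n=3$).

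A secondary issue: for the Cartan products $\phi_i\phi_j$ with $i,j$ odd that do not factor through $\Ad$ or $\phi_2$ (e.g.\ $\phi_1\phi_3(\CC_3)$, $\phi_3^2(\CC_3)$), your appeal to Lemma~\ref{lem:multweights} to produce a nonzero weight of multiplicity $\geq 2$ is not automatic when every weight of each factor has multiplicity one, as happens for $\phi_3(\CC_3)$. The paper sidesteps multiplicities entirely here by exhibiting the explicit indecomposable relation
$(2\epsilon_1+\epsilon_2+\epsilon_3)+(2\epsilon_1-\epsilon_2-\epsilon_3)+2(-\epsilon_1+2\epsilon_2+\epsilon_3)+2(-\epsilon_1-2\epsilon_2-\epsilon_3)=0$
among weights of $\phi_i\phi_j$, which gives a bad toral slice directly. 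The parts of your proposal that do work --- coreducedness of the three listed representations via cofreeness, and the elimination of Cartan products having $\Ad$ or $\phi_2$ as a factor via Criterion~\ref{critA}(iii) --- agree in substance with the paper.
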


\begin{proof}
(a) First consider the case of $\phi_i\phi_j$ 
where $i$ and $j$ are odd. We may suppose that $j\geq 3$. Then $\phi_j$ contains the weight $\epsilon_1+\epsilon_2+\epsilon_3$ (it is a dominant weight which is the highest weight of $\phi_j$ minus a sum of positive roots). By the action of the Weyl group we have all the weights $\pm\epsilon_1\pm\epsilon_2 \pm\epsilon_3$. In $\phi_i$ (and $\phi_j$) we similarly have all the weights $\pm\epsilon_k$. Thus $\phi_i\phi_j$ contains the roots $2\epsilon_1$ and $\epsilon_1-\epsilon_2$, hence all the roots. Moreover, we have the following indecomposable relation of weights in 
$\phi_{i}\phi_{j}$ (none of which are roots):

\begin{multline}\label{eq:badrelation}
(2\epsilon_1+\epsilon_2+\epsilon_3)+(2\epsilon_1-\epsilon_2-\epsilon_3)+\\
2(-\epsilon_1+2\epsilon_2+\epsilon_3)+2(-\epsilon_1-2\epsilon_2-\epsilon_3) = 0
\end{multline}
Hence $\phi_i\phi_j$ has a bad toral slice and is therefore not coreduced. The same holds for every Cartan product of $\phi_i\phi_j$ with any other representation of $G$. 

Now $\phi_1^4$ is a representation of $G$, but since $\phi_1^2$ contains the trivial representation $n$ times, $\phi_1^4$ contains the adjoint representation at least $n$ times, hence has a bad toral slice and is not coreduced. Therefore, the adjoint representation $\phi_1^2$ is the only coreduced irreducible representation $\phi=\phi_{i_{1}}\phi_{i_{2}}\cdots\phi_{i_{m}}$ of $G$ where at least one $i_{k}$ is odd.
\par\smallskip
(b) Now we consider representations $\phi_{2i}$, $2i\leq n$. These representations, one can show as above, contain the short roots of $G$. But   the long roots do not occur. Hence the connected component of the isotropy group at a generic zero weight vector is covered by a product $H:=\prod_{j=1}^n H_j$ where each $H_j$ is the copy of $\SL_2$ in $G$ corresponding to the positive long root $2\epsilon_j$. If $n\geq 5$ and $2i\geq 4$, then the slice representation
contains the subrepresentation
$$
\bigoplus_{1\leq j<k\leq n} V_{jk}\quad\text{where}\quad V_{jk} := (\C^{2}\otimes \C^{2},H_{j}\times H_{k}). 
$$
which is not coreduced (Lemma \ref{lem:HiHj}).
Finally, one easily sees that any product $\phi_{2i}\phi_{2j}$ contains all the roots  as well as the zero sum of weights given above in 
equation~\eqref{eq:badrelation}. This includes the case where a factor is $\phi_2$ or $\phi_4$. Hence the irreducible coreduced representations of $G$ are as claimed.
\par\smallskip
(c) It remains to show that the coreduced representations of $G$ are all irreducible. As seen above, the connected component of the isotropy group at a generic zero weight vector of $\phi_{2}$ is covered by a product $H:=\prod_{j=1}^n H_j$ where each $H_j$ is the copy of $\SL_2$ in $G$ corresponding to the positive long root $2\epsilon_j$. If we add another copy of $\phi_{2}$ or the adjoint representation $\phi_{1}^{2}$, then the slice representation contains $\bigoplus_{1\leq j<k\leq n} V_{jk}$ where $V_{jk} := (\C^{2}\otimes \C^{2},H_{j}\times H_{k})$, which is not coreduced for $n\geq 4$. The same holds if $n=4$ and we add a copy of $\phi_{4}(\CC_{4})$. This proves the claim for $n\geq 4$, because $\phi_{1}^{2}$ and $\phi_{4}(\CC_{4})$ contain all roots. For $\phi_{2}(\CC_{3})+\phi_{1}^{2}(\CC_{3})$ we have the slice representation of $H=H_1\times H_2\times H_3$ on  $V_{12}\oplus V_{13}\oplus V_{23}\oplus\lieh_1\oplus\lieh_2\oplus\lieh_3\oplus\theta_2                                                                                                                                                                                                                                                                                                                                                                                       $ where the $H_i$ are copies of $\SL_2$ and the $V_{ij}$ are as above. Consider the subrepresentation $(V',H_1\times H_2):=(V_{12}\oplus\lieh_1\oplus \lieh_2,H_1\times H_2)$. The principal isotropy group of $\lieh_1$ is $\C^*\times H_2$ where $\C^*$ acts on $V_{12}$ with weights $\pm 1$. Let $\lieh_2'$ denote a second copy of $\lieh_2$. Then the quotient of $V_{12}$ by $\C^*$ is a quadratic hypersurface in $\lieh_2'+\theta_1$ which equates the quadratic invariant of $\lieh_2'$ and the square of the coordinate function on $\theta_1$. Thus, as in Lemma \ref{lem:HiHj},  the fact that the representation $(\lieh_2+\lieh_2'+\theta_1,H_2)$  is  not coreduced (Example~\ref{ex:2c3}) implies that $V'$ is not coreduced, hence  neither is $\phi_{2}(\CC_{3})+\phi_{1}^{2}(\CC_{3})$.
Finally, $2\phi_{2}(\CC_{3})$ is not coreduced as we have seen in Example~\ref{ex:2c3}.
\end{proof}
\begin{remark}\lab{rem:irred-cored-classical}
The proofs above show that if an irreducible representation $(V,G)$ of an  adjoint classical group $G$ is not coreduced, then $V$ has a bad slice. We have already seen in the previous section that the same holds for the exceptional groups (Remark~\ref{rem:irred-cored-exceptional}).
\end{remark}

\bigskip
\section{Irreducible Coreduced Representations of Semisimple Groups}\lab{sec:irreducible-semisimple}

In this section we classify the irreducible coreduced representations of adjoint semisimple groups. 

\begin{example} The representation $(\C^{2n+1}\otimes\C^{2m+1},\SO_{2n+1}\times\SO_{2m+1})$  is the isotropy representation of a symmetric space. (Consider the automorphism $\theta$ of $\SO_{2(n+m+1)}$ given by conjugation with 
$\left[\begin{smallmatrix}\id_{2n+1}&\\&-\id_{2m+1}\end{smallmatrix}\right]$.) It now follows from
\cite[Theorem 14, p. 758]{KoRa1971Orbits-and-represe} that this representation is coreduced for all $n,m\geq 1$.
\end{example}

\begin{example} The representation $(V,G\times H)=(\C^{3} \otimes\phi_{1}(\Gtwo), \SO_{3}\times\Gtwo)$ is coreduced.
In  fact, $(V,H)$ is cofree and the quotient $\quot VH$ is the $\SO_{3}$-module $\phi_{1}^{4}\oplus \theta_{2}$ which is  cofree and coreduced. Hence $(V,G\times H)$ is cofree, too.  Now the proper nontrivial slice representations of $(3\phi_1,\Gtwo)$ are $(2\C^3+2(\C^3)^*+\theta_3,\SL_3)$ (coreduced by Theorem \ref{CIT.thm}) and $(2\C^2+\theta_6,\SL_2)$ (coreduced by Theorem \ref{thm:SLtwo}). Thus every fiber of $\pi\colon V\to\quot VH$ is reduced, except for the zero fiber, which has codimension 7. Thus the null cone of $(V,G\times H)$, which has codimension 4, is reduced.
\end{example}

Surprisingly, these two examples are the only irreducible coreduced representations besides those where $G$ is simple. 

\begin{theorem}\lab{thm:adjoint-semisimple} 
The coreduced irreducible representations of a semisimple non-simple adjoint group are 
$$
(\phi_{1}(\BB_{n})\otimes\phi_{1}(\BB_{m}), \BB_{n}\times\BB_{m})\quad \text{and}\quad 
(\phi_{1}^{2}(\AA_{1})\otimes\phi_{1}(\Gtwo), \AA_{1}\times\Gtwo).
$$
\end{theorem}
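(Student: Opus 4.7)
The positive direction of the theorem is handled by the two examples immediately preceding the statement: the first identifies $(\phi_{1}(\BB_{n})\otimes\phi_{1}(\BB_{m}),\BB_{n}\times\BB_{m})$ as the isotropy representation of the symmetric pair $\SO_{2n+2m+2}/(\SO_{2n+1}\times\SO_{2m+1})$ and invokes Kostant--Rallis, while the second verifies that $(\C^{3}\otimes\phi_{1}(\Gtwo),\SO_{3}\times\Gtwo)$ is cofree with every slice representation coreduced.

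For the converse, I would write $G=G_{1}\times\cdots\times G_{k}$ ($k\geq 2$, each $G_{i}$ simple adjoint) and $V=V_{1}\otimes\cdots\otimes V_{k}$ with each $V_{i}$ a nontrivial irreducible $G_{i}$-module; since $G_{i}$ is adjoint, each $V_{i}$ has a zero weight. The first step is to reduce to the case that each $V_{i}$ is itself coreduced: a non-coreduced $V_{i}$ has a bad slice at some $v_{i}\in V_{i}^{T_{i}}$ by Remarks \ref{rem:irred-cored-classical} and \ref{rem:irred-cored-exceptional}, and Corollary \ref{cor:slice.tensor}(1) propagates this to a bad slice of $V$ at $\bigotimes v_{j}$ (taking generic zero-weight vectors in the remaining factors), contradicting the coreducedness of $V$ via Proposition \ref{prop:slice.zero.weight}. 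If additionally each $V_{i}$ contains all roots of $G_{i}$, so that $V$ has a toral slice, then Proposition \ref{prop:product} forces $V=\sll_{2}\otimes\sll_{2}=\phi_{1}(\BB_{1})\otimes\phi_{1}(\BB_{1})$, the $n=m=1$ subcase of the first family.

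Otherwise some $V_{i}$ is ``short-root only''; from the coreduced lists of Theorems \ref{thm:cored.simple} and \ref{thm:coredexceptional} these are $\phi_{1}(\BB_{n})$, $\phi_{1}(\Gtwo)$, $\phi_{2}(\CC_{n})$, $\phi_{4}(\CC_{4})$, $\phi_{4}(\FF_{4})$. For the last three, whose zero weight has multiplicity $\geq 2$, I would pick a generic $v_{i}\in V_{i}^{T_{i}}$ whose isotropy $(G_{i})_{v_{i}}^{0}$ is the long-root Levi (a product of copies of $\SL_{2}$), and apply Lemma \ref{lem:slice.tensor} at $\bigotimes v_{j}$ to exhibit inside the slice of $V$ a copy of the non-coreduced configuration from Lemma \ref{lem:HiHj} or Example \ref{ex:2c3}. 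This forces each factor to lie in the multiplicity-free list $\{\phi_{1}(\BB_{n}),\phi_{1}(\Gtwo)\}$, with the identification $\phi_{1}(\BB_{1})=\phi_{1}^{2}(\AA_{1})$.

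The remaining two-factor tensor products beyond the two listed families are $\phi_{1}(\BB_{n})\otimes\phi_{1}(\Gtwo)$ with $n\geq 2$ and $\phi_{1}(\Gtwo)\otimes\phi_{1}(\Gtwo)$. For the first, the slice at $e_{0}\otimes e_{0}$ works out to the $\SO_{2n}\times\SL_{3}$-module $\bigl(\C^{2n}\otimes(\C^{3}\oplus(\C^{3})^{*})\bigr)\oplus\theta_{1}$, and one shows that it fails to be coreduced for $n\geq 2$ by first quotienting by the $\SL_{3}$-action and invoking Example \ref{ex:3D2} together with the classical invariant theory of $\SO_{2n}$. The case $\phi_{1}(\Gtwo)\otimes\phi_{1}(\Gtwo)$ is the genuinely hard case and is deferred to Appendix \ref{AppB}. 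Finally, for $k\geq 3$ with all factors in the multiplicity-free list, iterating Lemma \ref{lem:slice.tensor} at $\bigotimes v_{i}$ produces a slice containing a three-factor cross configuration whose $T$-weights admit the indecomposable Proposition \ref{prop:product}-style relation $(\alpha+\beta)+(\beta+\gamma)+(\alpha+\gamma)+2(-\alpha-\beta-\gamma)=0$, yielding non-coreducedness after a further slice reduction to a toral slice of the cross configuration. The principal obstacles are the explicit slice analysis of $\phi_{1}(\BB_{n})\otimes\phi_{1}(\Gtwo)$ for $n\geq 2$ and the $\Gtwo\times\Gtwo$ case reserved for Appendix \ref{AppB}.
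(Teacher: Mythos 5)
Your overall architecture matches the paper's: use the two preceding examples for the positive direction, reduce via Lemma \ref{lem:slice.tensor} and Remarks \ref{rem:irred-cored-exceptional}, \ref{rem:irred-cored-classical} to tensor products of coreduced irreducibles of simple adjoint groups, and then eliminate candidates by exhibiting bad slices. But there is a genuine gap in your case analysis. Your dichotomy is: (A) every factor $V_i$ contains all roots of $G_i$, handled by Proposition \ref{prop:product}; (B) some factor is short-root-only, and you eliminate the sub-case where that factor is $\phi_2(\CC_n)$, $\phi_4(\CC_4)$ or $\phi_4(\FF_4)$. From this you conclude that every factor lies in $\{\phi_1(\BB_n),\phi_1(\Gtwo)\}$. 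That does not follow: eliminating type-(5) factors says nothing about the \emph{other} factors, which could still be adjoint representations, $\phi_1^2(\BB_m)$, $\phi_1^2(\DD_m)$, $\phi_1^3(\AA_2)$, $\phi_2^2(\AA_3)$, or $\phi_1^4(\AA_1)$. Concretely, modules such as $\Ad(\PSL_3)\otimes\phi_1(\BB_2)$, $\phi_1^2(\DD_4)\otimes\phi_1(\Gtwo)$, or $\phi_1^4(\AA_1)\otimes\phi_1(\BB_n)$ are touched by neither of your cases --- Proposition \ref{prop:product} is inapplicable because the long roots of $\BB_n$ (resp.\ of $\Gtwo$) are missing from $V$, and no factor is of type (5). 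The paper closes exactly this hole with a dedicated lemma (``type (2) tensored with anything has a bad slice''), whose proof requires a case-by-case construction of indecomposable weight relations in the slice at $v_1\otimes v_2$, using the specific toral (or $\theta_2$-plus-$\pm\alpha$) slice data of the all-roots factor. Without some version of that lemma your argument cannot reach the claimed conclusion.

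Two smaller points. For $k\geq3$ with all factors in $\{\phi_1(\BB_n),\phi_1(\Gtwo)\}$, the stabilizer of a generic zero weight tensor is (a finite extension of) $\SO_{2n_1}\times\cdots$ or involves $\SL_3$ factors, not a torus, so the relation $(\alpha+\beta)+(\beta+\gamma)+(\alpha+\gamma)+2(-\alpha-\beta-\gamma)=0$ from the toral situation is not directly available; the paper instead runs a three-case computation on the $\SO_{2n}\times\SO_{2m}\times\SO_{2k}$-slice. And for $\phi_1(\BB_n)\otimes\phi_1(\Gtwo)$ your plan (quotient by $\SL_3$, invoke Example \ref{ex:3D2}) is unlikely to succeed for $n=2$: the paper quotients by $\Orth_{2n}$ for $n\geq3$, but the $n=2$ case $(\C^4\otimes(\C^3\oplus(\C^3)^*),\SO_4\times\SL_3)$ needs the null-cone-component and invariant-degree-count machinery of Section \ref{sec:comp-nullcone}. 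You correctly flag these as the hard spots, but as sketched they would not go through.
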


The proof needs some preparation. We first construct a list of non-coreduced representations which will help to rule out most candidates.

\begin{example}\lab{ex:onson}
Let $(V,G)=(\C^n\otimes\C^m+\C^n,\SO_n\times  \SO_m)$ where $m, n\geq 2$.  We show that $V$ is not coreduced.  
There are three cases. Recall that $(\Sym^2(\C^n)\oplus\C^n,\SO_n)$ is not coreduced even for $n=2$. 
\begin{enumerate}
\item $n< m$. Quotienting by the action of $\SO_m$ we obtain $\Sym^2(\C^n)\oplus\C^n$ which is not coreduced, hence neither is $V$.
\item  $n= m$. By Example \ref{ex:quotientSOn} the representation $V$ is not coreduced since quotienting by $\Orth_m$ we obtain the non-coreduced representation $\Sym^2(\C^n)\oplus\C^n$.
\item  $n>m$. We have at most $n$ copies of $\C^n$, so by Example \ref{ex:quotientSOn} we may quotient by the action of $\Orth_n$ to arrive at the representation $\Sym^2(\C^m)\oplus\C^m$ which is not coreduced.   Hence $V$ is not coreduced.
\end{enumerate}
\end{example}

We have seen in Lemma~\ref{lem:slice.tensor} that for two representations $(V,G)$ and $(W,H)$ with a zero weight, if $(V, G)$ has a bad slice, then so does $(V\otimes W,G\times H)$. Together with  Remarks~\ref{rem:irred-cored-exceptional} and \ref{rem:irred-cored-classical} this implies that  we need only consider tensor products of the irreducible coreduced representations $(V,G)$ of the simple adjoint groups. They fall into five types.
\begin{enumerate}
\item $(V,G)=\phi_1^2(\AA_1)=(\C^3,\SO_3)$.
\item $(V,G)=\phi_1^4(\AA_1)$ or there is a slice representation $(W,H)$ where $H^0=T$ is a maximal torus of $G$ (of rank at least 2) and $W$ contains weight spaces of roots $\alpha$, $\beta$ and $-(\alpha+\beta)$ or $W$ contains $\theta_2$ and weight spaces $\pm\alpha$.
\item $(V,G)=\phi_1(\BB_n)$, $n\geq 2$.
\item $(V,G)= \phi_1(\Gtwo)$. 
\item $(V,G)=\phi_4(\FF_4)$, $\phi_4(\CC_4)$, or $\phi_2(\CC_n)$, $n\geq 3$.
\end{enumerate}
Note that the representations $\phi_1^2(\DD_n)$, $n\geq 3$ and $\phi_1^2(\BB_n)$, $n\geq 2$, are of type (2) as is the representation $\phi_1^3(\AA_2)$.
We consider tensor products of the various types of representations.

\begin{lemma}
Let $(V_1,G_1)$ be of type (2) and $(V_2,G_2)$ of arbitrary type. Then $(V_1\otimes V_2,G_1\times G_2)$ has a bad slice.
\end{lemma}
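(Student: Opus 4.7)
The plan is to exhibit, at an explicit $T_1\times T_2$-fixed vector, a closed orbit in $V_1\otimes V_2$ whose slice representation is not coreduced. I will choose $v_1\in V_1^{T_1}$ and $v_2\in V_2^{T_2}$, form $v_1\otimes v_2$ (which is $T_1\times T_2$-fixed, so the orbit $(G_1\times G_2)(v_1\otimes v_2)$ is closed), and apply Lemma~\ref{lem:slice.tensor} to analyze the slice representation $N$ at $v_1\otimes v_2$. Because $T_1\times T_2$ sits inside the stabilizer $K:=G_{v_1\otimes v_2}$, once I produce a bad indecomposable $T_1\times T_2$-weight relation in $N$ (coefficient $\geq 2$), the corresponding monomial lies in $I(\NN_{T_1\times T_2}(N))\setminus\mm_{T_1\times T_2}\OO(N)$, and the containments $\NN_K(N)\subseteq\NN_{T_1\times T_2}(N)$ and $\mm_K\OO(N)\subseteq\mm_{T_1\times T_2}\OO(N)$ will show that the same monomial witnesses non-coreducedness of $N$ as a $K$-module.

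Take any nonzero $v_2\in V_2^{T_2}$, which exists because every representation in types (1)--(5) has a nontrivial zero weight. If $V_1$ satisfies the slice-alternative of type (2), choose $v_1$ giving the bad slice $(W,H)$ with $H^0=T_1$, so $N_{V_1}$ is not coreduced as a $T_1$-module and hence admits an indecomposable weight relation with a coefficient $\geq 2$. If $V_1=\phi_1^4(\AA_1)=R_4$, take $v_1=x^2y^2$; a short calculation shows $G_{1,v_1}^0=T_1$, with $\lieg_1/\lieg_{1,v_1}=\C\{x^3y,xy^3\}$ carrying $T_1$-weights $\pm 2$ and $N_{V_1}=\C\{x^4,y^4\}$ carrying weights $\pm 4$.

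By Lemma~\ref{lem:slice.tensor}, the slice $N$ at $v_1\otimes v_2$ contains as a $G_{1,v_1}\times G_{2,v_2}$-submodule both a copy of $N_{V_1}$ (on which $G_{2,v_2}$ acts trivially) and the cross-term $(\lieg_1/\lieg_{1,v_1}\oplus N_{V_1})\otimes(\lieg_2/\lieg_{2,v_2}\oplus N_{V_2})$. Under $T_1\times T_2$, the first summand contributes the weights of $N_{V_1}$ with zero second coordinate, and the cross-term contributes each product of a nonzero $T_1$-weight of $V_1$ with a nonzero $T_2$-weight of $V_2$. In the slice-alternative case, the indecomposable bad $T_1$-relation of $N_{V_1}$ lifts immediately to a bad $T_1\times T_2$-relation of $N$ by padding the $T_2$-coordinate with zero.

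The main obstacle is the case $V_1=R_4$: here $N_{V_1}$ alone (weights $\pm 4$) is already coreduced under $T_1$, so the bad relation must be extracted from the cross-term. Let $w$ be any nonzero $T_2$-weight of $V_2$; each of the types (1)--(5) is self-dual, so $-w$ is also a weight of $V_2$. Then $(\pm 4,0)$ come from $N_{V_1}$ and $(\pm 4,\pm w)$ from the cross-term, giving
\[
2(4,0)+(-4,w)+(-4,-w)=(0,0).
\]
Solving $a(4,0)+b(-4,w)+c(-4,-w)=(0,0)$ for nonnegative integers $a,b,c$ forces $a=b+c$ and $b=c$, so every nonnegative integer dependence among these three weights is a positive multiple of $(2,1,1)$; in particular the displayed relation is indecomposable and has a coefficient of $2$. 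Applying Proposition~\ref{prop:tori} together with the ideal-containment argument of the first paragraph, $N$ is not coreduced as a $K$-module, so $(V_1\otimes V_2,G_1\times G_2)$ has a bad slice at $v_1\otimes v_2$.
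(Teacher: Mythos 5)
Your proposal has two genuine gaps, and the first one undermines the main case entirely. You have misread the definition of type (2): you assume that the slice representation $(W,H)$ of $V_1$ at $v_1$ is itself \emph{not coreduced}, so that $N_{V_1}$ already carries an indecomposable relation with a coefficient $\geq 2$ which you then ``lift by padding the $T_2$-coordinate with zero.'' But type (2) only asserts that $W$ contains weight spaces $\alpha$, $\beta$, $-(\alpha+\beta)$ (a relation with all coefficients equal to $1$) or $\theta_2$ together with $\pm\alpha$; neither of these is a bad relation. Indeed $V_1$ is one of the irreducible \emph{coreduced} representations of a simple adjoint group --- that is the whole point of the enumeration into types (1)--(5) --- so by Proposition~\ref{prop:slice.zero.weight} its slice representation at a zero weight vector cannot be non-coreduced. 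The actual content of the lemma is to combine the weights $\alpha,\beta,-(\alpha+\beta)$ (resp.\ $\theta_2$ and $\pm\alpha$) with nonzero weights $\pm\gamma$ of $V_2$ through the cross term $(\lieg_1/\lieg_{1,v_1}\oplus N_{V_1})\otimes(\lieg_2/\lieg_{2,v_2}\oplus N_{V_2})$ of Lemma~\ref{lem:slice.tensor}, producing relations such as $2(-\gamma+\alpha)+2(\gamma+\beta)+(\gamma-\alpha-\beta)+(-\gamma-\alpha-\beta)=0$, or $(\alpha+\gamma)+(-\alpha+\gamma)+2(-\gamma)=0$ in the $\theta_2$ case. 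None of this appears in your argument, so the generic type (2) case is simply not proved.

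The second gap is the reduction from the stabilizer $K=G_{v_1\otimes v_2}$ to the torus $T_1\times T_2$. Since $\OO(N)^K\subset\OO(N)^{T_1\times T_2}$, the containment of null cones goes the other way from what you wrote: $\NN_{T_1\times T_2}(N)\subset\NN_K(N)$. Hence a monomial vanishing on $\NN_{T_1\times T_2}(N)$ need not vanish on $\NN_K(N)$, and non-coreducedness of $N$ as a torus module does not imply non-coreducedness as a $K^0$-module. This matters precisely when $G_{2,v_2}^0$ is not a torus, which happens for $V_2$ of types (3), (4) and (5), where the stabilizer of a zero weight vector is $\SO_{2n}$, $\SL_3$, a group of type $\DD_4$, a product of $\SL_2$'s, etc. The paper deals with these cases by first quotienting the relevant part of the slice by the semisimple factor (Lemma~\ref{lem:HnormalizingG}) and only then exhibiting a bad relation for the residual torus action. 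Your treatment of $\phi_1^4(\AA_1)\otimes V_2$ --- the one case the paper leaves to the reader --- is fine as a weight computation, and the indecomposability check for $2(4,0)+(-4,w)+(-4,-w)=0$ is correct, but it too only establishes non-coreducedness for the torus and therefore only settles the subcases in which $G_{2,v_2}^0=T_2$.
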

 
\begin{proof}   We leave the case that  $(V_1,G_1)$ or $(V_2,G_2)$ is $\phi_1^4(\AA_1)$ to the reader. It will be clear from our techniques what to do in this case. Let $T_1$ be a maximal torus of $G_1$ fixing $v_1$. First assume that  the weights of the slice representation at $v_1$ contain roots $\alpha$, $\beta$ and $-\alpha-\beta$.   Let $T_2$ be a maximal torus of $G_2$. Suppose first that $(V_2,G_2)=(\C^3,\SO_3)$,  
Let  $v_2\in V_2$ be a zero weight vector  and  let $\gamma$ be a nonzero weight of $(V_2,T_2)$.    Then by Corollary \ref{cor:slice.tensor} the slice representation of $T_1\times T_2$ at $v_1\otimes v_2$ contains the weights 
$$
- \gamma+\alpha,\ \gamma+\beta,  \gamma-\alpha-\beta,\ \text{ and } -\gamma-\alpha-\beta.
$$
We have the minimal zero sum
$$
2(- \gamma+\alpha)+2(\gamma+\beta)+( \gamma-\alpha-\beta)+(-\gamma-\alpha-\beta)=0,
$$
hence the slice representation of $T_1\times T_2$ is not coreduced.    The same argument works in case  $(V_2,G_2)$ is of type (2). Now suppose that $(V_2,G_2)=\phi_1(\BB_n)$, $n\geq 2$. Then we have a slice representation of $\SO_{2n}\times T_{1}$ containing the irreducible components $\C^{2n}\otimes\C_{\alpha}$, $\C^{2n}\otimes\C_{\beta}$ and $\C^{2n}\otimes\C_{-\alpha-\beta}$. Quotienting by $\SO_{2n}$ we obtain a representation of $T_1$ with weights 
$$
2\alpha,\ 2\beta,\ \alpha+\beta,\ -\alpha, \ -\beta \text { and } -2\alpha-2\beta.
$$
Hence the slice representation is not coreduced. The same argument works in case $(V_2,G_2)$ is of type (5). For type (4) we get a slice representation of $\SL_3\times T_{1}$ containing
$$
\C^3\otimes\C_{\alpha},\  \C^3 \otimes\C_{\beta},  (\C^3)^*\otimes\C_{-\alpha-\beta}, \text { and }(\C^3)^*\otimes\C_{\alpha},
$$ 
and quotienting by $\SL_3$ we obtain a $T_1$-representation with weights $-\beta$, $-\alpha$, $2\alpha$ and $\alpha+\beta$. Hence we have a non-coreduced slice representation. 

Finally assume that the slice representation at $v_1$ contains $\theta_2$ and weights $\pm \alpha$ and that $(V_2,G_2)$ is  of arbitrary type. Let $\pm\gamma$ be  nonzero weights of $V_2$.  Because of the $\theta_2$, the slice representation at $v_1\otimes v_2$ contains the weights of $V_2$ (Corollary \ref{cor:slice.tensor}). Hence we have weights $\pm\alpha\pm\gamma$ and $\pm\gamma$.
and the minimal bad relation
$$
(\alpha+\gamma)+(-\alpha+\gamma)-2(\gamma)=0.
$$
Thus $(V_1\otimes V_2,G_1\times G_2)$ is not coreduced.
\end{proof}

We are left with type (1) and types (3--5).
\begin{lemma}
Suppose that $(V_1,G_1)$ is of type (1) or (3) or (5) and that $(V_2,G_2)$ is of type (5). Then $(V_1\otimes V_2,G_1\times G_2)$ has a bad slice.
\end{lemma}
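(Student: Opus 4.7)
The plan is to mimic the strategy of the previous lemma: take a zero-weight vector $v=v_{1}\otimes v_{2}\in V_{1}\otimes V_{2}$, bound the slice representation at $v$ from below via Corollary \ref{cor:slice.tensor}, and locate in this bound a non-coreduced subrepresentation to which Lemma \ref{lem:HiHj} or Example \ref{ex:3D2} applies.

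The structural input is that every type (5) representation $V_{2}$ admits a generic zero-weight vector $v_{2}\in V_{2}^{T_{2}}$ whose connected stabilizer $H_{2}^{0}\subset G_{2}$ is a finite extension of a product of commuting $\SL_{2}$'s (corresponding to the long roots of $G_{2}$ absent from the weights of $V_{2}$), with $V_{2}$ decomposing as an $H_{2}^{0}$-module into a sum of tensor products $V_{a}\otimes V_{b}$ of standard $\SL_{2}$-modules plus a trivial part. Concretely, for $V_{2}=\phi_{2}(\CC_{m})$, $m\geq 3$, one has $H_{2}^{0}\simeq(\SL_{2})^{m}$ and $V_{2}|_{H_{2}^{0}}=(m-1)\theta_{1}\oplus\bigoplus_{1\leq a<b\leq m}V_{a}\otimes V_{b}$; the $\phi_{4}(\CC_{4})$- and $\phi_{4}(\FF_{4})$-cases admit analogous decompositions, possibly after replacing $v_{2}$ by a suitable non-principal zero-weight vector to enlarge $H_{2}^{0}$.

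When $V_{1}$ is also of type (5), or of type (3) with $V_{1}=\phi_{1}(\BB_{n})$ (whose zero-weight Levi stabilizer $\SO_{2n}$ contains commuting $\SL_{2}$-subgroups from its long roots), the stabilizer of $v_{1}\otimes v_{2}$ contains four commuting $\SL_{2}$-subgroups, two from $H_{1}^{0}$ and two from $H_{2}^{0}$. I would show using Corollary \ref{cor:slice.tensor} that the slice at $v_{1}\otimes v_{2}$ contains, as a representation of these four $\SL_{2}$'s, the direct sum $\bigoplus_{1\leq i<j\leq 4}V_{i}\otimes V_{j}$ of Lemma \ref{lem:HiHj}: four of the summands come from the tensor piece $(\lieg_{1}/\lieg_{v_{1}}+N_{V_{1}})\otimes(\lieg_{2}/\lieg_{v_{2}}+N_{V_{2}})$ after restricting the indices to the chosen $\SL_{2}$'s, and the remaining two (the "diagonal" $V_{1}\otimes V_{2}$ and $V_{3}\otimes V_{4}$) from the forced copies of $V_{1}$ and $V_{2}$ appearing by Corollary \ref{cor:slice.tensor}(3). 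Since the remaining $\SL_{2}$-factors of the stabilizer act trivially on this subrepresentation, the corresponding invariants and null cone are unchanged, so by Lemma \ref{lem:HiHj} the subrepresentation is not coreduced; Example \ref{ex:retraction}(1) then implies the full slice is not coreduced and $V_{1}\otimes V_{2}$ has a bad slice.

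The remaining case $V_{1}=(\C^{3},\SO_{3})$ of type (1) is the main obstacle: the stabilizer of a nonzero zero-weight vector is only the rank-one torus $\SO_{2}$, supplying just one $\SL_{2}$-piece of Lie algebra (together with a torus), so the four-$\SL_{2}$ argument above is unavailable. Here the plan is to exploit the forced copies of $V_{1}$ in the slice (which exist because $n_{2}\geq 2$ for every type (5) representation) together with the tensor piece $(\C_{+}+\C_{-})\otimes\bigoplus V_{a}\otimes V_{b}$, and to extract, after restricting to an appropriate $\SL_{2}^{(a)}\times\SL_{2}^{(b)}\subset H_{2}^{0}$ and quotienting by a central torus via Lemma \ref{lem:HnormalizingG}, a direct summand containing three copies of the standard $\SO_{4}$-module on $\C^{4}$, which fails to be coreduced by Example \ref{ex:3D2}. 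A secondary obstacle lies in the $\phi_{4}(\CC_{4})$- and $\phi_{4}(\FF_{4})$-cases, where the zero-weight stabilizer and slice decomposition must be verified separately; in particular, one may need to work with a non-principal closed orbit in $V_{2}$ to obtain a product-of-$\SL_{2}$-structure in $H_{2}^{0}$ rich enough for the four-factor argument to apply.
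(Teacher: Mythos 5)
There is a genuine gap in the central step of your argument, namely the claim that the six modules $V_i\otimes V_j$, $1\leq i<j\leq 4$, of Lemma \ref{lem:HiHj} appear as summands of the slice at $v_1\otimes v_2$ when two of the $\SL_2$'s are taken from $H_1^0$ and two from $H_2^0$. The tensor piece supplied by Lemma \ref{lem:slice.tensor} is $(\lieg_1/\lieg_{v_1}\oplus N_{V_1})\otimes(\lieg_2/\lieg_{v_2}\oplus N_{V_2})$, and for $\phi_2(\CC_m)$ at a generic zero weight vector one has $\lieg/\lieg_{v}\oplus N_{V}=\bigoplus_{a<b}V_a\otimes V_b$ (note that $N_{V}$ itself is trivial here: the whole sum $\bigoplus_{a<b}V_a\otimes V_b$ is the tangent space to the orbit). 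Hence the cross terms of the tensor piece are the \emph{sixteen}-dimensional irreducibles $V_a^{(1)}\otimes V_b^{(1)}\otimes V_c^{(2)}\otimes V_d^{(2)}$, not the four-dimensional modules $V_i\otimes V_j$ with one index from each factor; "restricting the indices to the chosen $\SL_2$'s" does not turn the former into the latter, since the other two $\SL_2$'s act nontrivially on each such summand and there is no retraction onto a $V_i\otimes V_j$. So the configuration of Lemma \ref{lem:HiHj} is not exhibited inside the slice, and the argument for the main case does not go through. (Choosing all four $\SL_2$'s inside one forced copy of $V_2$ only works when that copy is actually forced, i.e.\ when the trivial part of the slice of $V_1$ at $v_1$ has dimension at least $2$, which fails for $\phi_1(\BB_n)$ and $(\C^3,\SO_3)$, and it also fails for $\phi_2(\CC_3)$, which has only three long-root $\SL_2$'s.) The type (1) case and the $\phi_4$ cases, which you flag as obstacles, are likewise left unproved.

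The paper avoids these difficulties by basing the slice at a different closed orbit in $V_2$ and by using a different non-coreduced seed. For $V_2=\phi_2(\CC_m)$ one takes the point with stabilizer $\SL_2\times\SL_2\times\CC_{m-2}$ (iterating Example \ref{ex:phi2cnslice}), whose slice has trivial part $\theta_2$; this forces a full copy of $V_1$ into the slice at $v_1\otimes v_2$ by Corollary \ref{cor:slice.tensor}(3), while the tensor piece contributes $\C^{2n}\otimes(\C^2\otimes\C^2)=\C^{2n}\otimes\C^4$ for $\SO_{2n}\times\SO_4$. Together these give a subrepresentation $(\C^{2n}\otimes\C^4\oplus\C^{2n},\SO_{2n}\times\SO_4)$, which is not coreduced by Example \ref{ex:onson} (ultimately because $\Sym^2(\C^k)\oplus\C^k$ is not coreduced), and this works uniformly for $n\geq1$, covering type (1) as $\phi_1(\BB_1)$. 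For $\phi_4(\FF_4)$ one uses the principal orbit with stabilizer $\DD_4$ and slice $\theta_2$, yielding $(\C^{2n}\otimes\C^8\oplus\C^{2n},\SO_{2n}\times\SO_8)$. If you want to keep your strategy, you would need to replace Lemma \ref{lem:HiHj} by an argument that handles the $16$-dimensional cross modules directly, or switch to base points with larger stabilizers and a seed such as Example \ref{ex:onson} as the paper does.
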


\begin{proof} 
First assume that $(V_1,G_1)$ is $\phi_1(\BB_n)$, $n\geq 1$ (type (1) or type (3)). 
If $(V_2,G_2)$ is $\phi_4(\FF_4)$, then there is a (principal) slice representation of $\DD_4$ on $\theta_2$ where $(\phi_4(\FF_4),\DD_4)=(\phi_1+\phi_3+\phi_4+\theta_2)$ while $(V_1,G_1)$ has a slice representation of $\SO_{2n}$ on $\theta_1$ where $(V_1,\SO_{2n})=(\C^{2n}+\theta_1,\SO_{2n})$. By Corollary \ref{cor:slice.tensor} there is a subrepresentation of a slice representation of $(V_1\otimes V_2,G_1\times G_2)$ of the form $(\C^{2n}\otimes \C^8\oplus\C^{2n},\SO_{2n}\times\SO_8)$. It follows from Example \ref{ex:onson} that the slice representation is not coreduced.
  
If $(V_2,G_2)$ is $\phi_2(\CC_m)$, $m\geq 4$, then there is a  slice representation  
$
(W,H)=(\theta_2+\phi_2,\SL_2\times\SL_2\times\CC_{m-2})
$ 
where $(\phi_2(\CC_m),H)$ contains $(\C^2\otimes\C^2,\SL_2\times\SL_2)\simeq(\C^4,\SO_4)$.  There is a non-coreduced subrepresentation of the slice representation of $(V_1\otimes V_2,G_1\times G_2)$ of the form $(\C^{2n}\otimes\C^4\oplus\C^{2n},\SO_{2n}\times\SO_4)$. The case of $\phi_2(\CC_3)$ is only notationally different and the case of $\phi_4(\CC_4)$ is similar. Finally, if $(V_1,G_1)$ is of type (5), then   the same techniques   produce a non-coreduced slice representation at a zero weight vector.
\end{proof}

We leave the proof of the following to the reader.
\begin{lemma}
A tensor product $(V_1\otimes V_2,G_1\times G_2)$ has a bad slice if $(V_1,G_1)$ is $\phi_1(\Gtwo)$ (type (4)) and $(V_2,G_2)$ is of type (5).
\end{lemma}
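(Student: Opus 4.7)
The plan is to follow the strategy of the preceding two lemmas: at a suitably chosen point $v_1 \otimes v_2$, I will compute the slice representation of $G_1 \times G_2$, project onto a direct summand that factors through an $\SL_3 \times \SO_n$-action, and then apply Example~\ref{ex:quotientSOn} to convert the question of coreducedness into one about an explicit $\SL_3$-module. Since $V_1 = \phi_1(\Gtwo) = \C^7$ carries a nondegenerate invariant quadratic form, I take $v_1$ generic so that $\Gtwo \cdot v_1$ is closed with stabilizer $\SL_3$; as an $\SL_3$-module $V_1 = \theta_1 \oplus B_1$ where $B_1 := \C^3 \oplus (\C^3)^*$ and $v_1$ spans the $\theta_1$ summand.

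For each $(V_2,G_2)$ of type~(5), I identify a closed $G_2$-orbit $G_2 \cdot v_2$ whose stabilizer $H_2$ acts on the slice at $v_2$ via a representation containing a $2$-dimensional trivial summand together with a piece on which a subgroup of $H_2$ acts through $\SO_n$ on $\C^n$. For $\phi_4(\FF_4)$ I take $v_2$ generic in the $2$-dimensional $V_2^{\DD_4}$, so $H_2 = \DD_4 = \Spin_8$, $(V_2,\DD_4) = \phi_1 + \phi_3 + \phi_4 + \theta_2$, and the slice is $\theta_2$; here $n = 8$ with $\Spin_8$ acting through $\SO_8$ on $\phi_1 = \C^8$. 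For $\phi_2(\CC_m)$, $m \geq 3$, iterating Example~\ref{ex:phi2cnslice} produces a closed orbit with stabilizer containing $\SL_2 \times \SL_2 \times \Sp_{2m-4}$ whose slice contains $\theta_2 \oplus (\C^2 \otimes \C^2)$, with $\SL_2 \times \SL_2$ acting on $\C^2 \otimes \C^2 \simeq \C^4$ through the isogeny onto $\SO_4$ ($n = 4$). The case $\phi_4(\CC_4)$ admits an analogous description.

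A direct computation of the slice at $v_1 \otimes v_2$ via Lemma~\ref{lem:slice.tensor} shows that it contains, as a direct $(\SL_3 \times H_2)$-summand, the subrepresentation
\[
(B_1 \otimes \C^n \oplus B_1 \oplus \theta_2,\ \SL_3 \times \SO_n),
\]
where the extra copy of $B_1$ arises from $B_1 \otimes (\theta_2/\langle v_2\rangle)$ in the quotient defining the slice. By Examples~\ref{ex:retraction}(1) it suffices to show this summand is not coreduced, and by Example~\ref{ex:quotientSOn} this reduces (since $B_1$ is self-dual) to proving that the $\SL_3$-module $B_1 \oplus \theta_2 \oplus \Sym^2 B_1$ is not coreduced.

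Expanding $\Sym^2 B_1 = \Sym^2 \C^3 \oplus \Sym^2(\C^3)^* \oplus \sll_3 \oplus \theta_1$, the summand $\sll_3$ implies that all roots of $\SL_3$ occur, so by Lemma~\ref{lem:torus.slice} a generic zero-weight vector $w$ has isotropy a finite extension of the maximal torus $T \subset \SL_3$. Restricting the slice at $w$ to $T$ yields a torus module in which the weight $\mu_1$ of the standard $\SL_3$-weight basis of $\C^3$ occurs with multiplicity $\geq 2$ (once from $B_1$ and once from $\Sym^2(\C^3)^*$ via $-\mu_2 - \mu_3 = \mu_1$) and the weight $-2\mu_1$ appears as the weight of $y_1^2 \in \Sym^2(\C^3)^*$. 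The indexed $\N$-relation $2[e_1] + 1[y_1^2] = 0$, where $e_1 \in \C^3 \subset B_1$, has support exactly $\{e_1, y_1^2\}$ and is indecomposable (any proper sub-$\N$-relation $(a,b)$ supported there would satisfy $a\mu_1 - 2b\mu_1 = 0$, forcing $a = 2b$, which has no solution with $0 < a + b < 3$, $a \leq 2$, $b \leq 1$). Since this relation has a coefficient equal to $2$, Proposition~\ref{prop:tori} shows the $T$-slice is not coreduced, and Proposition~\ref{prop:slice.zero.weight} finishes the reduction. The main obstacle will be verifying the precise slice structure in the $\phi_4(\CC_4)$ case, which is not directly covered by Example~\ref{ex:phi2cnslice} and requires exhibiting an explicit closed orbit whose stabilizer carries the needed $\SL_2 \times \SL_2 \simeq \SO_4$-factor with $\C^2 \otimes \C^2$ in its slice.
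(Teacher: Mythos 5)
The paper gives no proof of this lemma (it is explicitly left to the reader), so there is nothing to match against; the intended argument is clearly the one of the two preceding lemmas, and you have correctly identified that strategy. Your treatment of the $\phi_4(\Ffour)$ case is sound: the slice at $v_1\otimes v_2$ does contain $(B_1\otimes\C^8\oplus B_1\oplus\theta_1,\SL_3\times\SO_8)$ with $B_1=\C^3\oplus(\C^3)^*$, and since $\dim B_1=6<8$ the $\SO_8$-quotient of $6\,\C^8$ is all of $\Sym^2(\C^6)$ with no relations (this is the situation of Example \ref{ex:onson}(1), which is even simpler than the $\dim W=n$ case actually stated in Example \ref{ex:quotientSOn}), so Lemma \ref{lem:HnormalizingG} reduces everything to $(B_1\oplus\Sym^2 B_1,\SL_3)$. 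Your bad toral relation there ($\sll_3\subset\Sym^2B_1$ forces a toral stabilizer at a generic zero weight vector, and the weights $\mu_1$ of multiplicity $\ge 2$ and $-2\mu_1$ give an indecomposable relation with a coefficient $2$) is correct.

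The genuine gap is in the type (5) cases $\phi_2(\CC_m)$ and $\phi_4(\CC_4)$, and it is not the one you flagged. There the relevant piece of the slice is $(B_1\otimes\C^4\oplus B_1,\SL_3\times\SO_4)$, and your reduction requires quotienting $B_1\otimes\C^4=6\,\C^4$ by $\SO_4$. Since $6>4$, the hypotheses behind Example \ref{ex:quotientSOn} and Example \ref{ex:onson} fail: the invariants now include all the $4\times 4$ determinants, the quotient is not $\Sym^2(B_1)$ nor an invariant hypersurface in $\Sym^2(B_1)\oplus\theta_1$ (its ideal involves the rank condition on the symmetric $6\times 6$ matrix $(f_{ij})$ and the determinantal identities, which are not generated by $\SL_3$-invariants), and indeed $(6\,\C^4,\SO_4)$ is itself non-coreduced by Theorem \ref{CIT.thm}(4), so Lemmas \ref{lem:HnormalizingG} and \ref{lem:invariant.ideal} do not deliver the claimed reduction to $B_1\oplus\theta_2\oplus\Sym^2 B_1$. (Note also that one cannot instead run the toral-slice argument directly on $B_1\otimes\C^4\oplus B_1$: the roots of $\SL_3$ do not occur among its weights, so a zero weight vector does not have toral stabilizer.) By contrast, the difficulty you single out --- exhibiting the closed orbit with an $\SO_4$-factor for $\phi_4(\CC_4)$ --- is routine. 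So the $\CC$-cases need a different device: either a smaller subrepresentation for which the number of copies of the orthogonal space stays within the classical-invariant-theory range, or a hypersurface/retraction argument in the style of Lemma \ref{lem:HiHj} and part (c) of the proof of Proposition \ref{prop:Cn}. As it stands, the proof is incomplete for those cases.
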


We are now left with the problem of tensor products of representations of types (1), (3) and (4). First we handle types (1) and (3).

\begin{proposition}
Let $3\leq 2k+1\leq 2m+1\leq 2n+1$ and
$$
(V,G)=(\C^{2n+1}\otimes\C^{2m+1}\otimes\C^{2k+1},\SO_{2n+1}\times\SO_{2m+1}\times\SO_{2k+1}).
$$ 
Then the  slice representation at the zero weight vector is not coreduced. 
\end{proposition}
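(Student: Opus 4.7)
The plan is to compute the slice representation at the zero weight vector and then show it is not coreduced as a representation of the identity component of the stabilizer.

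Since each factor $\C^{2\ell+1}$ of $\SO_{2\ell+1}$ has a one-dimensional zero weight space spanned by an axis vector $e_0$, the torus-fixed subspace $V^T$ is one-dimensional and spanned by $v := e_0\otimes f_0\otimes g_0$. Each $\SO_{2\ell+1}$ stabilizes its axis with connected stabilizer $\SO_{2\ell}$, so $G_v^0 = H := \SO_{2n}\times\SO_{2m}\times\SO_{2k}$, of the same rank as $G$. Using the decomposition $\C^{2\ell+1}|_{\SO_{2\ell}} = \theta_1\oplus\C^{2\ell}$, expanding the triple tensor, and subtracting the tangent space $T_v(Gv)\simeq\C^{2n}\oplus\C^{2m}\oplus\C^{2k}$, one obtains as $H$-modules
\[
N_v = \theta_1\oplus(\C^{2m}\otimes\C^{2k})\oplus(\C^{2n}\otimes\C^{2k})\oplus(\C^{2n}\otimes\C^{2m})\oplus(\C^{2n}\otimes\C^{2m}\otimes\C^{2k}).
\]
By Proposition~\ref{prop:non-connected} it suffices to show that $(N_v,H)$ is not coreduced.

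As a first orientation, I would record the indecomposable weight relation
\[
(\epsilon_1+\delta_1+\eta_1)+(\epsilon_1+\delta_1-\eta_1)+2(-\epsilon_1-\delta_1)=0
\]
among the $T_H$-weights of $N_v$ (the first two weights sit in $\C^{2n}\otimes\C^{2m}\otimes\C^{2k}$ and the third in $\C^{2n}\otimes\C^{2m}$), which by Proposition~\ref{prop:tori} already shows that $(N_v,T_H)$ is not coreduced. To upgrade this to $H$, I would proceed via the reduction of Example~\ref{ex:quotientSOn}: decompose $N_v = X \oplus (W\otimes\C^{2n})$ with $W := \C^{2m}\oplus\C^{2k}\oplus(\C^{2m}\otimes\C^{2k})$ (an $(\SO_{2m}\times\SO_{2k})$-module) and $X := \theta_1\oplus(\C^{2m}\otimes\C^{2k})$; take the partial quotient by $\SO_{2n}$; and observe that the resulting $(\SO_{2m}\times\SO_{2k})$-representation contains $2(\C^{2m}\otimes\C^{2k})$ as a direct summand (one copy from $X$, a second from the cross term inside $S^2(W^*)$). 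By retraction (Example~\ref{ex:retraction}(1)) this reduces the question to a smaller representation of $\SO_{2m}\times\SO_{2k}$, which can be analyzed inductively until a configuration already known to be non-coreduced (such as one in Theorem~\ref{CIT.thm}(4), or the $n=m=k=1$ torus case already covered by the weight relation above) is reached.

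The main obstacle is that Example~\ref{ex:quotientSOn} is formulated for $\dim W$ equal to the size of the acting $\SO_n$, whereas here $\dim W = 2m+2k+4mk$ typically differs from $2n$. When $\dim W > 2n$, the $\SO_{2n}$-quotient involves additional Weyl-type determinantal invariants and relations beyond $S^2(W^*)$ that must be incorporated via Lemma~\ref{lem:invariant.ideal}; when $\dim W < 2n$ the quotient is simply $S^2(W^*)$ and the reduction goes through directly. An alternative route is to apply the method of covariants (Proposition~\ref{prop:gencovariants}) directly to $N_v$: a natural candidate is the quadratic $H$-covariant $\phi\colon (\C^{2n}\otimes\C^{2m})\oplus(\C^{2n}\otimes\C^{2k})\to \C^{2m}\otimes\C^{2k}$ given by $\phi(S,T)_{j,l}=\sum_i S_{i,j}T_{i,l}$ (contracting with the bilinear form on $\C^{2n}$), whose vanishing on $\NN_H$ and non-membership in the ideal generated by positive-degree invariants would both have to be verified case by case using Hilbert--Mumford.
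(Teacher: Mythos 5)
Your computation of the slice representation is correct and agrees with the paper, and you are right that the indecomposable relation $(\epsilon_1+\delta_1+\eta_1)+(\epsilon_1+\delta_1-\eta_1)+2(-\epsilon_1-\delta_1)=0$ only settles the torus restriction and must be upgraded. The gap is in the upgrade. You lump all three summands containing $\C^{2n}$ into $W\otimes\C^{2n}$ with $W=\C^{2m}\oplus\C^{2k}\oplus(\C^{2m}\otimes\C^{2k})$, so $\dim W=2m+2k+4mk$, which exceeds $2n$ in the main cases (e.g.\ whenever $n=m=k$). Then the $\SO_{2n}$-quotient of $W\otimes\C^{2n}$ is governed by the second fundamental theorem: the image in $S^2(W^*)\oplus\theta$ is cut out by determinantal rank conditions on the Gram matrix, and that ideal is \emph{not} generated by $\SO_{2m}\times\SO_{2k}$-invariant functions, so neither Example~\ref{ex:quotientSOn} nor Lemma~\ref{lem:invariant.ideal} applies. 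You name this obstacle but do not overcome it, and it is precisely where the argument must do work. The fallback via the covariant $\phi(S,T)=\langle S,T\rangle_{\C^{2n}}$ also fails: $\phi$ does not vanish on the null cone (take $S=e\otimes f$, $T=e'\otimes g$ with $\langle e,e'\rangle\neq 0$ and a one-parameter subgroup acting trivially on $\C^{2n}$ and positively on $f$ and $g$; then $(S,T)\in\NN$ but $\phi(S,T)\neq 0$), so Proposition~\ref{prop:gencovariants} cannot be invoked for it.

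The paper's proof sidesteps all of this by first retracting (Examples~\ref{ex:retraction}(1)) onto a \emph{small} subrepresentation that omits the triple tensor factor, chosen so that each orthogonal group to be quotiented out acts on at most as many copies of its standard representation as its size. For $k>1$ one keeps only $\C^{2m}\otimes\C^{2k}\oplus\C^{2n}\otimes\C^{2k}$; quotienting by $\SO_{2m}\times\SO_{2n}$ (via Lemma~\ref{lem:HnormalizingG}, with Example~\ref{ex:quotientSOn} handling the boundary case $2k=2m$) yields $(2S^2(\C^{2k}),\SO_{2k})$, which is not coreduced because the roots of $\SO_{2k}$ occur with multiplicity two (Proposition~\ref{prop:rootsmult2}). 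The cases $k=1$, $m>1$ and $m=k=1$ use analogous small subrepresentations, ending in explicit torus computations of the kind you began with. So the missing idea is not a new criterion but the choice of subrepresentation: discard the large factor before quotienting, rather than after.
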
 

\begin{proof}
The slice representation at the zero weight vector is  
\begin{multline*}
(W,H)=(\C^{2n}\otimes\C^{2m}\otimes\C^{2k}+\C^{2n}\otimes\C^{2m}+\\
+\C^{2m}\otimes\C^{2k}+\C^{2n}\otimes\C^{2k},\SO_{2n}\times\SO_{2m}\times\SO_{2k}).
\end{multline*}
If $k>1$, consider the subrepresentation $\C^{2m}\otimes\C^{2k}\oplus \C^{2n}\otimes\C^{2k}$. Quotienting by $\SO_{2m}\times\SO_{2n}$ 
we get  $(2\Sym^2(\C^{2k}),\SO_{2k})$ which is not coreduced.  

Now assume that $k=1$ but $m>1$. We have a subrepresentation
$$
\C^{2n}\otimes\C^{2m}\oplus\C^{2m}\otimes \C_{\nu} \oplus\C^{2m}\otimes\C_{- \nu}
$$
where  the $\C_{\pm\nu}$ are irreducible representations of $\SO_{2k}\simeq\C^*$ of weight $\pm 1$. Quotienting by $\Orth_{2n}$ 
we obtain the representation
$$
(\Sym^2(\C^{2m})\oplus\C^{2m}\otimes \C_{\nu} \oplus\C^{2m}\otimes\C_{-\nu},\SO_{2m}\times\SO_2).
$$
Let $\pm\eps_1,\dots,\pm\eps_m$ be the  weights of $\C^{2m}$ for the action of the maximal torus $T$ of $\SO_{2m}$. 
Then the slice representation of $\Sym^2(\C^{2m})$ at a generic zero weight vector is, up to trivial factors, the sum of the $\C_{\pm 2\eps_i}$. Hence we have a slice representation of $T\times\SO_2$ containing
$$
\C_{-2\epsilon_1}\oplus \C_{-2\epsilon_2}\oplus(\C_{\epsilon_1}\otimes\C_{\nu})\oplus(\C_{\epsilon_2}\otimes\C_{-\nu}).
$$
This last representation is not coreduced.

Now assume that $n\geq m=k=1$.  We rename the weight $\epsilon_1$ of $\SO_{2m}=\SO_{2}$ to be  just $\epsilon$. Then we have the subrepresentation
$$
(\C^{2n}\otimes\C_{\epsilon})\oplus(\C^{2n}\otimes\C_{\nu})\oplus(\C_{-\epsilon}\otimes\C_{-\nu}).
$$
Quotienting by $\Orth_{2n}$ we get a representation 
$$
\C_{2\epsilon}\oplus \C_{2\nu}\oplus (\C_{\eps}\otimes\C_{\nu})\oplus(\C_{-\epsilon}\otimes\C_{-\nu})
$$
of $\Cst\times\Cst$ which is not coreduced.
\end{proof}

\begin{proposition}
Let $(V,G)=(\C^{2n+1}\otimes\C^7,\SO_{2n+1}\times\Gtwo)$, $n\geq 2$, or $(\C^3\otimes\C^3\otimes\C^7,\SO_3\times\SO_3\times\Gtwo)$. Then $(V,G)$ has a bad slice.
\end{proposition}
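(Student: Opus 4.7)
My plan is to show in each case that the slice representation at a suitable zero weight vector contains a non-coreduced subrepresentation.  First I would choose $v := w \otimes v_3$ (or $v := v_1 \otimes v_2 \otimes v_3$ in Case~2), where $v_3 \in \C^7$ is a generic zero weight vector with $(\Gtwo)_{v_3} = \SL_3$ and slice representation $\C^3 + (\C^3)^* + \theta_1$, and the other tensor factor is a generic zero weight vector for the $\SO$-action, with stabilizer $\SO_{2n}$ (resp.\ $T_1 \times T_2 \simeq \C^* \times \C^*$) and trivial normal slice.  Applying Lemma~\ref{lem:slice.tensor} (iteratively in Case~2), the slice at $v$ then contains, as a subrepresentation for $G_v^0$, the tensor product $\mathfrak{g}_1/\mathfrak{g}_{1,w} \otimes (2\C^3 + 2(\C^3)^*)$, where $\mathfrak{g}_1$ is the Lie algebra of the $\SO$-part of $G$.

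For Case~1 with $n \geq 3$, I would focus on the sub-$(\SO_{2n}\times\SL_3)$-module $U := \C^{2n} \otimes (\C^3 + (\C^3)^*)$.  Since $\dim(\C^3 + (\C^3)^*) = 6 \leq 2n$ and the image of $\SL_3$ in $\GL(\C^3 + (\C^3)^*)$ lies in $\SL_6$, a mild extension of Example~\ref{ex:quotientSOn} reduces the coreducedness of $U$ to that of the $\SL_3$-module
\[
S^2\bigl((\C^3 + (\C^3)^*)^*\bigr) \;\simeq\; S^2(\C^3) + \sll_3 + S^2((\C^3)^*) + \theta_1.
\]
For this latter module I would exhibit a bad toral slice: at a regular semisimple $t$ in the $\sll_3$-summand, the stabilizer is the maximal torus of $\SL_3$, and the slice (after subtracting the root space tangent) has nonzero torus weights exactly $\pm 2\epsilon_i$ and $\pm\epsilon_i$.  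One checks that $(2\epsilon_1) + 2(-\epsilon_1) = 0$ is an indecomposable relation of coefficient $2$, so by Proposition~\ref{prop:tori} the toral slice is not coreduced, whence neither is $U$, and hence neither is the slice at $v$.

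For Case~1 with $n = 2$ and for Case~2, the direct quotient of Example~\ref{ex:quotientSOn} breaks because $\dim(\C^3 + (\C^3)^*) = 6 > 4 = 2n$, respectively because the $\SO$-factors are only of rank one.  In these edge cases my plan is to carry out the quotient in stages, modding out successively by the smaller $\SO$-subgroups available in the stabilizer while tracking the determinantal hypersurface conditions that appear (as in Example~\ref{ex:3D2} and in the previous proposition of this section).  After these reductions the problem again reduces to the non-coreducedness of an $\SL_3$-module of the type handled above.  The main technical obstacle is precisely this bookkeeping of the hypersurface relations in the low-dimensional reductions; no genuinely new conceptual difficulty is expected to arise, but the cases $n=2$ and the two-$\SO_3$ situation require the most careful analysis.
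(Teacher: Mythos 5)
Your treatment of the case $n\geq 3$ is essentially the paper's: quotient the slice representation $(\C^{2n}\otimes(\C^3\oplus(\C^3)^*),\SO_{2n}\times\SL_3)$ by the orthogonal group to reduce to the $\SL_3$-module $S^2(\C^3)\oplus S^2((\C^3)^*)\oplus\C^3\otimes(\C^3)^*$, and your bad toral slice at a regular semisimple element of the $\sll_3$-summand (weights $\pm2\epsilon_i$, $\pm\epsilon_i$, indecomposable relation $(2\epsilon_1)+2(-\epsilon_1)=0$) is a correct justification of the non-coreducedness that the paper merely asserts. (A minor slip: the slice at $v$ contains only one copy of $\C^{2n}\otimes(\C^3\oplus(\C^3)^*)$, not two, since the normal slice of $(\C^7,\Gtwo)$ at $v_3$ is trivial — the summand $\C^3\oplus(\C^3)^*$ is tangent to the orbit $\Gtwo v_3$. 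This does not affect your argument, which only uses one copy.)

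The genuine gap is in the cases $n=2$ and $\SO_3\times\SO_3\times\Gtwo$, which you defer to ``bookkeeping of hypersurface relations.'' This bookkeeping cannot be made to work along the lines you propose. Example~\ref{ex:quotientSOn} and Lemma~\ref{lem:invariant.ideal} require the image of the quotient map to be cut out by $G$-invariant equations; for $\SO_4$ acting on $\C^4\otimes\C^6$, i.e.\ on six copies of $\C^4$, the image of the Gram-matrix map is the determinantal variety of symmetric $6\times6$ matrices of rank $\leq 4$, which has codimension $3$ in $S^2(\C^6)$ and whose ideal (generated by the $5\times5$ minors) is \emph{not} generated by $\SL_3$-invariants; splitting $\SO_4$ into its two $\SL_2$-factors and quotienting by one of them runs into the same problem with the Pl\"ucker ideal of $\bigwedge^2\C^6$. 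So no invariant-defined hypersurface appears and the reduction lemmas do not apply. The paper handles $n=2$ by an entirely different, direct method: for the $1$-parameter subgroup $\rho$ of $H=\SO_4\times\SL_3$ with weights $\pm1$ on $\C^4$ and $2,0,-2$ on $\C^3$, one checks that $HZ_\rho$ is an irreducible component of the null cone of codimension $7$; since the $\rho$-weights on $W$ are $\pm1,\pm3$, the differential of any invariant of degree $>4$ vanishes on $Z_\rho$, and there are only $4$ generating invariants of degree $\leq4$, so $\rk d\pi\leq 4<7=\codim HZ_\rho$ there and the null cone is non-reduced along $HZ_\rho$. Some argument of this kind (the paper leaves the $\C^3\otimes\C^3\otimes\C^7$ case to the reader as well) is needed to close these two cases; your proposal as written does not supply it.
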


\begin{proof}
We leave the latter case to the reader. In the former case we have the slice representation (minus the trivial factor)
$$
(W,H)=(\C^{2n}\otimes(\C^3\oplus(\C^3)^*),\SO_{2n}\times\SL_3).
$$
If $n\geq 3$, then quotienting by $\Orth_{2n}$ we obtain the representation 
$$
(\Sym^2(\C^3)\oplus\Sym^2({\C^3}^*)\oplus\C^3\otimes{\C^3}^*,\SL_3)
$$ 
which is not coreduced. 

We are left with the case $(W,H)=(\C^4\otimes(\C^3\oplus{\C^3}^*),\SO_4\times\SL_3)$.
Consider a 1-parameter subgroup  $\rho$ of $\SO_4\times\SL_3$ whose action on $\C^4$ has weights $\pm 1$ and on $\C^3$ has weights $2, 0, -2$. Then  $Z_\rho$, 
the span of the positive weight vectors,
has dimension 12 (which is not surprising since $(W,H)$ is self-dual of dimension 24). Note that $Z_\rho$ is in the null cone and is stable under a Borel subgroup of $H$. Now one can show that the dimension of $U^-Z_\rho$ is $17= 12+\dim U^{-}$, the maximal possible, where $U^-$ is the maximal unipotent subgroup of $H$ opposite $B$. Hence $HZ_\rho$ is a component of the null cone (see section~\ref{sec:comp-nullcone} for more details). 

The positive weights of $\rho$ on $W$ are 1 and 3  and the negative weights are $-1$ and $-3$. 
This implies that the differential of an invariant of degree $>4$ vanishes on $Z_{\rho}$, hence on $HZ_{\rho}$. But we have only 4 generating invariants in degree at most 4, and so the null cone is not reduced along $HZ_{\rho}$, because $\codim HZ_\rho=7$.
\end{proof}

We are left with the case $\Gtwo\times\Gtwo$ acting on $\C^7\otimes\C^7$.

\begin{proposition}\lab{prop:g2g2}
The representation $(\C^7\otimes\C^7,\Gtwo\times\Gtwo)$ is not coreduced. 
\end{proposition}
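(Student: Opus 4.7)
The strategy mirrors the proof of the preceding proposition: I would exhibit an irreducible component $HZ_\rho\subset\NN(V)$, where $H=\Gtwo\times\Gtwo$ and $Z_\rho$ is the positive weight subspace for a chosen one-parameter subgroup $\rho$ of a maximal torus $T\times T\subset H$, and then apply Lemma~\ref{lem:differential} by bounding the rank of $d\pi$ along $HZ_\rho$.

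The first step is to select $\rho$ so that the weights on $V$ are small integers. A natural candidate is $\rho=(\omega_1^\vee,\omega_1^\vee)$, where $\omega_1^\vee$ is the fundamental coweight dual to the short-root fundamental weight $\omega_1$; under $\omega_1^\vee$ the weights on each $\C^7$ are $\{0,\pm1,\pm1,\pm2\}$, so the weights of $\rho$ on $V$ range over $\{-4,-3,\ldots,3,4\}$ with multiplicities $1,4,6,8,11,8,6,4,1$, and $\dim Z_\rho=19$. Using the techniques of Section~\ref{sec:comp-nullcone}, I would check that $\dim(U^-\times U^-)Z_\rho=\dim Z_\rho+\dim(U^-\times U^-)$ attains the maximal possible value, which establishes that $HZ_\rho$ is an irreducible component of $\NN(V)$ and gives its exact codimension.

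Next, a weight balancing argument limits the rank of $d\pi$ on $HZ_\rho$. For a homogeneous invariant $f\in\OOO(V)^H$ of degree $d$, the partial derivative $\partial f/\partial z_i$ evaluated at $z\in Z_\rho$ can be nonzero only if the weight $-\lambda_i$ of the dual coordinate direction can be written as a sum of $d-1$ positive weights of $\rho$ on $V$. Since the positive weights of $V$ lie in $\{1,2,3,4\}$ and every $\lambda_i$ satisfies $\lambda_i\geq-4$, this forces $d\leq 5$; differentials of invariants of degree $\geq 6$ thus vanish identically on $Z_\rho$, hence on $HZ_\rho$. Consequently, the rank of $d\pi$ along $HZ_\rho$ is bounded by the number of generators of $\OOO(V)^H$ in degrees at most $5$.

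The proof is completed by using a Molien-series computation (for which Draisma's computations are invoked) to enumerate the generators of $\OOO(V)^H$ of degree $\leq 5$, and verifying that their number is strictly less than the codimension of $HZ_\rho$ in $V$, which yields non-coreducedness via Lemma~\ref{lem:differential}. The main obstacle is precisely this enumeration: since the principal isotropy group of $(V,H)$ is finite and $\dim\quot{V}{H}=49-28=21$, the invariant ring has many low-degree generators, and careful representation-theoretic bookkeeping (together with the parallel verification that $HZ_\rho$ really is a component of maximal dimension) requires substantial computation --- hence the paper's reference to Appendix B.
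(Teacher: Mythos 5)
There is a genuine gap, and it sits exactly at the step your whole argument hinges on: the claim that $HZ_\rho$ is an irreducible component of $\NN(V)$ for $\rho=(\omega_1^\vee,\omega_1^\vee)$. This $\rho$ is not generic in the sense of \S\ref{sec:comp-nullcone}: it pairs to zero with the long simple root of each factor and with ten nonzero weights of $V$ (the pairs $(\mu,\nu)$ with $\langle\rho,\mu\rangle+\langle\rho,\nu\rangle=0$). Hence $Z_\rho$, of dimension $19$, is \emph{properly} contained in $Z_{\rho'}$ for every generic perturbation $\rho'$ of $\rho$; the maximal positive weight spaces, which are the ones that can give components, have dimension $24=(49-1)/2$. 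Your proposed verification cannot succeed as stated: because $\rho$ kills the long simple roots, $Z_\rho$ is stable under a parabolic strictly larger than $B\times B$, so $\dim (U^-\times U^-)Z_\rho\le 19+10=29$ and the value $19+\dim(U^-\times U^-)=31$ you want to attain is impossible. Thus $HZ_\rho$ (of dimension at most $29$) lies inside the irreducible closed set $HZ_{\rho'}$, and unless you prove $HZ_\rho=HZ_{\rho'}$ you have only bounded the rank of $d\pi$ on a proper subvariety of a component, which says nothing about reducedness. That equality looks false: taking $\sigma=e$ in Lemma \ref{lem:BZsigma}, the weight of $Z_{\rho'}\setminus Z_\rho$ in the Weyl orbit of $(2\alpha_{\mathrm{short}}+\beta_{\mathrm{long}},-(2\alpha_{\mathrm{short}}'+\beta_{\mathrm{long}}'))$ cannot be written as $\lambda+\gamma$ with $\lambda\in\Lambda(Z_\rho)$ and $\gamma\in\Phi^+\cup\{0\}$ (its $\rho$-value is $0$ and subtracting any positive root only lowers the value), so the density criterion of Lemma \ref{lem:Bdense} fails for the identity Weyl element.

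Note also that the strength of your degree bound $d\le 5$ comes entirely from the special choice of $\rho$ with weights in $\{-4,\dots,4\}$; on the genuine $24$-dimensional positive weight spaces the analogous balancing argument only kills invariants of much larger degree, so the strategy does not transfer to the actual components. This is why the paper takes a different route in \ref{AppB}: both of its proofs produce a single \emph{generating} covariant that vanishes on all of $\NN$ --- in the first proof the differential of a multidegree $(3,3,3,3)$ generator of the invariants of the slice representation at a zero weight vector (via Remark \ref{rem:gendifferential}), in the second a degree $9$ covariant of type $\theta_1\otimes V$ --- and then checks the vanishing case by case on each of the sixteen maximal positive weight spaces, concluding by Proposition \ref{prop:gencovariants}. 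Your quantitative observations (the weight multiplicities for $\rho$, the $d\le 5$ vanishing of differentials on $Z_\rho$, and the shortage of low-degree generators) are all correct, but without a correctly identified component they do not yield non-coreducedness.
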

We have two proofs of this, and both need some computations. They are given in 
\ref{AppB}. 

\bigskip
\section{Classical Invariants}\lab{sec:CIT}
Classical Invariant Theory describes the invariants of copies of the standard representations of the classical groups, e.g., the $\GL(V)$- or $\SL(V)$-invariants of $mV \oplus nV^{*}$ or the $\Sp(V)$-invariants of $mV$ where $mV := V^{m\oplus}$ denotes the direct sum of $m$ copies of $V$. In this context we will prove the following theorem.

\begin{theorem}\lab{CIT.thm}
\be
\item The representation $(pV \oplus qV^{*},\GL(V))$ is
coreduced for all $p,q \geq 0$. The null cone is irreducible if and only if $p+q\leq n$.
\item The representation $(pV \oplus qV^{*},\SL(V))$ 
is coreduced for all $p,q \geq 0$. The null cone is irreducible in the following cases: $p+q\leq n$ or $(p,q)=(n,1)$ or $(p,q)=(1,n)$. 
\item The representations $(mV,\Sp(V))$ are coreduced for  all $m\geq 0$, and the null cone is irreducible and normal.
\item The representations $(mV,\Orth(V))$, $(mV,\SO(V))$ are core\-duced if and only if $2m\leq \dim V$. The null cone is irreducible and normal for $2m<\dim V$.
\ee
\end{theorem}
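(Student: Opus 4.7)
The plan is to use the First Fundamental Theorem of classical invariant theory to identify the null cone $\NN$ set-theoretically in each case, and then invoke the Second Fundamental Theorem to show that the ideal of $\NN$ is generated by the defining invariants, which is the definition of coreducedness. Irreducibility, normality, and dimension properties will be read off from natural parameterizations of $\NN$ by incidence varieties over (possibly isotropic) Grassmannians. The non-coreducedness in (4) for $2m > \dim V$ will be obtained directly via the method of covariants from \S \ref{sec:covariants}.

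For (1) and (2), identify $pV \oplus qV^* \simeq M_{n \times p} \oplus M_{q \times n}$ with $g \cdot (A, B) = (gA, Bg^{-1})$. The FFT for $\GL$ (resp.\ $\SL$) gives invariants generated by the entries of $BA$ (together with the $n \times n$ minors of $A$ and of $B$ in the $\SL$ case when $p \geq n$ or $q \geq n$), so set-theoretically $\NN = \{(A, B) \mid BA = 0\}$ in (1), with the additional vanishing of those minors in (2). The Second Fundamental Theorem for the variety of complexes (De Concini--Strickland) asserts that the ideal of $\{BA = 0\}$ is generated precisely by the entries of $BA$, yielding coreducedness; the $\SL$ case is a small variant. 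For (3) and (4), the FFT for $\Sp$ (resp.\ $\SO$) gives invariants generated by the pairings $\omega(v_i, v_j)$ or $(v_i, v_j)$ (together with $\det(v_{i_1}, \ldots, v_{i_n})$ in the $\SO$ case), so $\NN$ is set-theoretically the variety of $m$-tuples spanning a totally isotropic subspace. The Second Fundamental Theorem of De Concini--Procesi in these classical settings then gives that the ideal of $\NN$ is generated by these invariants, yielding coreducedness in (3) for all $m$ and in (4) when $2m \leq \dim V$.

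The geometric information comes from the natural incidence resolutions: in (1) and (2), $\tilde{\NN}_k = \{(W, A, B) \in \mathrm{Gr}(k, V) \times (pV \oplus qV^*) \mid \mathrm{im}(A) \subset W \subset \ker(B)\}$ is a vector bundle over $\mathrm{Gr}(k, V)$; in (3) and (4),
\[
\tilde{\NN} = \{(U, v_1, \ldots, v_m) \mid U \text{ a maximal isotropic subspace of } V,\ v_i \in U\}
\]
is a vector bundle over the Lagrangian (resp.\ orthogonal isotropic) Grassmannian. Each such $\tilde{\NN}$ is smooth and irreducible, and the projection to the corresponding null cone is surjective with connected fibers; Kempf's collapsing theorem yields that $\NN$ is normal with rational singularities in (3) and (4). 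The strata $\tilde{\NN}_k$ in (1) and (2) are indexed by the rank data, and a direct dimension count singles out a unique top-dimensional component exactly when $p + q \leq n$ in (1); in (2) one further recovers the irreducible cases $(p, q) = (n, 1)$ and $(1, n)$ coming from the extra determinantal invariants of $\SL$.

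For the ``only if'' in (4), suppose $2m > n := \dim V$. Then every isotropic subspace of $V$ has dimension at most $\lfloor n/2 \rfloor < m$, so any $k := \lfloor n/2 \rfloor + 1$ of the vectors $v_{i_1}, \ldots, v_{i_k}$ at a point of $\NN$ are linearly dependent. Hence the wedge covariant
\[
\phi \colon V^m \longrightarrow \bigwedge^{k} V, \qquad (v_1, \ldots, v_m) \longmapsto v_{i_1} \wedge \cdots \wedge v_{i_k},
\]
vanishes identically on $\NN$. Moreover, covariants of type $\bigwedge^k V$ on $V^m$ start in degree exactly $k$ (since $\bigwedge^k V$ does not appear as a constituent of $S^d(V^{m*})$ for $d < k$), so $\phi$ cannot lie in $\mm_0 \cdot \Cov(V^m, \bigwedge^k V)$; it is a generating covariant, and Proposition \ref{prop:gencovariants} forces $(mV, \SO(V))$ to be non-coreduced. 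The most delicate step in this program is the Second Fundamental Theorem input for coreducedness, which is classical but requires the full strength of standard monomial theory (or Kempf's collapsing) to show that the ideal of $\NN$ is generated by the defining invariants; the remaining arguments are comparatively formal.
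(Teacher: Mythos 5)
Your overall architecture (FFT to identify $\NN$ set-theoretically, then a ``second fundamental theorem'' to show the ideal of $\NN$ is generated by the invariants, then collapsings for the geometry) is reasonable, and it is genuinely different from the paper's route: the paper never invokes an SFT for the ideal of the null cone, but instead passes to $U$-invariants, uses the Cauchy formula together with Brion's orthogonality lemma to compute $(\OOO(W)/I)^{U}$ explicitly as a polynomial ring modulo a square-free monomial ideal, and concludes reducedness from the fact that a $G$-algebra is reduced if and only if its $U$-invariants are. The problem with your version is that the key step is not actually supplied. The Second Fundamental Theorem describes the relations among the generating invariants, i.e.\ the kernel of the presentation $\C[y_{ij}]\to\OOO(W)^{G}$; it is a different statement from the one you need, namely that the ideal $I\subset\OOO(W)$ generated by the invariants is radical. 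In case (1) the De Concini--Strickland theorem on the variety of complexes really is the statement you need, so that case is fine. But for (2) the ideal is enlarged by the maximal minors of $A$ and $B$, and ``a small variant'' is not a proof (the paper has to compute $J^{U}=I^{U}+(x_{n})$, etc., and check the quotient is still a reduced monomial ring); and for (3) and (4) the radicality of the ideal generated by the pairings $\omega(v_{i},v_{j})$ resp.\ $q_{ij}$ inside $\OOO(mV)$ is precisely the hard point, which neither the SFT nor Kempf's collapsing gives you --- Kempf's theorem yields normality and rational singularities of the \emph{reduced} image variety and says nothing about whether the schematic null cone defined by $I$ is reduced.

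The ``only if'' argument in (4) also has a genuine gap. Your claim that covariants of type $\bigwedge^{k}V$ on $mV$ first occur in degree $k$ is false for $\SO(V)$: since $\bigwedge^{k}V\cong\bigwedge^{n-k}V$ as $\SO(V)$-modules and $n-k<k$ for $k=\lfloor n/2\rfloor+1$, such covariants already occur in degree $n-k$, so the degree count does not exclude $\phi\in\mm_{0}\Cov(mV,\bigwedge^{k}V)$. (For $\Orth(V)$ the two exterior powers differ by the determinant character and your degree argument does work, but non-coreducedness of $(mV,\Orth(V))$ does not formally imply that of $(mV,\SO(V))$ --- the implication in Proposition \ref{prop:non-connected} goes in the opposite direction.) This is exactly why the paper's proof of this step is more delicate: it isolates the copy $M=\bigwedge^{k+1}\C^{k+1}\otimes\bigwedge^{k+1}V^{*}$ in degree $k+1$, observes that $\bigwedge^{k+1}V^{*}\cong\psi_{k-1}$ already occurs in degree $k-1$, and then rules out $M\subset I$ by a $\GL_{m+1}$-equivariance argument (the product $S^{2}(\C^{m+1})\otimes\bigwedge^{m-1}\C^{m+1}$ does not contain $\bigwedge^{m+1}\C^{m+1}$). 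You would need an argument of this finer type to close the gap.
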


The basic reference for this section is \cite{Sc1987On-classical-invar}. Denote by $T_{m}$, $B_{m}$ and $U_{m}$ the subgroups of $\GL_{m}$ consisting of diagonal, upper triangular, and upper triangular unipotent matrices. If $\lambda$ is a dominant weight, i.e., $\lambda =\sum_{i=1}^{m}\lambda_{i}\eps_{i} \in X(T_{n})=\bigoplus_{i=1}^{m} \Z\eps_{i}$ and $\lambda_{1}\geq \lambda_{2}\geq\cdots\geq \lambda_{m}$, we denote by $\psi_{\lambda}$ or $\psi_{\lambda}(m)$ the corresponding irreducible representation of $\GL_{m}$. In the following, we will only deal with {\it polynomial} representations of $\GL_{m}$, so that $\lambda_{i}\geq 0$ for all $i$.
Set $|\lambda|:=\sum\lambda_{i}$ and define the {\it height} of a dominant weight by $\htt(\lambda) :=  \max\{i\mid\lambda_{i}>0\}$. 

The famous \name{Cauchy} formula describes the decomposition of the symmetric powers of a tensor product where we consider $\psi_{\lambda}(m) \otimes \psi_{\mu}(k)$ as a representation of $\GL_{m}\times\GL_{k}$ (see \cite[(1.9) Theorem]{Sc1987On-classical-invar}).

\begin{proposition}\lab{Cauchy.prop}
\[
S^{d}(\C^{m}\otimes\C^{k}) = \bigoplus_{|\lambda|=d,\,\htt(\lambda)\leq\min\{m,k\}}
\psi_{\lambda}(m)\otimes\psi_{\lambda}(k)
\]
\end{proposition}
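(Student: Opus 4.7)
The plan is to prove this via character theory, reducing to the classical Cauchy identity for Schur polynomials. Let $T_m \times T_k$ be the product of the standard maximal tori. With respect to this torus, the weights of $\C^m \otimes \C^k$ are precisely the $\eps_i + \delta_j$ (written multiplicatively as $x_i y_j$) for $1 \le i \le m$, $1 \le j \le k$, each occurring with multiplicity one. The generating series for the characters of the symmetric powers is therefore
\[
\sum_{d \ge 0} \operatorname{ch}\bigl(S^{d}(\C^{m}\otimes\C^{k})\bigr)\, t^{d}
\;=\; \prod_{i,j} \frac{1}{1 - t\, x_i y_j}.
\]

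On the other hand, by the Weyl character formula for $\GL_{m}$, the character of $\psi_{\lambda}(m)$ is the Schur polynomial $s_{\lambda}(x_{1},\ldots,x_{m})$, which vanishes identically when $\htt(\lambda) > m$. Thus the character of the conjectural right-hand side is $\sum_{\lambda} s_{\lambda}(x)\, s_{\lambda}(y)$, the sum ranging over all partitions $\lambda$ with $\htt(\lambda) \le \min\{m,k\}$. Invoking the classical Cauchy identity
\[
\prod_{i,j} \frac{1}{1 - x_i y_j} \;=\; \sum_{\lambda} s_{\lambda}(x)\, s_{\lambda}(y),
\]
we see that the two characters, graded by total degree $d = |\lambda|$, coincide term by term. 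Since both sides are polynomial representations of $\GL_{m}\times\GL_{k}$ and are completely reducible with the $\psi_{\lambda}(m)\otimes\psi_{\mu}(k)$ being pairwise non-isomorphic, equality of formal characters forces the claimed decomposition.

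The only non-formal ingredient is the Cauchy identity, which I would not reprove here but merely cite (e.g., Macdonald's book, or the reference \cite{Sc1987On-classical-invar} already used); a self-contained argument would proceed either through the RSK bijection between two-line arrays and pairs of semistandard Young tableaux of the same shape, or through the Jacobi--Trudi formula and the duality of complete and elementary symmetric functions. The mild subtlety to flag is the height restriction: partitions with more than $\min\{m,k\}$ parts contribute zero on the right because one of the Schur polynomials vanishes, which is why the sum can be written equivalently as unrestricted over all $\lambda$ or restricted as in the statement.
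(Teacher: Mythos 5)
Your argument is correct. Note, however, that the paper does not prove this proposition at all: it is quoted as the ``famous Cauchy formula'' with a citation to \cite[(1.9) Theorem]{Sc1987On-classical-invar}, so there is no in-paper proof to compare against. What you supply is the standard character-theoretic derivation: the graded character of $S^{\bullet}(\C^m\otimes\C^k)$ is $\prod_{i,j}(1-tx_iy_j)^{-1}$, the symmetric-function Cauchy identity expands this as $\sum_\lambda s_\lambda(x)s_\lambda(y)$ with the degree-$d$ piece indexed by $|\lambda|=d$, and equality of characters pins down the decomposition because polynomial representations of $\GL_m\times\GL_k$ are completely reducible and the characters $s_\lambda(x)s_\mu(y)$ of the pairwise non-isomorphic irreducibles $\psi_\lambda(m)\otimes\psi_\mu(k)$ are linearly independent. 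Your handling of the height restriction --- that $s_\lambda$ in $m$ variables vanishes identically once $\htt(\lambda)>m$, so the restricted and unrestricted sums agree --- is exactly the right point to flag. The only thing you defer is the Cauchy identity itself, which is reasonable to cite; if one wanted a representation-theoretic rather than combinatorial proof of that ingredient, one could instead decompose $S^d(\C^m\otimes\C^k)$ directly by highest-weight theory for the commuting actions (Schur--Weyl--Howe duality), but for the purposes of this paper your route is entirely adequate.
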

If $\lambda$ is a dominant weight of height $r$, then $\psi_{\lambda}(m)$ makes sense for any $m\geq r$. In fact, $\psi_{\lambda}$ is a functor and $\psi_{\lambda}(V)$ is a well-defined $\GL(V)$-module for every vector space $V$ of dimension $\geq r$. In particular, if $\rho\colon G\to\GL(V)$ is a representation of a reductive group $G$, then all $\psi_{\lambda}(V)$ for $\htt(\lambda)\leq\dim V$ are representations of $G$ as well. From the \name{Cauchy} formula we thus get
\[
\OOO(mV)_{d}=S^{d}(\C^{m}\otimes V^{*}) = \bigoplus_{|\lambda|=d,\,\htt(\lambda)\leq\min(m,\dim V)}
\psi_{\lambda}(m)\otimes\psi_{\lambda}(V^{*})
\]
as a representation of $\GL_{m}\times G$. Taking $U_{m}$-invariants we find
\[
\OOO(mV)^{U_{m}}_{d}=S^{d}(\C^{m}\otimes V^{*})^{U_{m}} = \bigoplus_{|\lambda|=d,\,\htt(\lambda)\leq\min(m,\dim V)}
\psi_{\lambda}(V^{*})\tag{$*$}
\]
where the torus $T_{m}\subset \GL_{m}$ acts on $\psi_{\lambda}(V^{*})$ with weight $\lambda$. Thus the algebra $\OOO(mV)^{U_{m}}$ is $\Z^{m}$-graded, and the homogeneous component of weight $\lambda$ is the $G$-module $\psi_{\lambda}(V^{*})$. In particular, $\OOO(mV)^{U_{m}}$ is multiplicity-free as a $\GL(V)$-module. It follows that the product $\psi_{\lambda}(V^{*})\cdot \psi_{\mu}(V^{*})$ in $\OOO(mV)$ is equal to $\psi_{\lambda+\mu}(V^{*})$. This leads to the following definition.

\begin{definition}
Let $G$ be a connected reductive group and let $A$ be a $G$-algebra, i.e., a commutative $\C$-algebra with a locally finite and rational action of $G$ by algebra automorphisms. Two simple submodules  $U,V \subset A$ are called {\it orthogonal\/} if the product $U\cdot V \subset A$ is either zero or simple and isomorphic to the Cartan (highest weight) component of $U \otimes V$.
\end{definition}

The result above can therefore be expressed by saying that all irreducible $\GL(V)$-submodules of $\OOO(mV)^{U_{m}}$ are orthogonal to each other. The following crucial result is due to \name{Brion} \cite[Lemme 4.1]{Br1985Representations-ex}.

\begin{proposition}\lab{Brion.prop}
Let $A$ be a $G$-algebra and let $V_{1},V_{2},W \subset A$ be simple submodules. Assume that $V_{1},V_{2}$ are both orthogonal to $W$. Then any simple factor of $V_{1}\cdot V_{2}$ is orthogonal to $W$.
\end{proposition}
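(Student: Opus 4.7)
The plan is to reformulate orthogonality in terms of highest-weight vectors, dispose of the trivial case via associativity, and then combine both orthogonality hypotheses through a careful kernel analysis of the triple multiplication.

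First I would rephrase: fix a Borel $B=TU$ of $G$. For simple $G$-submodules $U,V\subset A$ of highest weights $\alpha,\beta$, orthogonality of $U$ and $V$ is equivalent to the condition that every $U$-fixed vector in $U\otimes V$ of $T$-weight strictly less than $\alpha+\beta$ maps to $0$ under the multiplication $m\colon U\otimes V\to A$. This is immediate from the decomposition $U\otimes V=P(\alpha+\beta)\oplus(\text{lower summands})$, with the Cartan component $P(\alpha+\beta)$ appearing with multiplicity one. So, for a simple summand $V'\subset V_1\cdot V_2$ of highest weight $\mu$, the task becomes: every $U$-fixed $\xi\in V'\otimes W$ of weight $\nu<\mu+\lambda_W$ must satisfy $m(\xi)=0$ in $A$.

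Second, I would dispose of the trivial vanishing case. If either $V_1\cdot W=0$ or $V_2\cdot W=0$, associativity and commutativity of multiplication in $A$ give $V_1\cdot V_2\cdot W=0$, so $V'\cdot W=0$ and $V'$ is trivially orthogonal to $W$. So I may assume both $V_i\cdot W=P(\lambda_i+\lambda_W)$ are nonzero Cartan components. By semisimplicity of rational $G$-modules, lift $V'$ to a simple summand $\tilde V'\subset V_1\otimes V_2$ mapping isomorphically onto $V'$ under the multiplication $m_{12}$. Any $U$-fixed $\xi\in V'\otimes W$ of weight $\nu$ then lifts to a $U$-fixed $\tilde\xi\in\tilde V'\otimes W\subset V_1\otimes V_2\otimes W$ of the same weight, with $m(\xi)=\Phi(\tilde\xi)$ for the triple multiplication $\Phi\colon V_1\otimes V_2\otimes W\to A$. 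By the two given orthogonalities, the kernel of $\Phi$ contains the three subspaces $K_{12}\otimes W$, $V_1\otimes K_{2W}$ and (after reordering) $K_{1W}\otimes V_2$, where $K_{12}\subset V_1\otimes V_2$ and $K_{iW}\subset V_i\otimes W$ denote the non-Cartan complements.

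The main obstacle, and the technical heart of the lemma, is to show that any lift $\tilde\xi\in\tilde V'\otimes W$ of weight $\nu<\mu+\lambda_W$ lies in the sum $K_{12}\otimes W+V_1\otimes K_{2W}+K_{1W}\otimes V_2$. I would approach this by picking an explicit highest-weight vector $\tilde v'_0=\sum_k a_k\otimes b_k\in\tilde V'$ with $a_k\in V_1$, $b_k\in V_2$, expressing an arbitrary element of $\tilde V'\otimes W$ via weight-lowering operators acting diagonally on $\tilde v'_0\otimes w_0$, and then applying the Leibniz rule for the $\lieg$-action on $A$ iteratively to rewrite the result in Cartan-plus-kernel form. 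Because the derivation identity shuffles weight among the three tensor factors, the induction on the weight difference $\mu+\lambda_W-\nu$ (a non-negative sum of simple roots) must be organized delicately, typically by sub-inducting on the degree in $U(\nn^-)$ of one factor so as to keep the weight-lowering operators under control. Overcoming this weight-bookkeeping issue---while simultaneously using that the specific summand structure $\tilde V'\subset V_1\otimes V_2$ forces a restricted lower-weight structure on $\tilde\xi$---is the core obstacle.
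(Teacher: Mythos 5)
A preliminary remark: the paper offers no proof of this proposition --- it is quoted from Brion's paper (Lemme 4.1 of the cited reference) --- so there is no in-paper argument to compare your route against, and your proposal has to stand on its own. It does not, for the following concrete reason. You assert that ``by the two given orthogonalities, the kernel of $\Phi$ contains \dots\ $K_{12}\otimes W$,'' where $K_{12}$ is the non-Cartan complement in $V_1\otimes V_2$. The hypotheses are $V_1\perp W$ and $V_2\perp W$; they give $V_1\otimes K_{2W}\subseteq\ker\Phi$ and (after reordering) $K_{1W}\otimes V_2\subseteq\ker\Phi$, but they say nothing about $m_{12}\colon V_1\otimes V_2\to A$. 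The inclusion $K_{12}\otimes W\subseteq\ker\Phi$ is equivalent to $m_{12}$ annihilating the non-Cartan part of $V_1\otimes V_2$, i.e.\ to $V_1\perp V_2$ --- which is not assumed, and which fails in precisely the situations where the paper uses the proposition (two copies of $V^*$ inside $\OOO(pV)$ multiply to $S^2V^*\oplus\bigwedge^2V^*$, and the non-Cartan factor $\bigwedge^2V^*$ is one of the ``simple factors of $V_1\cdot V_2$'' that the proposition must control). So even if you established the membership $\tilde\xi\in K_{12}\otimes W+V_1\otimes K_{2W}+K_{1W}\otimes V_2$, you could not conclude $\Phi(\tilde\xi)=0$. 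The only subspace supported on the first two factors that is killed ``for free'' is $\ker(m_{12})\otimes W$, which depends on $A$ and is in general neither contained in nor equal to $K_{12}\otimes W$.

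The same error conceals the genuinely hard case. When $V'$ is a non-Cartan simple factor of $V_1\cdot V_2$, any lift $\tilde V'$ lies inside $K_{12}$, so $\tilde V'\otimes W\subseteq K_{12}\otimes W$ and your target membership is vacuously true while yielding nothing about $\Phi(\tilde\xi)$; yet these non-Cartan factors are exactly what the lemma is needed for in the applications. In addition, the step you yourself label the ``technical heart'' --- the Leibniz-rule induction establishing the membership --- is only described, not carried out, so even for the Cartan factor the argument is incomplete. A viable repair must (i) replace $K_{12}\otimes W$ by $\ker(m_{12})\otimes W$ throughout, and (ii) actually prove, for every simple submodule $\tilde V'\subseteq V_1\otimes V_2$ of highest weight $\mu$, that the $U$-fixed vectors of $\tilde V'\otimes W$ of weight $\prec\mu+\lambda_W$ lie in $V_1\otimes K_{2W}+K_{1W}\otimes V_2$ (suitably reordered) modulo $\ker(m_{12})\otimes W$. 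That statement is the real content of Brion's lemma, and nothing in the proposal establishes it.
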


We will also need the following result about $U$-invariants (see \cite[III.3.3]{Kr1984Geometrische-Metho}).
\begin{proposition}\lab{Uinv.prop}
Let $G$ be a connected reductive group, $U\subset G$ a maximal unipotent subgroup, and let $A$ be a finitely generated $G$-algebra. Then $A$ is reduced, resp. irreducible, resp. normal if and only if $A^{U}$ is reduced, resp. irreducible, resp. normal.
\end{proposition}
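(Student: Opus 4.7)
My plan is to deduce all three biconditionals from two structural facts: (a) every nonzero rational $U$-module has a nonzero $U$-invariant vector (apply Lie--Kolchin to any finite-dimensional $U$-stable subspace), and (b) every rational $G$-module is semisimple since $G$ is reductive, so that a $G$-submodule $W \subseteq V$ is uniquely determined by $W^{U}\subseteq V^{U}$ (via the decomposition $V \simeq \bigoplus_{\lambda} V(\lambda)\otimes \Hom_{G}(V(\lambda),V)$ and the fact that $V(\lambda)^{U}$ is one-dimensional).

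For reducedness, ``$A$ reduced $\Rightarrow A^{U}$ reduced'' is trivial since $A^{U}\subseteq A$. Conversely, if $A^{U}$ is reduced then the nilradical $\sqrt{(0)}\subseteq A$ is a $G$-stable ideal; if it were nonzero, (a) would produce a nonzero element of $(\sqrt{(0)})^{U}\subseteq A^{U}$, contradicting that $A^{U}$ is reduced. For the integral domain case, one direction is again immediate. For the converse, assume $A^{U}$ is a domain, so $A$ is reduced by the above, and suppose for contradiction that $A$ has minimal primes $\mathfrak{p}_{1},\dots,\mathfrak{p}_{k}$ with $k\geq 2$. Since $G$ is connected, the permutation action of $G$ on the finite set of minimal primes is trivial, so each $\mathfrak{p}_{i}$ is $G$-stable. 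The inclusion $\prod_{i}\mathfrak{p}_{i}^{U}\subseteq (\bigcap_{i}\mathfrak{p}_{i})^{U}=0$ inside the domain $A^{U}$ forces $\mathfrak{p}_{i}^{U}=0$ for some $i$; then (a) applied to the rational $G$-module $\mathfrak{p}_{i}$ forces $\mathfrak{p}_{i}=0$, contradicting $k\geq 2$.

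For normality I first establish the auxiliary equality $\operatorname{Frac}(A)^{U}=\operatorname{Frac}(A^{U})$ when $A$ is a $G$-domain: given $x=p/q\in \operatorname{Frac}(A)^{U}$, the ideal $I=\{a\in A:xa\in A\}$ of $A$ is $U$-stable (since $x$ is $U$-fixed and multiplication is $U$-equivariant), contains $q$, and is a rational $U$-module, so by (a) contains some $0\neq q'\in A^{U}$; then $xq'\in A^{U}$ as well, so $x\in \operatorname{Frac}(A^{U})$. The direction ``$A$ normal $\Rightarrow A^{U}$ normal'' is routine: any $x\in \operatorname{Frac}(A^{U})$ integral over $A^{U}$ is integral over $A$, hence lies in $A$, and being a quotient of $U$-invariants it is $U$-invariant, so $x\in A^{U}$. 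For the converse, assume $A^{U}$ is normal, so $A$ is a $G$-domain by the previous paragraph, and let $\tilde A\subseteq \operatorname{Frac}(A)$ be the normalization of $A$, which is $G$-stable because $G$ acts on $\operatorname{Frac}(A)$ by ring automorphisms preserving $A$ and hence preserving integrality over $A$. Given $y\in \tilde A^{U}$, pick a $G$-stable finite-dimensional $M\subseteq A$ containing coefficients $a_{1},\dots,a_{n}$ of an integral equation $y^{n}+\sum_{i}a_{i}y^{n-i}=0$. The kernel of the $U$-equivariant linear map $M^{\oplus n}\to \operatorname{Frac}(A)$, $(b_{i})\mapsto y^{n}+\sum_{i}b_{i}y^{n-i}$, is a nonzero $U$-stable subspace (it contains $(a_{1},\dots,a_{n})$), so by (a) it contains a nonzero $U$-invariant tuple $(a_{i}')\in (A^{U})^{\oplus n}$, producing an integral equation for $y$ over $A^{U}$. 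Since $y\in \operatorname{Frac}(A)^{U}=\operatorname{Frac}(A^{U})$ by the lemma, normality of $A^{U}$ yields $y\in A^{U}$. Hence $\tilde A^{U}=A^{U}$, and fact (b) applied to the inclusion of rational $G$-modules $A\subseteq \tilde A$ upgrades this to $\tilde A=A$.

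The substantive case is the normality converse, which combines two ingredients: the Lie--Kolchin averaging trick that turns an arbitrary integral equation for $y$ into one with $U$-invariant coefficients (necessary to invoke normality of $A^{U}$), and the semisimplicity of rational $G$-modules for reductive $G$, which is what promotes the equality of $U$-invariants $\tilde A^{U}=A^{U}$ to the desired equality of $G$-modules $\tilde A=A$.
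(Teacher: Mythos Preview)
The paper does not supply its own proof of this proposition; it simply cites \cite[III.3.3]{Kr1984Geometrische-Metho}. Your argument follows the standard route and the reduced and irreducible cases are fine, but there is a slip in the normality converse.

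The map $M^{\oplus n}\to\operatorname{Frac}(A)$, $(b_i)\mapsto y^n+\sum_i b_i y^{n-i}$, is \emph{affine}, not linear: once $y\neq 0$ its zero locus is a $U$-stable affine subspace of $M^{\oplus n}$ not passing through the origin, and your fact (a)---existence of a nonzero invariant vector in a nonzero linear $U$-module---does not literally produce a fixed point there. What is needed is the slightly stronger statement that $(-)^U$ is an \emph{exact} functor on rational $U$-modules for unipotent $U$ (equivalently $H^1_{\mathrm{rat}}(U,W)=0$ for every rational $U$-module $W$; this follows by filtering $W$ via repeated applications of (a)). With exactness in hand the repair is immediate: the linear part $L\colon(b_i)\mapsto\sum_i b_i y^{n-i}$ is $U$-equivariant with $-y^n\in(\operatorname{Im}L)^U$, and surjectivity of $(M^{\oplus n})^U\to(\operatorname{Im}L)^U$ yields $(a_i')\in(M^U)^{\oplus n}\subseteq (A^U)^{\oplus n}$ with $L(a_i')=-y^n$, the monic equation you want. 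Note that this is genuinely a $U$-module statement: since $y$ is only $U$-fixed, $\operatorname{Im}L$ need not be $G$-stable, so your fact (b) does not cover it. A minor further point: to invoke (b) for the inclusion $A\subseteq\tilde A$ at the end you need the $G$-action on $\tilde A$ to be locally finite and rational; this follows from the universal property of normalization applied to the action morphism $G\times\operatorname{Spec}A\to\operatorname{Spec}A$, using that $G\times\operatorname{Spec}\tilde A$ is normal.
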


Another consequence of formula $(*)$ is that $\OOO(mV)^{U_{m}} = \OOO(nV)^{U_{n}}$ for all $m\geq n=\dim V$. 
\par\smallskip
We start with the groups $\GL(V)$ and $\SL(V)$ acting on $W:=pV\oplus qV^{*}$. It is known that the $\GL(V)$-invariants are generated by the bilinear forms 
\[
f_{ij}\colon (v_{1},\ldots,v_{p},\xi_{1},\ldots,\xi_{q})\mapsto \xi_{j}(v_{i}).
\] 
If $V_i^{*}$ is the $i$th copy of $V^{*}$ in $W^{*}\subset \OOO(W)$ and $V_j$ the $j$th copy of $V$, then $V_i^*\cdot V_j = \sll(V)\oplus \C f_{ij}$ in $\OOO(W)$, and so $V_i^{*}$ and $V_j$ are orthogonal in $\OOO(W)/I$ where $I$ is the ideal generated by the invariants $f_{ij}$. It follows from Proposition~\ref{Brion.prop} above that all simple submodules of $\OOO(pV)$ are orthogonal to all simple submodules in $\OOO(qV^{*})$ modulo $I$. Thus the $\GL(V)$-homomorphism 
\[
\OOO(pV)^{U_{p}}\otimes \OOO(qV^{*})^{U_{q}} \to (\OOO(pV\oplus qV^{*})/I)^{U_{p}\times U_{q}}
\]
is surjective, and the same holds if we take invariants under $U := U_{p}\times U_{V} \times U_{q} \subset \GL_{p}\times \GL(V) \times \GL_{q}$ where $U_{V}\subset \GL(V)$ is a maximal unipotent subgroup. This also shows that 
the $(U_{p}\times U_{q})$-invariants do not change   once  $p\geq n$ or $q\geq n$, so that we can assume that $p,q\leq n$.

Now we have $\OOO(pV)^{U} = \C[x_{1},\ldots,x_{p}]$ where $x_{i}\in\bigwedge^{i}V^{*}$ is a highest weight vector. Similarly, $\OOO(qV^{*})^{U} = \C[y_{1},\ldots,y_{q}]$, and thus we get a surjective homomorphism
\[
\phi\colon \C[x_{1},\ldots,x_{p},y_{1},\ldots,y_{q}] \to (\OOO(pV\oplus qV^{*})/I)^{U}. \tag{$**$}
\]
\begin{proof}[Proof of Theorem~\ref{CIT.thm}(1) and (2)]
We claim that the kernel of $\phi$ is generated by the products $x_{r}y_{s}$ where $r+s>n$. This implies that
we have an isomorphism
\[
(\OOO(pV\oplus qV^{*})/I)^{U}\simeq \C[x_{1},\ldots,x_{p},y_{1},\ldots,y_{q}]/(x_{i}y_{j}\mid i+j>n),
\]
and so $\NN_{p,q}$ is reduced, by Proposition~\ref{Uinv.prop}. We also see that the ideal $(x_{i}y_{j}\mid i+j>n)$ is prime if and only if it is $(0)$, i.e., when $p+q\leq n$. This proves the theorem for $\GL(V)$.

To prove the claim we first remark that the kernel of $\phi$ is spanned by monomials, because $\phi$ is equivariant under the action of the  maximal torus $T_{p}\times T_{q}$.  Moreover, it is not difficult to see that $\phi(x_{r}y_{s})=0$ if $r+s>n$, see \cite[ Remark~1.23(2)]{Sc1987On-classical-invar}.

Now let $f:=x_{i_{1}}\cdots x_{i_{p}}y_{j_{1}}\cdots y_{j_{q}}$ be a monomial which is not in the ideal $(x_{i}y_{j}\mid i+j>n)$. Then $r+s \leq n$ where $r := \max(p_{i})$ and $s:=\max(q_{j})$. If $(e_{1},\ldots,e_{n})$ is a basis of $V$ and $(e_{1}^{*},\ldots,e_{n}^{*})$ the dual basis of $V^{*}$, then we can assume that $x_{i}= e_{n-i+1}^{*}\wedge e_{n-i+2}^{*}\wedge\cdots\wedge e_{n}^{*}$ and $y_{j}=e_{1}\wedge_{2}\wedge\cdots\wedge e_{j}$. Now it is clear that the monomial $f$ does not vanish at the point $w:=(0,\ldots,0,e_{n-r+1},\ldots,e_{n},e_{1}^{*},\ldots,e_{s}^{*},0,\ldots,0)$ which is in the null cone $\NN_{p,q}:= \NN(pV\oplus qV^{*})$.
\par\smallskip
For the group $\SL(V)$ there are more invariants, namely the determinants 
$d_{i_{1}\cdots i_{n}}:=\det \begin{bmatrix} v_{i_{1}}\\ \vdots \\ v_{i_{n}}\end{bmatrix}$ where $i_{1}<i_{2}<\cdots<i_{n}$,
and $d^{*}_{j_{1}\cdots j_{n}}:=\det \begin{bmatrix} \xi_{j_{1}}\\ \vdots \\ \xi_{j_{n}}\end{bmatrix}$ where $j_{1}<j_{2}<\cdots<j_{n}$. These invariants only appear if $p\geq n$, resp. $q\geq n$. In particular, we have the same invariants and the same null cone in case $p,q < n$. From the surjectivity of the map $\phi$ in $(**)$ above we see that there remain only the cases where either $p=n$ and $q\leq n$, or $q=n$ and $p\leq n$. Let $J$ denote  the ideal generated by the $\SL(V)$-invariants. Then $J^{U}=I^{U}+(x_{n})$ if $p=n>q$, $J^{U}=I^{U}+(y_{n})$ if $p<n=q$, and $J^{U}=I^{U}+ (x_{n},y_{n})$ if $p=n=q$. Hence
\[
(\OOO(pV\oplus qV^{*})/J)^{U}\simeq \C[x_{1},\ldots,x_{p'},y_{1},\ldots,y_{q'}]/(x_{i}y_{j}\mid i+j>n)
\]
where $p':=\min(p,n-1)$ and $q':=\min(q,n-1)$. The rest of the proof is as above.
\end{proof}

Next we study the case where $V$ is a symplectic space, i.e., $V$ is equipped with a non-degenerate skew form $\beta$, $\dim V = 2n$. Then the invariants of $mV$ are generated by the bilinear maps 
\[
\beta_{ij}\colon (v_{1},\ldots,v_{n})\mapsto \beta(v_{i},v_{j}), \ 1\leq i < j \leq m.
\]
We denote by $\psi_{k}:=\bigwedge^{k}_{0}V^{*}\subset \bigwedge^{k}V^{*}$ ($k=1,\ldots,n$)  the fundamental representations of $\Sp(V)$ where 
$\bigwedge^{k}V^{*} = \bigwedge^{k}_{0}V^{*} \oplus \beta\wedge\bigwedge^{k-2}V^{*}$. We know from equation $(*)$ that $\OOO(mV)_{k}^{U_{m}}$ contains a unique copy of $\bigwedge^{k}V^{*}$ for $k\leq \min(m,n)$.
\begin{lemma}
Let $I \subset \OOO(mV)$ be the ideal generated by the invariants $\beta_{ij}$. Then in $\OOO(mV)^{U_{m}}$ we have
\be
\item $\bigwedge^{k}V^{*} = \psi_{k}\pmod{I}$ for $k=1,\ldots,\min(m,n)$;
\item $\bigwedge^{k}V^{*} \cdot \bigwedge^{\ell}V^{*} = \psi_{k}\psi_{\ell} \pmod{I}$ for $1\leq k\leq \ell\leq \min(m,n)$.
\ee
\end{lemma}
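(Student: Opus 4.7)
The plan is to prove (1) by an explicit determinant computation showing that contraction with $\beta$ in $\bigwedge^k V^*$ lands in the ideal $I$, and then to deduce (2) by combining (1) with the $\GL(V)$-Cartan multiplication in $\OO(mV)^{U_m}$ and Brion's lemma.

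For (1), I would realize the copy of $\bigwedge^k V^*$ inside $\OO(mV)^{U_m}$ via the Cauchy embedding as the polynomial functions
\[
\omega_{\xi_1 \wedge \cdots \wedge \xi_k}(v_1, \ldots, v_m) \;=\; \det\bigl(\xi_j(v_i)\bigr)_{1 \leq i, j \leq k},
\]
obtained by pairing the $\GL_m$-highest weight vector $e_1 \wedge \cdots \wedge e_k \in \bigwedge^k\C^m$ with $\xi_1 \wedge \cdots \wedge \xi_k$. Fix a symplectic basis $e_1,\ldots,e_n,f_1,\ldots,f_n$ of $V$ so that $\beta = \sum_{a=1}^{n} e_a^* \wedge f_a^*$. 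For any $\eta \in \bigwedge^{k-2}V^*$, I would expand $\omega_{\beta \wedge \eta} = \sum_a \omega_{e_a^* \wedge f_a^* \wedge \eta}$ by Laplace along the two columns indexed by $e_a^*$ and $f_a^*$; the $2 \times 2$ minors from rows $p<q$ sum over $a$ to $\sum_a (e_a^*(v_p)f_a^*(v_q)-e_a^*(v_q)f_a^*(v_p)) = \beta(v_p,v_q) = \beta_{pq}$. This yields the identity
\[
\omega_{\beta \wedge \eta} \;=\; \sum_{1 \leq p < q \leq k} \pm\, \beta_{pq} \cdot M_{pq}^\eta
\]
for appropriate $(k{-}2)\times(k{-}2)$ minors $M_{pq}^\eta$, which shows $\beta \wedge \bigwedge^{k-2}V^* \subseteq I$. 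The image of $\bigwedge^k V^*$ modulo $I$ is therefore a quotient of the $\Sp(V)$-complement $\psi_k = \bigwedge_0^k V^*$. To see this quotient is nonzero, I would evaluate $\omega_{e_1^* \wedge \cdots \wedge e_k^*}$ at the point $v^\circ := (e_1, \ldots, e_k, 0, \ldots, 0)$, which lies in $V(I)$ since the $e_i$ are $\beta$-isotropic; the result $\det(\delta_{ij}) = 1$ combined with irreducibility of $\psi_k$ forces the image to equal $\psi_k$.

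For (2), I would start from the observation that, by the $\GL(V)$-multiplicity-free (Cartan) structure of $\OO(mV)^{U_m}$ established before the lemma, the product $\bigwedge^k V^* \cdot \bigwedge^\ell V^*$ inside $\OO(mV)^{U_m}$ is the $\GL(V)$-irreducible $\psi_{(2^k,1^{\ell-k})}(V^*)$. Under $\Sp(V)$ this decomposes as the Cartan component $\psi_k \psi_\ell$ plus $\Sp(V)$-pieces of strictly smaller highest weight arising from $\beta$-contractions. Two routes present themselves: either extend the Laplace identity of (1) directly to show that each such $\beta$-contracted piece is a sum of $\beta_{pq}$-multiples of smaller determinants (hence lies in $I$); or apply Proposition~\ref{Brion.prop} to $A := \OO(mV)/I$, using (1) applied to both factors as input, so that $\psi_k$ and $\psi_\ell$ already sit inside $A$ as $\Sp(V)$-irreducibles and Brion's lemma propagates the orthogonality inherited from $\OO(mV)^{U_m}$ to every simple factor of $\psi_k \cdot \psi_\ell$ in $A$, ruling out the non-Cartan components. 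Nonvanishing of $\psi_k \psi_\ell$ mod $I$ is then checked by evaluating the product $\omega_{e_1^* \wedge \cdots \wedge e_k^*} \cdot \omega_{e_1^* \wedge \cdots \wedge e_\ell^*}$ at the same isotropic point $v^\circ$.

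The hard part will be the bookkeeping in (2): matching each non-Cartan $\Sp(V)$-isotypic component of $\psi_{(2^k,1^{\ell-k})}(V^*)$ with an explicit $\beta$-contraction that is visibly in $I$. Once this identification is set up---essentially by iterating the Laplace argument of (1) and tracking signs through repeated contractions---the remaining steps (the evaluations at $v^\circ$ and the appeals to irreducibility and multiplicity-freeness) are routine.
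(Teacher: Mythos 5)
Your treatment of part (1) is correct and in fact more explicit than the paper's (which simply declares it clear from $\bigwedge^{k}V^{*}=\psi_{k}\oplus\beta\wedge\bigwedge^{k-2}V^{*}$): the Laplace expansion along the two columns coming from $\beta=\sum_a e_a^*\wedge f_a^*$ does exhibit $\omega_{\beta\wedge\eta}$ as $\sum_{p<q}\pm\beta_{pq}M^{\eta}_{pq}$, and the evaluation at an isotropic point settles nonvanishing. The problem is part (2), where neither of your two routes closes the argument. Route (b) is a misapplication of Proposition~\ref{Brion.prop}: that proposition takes as \emph{hypothesis} that $V_1$ and $V_2$ are each orthogonal to a third module $W$ and concludes that the simple factors of $V_1\cdot V_2$ are orthogonal to $W$; it never produces the orthogonality of $V_1$ and $V_2$ to each other. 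The orthogonality you want to "inherit" from $\OOO(mV)^{U_m}$ is orthogonality of $\GL(V)$-modules (multiplicity-freeness of the $T_m$-grading), which only tells you that $\bigwedge^kV^*\cdot\bigwedge^\ell V^*$ is the single $\GL(V)$-irreducible $\psi_{(2^k,1^{\ell-k})}(V^*)$; the assertion of (2) is about the $\Sp(V)$-Cartan component inside that, and Brion's lemma for $G=\Sp(V)$ would need exactly the statement $\psi_k\perp\psi_\ell$ as input. Indeed, in the paper the pairwise orthogonality of the $\psi_k$ established by this lemma is the \emph{base case} that feeds into Proposition~\ref{Brion.prop} in the proof of Theorem~\ref{CIT.thm}(3); it cannot be derived from it. Route (a) is the right idea, but you have deferred precisely the step that constitutes the proof ("matching each non-Cartan component with an explicit $\beta$-contraction"), and iterating the two-column Laplace expansion does not obviously organize itself by $\Sp(V)$-isotypic components.

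For comparison, the paper closes this gap with a short highest-weight analysis rather than determinant bookkeeping. Writing $x_1,\dots,x_n,y_1,\dots,y_n$ for a symplectic weight basis of $V^*$, any $\Sp(V)$-highest weight vector occurring in $\psi_k\cdot\psi_\ell$ must contain the term $(x_1\wedge\cdots\wedge x_k)\cdot\alpha$ with $\alpha=x_1\wedge\cdots\wedge x_q\wedge y_{k-r+1}\wedge\cdots\wedge y_k$, $q=\ell-r$; so the only candidate constituents are the $\psi_{k-r}\psi_{\ell-r}$, each with multiplicity at most one. For $r>0$ one then exhibits the invariant $\beta_r\in(\bigwedge^rV^*\otimes\bigwedge^rV^*)^{\Sp(V)}$, $\beta_r(v_1\wedge\cdots\wedge v_r,w_1\wedge\cdots\wedge w_r)=\det(\beta(v_i,w_j))$, whose projection $\beta_r'$ to $\psi_r\psi_r$ is nonzero and visibly lies in $I^r$; multiplying $\beta_r'$ by the copy of $\psi_{k-r}\psi_{\ell-r}$ produces, by multiplicity one, \emph{the} copy of $\psi_{k-r}\psi_{\ell-r}$ in $\psi_k\cdot\psi_\ell$, which therefore lies in $I$. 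Adding this classification of highest weight vectors and the element $\beta_r'$ would repair your proof; as written, part (2) is not established.
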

\begin{proof}
Part (1) is clear since $\psi_{k+2} = \bigwedge^{k+2} V^{*} / \beta \bigwedge^k V^{*}$. For part (2) let $x_1,\dots ,x_n\in V^*$ correspond to the positive weights $\epsilon_1,\dots,\epsilon_n$ and let $y_1,\dots,y_n$ correspond to the $-\epsilon_j$. A simple submodule  occurring in $\psi_k\cdot \psi_\ell$ has a highest weight vector containing a unique term $\gamma:=x_1\wedge\cdots\wedge x_k\cdot \alpha$ where $\alpha$ is an $\ell$-fold wedge product of a certain number of $x_i$ and $y_j$. But the only possibility for obtaining a highest weight of $\Sp(V)$  is  
$\alpha=x_1\wedge\cdots\wedge x_q \wedge y_{k-r+1}\wedge\cdots\wedge y_k$ where $q\leq \ell$ and $r=\ell-q$.  This gives the highest weight  of a (unique) copy of $\psi_p\psi_q$  where $p=k-r$.  

Suppose that $r>0$. We have an element $\beta_r$  in $(\bigwedge^r(V^*)\otimes\bigwedge^r(V^*))^{\Sp(V)}$ where $\beta_r(v_1\wedge\cdots\wedge v_r,w_1\wedge\cdots\wedge w_r)=\det(\beta(v_i,w_j))$. Here the $v_i$ and $w_j$ are elements of $V$. It is easy to see that $\beta_r$  projects to a nontrivial invariant element $\beta_r'$ of $\psi_r\psi_r$, and that $\beta'_r\in I^r$. Then the product of $\beta_r'$ with $\psi_p\psi_q\subset \psi_p\cdot\psi_q$ is a copy of $\psi_p\psi_q$ in  $\psi_k\psi_\ell $, and we have (2).
\end{proof}

\begin{proof}[Proof of Theorem~\ref{CIT.thm}(3)]
It follows from the lemma above and Proposition~\ref{Brion.prop} that all simple submodules in $\OOO(mV)^{U_{m}}$ are orthogonal and the covariants are generated by $\psi_{1},\cdots,\psi_{m'}$ where $m':=\min(m,n)$.
Let $U_{V}\subset \Sp(V)$ be a maximal unipotent subgroup and let $x_{k}\in\bigwedge^{k}_{0}V^{*}\subset \OOO(mV)_{k}^{U_{m}}$ be a highest weight vector. Then we have a surjective homomorphism
\[
\phi\colon \C[x_{1},\ldots,x_{m'}]\to (\OOO(mV)/I)^{U_{m}\times U_{V}}.
\]
If $W\subset V$ is a maximal isotropic subspace, then $W^{\oplus m}$ belongs to the null cone of $mV$, and for a suitable choice of $W$ the function $x_{k}$ does not vanish on $W^{\oplus m}$ for $k\leq m'$. This implies that $\phi$ is an isomorphism,  because the grading of the action of $T_{m}$ has one-dimensional weight spaces and so the kernel of $\phi$ is linearly spanned by monomials. Now the theorem for $\Sp_{n}$ follows from Proposition~\ref{Uinv.prop}. 
\end{proof}

Finally, let $V$ be a quadratic space, i.e., an $n$-dimen\-sional vector space with a non-degenerate quadratic form $q$. The $\Orth(V)$-invariants of $mV$ are generated by the bilinear maps
\[
q_{ij}\colon (v_{1},\ldots,v_{m})\mapsto q(v_{i},v_{j}), \ 1\leq i \leq j \leq m.
\]
The $\SO(V)$-modules $\psi_{k}:=\bigwedge^{k}V^{*}$  are simple if $2k<n$. For $n=2m$ $\psi_{m}:=\bigwedge^{m}V^{*}$ is simple as an $\Orth(V)$-module, but decomposes as $\psi_{m} = \psi_{m}^{+}\oplus\psi_{m}^{-}$ as an $\SO(V)$-module.  

\begin{lemma} Let $2m\leq n$ and 
let $I \subset \OOO(mV)$ be the ideal generated by the invariants $q_{ij}$. Then in $\OOO(mV)^{U_{m}}$ we have
\be
\item
$\psi_{k} \cdot \psi_{\ell} = \psi_{k}\psi_{\ell} \pmod{I}$ for $1\leq k\leq \ell\leq \min(m,\frac{n-1}{2})$;
\item
If $n=2m$, then $\psi_{m}^{+}\cdot\psi_{m}^{-} = 0 \pmod{I}$.
\ee
\end{lemma}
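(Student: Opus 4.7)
The plan is to follow the argument of the preceding symplectic lemma with $q$ replacing $\beta$. For part (1), first recall that as a $\GL(V)$-module the product $\psi_k\cdot\psi_\ell$ inside $\OOO(mV)^{U_m}$ lives in the $T_m$-graded piece of weight $(2^k,1^{\ell-k})$, which by the Cauchy formula is the single Schur module $\psi_{(2^k,1^{\ell-k})}(V^*)$. Restricting to $\Orth(V)$, this decomposes via Littlewood's branching rule as $\bigoplus_\mu V_\mu^{\Orth}$, where $\mu$ runs over partitions obtained from $(2^k,1^{\ell-k})$ by removing a subpartition $\nu$ with even columns. The top summand is the Cartan product $\psi_k\psi_\ell^{\Orth}$ of highest weight $\omega_k+\omega_\ell$, and I must show that every other summand lies in $I$.

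To do this I introduce, for each $s\ge 1$, the $\Orth(V)$-invariant
\[
q_s\in\bigl(\textstyle\bigwedge^s V^*\otimes\bigwedge^s V^*\bigr)^{\Orth(V)},\qquad q_s(v_1\wedge\cdots\wedge v_s,\,w_1\wedge\cdots\wedge w_s)=\det\bigl(q(v_i,w_j)\bigr),
\]
which, evaluated on a pair of vectors in $2V\subset mV$, is a polynomial of degree $s$ in the generating invariants $q_{ij}$ and therefore lies in $I^s\subset I$. It projects to a nontrivial invariant element $q_s'\in\psi_s\psi_s\subset\psi_s\cdot\psi_s$. Multiplying $q_s'$ by the Cartan summand $\psi_{k-s}\psi_{\ell-s}\subset\psi_{k-s}\cdot\psi_{\ell-s}$ yields a copy of $\psi_{k-s}\psi_{\ell-s}$ embedded inside $\psi_k\cdot\psi_\ell$, realising the Littlewood summand that corresponds to removing a symmetric even-column pair. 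Analogous multiplications, using products of $q_{s_1}\cdot q_{s_2}$ with varying internal structure adapted to two-column strippings $(c_1,c_2)$, realise every non-top Littlewood summand inside $I$; hence only the Cartan summand survives modulo $I$, proving (1).

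For part (2), the splitting $\psi_m=\psi_m^+\oplus\psi_m^-$ has highest weights $\omega_m^\pm=\epsilon_1+\cdots+\epsilon_{m-1}\pm\epsilon_m$ under $\SO(V)$, so the Cartan product $\psi_m^+\psi_m^-$ has highest weight $2\omega_{m-1}$. In the labelling of part~(1) with $k=\ell=m$, the summand of highest weight $\omega_{m-1}+\omega_{m-1}=2\omega_{m-1}$ is exactly the $s=1$ Littlewood summand of $\psi_m\cdot\psi_m$, which lies in $I$ by the $q_1$-multiplication construction above. Any further $\SO(V)$-summand of $\psi_m^+\cdot\psi_m^-$ corresponds to deeper Littlewood strippings ($s\ge 2$) and is treated by the same $q_s$-argument.

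The main obstacle is the refinement of the symplectic argument: whereas in the $\Sp$-case a single invariant $\beta_s$ suffices because the relevant tensor decomposition is indexed by one parameter, in the orthogonal setting the Littlewood branching from $\GL(V)$ to $\Orth(V)$ allows removal of two-column even subpartitions $(c_1,c_2)$, so one must ensure that appropriate products of the $q_s$'s account for each such pair and that the resulting element of $\psi_k\cdot\psi_\ell$ is nonzero and lands in the intended Littlewood summand. Once this combinatorial bookkeeping is set up, the rest of the proof is a direct transcription of the symplectic argument, with the grading on $\OOO(mV)^{U_m}$ and the multiplicity-freeness of its $\GL(V)$-decomposition doing the heavy lifting.
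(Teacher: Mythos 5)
Your core mechanism is exactly the paper's: the determinantal invariants $q_s'\in(\psi_s\psi_s)^{\Orth(V)}\cap I^s$, multiplied against lower Cartan products, realise the non-top constituents of $\psi_k\cdot\psi_\ell$ inside $I$. Where you diverge is in how you identify those constituents: the paper analyses the possible highest weight vectors of $\psi_k\cdot\psi_\ell$ directly in a weight basis (transcribing the symplectic lemma, with a separate paragraph for $n$ odd to handle the zero weight vector $z$), whereas you invoke Littlewood's branching rule for $\GL(V)\downarrow\Orth(V)$ applied to $\psi_{(2^k,1^{\ell-k})}(V^*)$.

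The gap is in that second step. You have the Littlewood convention backwards: for restriction to $\Orth(V)$ one strips partitions with all \emph{rows} even (even columns is the symplectic rule). For the two-column shape $(2^k,1^{\ell-k})$ the even-row strips are exactly the shapes $(2^j)$, $0\le j\le k$, and a short Littlewood--Richardson computation shows the resulting decomposition is multiplicity-free with summands of highest weight $\omega_{k-j}+\omega_{\ell-j}$ --- a one-parameter family, just as in the symplectic case, each hit by a single $q_j'$-multiplication. The ``two-column strippings'' and the products $q_{s_1}\cdot q_{s_2}$ that you flag as the main obstacle are artifacts of the wrong convention; as written you assert without proof that such products account for every non-top summand, and that assertion is precisely the content of the lemma. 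Once the convention is fixed the obstacle disappears and your argument closes up (and, as a bonus, handles $n$ odd uniformly, which the paper must treat by hand); part (2) is then correct as you state it, matching the paper's identification of $\psi_m^+\psi_m^-$ with the $j=1$ summand realised by $q_1'\cdot\psi_{m-1}\psi_{m-1}$.
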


\begin{proof}
Let $n=2s$ or $2s+1$ so that $m\leq s$. We consider a weight basis $x_1,\dots,x_s$ and $y_1,\dots,y_s$ (and a zero weight element $z$ if $n$ is odd). First suppose that $n$ is even.
For (1) we can then proceed as in the symplectic case.  The only difference is that we use the invariant bilinear form $q$ to generate an element   $q_r'$ lying in  $(\psi_r\psi_r)^{\SO(V)}$ and in $I^r$. As for (2), the highest weight vectors are $x_1\wedge\cdots\wedge x_m$ and $x_1\wedge\cdots \wedge x_{m-1}\wedge y_m$.  Their product   is the image of $q_1'  \otimes \psi_{m-1}\psi_{m-1}$ in $\psi_m^+\psi_m^-$. The argument of (1) shows that any other irreducible occurring in $\psi^+_m\cdot\psi^-_m$ also lies in $I$.

Now suppose that $n$ is odd. Then the argument for (1) above goes through except when the zero weight vector  appears in the expression for $\alpha$. 
So suppose that $\alpha=x_1\wedge\cdots\wedge x_{\ell-1}\wedge z$.
Then
$$
x_1\wedge\cdots\wedge x_k\cdot \alpha+(x_1\wedge\cdots\wedge x_{\ell-1}\wedge x_{\ell+1}\cdots\wedge x_k\wedge z)\cdot (x_1\wedge\cdots\wedge x_\ell)
$$ 
is a vector in $\psi_k\cdot\psi_\ell$. It is obtained from $(x_1\wedge\cdots\wedge x_k)\cdot (x_1\wedge\cdots \wedge x_\ell)$ by  applying elements of $U^-$. Hence we don't have a new irreducible component of $\psi_k\cdot \psi_\ell$.
\end{proof}

\begin{proof}[Proof of Theorem~\ref{CIT.thm}(4)]
Choose highest weight vectors $x_{k}\in \bigwedge^{k}V^{*}$ for $2k<n$ and $x_{m}^{+}\in \psi_{m}^{+}$, $x_{m}^{-}\in \psi_{m}^{-}$ for $n=2m$.
The lemma implies that the induced maps
\begin{gather*}
\C[x_{1},\ldots,x_{m}]\to (\OOO(mV)/I)^{U_{m}\times U_{V}} \text{ for } 2m<n, \text{ and}\\
\C[x_{1},\ldots,x_{m}^{+},x_{m}^{-}]/(x_{m}^{+}x_{m}^{-}) \to (\OOO(mV)/I)^{U_{m}\times U_{V}} \text{ for } 2m=n
\end{gather*}
are surjective. The weights $\nu(x_{k})$ of the highest weight vectors (with respect to $T_{m}\times T_{V}$, $T_{V}$ a maximal torus of $\SO(V)$) are linearly independent, except that in case $n=2m$ we have $\nu(x_{m}^{+}) + \nu(x_{m}^{-}) = 2\nu(x_{m-1})$. It follows that the algebras on the left hand side are multiplicity free, and so the kernels of the two maps are spanned by monomials. But it is easy to see that none of the $x_{k}$, $x_{m}^{\pm}$ vanish on the null cone, and so the two maps are isomorphisms. 
Again using Proposition~\ref{Uinv.prop} we obtain the theorem for  the groups $\Orth(V)$ and $\SO(V)$ in the case where $2m\leq n$.

It remains to 
show that the null cone  is not reduced for $2m >n$. Let $n=2k$ where $k>m$. The case $n=2k-1$ is similar and will be left to the reader. Then in degree $k+1$ we find the submodule $M:=\bigwedge^{k+1} \C^{k+1} \otimes \bigwedge^{k+1}V^*$, by \name{Cauchy}'s formula (Proposition~\ref{Cauchy.prop}). The $\SO(V)$-module $\bigwedge^{k+1}V^*$ is simple and isomorphic to $\psi_{k-1} = \bigwedge^{k-1}V$. We claim that $M$ vanishes on the null cone $\NN$, but is not contained in the ideal $I$ generated by the invariants.

The first part is clear, because $\NN = \Orth(V) \cdot (k+1)W$ where $W \subset V$ is a maximal isotropic subspace, and every function $f_1\wedge \cdots \wedge f_{k+1}$ vanishes on $(k+1)W$, because $\dim W = k$.

For the second, we remark that the module $\psi_{k-1}$ appears the first time in degree $k-1$, in the form $\bigwedge^{k-1} \C^{k+1} \otimes \bigwedge^{k-1} V^*$. If $M \subset I$, then $M$ must belong to the product
\[
\OOO((m+1)V)^{\SO(V)}_2 \cdot (\textstyle\bigwedge^{m-1} \C^{m+1} \otimes \bigwedge^{m-1} V^*)
\]
which is a quotient of $(S^2(\C^{m+1})\otimes \bigwedge^{m-1} \C^{m+1}) \otimes \bigwedge^{m-1} V^*$. But the tensor product $S^2(\C^{m+1})\otimes \bigwedge^{m-1} \C^{m+1}$ does not contain the "determinant" $\bigwedge^{m+1} \C^{m+1}$ as a $\GL_{m+1}$ module.
\end{proof}

\bigskip
\section{Non-Reduced Components of the Null Cone}\lab{sec:comp-nullcone}

We need some information about null cones (see  \cite{KrWa2006On-the-nullcone-of} for more details).
Let $G$ be a connected reductive complex group, $T \subset G$ a maximal torus and $V$ a $G$-module. Let  $X(T) =\Hom(T,\C^*)$ denote the character group of $T$ and let $Y(T)=\Hom(\C^*,T)$ denote the group of 1-parameter subgroups of $T$. Then $Y(T)$ and $X(T)$ are dually paired: $\langle \rho,\mu\rangle = n$ if $\mu(\rho(t)) = t^{n}$. 
For any $\rho\in Y(T)$ we set 
$$
Z_{\rho}:=\{v \in V \mid \lim_{t\to 0} \rho(t) v = 0\} = \bigoplus_{\mu\in X(T), \langle\rho,\mu\rangle>0}V_{\mu}
$$
where $V_{\mu}\subset V$ denotes the weight space of weight $\mu$. These $Z_{\rho}$ are called {\it positive weight spaces}.
Then the Hilbert-Mumford theorem says that $\NN$ is the union of the sets $GZ_\rho$, $\rho\in Y(T)$. In fact, one needs only a finite number of elements of $Y(T)$. Pick a system of simple roots for $G$. Then using the action of the Weyl group, we can assume that any given $\rho$ is positive when paired with the simple roots $\alpha_1,\dots,\alpha_\ell\in X(T)$, $\ell=\dim T$. In fact, we can always assume that the pairings are strictly positive and that $\rho$ only takes the value 0 on the zero weight.  We call such elements of $Y(T)$ \emph{generic}.  Now $Z_\rho$ is stable under the action of the Borel $B$, thus $GZ_\rho$ is  closed in the Zariski topology, and $GZ_\rho$ is   irreducible. Thus there are finitely many generic $\rho_i$ such that the sets $GZ_{\rho_i}$ are the irreducible components of $\NN$.

\begin{remark}\lab{rem:vanishingcov}
We will use this description of the null cone to show that a given homogeneous covariant $\tau\colon V \to W$ of degree $d$ vanishes on the null cone, generalizing Lemma~\ref{lem:vanishingcov}. It suffices to show that $\tau$ vanishes on $Z_{\rho}$ for the relevant generic $\rho$'s. Denote by $\mu_{1},\ldots,\mu_{m}$ the weights of $Z_{\rho}$. 
 {\it If $\tau\neq 0$, then the highest weight $\mu$ of $W$ is of the form $\sum_{i}d_{i}\mu_{i}$ where $\sum_{i}d_{i}=d$.} 
(This follows from the $B$-equivariance of $\tau$.) \ %
Hence $\tau$ vanishes if $\mu$ cannot be expressed as such a sum.
\end{remark}

Let $\Lambda(V)$ denote the set of weights of $V$. For $\rho\in Y(T)$, let $\Lambda_{\rho}$ denote the subset of $\Lambda(V)$ of elements which pair  strictly positively with $\rho$. A subset $\Lambda \subset\Lambda(V)$ is called {\it admissible} if $\Lambda=\Lambda_{\rho}$ for a generic $\rho$. In this case  set $Z_\Lambda:=Z_\rho$.  We will often switch between looking at generic elements of $Y(T)$ (or $Y(T)\otimes\Q$) and corresponding subsets $\Lambda\subset \Lambda(V)$.
We say that an admissible $\Lambda$ is \emph{dominant\/} if   $GZ_\Lambda$ is a component of the null cone. 

Here is a way to show that the null cone $\NN$ is not reduced. 
 
\begin{proposition}\lab{prop:method1} 
Let $\Lambda\subset\Lambda(V)$ be dominant and let $W\subset V$ be a $T$-stable complement of $Z_{\Lambda}$. 
Assume that for any $z\in Z_{\Lambda}$ the differential $d\pi_{z}
$ restricted to $W$ has rank $< \codim_{V} GZ_{\Lambda}$, or,  equivalently; there is a subspace $W'\subset W$ of dimension $>\codim_{GZ_{\Lambda}}Z_{\Lambda}$ such that the differential  of any invariant vanishes on $W'$. 
Then no point of $GZ_\Lambda\subset\NN$ is reduced.
\end{proposition}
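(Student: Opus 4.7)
The plan is to combine the standard identification $T_z\NN=\ker d\pi_z$ with $G$-equivariance of $\pi$, so that the rank hypothesis forces non-reducedness at a generic point of the component $GZ_\Lambda$; then closedness of the non-reduced locus propagates this to all of $GZ_\Lambda$.

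First, since $Z_\Lambda\subset GZ_\Lambda\subset\NN$, every $f\in\mm_{0}$ vanishes on $Z_\Lambda$, so $df(z)$ annihilates $T_zZ_\Lambda=Z_\Lambda$ for all $z\in Z_\Lambda$. In the decomposition $V=Z_\Lambda\oplus W$ this forces $d\pi_z|_{Z_\Lambda}=0$, whence
\[
\rk d\pi_z=\rk(d\pi_z|_W)<\codim_V GZ_\Lambda
\]
by hypothesis. The $G$-invariance $\pi\circ g=\pi$ gives $d\pi_{gz}\circ g=d\pi_z$, and so the strict inequality $\rk d\pi_z<\codim_V GZ_\Lambda$ holds at every $z\in GZ_\Lambda$.

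Next, let $U\subset GZ_\Lambda$ be the set of smooth points of $GZ_\Lambda$ that lie on no other irreducible component of $\NN$. Then $U$ is open and dense in $GZ_\Lambda$, since $\NN$ has only finitely many components and the smooth locus of the irreducible variety $GZ_\Lambda$ is open dense. If $\NN$ were reduced at some $z\in U$, then locally at $z$ the scheme $\NN$ would coincide with the smooth variety $GZ_\Lambda$, so that
\[
\ker d\pi_z=T_z\NN=T_zGZ_\Lambda
\]
would have dimension exactly $\dim GZ_\Lambda$; this forces $\rk d\pi_z=\codim_V GZ_\Lambda$, contradicting the previous paragraph.

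Hence every point of $U$ is non-reduced in $\NN$. The non-reduced locus of the Noetherian scheme $\NN$ is closed, and $U$ is dense in the irreducible closed subset $GZ_\Lambda\subset\NN$, so this non-reduced locus contains all of $GZ_\Lambda$, as claimed. The main technical point is the rank computation in the second paragraph: correctly identifying $\rk d\pi_z$ with $\rk(d\pi_z|_W)$ via the vanishing of invariants on $Z_\Lambda$, and propagating from $Z_\Lambda$ to $GZ_\Lambda$ by $G$-equivariance; everything else is a routine application of the infinitesimal criterion for reducedness at a smooth, generic point of a component.
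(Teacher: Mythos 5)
Your proof is correct and follows essentially the same route as the paper: the paper's one-line proof observes that the hypothesis forces $\rk d\pi_z<\codim_V GZ_\Lambda$ on $Z_\Lambda$ (hence on $GZ_\Lambda$ by equivariance) and then invokes Lemma \ref{lem:differential}, whose justification is exactly your generic-smoothness argument identifying $T_z\NN$ with $\ker d\pi_z$ at a smooth point lying on no other component. You have merely written out the details (vanishing of $d\pi_z$ on $Z_\Lambda$, propagation by $G$-equivariance, closedness of the non-reduced locus) that the paper leaves implicit.
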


\begin{proof}
Either condition implies that the rank of $d\pi_z$ is less than the codimension of $GZ_\lambda$ for any $z\in Z_\lambda$.
\end{proof}

\begin{remark}\lab{rem:method1}
Let $(v_{1},\ldots,v_{n})$ be a basis of $V$ consisting of weight vectors of weight $\mu_{1},\ldots,\mu_{n}$, and let $(x_{1},\ldots,x_{n})$ be the dual basis. If $f=x_{i_{1}}\cdots x_{i_{d}}$ is a monomial of weight zero, then an $x_{i}$ such that $\mu_{i}\notin \Lambda$ has to appear. If two such $x_i$ appear in $f$, then clearly $(df)|_{Z_{\lambda}}=0$.  This gives our first method to show that $\NN$ is not reduced.
\be 
\item  Let  $\Lambda'$ be the complement of  $\Lambda$ in $\Lambda(V)$. Let   $d\in\N$ be minimal such that every zero weight monomial $f$ containing exactly one factor $x_{i}$ corresponding to a weight from $\Lambda'$ has degree $\leq d$.
\item Show that there are not enough invariants of degree $\leq d$, i.e., show that the number of invariants of degree $\leq d$ is strictly less than the codimension of $GZ_\lambda$.
\ee
\end{remark}

If $W$ is irreducible of highest weight $\lambda$ we denote by $\lambda^{*}$ the highest weight of the dual representation $W^{*}$. The next result will give us another way to see if the null cone is not reduced. It uses the method of covariants introduced in section~\ref{sec:covariants} (see Proposition~\ref{prop:gencovariants}).

\begin{proposition}\lab{prop:method2}
Let $\phi\colon V \to W$ be a covariant where $W$ is irreducible of highest weight $\lambda$. Let $\Lambda\subset \Lambda(V)$ be admissible and assume that $\phi$ does not vanish on $GZ_{\Lambda}$. Then $\lambda^{*}\in \N\Lambda$.
\end{proposition}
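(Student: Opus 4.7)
The plan is to pass from the hypothesis $\phi|_{GZ_\Lambda}\not\equiv 0$ to $\phi|_{Z_\Lambda}\not\equiv 0$ (immediate from the $G$-equivariance of $\phi$), and then to exploit the $B$-stability of $Z_\Lambda$ together with Borel's fixed-point theorem. Write $\Lambda=\Lambda_\rho$ for a generic $\rho\in Y(T)$, so that $Z_\Lambda$ is stable under the Borel subgroup $B\supset T$ whose unipotent radical corresponds to the roots pairing positively with $\rho$. Splitting $\phi$ into its homogeneous components and replacing it by one that does not vanish on $Z_\Lambda$, we may assume $\phi$ is homogeneous of some positive degree; then $\phi(Z_\Lambda)\subset W$ is a $B$-stable cone, and its Zariski closure $\overline{\phi(Z_\Lambda)}$ is a closed $B$-stable cone in $W$.

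The key step is to apply Borel's fixed-point theorem to the nonempty $B$-stable projective variety $\mathbb{P}(\overline{\phi(Z_\Lambda)})\subset\mathbb{P}(W)$. Since $W$ is an irreducible $G$-module, its only $B$-fixed line is the highest-weight line, so a highest weight vector $w_\lambda\in W$ must lie in $\overline{\phi(Z_\Lambda)}$. Taking $\eta\in W^*$ to be the linear functional picking out the $w_\lambda$-coefficient of a vector, the function $f:=\eta\circ\phi=\phi^*(\eta)\in\OO(V)$ is a $T$-semi-invariant that cannot vanish identically on $Z_\Lambda$: otherwise one would have $\overline{\phi(Z_\Lambda)}\subset\ker\eta$, contradicting $\eta(w_\lambda)=1$.

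Finally, in a $T$-weight basis $x_1,\dots,x_n$ of $V^*$ with corresponding weights $\mu_1,\dots,\mu_n$ of $V$, every monomial occurring in $f$ carries the same $T$-weight as $f$; and since $f|_{Z_\Lambda}\not\equiv 0$, at least one monomial $\prod_i x_i^{k_i}$ appearing in $f$ must involve only variables with $\mu_i\in\Lambda$ (any other monomial contains a coordinate that vanishes identically on $Z_\Lambda$). Reading off the weight of this distinguished monomial expresses the weight of $f$ as a non-negative integer combination $\sum_i k_i\mu_i$ of elements of $\Lambda$, which is the desired conclusion $\lambda^*\in\N\Lambda$.

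The single nontrivial input is the Borel fixed-point step that locates the highest-weight line of $W$ inside $\overline{\phi(Z_\Lambda)}$; everything else is straightforward bookkeeping with weights and coordinates, in close parallel to Remark~\ref{rem:vanishingcov} and to the degree-based argument in Lemma~\ref{lem:vanishingcov}.
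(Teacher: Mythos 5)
Your argument is essentially the paper's: both proofs come down to locating the highest weight line of $W$ inside the span of $\phi(Z_\Lambda)$ and then reading off the weight of a surviving monomial. You package the first step as Borel's fixed-point theorem applied to $\mathbb{P}(\overline{\phi(Z_\Lambda)})$, while the paper works dually with a $B$-eigenvector of the submodule $\phi^*(W^*)\subset\OO(V)$; these are two formulations of the same fact (every nonzero $B$-stable subspace of an irreducible module meets the highest weight line), and your intermediate steps (reduction to $Z_\Lambda$, homogeneity, $B$-stability of the image cone, the choice of $\eta$, the monomial bookkeeping) are all sound.

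The one point you must fix is the last sentence. With $\eta$ dual to the highest weight vector $w_\lambda$ of $W$, the $T$-equivariance of $\phi$ forces every monomial $\prod_i x_i^{k_i}$ of $f=\eta\circ\phi$ to satisfy $\sum_i k_i\mu_i=\lambda$; so what your computation proves is $\lambda\in\N\Lambda$, where $\lambda$ is the highest weight of $W$ itself, and you cannot simply relabel this as $\lambda^*$. For non-self-dual $W$ the two conclusions genuinely differ: take $G=\SL_3$, $\phi=\id\colon\C^3\to\C^3$, and $\Lambda=\{\eps_1\}$ (admissible for $\rho$ with $\langle\rho,\alpha\rangle=2$, $\langle\rho,\beta\rangle=1$); then $\id$ does not vanish on $GZ_\Lambda$, and $\lambda=\eps_1\in\N\Lambda$, but $\lambda^*=\eps_1+\eps_2=-\eps_3\notin\N\Lambda$. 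This discrepancy is inherited from the statement itself rather than introduced by you: the version with $\lambda$ is the one consistent with Remark \ref{rem:vanishingcov} and with how Proposition \ref{prop:method2} is actually applied in Example \ref{ex:v31revisited}, where the degree bound is computed from the highest weight $(2/3,1/3)$ of $V[1,0]$, not of its dual. So either prove the statement with $\lambda$ in place of $\lambda^*$, or make explicit (and justify) the duality convention under which the weight you read off is $\lambda^*$; as written, the final identification does not follow.
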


\begin{proof}
Let $W^*$ be the subspace of $\OOO(V)$ corresponding to $\phi$. Let $f$ be a highest weight vector of $W^*$. Then $f$ has weight $-\lambda^*$ and $f$ does not vanish on $GZ_{\Lambda}$ by assumption.  It follows that $f$ contains a monomial 
$m=x_{i_{1}}x_{i_{2}}\cdots x_{i_{d}}$ where the corresponding $v_{i_{k}}$ all belong to $Z_{\Lambda}$, i.e., $\lambda^{*}= \mu_{i_{1}}+\mu_{i_{2}}+ \cdots+\mu_{i_{d}}\in\N\Lambda$.
\end{proof}

\begin{remark}\lab{rem:method2}
This proposition will be used in the following way.
\be
\item Find a suitable highest weight $\lambda$ and an integer $d$ such that $\lambda^{*}$ cannot be written as a sum of more than $d$ weights from $\Lambda$.
\item Show that there are generating covariants of type $W_{\lambda}$ in degree $>d$.
\ee
By the proposition above this implies that the generating covariants from (2) vanish  on $GZ_{\Lambda}$. In order to apply Proposition~~\ref{prop:gencovariants} one has to  fix $d$ and check (1) for any admissible $\Lambda$.  
\end{remark}

We finish this section by giving some criteria to find the dominant $\Lambda$ among the admissible ones.
Let $\Lambda_1$ and $\Lambda_2$ be admissible subsets of $\Lambda(V)$. Set $Z_i:=Z_{\Lambda_i}$, $i=1,2$. We say that $\Lambda_2$ \emph{dominates} $\Lambda_1$, and we write $\Lambda_1\leq\Lambda_2$, if $GZ_1\subset GZ_2$. Given $\sigma\in W$, let $\Lambda_1^{(\sigma)}:=\{\lambda\in\Lambda_1\mid\sigma(\lambda)\in\Lambda_2\}$ and let $Z_1^{(\sigma)}$ denote the sum of the weight spaces with weights in $\Lambda_1^{(\sigma)}$.

\begin{lemma}\lab{lem:BZsigma}
Let  $\Lambda_1$ and  $\Lambda_2$ be admissible. Then $\Lambda_2$ dominates $\Lambda_1$ if and only if there is a $\sigma\in W$    
such that $BZ_1^{(\sigma)}$ is dense in $Z_1$.
\end{lemma}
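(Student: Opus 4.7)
My plan is to translate the dominance condition $GZ_1\subset GZ_2$ through the Bruhat decomposition of $G$, and then read off which weight spaces of $Z_1$ can be reached from $Z_2$ by elements of the cells $B\tilde\sigma B$, where $\tilde\sigma\in N(T)$ is a lift of $\sigma$.

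For the easy direction $(\Leftarrow)$, the definition of $\Lambda_1^{(\sigma)}$ directly yields $\tilde\sigma Z_1^{(\sigma)}\subset Z_2$, since $\tilde\sigma$ carries the weight space $V_\mu$ to $V_{\sigma\mu}$ and $\sigma(\Lambda_1^{(\sigma)})\subset\Lambda_2$ by definition. Hence $BZ_1^{(\sigma)}\subset B\tilde\sigma^{-1}Z_2\subset GZ_2$. Because $Z_2$ is $B$-stable, $GZ_2$ is closed in $V$ (as recalled at the start of this section), so density of $BZ_1^{(\sigma)}$ in $Z_1$ gives $Z_1\subset GZ_2$, hence $GZ_1\subset GZ_2$.

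For $(\Rightarrow)$, I will use the Bruhat decomposition together with the $B$-stability of $Z_2$ to write
$$
GZ_2=\bigcup_{\sigma\in W}B\tilde\sigma Z_2,
$$
a finite union of constructible subsets of $V$. Since the irreducible variety $Z_1$ is contained in this union, one intersection $Z_1\cap B\tilde\sigma Z_2$ must be dense in $Z_1$. The main calculation is then to identify this intersection with a subset of $BZ_1^{(\sigma^{-1})}$: given $v=b\tilde\sigma z$ with $b\in B$ and $z\in Z_2$, the element $b^{-1}v=\tilde\sigma z$ has weights in $\sigma\Lambda_2$, but it also lies in the $B$-stable subspace $Z_1$, so its weights lie in $\Lambda_1\cap\sigma\Lambda_2=\Lambda_1^{(\sigma^{-1})}$. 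Thus $b^{-1}v\in Z_1^{(\sigma^{-1})}$ and $v\in BZ_1^{(\sigma^{-1})}$, so $BZ_1^{(\sigma^{-1})}$ is dense in $Z_1$. Relabeling $\sigma^{-1}\mapsto\sigma$ finishes the proof.

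There is no serious obstacle in this argument; the only thing to track carefully is the swap between $\sigma$ and $\sigma^{-1}$, which arises because the Bruhat cell naturally pulls $v\in Z_1$ back into $Z_2$ via $\tilde\sigma^{-1}$, whereas $\Lambda_1^{(\sigma)}$ is defined so that $\sigma$ pushes forward into $\Lambda_2$. Since the lemma only asserts the existence of some $\sigma$, this relabeling is harmless.
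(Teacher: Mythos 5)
Your proof is correct and follows essentially the same route as the paper: both directions rest on the Bruhat decomposition together with the $B$-stability of $Z_1$ and $Z_2$, expressing $Z_1$ as a finite union of constructible sets $BZ_1^{(\sigma)}$ and using irreducibility to force one of them to be dense. The only difference is cosmetic — the paper decomposes the group element $g$ carrying $z$ into $Z_2$ as $u\sigma b$, whereas you decompose $GZ_2$ itself into cells $B\tilde\sigma Z_2$, which accounts for your harmless $\sigma\leftrightarrow\sigma^{-1}$ relabeling.
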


\begin{proof}
Suppose that $\Lambda_1\leq\Lambda_2$. Let $z\in Z_1$. Then there is a $g\in G$ such that $gz\in Z_2$. Write $g=u\sigma b$ where $b\in B$, $u\in U$ and $\sigma\in W$ (Bruhat decomposition).  
Since $b$ and $u$ preserve the $Z_i$, we see that $bz\in Z_1^{(\sigma)}$.  Thus $Z_1$ is the union of the constructible subsets $BZ_1^{(\sigma)}$, $\sigma\in W$, and one of them must be dense.

Conversely, suppose that some $BZ_1^{(\sigma)}$ is dense in $Z_1$. Since $\sigma(BZ_1^{(\sigma)})$ lies in $GZ_2$ and $GZ_2$ is closed, we see that $GZ_1\subset GZ_2$.
\end{proof}

The condition that $BZ_1^{(\sigma)}$ is dense in $Z_1$ has some consequences for the weights of $Z_{1}^{(\sigma)}$. Denote by 
$\Phi^{+}$ the set of positive roots, i.e., the weights of $\bb:=\Lie B$.

\begin{lemma}\lab{lem:Bdense}
Let $Z$ be a $B$-module and $Z'\subset Z$ a $T$-stable subspace. If $BZ'$ is dense in $Z$, then
$$
\Lambda(Z) = \Lambda(Z')+(\Phi^{+}\cup\{0\}).
$$
In particular, $\Lambda(Z')$ contains the set $\Omega:=\{\lambda\in\Lambda(Z) \mid \lambda\notin \Lambda(Z)+\Phi^{*}\}$.
\end{lemma}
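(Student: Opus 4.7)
The plan is to exploit the decomposition $B=TU$ with $U$ the unipotent radical; since $Z'$ is $T$-stable one has $BZ'=UZ'$, so the hypothesis is that $UZ'$ is Zariski dense in $Z$. I would then analyze how $U$ acts via its root subgroups: for $\alpha\in\Phi^+$ and a weight vector $z\in Z_\mu$, one has $u_\alpha(t)z=\sum_{k\geq 0}\frac{t^k}{k!}x_\alpha^k z$, with $x_\alpha^k z\in Z_{\mu+k\alpha}$. The first thing to record is that whenever $x_\alpha^k z\neq 0$ for some $k\geq 1$, inductively $x_\alpha^j z$ is nonzero for every $0\leq j\leq k$, so each intermediate weight $\mu+j\alpha$ lies in $\Lambda(Z)$.

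For the set equality, I would fix $\lambda\in\Lambda(Z)$ and consider the $T$-equivariant linear projection $\pi_\lambda\colon Z\to Z_\lambda$. Its kernel is a proper closed subspace of $Z$, so density of $UZ'$ precludes $UZ'\subset\ker\pi_\lambda$; hence there exist $u\in U$ and $z\in Z'$ with $\pi_\lambda(uz)\neq 0$. Decomposing $z=\sum_\mu z_\mu$ into $T$-weight pieces produces some $\mu\in\Lambda(Z')$ with $\pi_\lambda(uz_\mu)\neq 0$. Writing $u$ as a product of one-parameter subgroups $u_{\alpha_i}(t_i)$ and iterating the expansion above, the $Z_\lambda$-component of $uz_\mu$ can be nonzero only if $\lambda-\mu$ is a nonnegative integer combination of positive roots. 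Reading ``$\Phi^+\cup\{0\}$'' on the right-hand side as the monoid it generates, this gives the nontrivial inclusion; the reverse inclusion $\Lambda(Z')\subset\Lambda(Z)$ is immediate from $Z'\subset Z$.

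For the ``in particular'' claim, suppose $\lambda\in\Omega$, i.e.\ $\lambda\notin\Lambda(Z)+\Phi^+$. From the previous paragraph, $\lambda=\mu+\alpha$ with $\mu\in\Lambda(Z')$ and $\alpha$ a nonnegative sum of positive roots, realized via a sequence of root-subgroup exponentials. If $\alpha\neq 0$, I would track the very last root subgroup $u_\beta(t_N)$ contributing to the nonzero $Z_\lambda$-component: it produces an intermediate weight $\nu\in\Lambda(Z)$ with $\lambda=\nu+k\beta$ for some $\beta\in\Phi^+$ and $k\geq 1$. By the key observation, $\lambda-\beta=\nu+(k-1)\beta\in\Lambda(Z)$, contradicting $\lambda\notin\Lambda(Z)+\Phi^+$. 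Hence $\alpha=0$ and $\lambda=\mu\in\Lambda(Z')$.

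The main obstacle is the bookkeeping in this ``path'' argument---both ensuring that some concrete sequence of root-subgroup exponentials actually contributes a nonzero weight-$\lambda$ component and isolating the last such step to extract the contradiction for $\Omega$. This follows formally from the exponential formula together with the key observation about intermediate weights, but it constitutes the technical heart of the proof.
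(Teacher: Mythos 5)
Your argument is correct in substance but takes a genuinely different route from the paper's, and it proves a slightly weaker form of the displayed identity. The paper argues infinitesimally: density of $BZ'$ in $Z$ makes the multiplication map $B\times Z'\to Z$ dominant, so its tangent map $(X,v)\mapsto Xz_0+v$ is surjective at a suitable point, giving $\bb Z'+Z'=Z$ in one line; since the weights of $\bb$ are exactly $\Phi^{+}\cup\{0\}$, every weight of $Z$ is a weight of $Z'$ plus a \emph{single} positive root or zero, and the claim about $\Omega$ (where the $\Phi^{*}$ in the statement should read $\Phi^{+}$) is then immediate. Your group-level expansion of root subgroups only yields $\Lambda(Z)\subseteq\Lambda(Z')+\N\Phi^{+}$, as you acknowledge when you reinterpret the right-hand side as a monoid, so you do not recover the literal single-root display, which the Lie-algebra argument gives for free; on the other hand the paper's ``equality'' is itself only an inclusion, since the reverse containment fails in general. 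What your approach buys is elementarity: no appeal to generic smoothness of dominant morphisms, at the price of the ``last nontrivial step'' bookkeeping, which you carry out correctly --- the projection to $Z_\lambda$ legitimately produces a nonzero term $x_{\beta_1}^{k_1}\cdots x_{\beta_N}^{k_N}z_\mu$ of weight $\lambda$, and the observation that $x_\beta^{k}w\neq 0$ forces $x_\beta^{k-1}w\neq 0$ places $\lambda-\beta$ in $\Lambda(Z)$ and gives the contradiction. Since only the statement about $\Omega$ is used downstream (Proposition~\ref{prop:component}), the weakening is harmless here, but you should note that the single-root refinement is true and comes much more cheaply from the tangent map.
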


\begin{proof} The tangent map of $B \times Z' \to Z$ at a point $(e,z_{0})$ has the form $(X,v)\mapsto Xz_{0}+ v$, and so
$\bb Z' + Z' = Z$. If $z\in Z'$ is a weight vector of weight $\lambda$ then $\bb z \subset \bigoplus_{\omega\in\Phi_{+}\cup\{0\}} Z_{\lambda+\omega}$, hence  
$\Lambda(Z)$ is as claimed.
\end{proof}

\begin{proposition}\lab{prop:component}
Let $\Lambda_{1},\Lambda_{2}\subset \Lambda(V)$ be admissible subsets. Define $\Omega_1 := 
\{\lambda\in\Lambda_1\mid \lambda\not\in \Lambda_1+\Phi^{+}\}$ and suppose that $\Q_{\geq0}\Omega_{1}$ contains the simple roots.
Then $\Lambda_1\leq\Lambda_2$ implies that $\Lambda_1\subset\Lambda_2$.
\end{proposition}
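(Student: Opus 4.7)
The plan is to combine Lemmas~\ref{lem:BZsigma} and \ref{lem:Bdense} with the dominance hypothesis on $\Omega_{1}$. From $\Lambda_{1}\leq\Lambda_{2}$, Lemma~\ref{lem:BZsigma} supplies some $\sigma\in W$ with $BZ_{1}^{(\sigma)}$ dense in $Z_{1}$. Applying Lemma~\ref{lem:Bdense} with $Z=Z_{1}$ and $Z'=Z_{1}^{(\sigma)}$, and unravelling the definition of $\Lambda_{1}^{(\sigma)}$, yields $\Omega_{1}\subset\Lambda_{1}^{(\sigma)}$, i.e., $\sigma(\Omega_{1})\subset\Lambda_{2}$.

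The crux of the proof is to force $\sigma=e$. Let $\rho_{2}\in Y(T)$ be a strictly dominant generic $1$-parameter subgroup defining $\Lambda_{2}=\{\mu\mid\langle\rho_{2},\mu\rangle>0\}$, as produced in \S\ref{sec:comp-nullcone}. The inclusion $\sigma(\Omega_{1})\subset\Lambda_{2}$ reads $\langle\sigma^{-1}(\rho_{2}),\omega\rangle>0$ for every $\omega\in\Omega_{1}$. By hypothesis each simple root admits an expression $\alpha_{i}=\sum_{\omega\in\Omega_{1}}c_{\omega}\omega$ with $c_{\omega}\geq 0$; since $\alpha_{i}\neq 0$ at least one coefficient is strictly positive, so $\langle\sigma^{-1}(\rho_{2}),\alpha_{i}\rangle>0$ for every $i$. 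Thus $\sigma^{-1}(\rho_{2})$ lies in the open dominant Weyl chamber. Since $W$ acts simply transitively on chambers and $\rho_{2}$ already lies in that same open chamber, I conclude $\sigma=e$, and hence $\Omega_{1}\subset\Lambda_{2}$.

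It remains to promote $\Omega_{1}\subset\Lambda_{2}$ to $\Lambda_{1}\subset\Lambda_{2}$. For this I use that $\Lambda_{1}$ is finite: given $\lambda\in\Lambda_{1}$, iteratively subtract positive roots while staying inside $\Lambda_{1}$ until reaching a minimal element, obtaining a decomposition $\lambda=\omega+\sum_{i}n_{i}\alpha_{i}$ with $\omega\in\Omega_{1}$ and $n_{i}\in\Z_{\geq 0}$. Pairing against $\rho_{2}$, both $\langle\rho_{2},\omega\rangle$ (since $\omega\in\Omega_{1}\subset\Lambda_{2}$) and $\langle\rho_{2},\alpha_{i}\rangle$ (since $\rho_{2}$ is strictly dominant) are positive, so $\langle\rho_{2},\lambda\rangle>0$ and $\lambda\in\Lambda_{2}$.

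The main obstacle I anticipate is the Weyl-group step that pins $\sigma$ to the identity; the remainder is essentially bookkeeping once the positivity of $\langle\sigma^{-1}(\rho_{2}),\alpha_{i}\rangle$ is in place. The assumption that $\Q_{\geq 0}\Omega_{1}$ contains the simple roots is invoked exactly once, and it is precisely what is needed to convert the positivity on $\Omega_{1}$ into dominance on the whole simple system.
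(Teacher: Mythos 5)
Your proof is correct and follows essentially the same route as the paper's: Lemma~\ref{lem:BZsigma} produces $\sigma$, Lemma~\ref{lem:Bdense} gives $\Omega_1\subset\Lambda_1^{(\sigma)}$, the hypothesis that $\Q_{\geq0}\Omega_1$ contains the simple roots forces $\sigma=e$, and then $\Lambda_1\subset\Lambda_2$. Your chamber argument for $\sigma=e$ and the explicit descent $\lambda=\omega+\sum_i n_i\alpha_i$ are simply fleshed-out versions of the paper's terser assertions that each $\sigma(\alpha_j)$ must be a positive root (hence $\sigma$ is the identity) and that $\Omega_1\subset\Lambda_2$ yields $\Lambda_1\subset\Lambda_2$.
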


\begin{proof}
Let $\sigma$ be as in the lemma. Then $\Lambda_1^{(\sigma)}$ contains $\Omega_1$ by Lemma~\ref{lem:Bdense}. This in turn implies that $\Lambda_2$ is positive on $\sigma(\alpha_j)$, $j=1,\dots,\ell$. Thus each $\sigma(\alpha_j)$ is a positive root and so 
$\sigma$ is the identity. Hence $\Omega_1\subset Z_2$ and thus $\Lambda_1\subset\Lambda_2$.
\end{proof}
 
\begin{corollary} \lab{cor:irredsl3} 
Suppose that $G=\SL_3$ with simple roots $\alpha$ and $\beta$.
Let $\Lambda=\Lambda_{\rho}\subset\Lambda(V)$ be admissible and maximal with respect to set inclusion. Suppose that $\Lambda$ contains nonzero weights of the form $\lambda_1:=-a\alpha+b\beta$ and $\lambda_2:=c\alpha-d\beta$ where the coefficients $a$, $b$, $c$ and $d$ are non-negative rational numbers. Then $\Lambda$ is dominant.
\end{corollary}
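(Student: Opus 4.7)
The plan is to verify the hypothesis of Proposition~\ref{prop:component} and then invoke the maximality of $\Lambda$ under set inclusion. Concretely, once we know that $\Q_{\geq0}\Omega_{\Lambda}$ contains both simple roots $\alpha$ and $\beta$, Proposition~\ref{prop:component} yields $\Lambda\subseteq\Lambda_{2}$ for any admissible $\Lambda_{2}$ with $\Lambda\leq\Lambda_{2}$; set-inclusion maximality of $\Lambda$ then forces $\Lambda_{2}=\Lambda$, so $GZ_{\Lambda}=GZ_{\Lambda_{2}}$ and $\Lambda$ is dominant.

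To verify the hypothesis, I would pick for $i=1,2$ a minimal element $\mu_{i}\in\Omega_{\Lambda}$ with $\lambda_{i}-\mu_{i}\in\N\Phi^{+}$; these exist by the finiteness of $\Lambda$. Writing $\mu_{i}=p_{i}\alpha+q_{i}\beta$ and expanding $\lambda_{i}-\mu_{i}$ as a non-negative integer combination of $\alpha,\beta,\alpha+\beta$ (each of which has non-negative $\alpha$- and $\beta$-coefficients) immediately gives $p_{1}\leq -a\leq0$ and $q_{2}\leq -d\leq0$. Next, since $\mu_{i}\in\Lambda_{\rho}$, the pairing condition $\langle\rho,\mu_{i}\rangle>0$ in the basis $\{\eta_{1},\eta_{2}\}$ of $Y(T)\otimes\Q$ dual to $\{\alpha,\beta\}$ reads $r_{1}p_{i}+r_{2}q_{i}>0$ with $r_{1},r_{2}>0$ (as $\rho$ is strictly dominant). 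Combined with the signs of $p_{1}$ and $q_{2}$, this forces the strict positivity $q_{1}>0$ and $p_{2}>0$, and rearranges to the strict determinantal inequality
\[
p_{1}q_{2}-p_{2}q_{1}<0.
\]

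With the sign pattern $p_{1}\leq 0<q_{1}$, $p_{2}>0\geq q_{2}$ and this inequality, elementary linear algebra solves the systems $s\mu_{1}+t\mu_{2}=\alpha$ and $s\mu_{1}+t\mu_{2}=\beta$ with $s,t\in\Q_{\geq 0}$, since the non-negativity of the unique solution in each case is equivalent to the determinantal inequality. Therefore $\alpha,\beta\in\Q_{\geq0}\{\mu_{1},\mu_{2}\}\subseteq\Q_{\geq0}\Omega_{\Lambda}$, as required. The main subtlety is upgrading the weak inequalities produced by the initial sign analysis to the strict ones, which relies crucially on $\rho$ being strictly dominant and the $\mu_{i}$ lying in the open half-space $\Lambda_{\rho}$ rather than on its boundary; everything else amounts to bookkeeping of signs.
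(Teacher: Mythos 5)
Your proof is correct and follows essentially the same route as the paper's: both reduce to minimal elements of $\Lambda$, use positivity of the pairing with $\rho$ to obtain the determinantal inequality (your $p_1q_2-p_2q_1<0$ is the paper's $bc-ad>0$), invert the resulting $2\times 2$ matrix to place $\alpha$ and $\beta$ in $\Q_{\geq0}\Omega_\Lambda$, and conclude via Proposition~\ref{prop:component} together with maximality. You are somewhat more explicit than the paper in justifying the reduction to minimal elements (the paper's ``we may assume $\lambda_1,\lambda_2\in\Omega$'') and you absorb the boundary cases $a=0$ or $d=0$ into the general determinant argument, but these are presentational differences only.
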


\begin{proof}
Let $\Omega\subset\Lambda$ be the minimal elements.   We may assume that $\lambda_1$ and $\lambda_2$ are in $\Omega$. Clearly $b,c\neq 0$. If $a=0$ or $d=0$, then the hypotheses of Proposition~\ref{prop:component} are satisfied. If $a,d\neq 0$, then 
$\langle\rho,\lambda_1\rangle>0$ and $\langle\rho,\lambda_2\rangle>0$ forces $bc-ad>0$. Thus the inverse of the matrix $\twomatrix c {-a}{-d}b$ has positive entries, so that the hypotheses of Proposition~\ref{prop:component} are satisfied and $\Lambda$ is dominant.
\end{proof}

See Example~\ref{ex:v31} below for a calculation of components of a null cone.

\bigskip
\section{Coreduced Representations of \texorpdfstring{$\SLthree$}{SL(3)}} \lab{sec:sl3}

In this section we classify the coreduced representations of $G=\SL_{3}$ (Theorems~\ref{thm:mainsl3.1} and \ref{thm:mainsl3.2}).

We denote the representation $V:=\phi_1^r\phi_2^s$ by $V[r,s]$, $r,s\in\N$, and its highest weight by $[r,s]$. We denote a weight $p\alpha+q\beta$ of a representation of $G$ by $(p,q)$ where $p$, $q\in(1/3)\Z$ and $p+q\in\Z$. Hence $\alpha=(1,0)=[2,-1]$, $\beta=(0,1) = [-1,2]$, and so 
$$
[r,s]=(\frac{2r+s}{3},\frac{r+2s}{3}) \text{ \ and \ } (p,q) = [2p-q,2q-p].
$$ 
Moreover, $[r,s]$ is in the root lattice if and only if $r\equiv s \mod 3$.

We leave the following lemma to the reader (see Lemma \ref{lem:domweights}).

\begin{lemma}\lab{lem:basics}
Let $V:=V[r,s]$ be an irreducible representation of $G$ and set $(p,q)=[r,s]$.  
\begin{enumerate}
\item The dominant weights of $V[r,s]$ are the weights $[r',s']$ obtained starting with $[r,s]$ and using  the following inductive process:  $[r',s']$ gives rise to  $[r'-2,s'+1]$ if $r'\geq 2$ and to  $[r'+1,s'-2]$ if $s'\geq 2$. Finally,  $[1,1]$ gives rise to  $[0,0]$. 
\newline
Equivalently, the dominant weights of $V[r,s]$ are those of the form $(k,l):=(p-a,q-b)$ where $a$, $b\in\N$, $0\leq k\leq 2l$ and $0\leq l\leq  2k$.
 \item The Weyl group orbit of the dominant weight $(k,l)$ is
\begin{enumerate}
\item $(k,l)$, $(l-k,l)$, $(k,k-l)$, $(l-k,-k)$, $(-l,k-l)$, $(-l,-k)$ if $k\neq 2l$ and $l\neq 2k$,
\item $(2l,l)$, $(-l,l)$, $(-l,-2l)$ if $k=2l$ and 
\item $(k,2k)$, $(k,-k)$ and $(-2k,-k)$ if $l=2k$.
\end{enumerate}
\item Let $(p,q)$ be dominant, $p\neq q$, and let $W\cdot(p,q)$ be the Weyl group orbit of $(p,q)$. Then
$$
\max\left\{\frac{-k}{\ell} \mid (k,\ell)\in W\cdot (p,q) \right\} = \frac{\min(p,q)}{|p-q|},
$$
$$
\min\left\{\frac{-k}{\ell} \mid (k,\ell)\in W\cdot (p,q),  \, \frac{-k}{\ell}>0\right\} = \frac{|p-q|}{\min(p,q)}.
$$
\end{enumerate}
\end{lemma}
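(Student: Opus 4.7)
All three parts are essentially bookkeeping in the structure theory of $\SL_3$, and I do not expect any genuine obstacle; the only nontrivial input is Steinberg's result recalled in Lemma \ref{lem:domweights}, which reduces the enumeration of dominant weights below $[r,s]$ to iterated subtraction of a single positive root while staying in the closed dominant chamber.

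For part (1), I would first check that the three moves $[r',s']\mapsto [r'-2,s'+1]$, $[r'+1,s'-2]$, and $[1,1]\mapsto [0,0]$ list exactly the dominance-preserving subtractions of a positive root of $\SL_3$, namely $\alpha$, $\beta$, and $\alpha+\beta$. The first two handle the simple roots; for $\alpha+\beta$ one notes that from any dominant $[r',s']$ with both entries positive and at least one $\geq 2$ the move $[r',s']\mapsto [r'-1,s'-1]$ factors as two successive simple-root subtractions keeping dominance at each step, so the genuinely new third move is needed only from $[1,1]$. Applying Lemma \ref{lem:domweights} then yields the inductive description. For the equivalent characterization in $(k,\ell)$-coordinates: dominance translates to nonnegativity of the fundamental-weight coordinates $[2k-\ell,2\ell-k]$, i.e., $0\le k\le 2\ell$ and $0\le \ell\le 2k$; and being $\preceq [r,s]$ in the root order is exactly $(k,\ell)=(p-a,q-b)$ with $a,b\in\N$. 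That every such dominant $(k,\ell)$ actually occurs as a weight of $V[r,s]$ is the standard fact that the weights of a simple $G$-module are precisely the elements of the Weyl group orbits of the dominant weights below the highest weight.

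For part (2), I would compute the simple reflections in $(p,q)$-coordinates directly from $s_\alpha(\beta)=\alpha+\beta$ and $s_\beta(\alpha)=\alpha+\beta$, obtaining $s_\alpha(p,q)=(q-p,q)$ and $s_\beta(p,q)=(p,p-q)$. Enumerating the six elements of the $S_3$-orbit by successive composition yields case (a). The degenerate cases (b) and (c) occur precisely when $(k,\ell)$ lies on the walls $k=2\ell$ or $\ell=2k$; there the stabilizer of $(k,\ell)$ in $W$ is one of $\{1,s_\beta\}$ or $\{1,s_\alpha\}$, so the orbit collapses to the three elements listed.

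For part (3), I would tabulate $-k/\ell$ over the six orbit representatives of part (2). Using the outer symmetry $p\leftrightarrow q$, which preserves both claimed formulas, one may assume $p>q>0$; dominance then forces $p\le 2q$, hence $p-q\le q$. A direct inspection shows that the only positive values of $-k/\ell$ come from $(q-p,q)$ and $(-q,p-q)$, equal respectively to $(p-q)/q=|p-q|/\min(p,q)$ and $q/(p-q)=\min(p,q)/|p-q|$, with the first the smaller and the second the larger; the remaining four ratios are negative, so the second is also the global maximum. The wall cases $p=2q$ and $q=2p$ collapse both expressions to $1$, which is readily confirmed from the three-element orbits in (b) and (c).
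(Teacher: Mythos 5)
Your proof is correct, and it follows exactly the route the paper intends: the authors leave this lemma to the reader with a pointer to Lemma~\ref{lem:domweights}, which is precisely the input you use for part (1), while parts (2) and (3) are the direct computations with $s_\alpha(p,q)=(q-p,q)$ and $s_\beta(p,q)=(p,p-q)$ that you carry out. I verified the orbit enumeration, the wall cases $k=2\ell$ and $\ell=2k$, and the identification of the two positive ratios in part (3); all check out.
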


Suppose that $\Lambda(V)$ is not contained in the root lattice. Then let $\Lambda_\alpha$ denote the weights $(p,q)$ of $V$ where $p>0$.   We define $\Lambda_\beta$ similarly. Note that $\Lambda_{\alpha}$ is stable under $\sigma_{\beta}$ and that $\Lambda_{\beta}$ is stable under $\sigma_{\alpha}$.

\begin{example}\lab{ex:v31}
Consider the module $V=V[3,1]$. Then the dominant weights are $[3,1]$, $[1,2]$, $[2,0]$ and $[0,1]$. Thus the weights of $V$ are
\begin{enumerate}
\item $(7/3,5/3)$, $(-2/3,5/3)$, $(7/3,2/3)$, $(-2/3,-7/3)$, $(-5/3,2/3)$, $(-5/3,-7/3)$ (the $W$-orbit of $[3,1]$);
\item $(4/3,5/3)$, $(1/3,5/3)$, $(4/3,-1/3)$, $(1/3,-4/3)$, $(-5/3,-1/3)$, $(-5/3,-4/3)$ (the $W$-orbit of $[1,2]$);
\item $(4/3,2/3)$, $(-2/3,2/3)$, $(-2/3,-4/3)$ (the $W$-orbit of $[2,0]$);
\item $(1/3,2/3)$, $(1/3,-1/3)$, $(-2/3,-1/3)$ (the $W$-orbit of $[0,1]$).
\end{enumerate}
Let $\rho\in Y(T)\otimes\Q$ be generic. We may assume that $\rho(\alpha)=1$ and,  
of course, we have $\rho(\beta)>0$. Then $\rho(\beta)$ has to avoid the values $2/5$, $5/2$, $4$, $1/4$ and $1$, so there are six cases to consider.
\smallskip

\begin{description}
\item[Case 1] 
Let $\Lambda_{1}$ correspond to $2/5<\rho(\beta)<1$. 
It is easy to see that $\Lambda_1$ is maximal. Then $\Lambda_1$ is dominant by Corollary~\ref{cor:irredsl3} since $(-2/3,5/3)$ and $(1/3,-1/3)$ are $\rho$-positive.													
\item[Case 2] 
Let $\Lambda$ correspond to $0<\rho(\beta)<1/4$.  Then $\Lambda=\Lambda_\alpha$ is $\sigma:=\sigma_\beta$-stable so that $\sigma(\Lambda^{(\sigma)})=\sigma(\Lambda\cap\Lambda_1)$. Now $\Lambda\cap\Lambda_1$ is $\Lambda\setminus\{(1/3,-4/3)\}$, hence  $\Lambda^{(\sigma)}$ is $\Lambda\setminus\{(1/3,5/3)\}$ where $(1/3,5/3)$ has multiplicity one. Thus $UZ_{\Lambda}^{(\sigma)}$ is dense in $Z_{\Lambda}$ so that $\Lambda<\Lambda_1$. (One can also see directly that $U^-Z_1$ has $Z_{\Lambda}$ in its closure.) Now it is easy to calculate that $\dim GZ_\Lambda<\dim GZ_1$, so that $\Lambda=\Lambda_{\alpha}$ is not dominant.
\item[Case 3]
Let $\Lambda$ correspond to $1/4<\rho(\beta)<2/5$. Then $\Lambda\subset\Lambda_1$. 
\item[Case 4] 
Let $\Lambda_2$ correspond to $5/2<\rho(\beta)<4$. Then 
$\Lambda_2$ is maximal and $(-5/3,2/3)$ and $(4/3,-1/3)$ are $\rho$-positive, so that $\Lambda_2$ is dominant by Corollary~\ref{cor:irredsl3}.
\item[Case 5] 
Let $\Lambda$ correspond to  $1<\rho(\beta)<5/2$. Then $\Lambda\subset \Lambda_1$.  
\item[Case 6] 
Let $\Lambda$ correspond to $\rho(\beta)>4$. Then $\Lambda=\Lambda_\beta$ and  as in Case 2 we see that $\Lambda<\Lambda_1$ and that $\Lambda$ is not dominant.
\end{description}
Thus there are only two components of the null cone, $GZ_{\Lambda_{1}}$ and $GZ_{\Lambda_{2}}$ corresponding to cases 1 and 4. Note that neither $\Lambda_\alpha$ nor $\Lambda_\beta$ is dominant.
\end{example}

Lemma \ref{lem:basics}   does not tell us anything about  multiplicities of  weights, but the following result gives us some lower bounds, which suffice for our uses. If $[r,s]$ is a weight of $V$, then we denote by $V_{[r,s]}\subset V$ the corresponding weight space.

\begin{lemma}\lab{lem:multiplicities}
Let $r=r_{0}+r'$ and $s=s_{0}+s'$ where $r'\equiv s'\mod 3$. Then every weight of $V[r_{0},s_{0}]$ occurs in $V[r,s]$ with multiplicity at least the dimension of the zero weight space $V[r',s']_{[0,0]}$.
\end{lemma}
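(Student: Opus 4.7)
My strategy is to extend the $\OOO(G/U)$-multiplication trick used in the proof of Lemma~\ref{lem:multweights}. Recall that $\OOO(G/U)$ is an integral domain which, as a $G$-module, decomposes as $\bigoplus_\lambda V(\lambda)$ with each irreducible appearing exactly once; under this identification the algebra multiplication sends $V(\lambda)\cdot V(\mu)$ onto $V(\lambda+\mu)$, because the product of the two highest weight vectors is a nonzero highest weight vector of weight $\lambda+\mu$. Applying this with $\lambda=[r_0,s_0]$ and $\mu=[r',s']$, we get $V[r_0,s_0]\cdot V[r',s']=V[r,s]$ inside $\OOO(G/U)$.

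Now let $\mu$ be a weight of $V[r_0,s_0]$ and fix any nonzero weight vector $f\in V[r_0,s_0]_{\mu}$. I will construct an injection
$$
\Psi_f \colon V[r',s']_{[0,0]} \longrightarrow V[r,s]_{\mu}, \qquad v \longmapsto f\cdot v,
$$
where the product is computed in $\OOO(G/U)$. Because the left $T$-action acts on $\OOO(G/U)$ by algebra automorphisms, and $f$, $v$ have weights $\mu$ and $0$ respectively, the product $f\cdot v$ has weight $\mu$; hence $\Psi_f$ lands in $V[r,s]_{\mu}$. Injectivity is immediate: if $f\cdot v=0$ with $f\neq 0$, the domain property of $\OOO(G/U)$ forces $v=0$. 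We conclude $\dim V[r,s]_{\mu} \ge \dim V[r',s']_{[0,0]}$, which is exactly the claim.

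There is essentially no obstacle: the argument is a direct variant of the one used in Lemma~\ref{lem:multweights}, the only twist being that we multiply by an arbitrary nonzero zero-weight vector of $V[r',s']$ (rather than its highest weight vector) and vary that vector over a basis of $V[r',s']_{[0,0]}$ to produce the full injection. Note that if $r'\not\equiv s'\!\pmod 3$, then $V[r',s']_{[0,0]}=0$ and the statement is vacuous, so the hypothesis on $r',s'$ is used only to guarantee that the conclusion has content.
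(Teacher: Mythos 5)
Your proof is correct and follows exactly the same route as the paper, which simply cites the facts that $\OO(G/U)$ is a domain and that the product of the copies of $V[r_{0},s_{0}]$ and $V[r',s']$ inside $\OO(G/U)$ is the copy of $V[r,s]$. You have merely filled in the details (multiplying a weight-$\mu$ vector by the zero weight space and using the domain property for injectivity), which is precisely what the paper's one-line proof intends.
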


\begin{proof}
This follows from the fact that $\OO(G/U)$ is a domain and that the product of the copies of $V[r_{0},s_{0}]$ and $V[r',s']$ in $\OO(G/U)$ is just the copy of $V[r,s]$
\end{proof}

\begin{example}
Consider $V[3,2]$. Then the multiplicity   of $[1,0]$   is at least the multiplicity of the zero weight in $V[2,2]$, which is $3$. The multiplicity of   $[2,1]$ is similarly seen to be at least 2. Thus the multiplicities of the dominant weights of $V[3,2]$ are at least as follows:  $[3,2]$, $[4,0]$, $[1,3]$ and $[0,2]$ with multiplicity one, $[2,1]$ with multiplicity two and $[1,0]$ with multiplicity three. In fact, these multiplicities are correct, except that $[0,2]$ actually has multiplicity two.
\end{example}

In Example~\ref{ex:v31} we have seen that neither $\Lambda_\alpha$ nor $\Lambda_\beta$ is dominant. But this is an exception as shown by the following result.

\begin{lemma} \lab{lem:lambdabeta} Let $V=V[r,s]$ where $r\geq s$.
\begin{enumerate}
\item If $r-s\equiv 1 \mod 3$, then $\Lambda_\beta$ is dominant.
\item If $r-s\equiv 2\mod 3$ and $[r,s]\neq [3,1]$ or $[5,0]$, then $\Lambda_\alpha$ is dominant.
\end{enumerate}
\end{lemma}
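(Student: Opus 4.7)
I would prove this lemma by combining the general framework of Section~\ref{sec:comp-nullcone} with a careful analysis of the weight diagram of $V[r,s]$ under the congruence assumption on $r-s$. The proof proceeds in four steps.

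\smallskip\noindent\textbf{Step 1 (Admissibility).} Since $r-s\not\equiv 0\mod 3$, no weight of $V$ lies on the line $q=0$ (for case~(1)) respectively $p=0$ (for case~(2))---see the remarks after Lemma~\ref{lem:basics}. Hence a generic $\rho\in Y(T)\otimes\Q$ in the positive Weyl chamber with $\rho(\beta)\gg\rho(\alpha)$ realizes $\Lambda_\rho=\Lambda_\beta$, and similarly $\rho(\alpha)\gg\rho(\beta)$ realizes $\Lambda_\alpha$. Thus both sets are admissible.

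\smallskip\noindent\textbf{Step 2 (Maximality under inclusion).} I would show that $\Lambda_\beta$ is maximal among admissible subsets with respect to set inclusion. If $\Lambda'\supsetneq\Lambda_\beta$ were admissible, realized by some $\rho'=(x,y)$ with $x,y>0$, then $\rho'$ would have to be positive on every weight of $\Lambda_\beta$ (including those of the form $(-a,b)$ with $a>0$, $b>0$ small) and also positive on some weight $\mu=(p,q)$ with $q\leq 0$. The first condition forces $y/x$ to exceed a certain positive bound determined by the extremal weights, while the second forces $y/x$ to be at most another bound; one verifies these bounds are incompatible, so no such $\Lambda'$ exists.

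\smallskip\noindent\textbf{Step 3 (Ruling out nontrivial domination).} By Lemma~\ref{lem:BZsigma}, if $\Lambda_\beta\leq\Lambda'$ for an admissible $\Lambda'\neq\Lambda_\beta$, there exists $\sigma\in W$ such that $BZ_{\Lambda_\beta}^{(\sigma)}$ is dense in $Z_{\Lambda_\beta}$. The density condition of Lemma~\ref{lem:Bdense} forces $\Omega_\beta\subset\Lambda_\beta^{(\sigma)}$, equivalently $\sigma(\Omega_\beta)\subset\Lambda'$. Using the multiplicity bound of Lemma~\ref{lem:multweights} together with the explicit parametrization of weights on the horizontal line $q=1/3$ (accessible since $r-s\equiv 1\mod 3$ guarantees the embedding $V[1,0]\hookrightarrow V$ of Lemma~\ref{lem:multiplicities}), one identifies the leftmost weight $\lambda_0=(p_{\min},1/3)$ and checks $\lambda_0\in\Omega_\beta$. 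Combined with the further minimal weights at the "lower-left corner" of the weight hexagon, one computes the Weyl orbit of $\Omega_\beta$ and uses the requirement that $\Lambda'=\Lambda_{\rho'}$ for some $\rho'$ in the positive chamber to conclude that $\sigma$ must be the identity. By Step~2 this contradicts $\Lambda'\neq\Lambda_\beta$. The argument for $\Lambda_\alpha$ in case~(2) is parallel, with the roles of the two simple roots (and hence of $p$ and $q$) swapped.

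\smallskip\noindent\textbf{Step 4 (Exceptions).} For the excluded cases $[3,1]$ and $[5,0]$, the weight diagram is too small: the set $\Omega_\alpha$ consists of a single weight $\lambda_0$ whose Weyl orbit contains a point on which some nontrivial $\sigma\in W$ yields density of $BZ_{\Lambda_\alpha}^{(\sigma)}$ in $Z_{\Lambda_\alpha}$, producing an admissible $\Lambda'\supsetneq\sigma^{-1}(\Lambda_\alpha)$ that strictly dominates $\Lambda_\alpha$. The analogous issue is worked out explicitly for $[3,1]$ in Example~\ref{ex:v31}, where Cases~2 and~6 identify $\Lambda_\alpha<\Lambda_1$ and $\Lambda_\beta<\Lambda_1$.

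\smallskip
The central difficulty is Step~3: the minimal-weight set $\Omega_\beta$ often reduces to a single element (as in $V[r,0]$ with $r=3k+1$, where $\Omega_\beta=\{-\tfrac{r}{3}\alpha+\tfrac{1}{3}\beta\}$), so Proposition~\ref{prop:component} does not directly apply. One must instead analyze the admissibility constraints on $\rho'$ together with the density requirement of Lemma~\ref{lem:Bdense} case-by-case over $\sigma\in W$, exploiting that the congruence $r-s\equiv 1\mod 3$ puts the minimum $q$-value at exactly $1/3$ (and not $2/3$), which provides just enough separation to rule out nontrivial $\sigma$ except for the two small exceptional representations.
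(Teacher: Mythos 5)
Your overall skeleton coincides with the paper's: order the admissible sets by the slope $t=\rho(\beta)/\rho(\alpha)$, establish maximality of $\Lambda_\beta$ (resp.\ $\Lambda_\alpha$), and rule out domination $\Lambda_\beta<\Lambda'$ via Lemmas \ref{lem:BZsigma} and \ref{lem:Bdense}. But your Step~3 --- which you correctly identify as the central difficulty --- is not executable as described, and this is a genuine gap. Knowing only that $\sigma(\Omega_\beta)\subset\Lambda'$ for a \emph{singleton} $\Omega_\beta=\{(-q,1/3)\}$ pins down essentially nothing: the Weyl orbit of a single weight meets many admissible sets, so several $\sigma$ survive, and the appeal to ``further minimal weights at the lower-left corner'' plus an unspecified ``case-by-case over $\sigma\in W$'' does not supply the missing quantitative input. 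The paper's actual mechanism is different and stronger: it takes the \emph{entire} $\alpha$-string $\Sigma$ through $[1,0]$ (resp.\ the $\beta$-string through $[0,1]$), of cardinality $\geq 4$ (resp.\ $\geq 6$), and uses the max/min formulas of Lemma~\ref{lem:basics}(3) to prove that the two extremal critical slopes $t_m,t_{m-1}$ (resp.\ $t_1,t_2$) are attained \emph{exactly at the first two weights of} $\Sigma$. This yields simultaneously (i) $\Lambda^{(m-1)}=\Lambda_\beta\setminus\{(-q,1/3)\}$, hence maximality (your Step~2 falls out for free), and (ii) every other admissible set omits at least two elements of $\Sigma$. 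Then Lemma~\ref{lem:Bdense} forces $\Lambda_\beta^{(\sigma)}$ to contain a subset $\Sigma'\subset\Sigma$ omitting at most one element and containing the leftmost weight; since $\#\Sigma'\geq 3$ and these weights lie on a single root string, only $\sigma=e$ or $\sigma=\sigma_\alpha$ can carry $\Sigma'$ into an admissible set, and (ii) then forces $\Lambda'\in\{\Lambda_\beta,\Lambda^{(m-1)}\}\subset\Lambda_\beta$. Without the string and the computation of the two extremal critical values, the argument does not close.

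A related symptom of the gap: your proposal gives no mechanism that singles out $[3,1]$ and $[5,0]$ as the \emph{only} exceptions in case~(2). In the paper these are precisely the modules where the string length hypothesis $\#\Sigma\geq 6$ (equivalently $q\geq 8/3$) fails and the two-extremal-slope computation breaks down (with $[2,0]$ handled separately, as it has only two admissible sets). Your Step~4 description of the exceptional cases is also slightly off: $\Omega_\alpha$ is a singleton in the generic case as well, so ``$\Omega_\alpha$ consists of a single weight'' cannot be what distinguishes $[3,1]$ and $[5,0]$ from, say, $[8,0]$. Note finally that maximality under inclusion (your Step~2) is genuinely insufficient on its own: for $V[3,1]$ the set $\Lambda_\alpha$ \emph{is} maximal under inclusion yet fails to be dominant, as Example~\ref{ex:v31} shows.
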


\begin{proof} For $t>0$ define $\rho_{t}\in Y(T)$ by $\rho_{t}(\alpha)=1$ and $\rho_{t}(\beta)=t$, and set $\Lambda_{t}:=\Lambda_{\rho_{t}}$. Define 
$$
\TT:=\{t>0\mid \rho_{t}(\lambda)=0 \text{ for some }\lambda\in\Lambda(V[r,s]), \lambda\neq 0\}
$$
We have $\TT = \{t_{1}, t_{2},\ldots, t_{m}\}$ where $0<t_{1}<t_{2}<\cdots<t_{m}$, so there are $m+1$ admissible subsets $\Lambda^{(i)}$, $i=0,\ldots,m$, defined by $\Lambda^{(i)}:=\Lambda_{t}$ for $t_{i}<t<t_{i+1}$,  where $t_{0}=0, t_{m+1}=\infty$. Clearly, $\Lambda^{(0)} =\Lambda_{\alpha}$, $\Lambda^{(m)}=\Lambda_{\beta}$,  and $\Lambda_{\alpha}$ (resp. $\Lambda_{\beta}$) is not maximal if and only if $\Lambda_{\alpha}\subset \Lambda^{(1)}$ (resp. $\Lambda_{\beta}\subset\Lambda^{(m-1)}$). Note that if $\rho_{t}((k,l))=0$ then $t=-k/l$.

\par\smallskip
(1) First suppose that $r-s\equiv 1\mod 3$ and let $(p,q)=[r,s]$. Then $[1,0]=(2/3,1/3)$ is a weight of $V$, 
and the $\alpha$-string through $[1,0]$ has the form
$$
\Sigma = \left((-q,1/3), (-q+1,1/3),\ldots,(2/3,1/3),\ldots,(q+1/3,1/3)\right)
$$
where $(-q,1/3)$ is in the $W$-orbit of $(q+1/3,q)$.
Note that $\#\Sigma = 2q+4/3$. Since the case $V=V[1,0]$ is obvious we can assume that $q\geq 4/3$, hence $\#\Sigma\geq 4$.
\par\smallskip
{\bf Claim 1:} {\it We have $t_{m}=3q$ and $t_{m-1}=3q-3$, and these values are attained at the first two weights $(-q,1/3)$ and $(-q+1,1/3)$ of the $\alpha$-string $\Sigma$. In particular, $\Lambda_{\beta}\supset\Lambda^{(m-1)}$ and $\#(\Sigma\cap\Lambda^{(m')})\leq \#\Sigma - 2$ for $m'\leq m-2$.}

This implies that $\Lambda_{\beta}$ is dominant. In fact, suppose that $\Lambda_\beta<\Lambda$ for some admissible $\Lambda$. Set $Z_\beta:=Z_{\Lambda_\beta}$. 
There is a $\sigma\in W$ such that $BZ_\beta^{(\sigma)}$ is dense in $Z_\beta$ and $\sigma( \Lambda_\beta^{(\sigma)})\subset\Lambda$ (Lemma~\ref{lem:BZsigma}). Clearly   $\Lambda_\beta^{(\sigma)}$ has to contain a subset $\Sigma'$ of the $\alpha$-string $\Sigma$ which omits at most one element and contains $(-q,1/3)$ (see Lemma~\ref{lem:Bdense}). Since $\Sigma'$ contains at least 3 elements  it is easy to see that $\sigma=e$ and $\sigma=\sigma_{\alpha}$ are the only elements from $W$ which send $\Sigma'$ to elements which have at least one  positive $\alpha$ or $\beta$  coefficient. Thus $\sigma(\Sigma')\subset \Lambda\cap\Sigma$. By the claim above this implies  that $\Lambda=\Lambda_{\beta}$ or $\Lambda=\Lambda^{(m-1)}$ and so $\Lambda\subset\Lambda_{\beta}$.

\par\smallskip
(2) Now suppose that $r-s\equiv 2\mod 3$. Then $[0,1] = (1/3,2/3)$ is a weight of $V$, and the $\beta$-string through $[0,1]$ has the form
$$
\Sigma = \left((1/3,-q+1/3), (1/3,-q+4/3),\ldots,(1/3,2/3),\ldots,(1/3,q)\right)
$$
where $(1/3,-q+1/3)$ is in the $W$-orbit of $(q-1/3,q)$.
Note that $\#\Sigma = 2q+4/3$.
\par\smallskip
{\bf Claim 2:} {\it If  $\#\Sigma\geq 6$ (i.e., $q\geq 8/3$), then $t_{1}=1/(3q-1)$ and $t_{2}=1/(3q-4)$, and these values are attained at the first two weights $(1/3,-q+1/3)$ and $(1/3,-q+4/3)$ of the $\beta$-string $\Sigma$. Moreover, $\Lambda_{\alpha}\supset\Lambda^{(1)}$ and $\#(\Sigma\cap\Lambda^{(m')})\leq \#\Sigma - 2$ for $m'\geq 2$.}

Now the same argument as above implies that $\Lambda_{\alpha}$ is dominant. Note that the condition $q\geq 8/3$ is satisfied for $[r,s]\neq [2,0]$, $[3,1]$ or $[5,0]$. For $V[2,0]$ there are only two admissible sets, $\Lambda_{\alpha}$ and $\Lambda_{\beta}$, both are dominant and $\NN=G Z_{\alpha}=G Z_{\beta}$.

\par\smallskip
(3) It remains to prove the two claims. Let $r-s\equiv 1\mod 3$.  We use the first formula given in Lemma~\ref{lem:basics}(3) for a dominant $(p',q')$:
$$
\mu_{(p',q')}:= \max\left\{\frac{-k}{\ell} \mid (k,\ell)\in W\cdot (p',q') \right\} = \frac{\min(p',q')}{|p'-q'|}
$$
By assumption we have $q\geq 4/3$. If $(p',q')\leq (p,q)$ is dominant, then 
$|p'-q'|\geq 1/3$. Thus 
$$
t_{m}=\max(\mu_{(p',q')}\mid (p',q') \text{ dominant}, (p',q')\leq (p,q) ) = \mu_{(q+1/3,q)}=3q,
$$ 
and this value is attained at a single weight of $V$, namely at $(-q,1/3)\in W\cdot (q+1/3,q)$. It follows that $t_{m-1}$ is either equal to $\mu_{(q-2/3,q-1)}=3(q-1)$ or equal to $\mu_{(p',q)}$ for a suitable $p'\leq p$, $p'\neq q+1/3$. But then $p'=q-2/3$ or $p'=q+4/3$ and in both cases we get $\mu_{(p',q)}\leq 3(q-1)$, because $q\geq 4/3$. Hence $t_{m-1}=3(q-1)$ and this value is attained at the weight $(-q+1,1/3)\in W\cdot (q-2/3,q-1)$. As a consequence, $\Lambda_{\beta}\supset \Lambda^{m-1}=\Lambda_{\beta}\setminus\{(-q,1/3)\}$, and $(-q,1/3), (-q+1,1/3)\notin \Lambda^{m'}$ for $m'\leq m-2$.
This proves Claim 1.
\par\smallskip
For $r-s\equiv 2\mod 3$ we use the second formula in Lemma~\ref{lem:basics}(3) for a dominant $(p',q')$:
$$
\nu_{(p'q')}:=\min\left\{\frac{-k}{\ell} \mid (k,\ell)\in W\cdot (p',q'),  \, \frac{-k}{\ell}>0\right\} = \frac{|p'-q'|}{\min(p',q')}.
$$
The minimal values of $|p'-q'|$ are $1/3$ and $2/3$ and they are attained at $(q'-1/3,q')$ and $(q'+2/3,q')$. Thus, for a fixed $q'$ the minimal values of $\nu_{(p',q')}$ are $1/(3q'-1)$ and $2/(3q')$. Since $q\geq 8/3>4/3$ we get 
$$
t_{1}=\min\left(\nu_{(p',q')}\mid (p',q')\leq (p,q) \text{ dominant}\right) = \nu_{(q-1/3,q)}=1/(3q-1),
$$
and this value is attained at a single weight, namely at $(1/3,-q+1/3)\in W\cdot (q-1/3,q)$. If follows that $t_{2}$ is either equal to $\nu(q+2/3,q) = 2/(3q)$ or equal to $\nu(q-4/3,q-1)=1/(3q-4)$. Since  $q\geq 8/3$ we get $3q-4=(3/2)q + ((3/2)q-4) \geq (3/2)q$. Hence $t_{2}=1/(3q-4)$ and this value is attained at $(1/3,-q+4/3)\in W\cdot (q-4/3,q-1)$. Now Claim~2 follows as above.
 \end{proof}

\begin{remark}\lab{rem:dimgzalpha}
Let $\Lambda=\Lambda_\alpha$ or $\Lambda_\beta$. Then $Z_\lambda$ is stabilized by a parabolic subgroup of codimension 2, hence $\codim_{GZ_\Lambda}Z_\Lambda\leq 2$.
\end{remark}

We need the following estimate on the dimension of $S^3(V)^G$:
  
\begin{proposition}\lab{prop:degree3invars} Let $r\geq s\geq 0$. Then 
\begin{enumerate}
\item The multiplicity of $[r-s,0]$ in $V[r,0]\otimes V[0,s]$ is $\binom {s+2}2$.
\item The multiplicity of $[r-s,0]$ in $V[r,s]$ is $s+1$.
\item The multiplicity of $V[s,r]$ in $V[r,s]\otimes V[r,s]$ is at most $s+1$.
\item The dimension of $S^3(V[r,s])^G$ is at most $s+1$, hence there are at most $s+1$ linearly independent cubic invariants of $V[r,s]$.
\end{enumerate}  
\end{proposition}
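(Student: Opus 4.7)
The plan is to establish the four claims in order: (1) by direct weight counting; (2) from (1) by induction on $s$ using the classical decomposition of $V[r,0]\otimes V[0,s]$; (3) from (2) via the Brauer tensor product formula; and (4) as an immediate consequence of (3).

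For (1), the weights of $V[r,0]=S^r\C^3$ are exactly $r\omega_1-a\alpha-b\beta$ with $0\leq b\leq a\leq r$ (each of multiplicity one, from the monomial basis), and symmetrically the weights of $V[0,s]=S^s(\C^3)^*$ are $s\omega_2-d\alpha-c\beta$ with $0\leq d\leq c\leq s$. Requiring $\mu_1+\mu_2=(r-s)\omega_1$ and using $\omega_1+\omega_2=\alpha+\beta$ reduces to the two identities $a+d=s$ and $b+c=s$; for $r\geq s$ this collapses to the single constraint $0\leq b\leq a\leq s$, giving $\binom{s+2}{2}$.

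For (2), I proceed by induction on $s$ using the Clebsch-Gordan decomposition $V[r,0]\otimes V[0,s]=\bigoplus_{k=0}^{s}V[r-k,s-k]$. The base case $s=0$ is trivial. In the inductive step, (1) identifies $\binom{s+2}{2}$ with $\sum_{k=0}^{s}m_{V[r-k,s-k]}([r-s,0])$; by hypothesis each term with $k\geq 1$ equals $(s-k)+1$, summing to $\binom{s+1}{2}$, and leaving $m_{V[r,s]}([r-s,0])=\binom{s+2}{2}-\binom{s+1}{2}=s+1$.

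For (3), apply the Brauer tensor product formula: the multiplicity of $V[s,r]$ in $V[r,s]\otimes V[r,s]$ equals
\[
\sum_{\tau\in\Lambda(V[r,s])}m_{V[r,s]}(\tau)\,\epsilon(\tau),
\]
where $\epsilon(\tau)=(-1)^{\ell(w)}$ whenever a unique Weyl element $w\in W=S_3$ satisfies $w([r,s]+\tau+\rho)=[s,r]+\rho$ with right-hand side strictly dominant, and $\epsilon(\tau)=0$ otherwise. A case check over the six $w\in W$ shows that only $w=e$ yields an admissible $\tau$: for each of the other five, the dominant representative of $\tau$ (e.g.\ $(r+1,s+1)$ in $\omega$-coordinates for $w=s_\alpha$, and analogous weights for the others) fails to satisfy $\preceq[r,s]$, hence is not a weight of $V[r,s]$. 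For $w=e$, $\tau=[s,r]-[r,s]$ has dominant representative $[r-s,0]$, whose multiplicity in $V[r,s]$ is $s+1$ by (2), yielding $s+1$ (in fact with equality) for (3). Claim (4) then follows at once from the inclusion $S^3V[r,s]\hookrightarrow V[r,s]^{\otimes 3}$, which gives $\dim(S^3V[r,s])^G\leq\dim(V[r,s]^{\otimes 3})^G=\dim\Hom_G(V[s,r],V[r,s]^{\otimes 2})$, the multiplicity bounded by (3). The main obstacle will be the dominance checks in (3): for each of the five non-identity Weyl elements, one must bring $\tau$ back into the dominant chamber and verify that the resulting weight fails $\preceq[r,s]$.
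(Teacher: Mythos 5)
Your proof is correct. Parts (1), (2) and (4) follow essentially the paper's route: a direct count of the $[r-s,0]$-weight space of $V[r,0]\otimes V[0,s]$ from the monomial basis, the decomposition of that tensor product into the $V[r-k,s-k]$ (the paper quotes Pieri's rule in the recursive form $V[r,0]\otimes V[0,s]=V[r,s]\oplus V[r-1,0]\otimes V[0,s-1]$, which is your decomposition iterated), and the embedding $S^3V\hookrightarrow V^{\otimes 3}$. The genuine divergence is in (3): the paper gets the bound in one line from the elementary fact that $\dim\Hom_G(V_\lambda,V_\mu\otimes V_\nu)\leq\dim (V_\nu)_{\lambda-\mu}$ (a highest weight vector of $V_\lambda$ in the tensor product must have nonzero component in $v_\mu\otimes (V_\nu)_{\lambda-\mu}$), applied with $\lambda=[s,r]$ and $\mu=\nu=[r,s]$, and then identifies the dominant representative of $[s,r]-[r,s]$ as $[r-s,0]$ so that (2) applies. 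You instead invoke the full Brauer--Klimyk formula and verify that the five non-identity Weyl terms contribute nothing because the corresponding shifted weights have dominant representatives (such as $[r+1,s+1]$) that are not $\preceq[r,s]$; I checked these dominance claims and they hold. Your route costs those verifications but buys equality in (3), whereas the paper's one-line argument yields only the upper bound --- which is all that is needed downstream to bound $\dim S^3(V[r,s])^G$. Both arguments are valid.
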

  
\begin{proof} Let $e_1$, $e_2$ and $e_3$ be the usual basis of $\C^3$ and let $f_1$, $f_2$, $f_3$ be the dual basis. Then
the weight vectors of weight  $[r-s,0]$ in $V[r,0]\otimes V[0,s]$ have basis the vectors  $e_1^{r-t}m\otimes f_1^{s-t}m^*$ where $0\leq t\leq s$ and $m$ is a monomial of degree $t$ in $e_2$ and $e_3$ and $m^*$ is the same monomial in $f_2$ and $f_3$. Thus the dimension of this weight space is $1+\dots+(s+1)$, giving (1). 
  
Part (2)  follows from the fact that $V[r,0]\otimes V[0,s]=V[r,s]\oplus V[r-1,0]\otimes V[0,s-1]$. This is an immediate consequence of Pieri's formula (see \cite[formula (10.2.2) in 9.10.2]{Pr2007Lie-groups}). 
  
The multiplicity of $V[s,r]$ in $V[r,s]\otimes V[r,s]$ is bounded by the multiplicity of the weight  $[r,s]-(r,s)$ in $V[r,s]$ since $[r,s]+([r,s]-(r,s))=[s,r]$. Now $[r,s]-(r,s)=1/3(-r+s,r-s)$ which is in the $W$-orbit of $1/3(2r-2s,r-s)=[r-s,0]$. Thus (2) implies (3). Clearly (3) implies (4).
\end{proof}

\begin{example}\lab{exa:multiplicities}
Assume that $r\geq s\geq 1$ and that $r-s\equiv 2 \mod 3$.
Then the multiplicities of the weights of $V[0,1]$ and $V[3,1]$  in $V[r,s]$ are $\geq s$, and the multiplicities of the weights of $V[2,0]$ are $\geq s+1$ in case $r\geq 5$.
\par\smallskip
(In fact, for $V[3,1]$ the  multiplicities are $\geq \dim V[r-3,s-1]_{[0,0]}$ by Lemma~\ref{lem:multiplicities} and $\dim V[r-3,s-1]_{[0,0]}\geq \dim V[r-3,s-1]_{[r-s-2,0]} = s$ by Proposition~\ref{prop:degree3invars}(2). The other cases follow by similar arguments.)
\end{example}

\begin{proposition}\lab{prop:mainsl3}
Let $V=V[r,s]$ where $r+s\geq 4$ or $(r,s)=(2,1)$ or $(r,s)=(1,2)$. Then there is an irreducible component $\NN_1$ of $\NN$ such that the rank of $d\pi$ is less than the codimension of $\NN_1$ on $\NN_1$. In particular, $\NN$ is not reduced.
\end{proposition}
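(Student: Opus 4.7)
The plan is to apply the differential criterion of Proposition~\ref{prop:method1} to a suitable dominant admissible subset $\Lambda\subset\Lambda(V)$, producing a component $\NN_{1}:=GZ_{\Lambda}$ on which $d\pi$ drops rank. By the outer automorphism of $\SLthree$ that interchanges $\phi_{1}$ and $\phi_{2}$, it suffices to treat $r\geq s$.

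In the ``generic'' cases where $r-s\not\equiv 0\pmod 3$ and $(r,s)\notin\{(3,1),(5,0)\}$, Lemma~\ref{lem:lambdabeta} supplies the dominant component $\Lambda_{\beta}$ (or $\Lambda_{\alpha}$). By Remark~\ref{rem:dimgzalpha}, $\codim_{GZ_{\Lambda}}Z_{\Lambda}\leq 2$, so by the second formulation of Proposition~\ref{prop:method1} it suffices to exhibit a three-dimensional subspace $W'$ of the $T$-stable complement $W$ of $Z_{\Lambda}$ on which every invariant has vanishing differential. By Remark~\ref{rem:method1}, a weight direction $v_{w}\in W$ of weight $\mu_{w}$ contributes to $df_{z}$ for a generic $z\in Z_{\Lambda}$ only if $-\mu_{w}$ can be written as an $\N$-sum of weights in $\Lambda$, with $v_{w}$ appearing exactly once in some monomial of $f$. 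The combinatorial core is to combine the multiplicity estimates of Lemma~\ref{lem:multiplicities} and Example~\ref{exa:multiplicities} with the bound on cubic invariants in Proposition~\ref{prop:degree3invars} to show that sufficiently many weights $\mu_{w}$ of $W$ fail this test to span a three-dimensional $W'$.

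The two exceptional pairs $(3,1)$ and $(5,0)$ will be handled directly from the explicit component calculation of Example~\ref{ex:v31} for $V[3,1]$ and an analogous computation for $V[5,0]$, verifying the differential criterion at $\NN_{1}=GZ_{\Lambda_{1}}$ or $\NN_{2}=GZ_{\Lambda_{2}}$. Finally, when $r\equiv s\pmod 3$ the representation $V$ has a zero weight; the toral slice at a generic element of $V^{T}$ admits an indecomposable weight relation with a coefficient $>1$ in the style of Example~\ref{ex:s3kc3}, so this slice is not coreduced. By Lemma~\ref{associatedcone.lem}, the associated cone of the corresponding fiber equals $\NN(V)$, and the leading-term argument in the proof of Proposition~\ref{prop:slice.zero.weight} localizes the non-reducedness to a specific dominant component on which the rank of $d\pi$ is deficient.

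The principal obstacle is the uniform combinatorial verification in the generic case: one must guarantee, for every admissible $(r,s)$, that the subspace of $W$ consisting of weight vectors $v_{w}$ with $-\mu_{w}\notin\N\Lambda$ has dimension at least three. This requires delicate bookkeeping of weight multiplicities and monomial degrees near the boundary of the Weyl chamber, and it is precisely this counting that forces the exclusion of the low-dimensional exceptional cases listed in the hypothesis.
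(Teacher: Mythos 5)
Your outline reproduces the architecture of the paper's proof: reduce to $r\geq s$, use Lemma~\ref{lem:lambdabeta} and Remark~\ref{rem:dimgzalpha} to get a dominant $\Lambda_\alpha$ or $\Lambda_\beta$ with $\codim_{GZ_\Lambda}Z_\Lambda\leq 2$, treat $(3,1)$ and $(5,0)$ separately via Example~\ref{ex:v31}, and dispose of $r\equiv s\pmod 3$ by the adjoint/toral-slice results. But the step you yourself flag as the ``principal obstacle'' is not merely unexecuted; the mechanism you propose for it does not work. You ask for a three-dimensional $W'$ spanned by weight vectors $v_w$ with $-\mu_w\notin\N\Lambda$ (or, more refinedly, not occurring linearly in any invariant monomial). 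Already for $V[2,1]$ with $\Lambda=\Lambda_\beta$ this fails outright: every weight $\mu_w$ of the complement $W$ satisfies $-\mu_w\in\N\Lambda_\beta$ (for instance $-(2/3,-2/3)=(-1/3,1/3)+(-1/3,1/3)$), so no weight vector passes your test, and deciding the refined version would require knowing the actual monomials of the invariants. A fixed $T$-stable $W'$ annihilated by \emph{all} invariant differentials is simply the wrong target.

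What actually closes the argument is a rank count rather than a vanishing statement. One exhibits at least $s+5$ weight directions in $W$ whose $\beta$-coefficient (resp.\ $\alpha$-coefficient) equals $-2/3$ — coming from the Weyl orbits of $[0,2]$, $[1,3]$ and the multiplicity-$(s+1)$ weight $[1,0]$ (resp.\ $[3,1]$, $[2,0]$, $[0,1]$), using Lemma~\ref{lem:multiplicities} — so that any zero-weight monomial containing exactly one such factor has degree at most $3$; hence only invariants of degree $\leq 3$ can have nonzero differential on these directions. Since there are no quadratic invariants and $\dim S^3(V)^G\leq s+1$ by Proposition~\ref{prop:degree3invars}(4), the rank of $d\pi_z$ on this $(s+5)$-dimensional span is at most $s+1$, which forces rank $d\pi_z<\codim GZ_\Lambda$. (The case $V[4,0]$ needs the weights of $[2,1]$ and a count of invariants up to degree $6$ from the Poincar\'e series, and $V[3,1]$, $V[5,0]$ are handled by the covariant method of Remark~\ref{rem:method2} — a generating $V[1,0]$-covariant of degree $8$, resp.\ degree $>5$, vanishing on both dominant components — not by a direct differential computation.) Without this quantitative comparison of ``many cheap negative weight directions'' against ``few low-degree invariants,'' your plan does not yield the proposition.
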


An immediate consequence is
\begin{theorem}\lab{thm:mainsl3.1}
Let $V$ be an irreducible representation of $G=\SL_{3}$. Then $V$ is coreduced if and only if $V$ is on the following list:
\begin{enumerate}
\item $V[1,0]$, $V[2,0]$, $V[3,0]$;
\item $V[0,1]$, $V[0,2]$, $V[0,3]$;
\item $V[1,1]$.
\end{enumerate}
Equivalently, $V$ is coreduced if and only if it is cofree.
\end{theorem}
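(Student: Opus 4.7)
The plan is to derive Theorem \ref{thm:mainsl3.1} as a direct consequence of Proposition \ref{prop:mainsl3} together with standard positive coreducedness results for the seven representations on the list. The work splits into an ``if'' direction (case-by-case verification), an ``only if'' direction (application of Proposition \ref{prop:mainsl3}), and a final comparison with the list of cofree modules.

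First I would handle the ``if'' direction by checking coreducedness for each listed representation individually. The representations $V[1,0]=\C^3$ and $V[0,1]=(\C^3)^*$ have only constant invariants since $\SL_3$ acts transitively on $\C^3\setminus\{0\}$, so $\NN=V$ is trivially reduced. For $V[2,0]$ and $V[0,2]$, the invariant ring is one-dimensional (generated by the discriminant of a binary quadratic form in the coordinates), so Proposition \ref{prop:coreduced}(1) applies. For $V[3,0]$ and $V[0,3]$, the invariant ring is classically the polynomial ring $\C[S,T]$ generated by the two Aronhold invariants of degrees $4$ and $6$, making these representations cofree; coreducedness then follows from Theorem \ref{thm:cofree}. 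Finally, $V[1,1]=\sll_3$ is the adjoint representation and is coreduced by Proposition \ref{prop:coreduced}(2).

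Next I would address the ``only if'' direction. Enumerating the highest weights $[r,s]\neq [0,0]$ that are not on the list, one sees that each satisfies either $r+s\geq 4$ or $(r,s)\in\{(2,1),(1,2)\}$. In every such case Proposition \ref{prop:mainsl3} produces an irreducible component of $\NN$ along which the differential $d\pi$ has rank strictly less than the codimension of that component, and Lemma \ref{lem:differential} then forces $V$ to fail coreducedness.

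For the equivalence with cofreeness, I would simply compare the list with the classification of irreducible cofree representations of simple groups in \cite{Sc1978Representations-free}: the seven listed representations are exactly the nontrivial irreducible cofree $\SL_3$-modules, as can also be verified directly by computing dimensions, generic stabilizers, and degrees of generating invariants as above. The main obstacle in the whole argument is Proposition \ref{prop:mainsl3} itself, which requires exhibiting, for each $V[r,s]$ outside the list, a specific dominant admissible subset of weights that yields a component of the null cone along which too few invariants exist to reach maximal rank; once that proposition is in hand, Theorem \ref{thm:mainsl3.1} reduces to routine bookkeeping.
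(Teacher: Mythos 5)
Your proposal is correct and follows essentially the same route as the paper, which derives the theorem as an immediate consequence of Proposition \ref{prop:mainsl3} for the negative direction and relies on Proposition \ref{prop:coreduced} and Theorem \ref{thm:cofree} for the listed cases. The only quibble is a terminological slip: the generator for $V[2,0]=S^2(\C^3)$ is the discriminant of a \emph{ternary} (not binary) quadratic form, but the relevant fact, namely $\dim \quot{V}{G}=1$, is correct.
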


\begin{proof}[Proof of Proposition \ref{prop:mainsl3}] 
We may assume that $V=V[r,s]$ where $r\geq s$ and $V[r,s]$ does not appear in (1), (2) or (3) of the theorem. Let $(p,q)= [r,s]$. We are constantly applying Remarks \ref{rem:method1} and \ref{rem:method2}.
\par\smallskip\noindent
{\bf Case 1:}  Assume that $r-s\equiv 1\mod 3$ and consider $\Lambda=\Lambda_\beta$ which is dominant by Lemma~\ref{lem:lambdabeta}. Recall that $\codim_{GZ_{\Lambda}}Z_{\Lambda} \leq 2$. First suppose that $s\geq 1$ and $r>2$.  
Then $[1,3]$ and $[0,2]$ are weights of $V$. Let $\lambda\in Y(T)$ correspond to $\Lambda_\beta$. Then $\lambda$ is negative on the weights $(2/3,-2/3)$ and $(-4/3,-2/3)$ in the $W$-orbit of $[0,2]$, on the weights $(-7/3,-2/3)$ and $(5/3,-2/3)$ in the $W$-orbit of $[1,3]$ and on the weight $(-1/3,-2/3)$ in the $W$-orbit of $[1,0]$ which occurs with multiplicity at least $s+1$ since $[r-s,0]$ has multiplicity $s+1$ by Proposition~\ref{prop:degree3invars}(2). These negative weights can be paired with at most quadratic expressions in the positive weights (just look at the coefficients of $\beta$). Now there are at most $s+1$ cubic invariants (and no quadratic invariants), hence $\NN$ is not reduced if $s\geq 1$, $r>2$.  

If $(r,s)=(2,1)$, then we have the negative weights $(2/3,-2/3)$, $(-4/3,-2/3)$ and $(-1/3,-2/3)$ (with multiplicity 2). There is only a one-dimensional space of degree 3 invariants, and so $\NN$ is not reduced. 

If $s=0$, then the cases to  consider are $V[4,0]$, $V[7,0]$, etc. If $r\geq 7$, then we have a dominant weight $[1,3]$ whose $W$-orbit contains $(-7/3,-2/3)$ and $(5/3,-2/3)$. We still have $(2/3,-2/3)$,   $(-4/3,-2/3)$ and $(-1/3,-2/3)$. Since there is at most one degree three invariant, $\NN$ is not reduced. 

We are left with the case  of $V[4,0]$. Here we have negative weights $(2/3,-2/3)$,   $(-4/3,-2/3)$ and $(-1/3,-2/3)$ as well as $(-1/3,-5/3)$ and $(-4/3,-5/3)$ in the $W$ orbit of $[2,1]$. Thus $\NN$ is not reduced since there are only two irreducible invariants of degree $\leq 6$ (the Poincar\'e series of $\OO(V)^G$ is $1+t^3+2t^6+\dots$).
\par\smallskip\noindent
{\bf Case 2:} Assume that $r-s\equiv 2\mod 3$. For the cases  $[r,s]=[3,1]$ or $[5,0]$ see Example~\ref{ex:v31revisited} below. So we may assume that  $\Lambda=\Lambda_\alpha$ is dominant. If $s\geq 1$ (and so $r\geq5$), then among the dominant weights we have $[3,1]$  with multiplicity at least $s$,  $[2,0]$ with multiplicity at least $s+1$ and $[0,1]$ with multiplicity at least $s$ (see Example~\ref{exa:multiplicities}). The   $W$-orbit  of $[3,1]$ contains the weights $(-2/3,5/3)$ and $(-2/3,-7/3)$ with negative $\alpha$-coefficient, the $W$-orbit of $[2,0]$ contains $(-2/3,2/3)$ and $(-2/3,-7/3)$ and the $W$-orbit of $[0,1]$ contains $(-2/3,-1/3)$. Since there is at most an $(s+1)$-dimensional space of degree three invariants, $\NN$ is not reduced. If $s=0$ (and so $r\geq 5$), then we have the weights $[3,1]$, $[2,0]$ and $[0,1]$ with multiplicity one, and $\NN$ is not reduced because $\dim S^3(V)^G\leq 1$.

\par\smallskip\noindent
{\bf Case 3:} If $r-s\equiv 0\mod 3$, then we are in the adjoint case and the claim follows from Proposition~\ref{prop:PSLn}.
\end{proof}

\begin{example}\lab{ex:v31revisited}
Let $V=V[3,1]$. Then from Example~\ref{ex:v31} we see that there are two dominant $\Lambda$, one corresponding to $\lambda(\alpha)=1$ and $2/5<\lambda(\beta)<1$ (choose $\lambda(\beta)=1/2$) and the other to $\lambda(\alpha)=1$ and $5/2<\lambda(\beta)<4$ (choose $\lambda(\beta)=3$). Neither $\Lambda_\alpha$ nor $\Lambda_\beta$ is dominant. Consider the case where $\lambda(\beta)=1/2$. Then the minimal positive weights (in terms of their $\lambda$-value) are $(1/3,-1/3)$ and $(-2/3,5/3)$, both having $\lambda$-value $1/6$. Now consider the covariants of type $V[1,0]$. The highest weight is $(2/3,1/3)$ where $\lambda(2/3,1/3)=5/6$. Thus the highest degree in which the covariant could occur in $S^*(V)$ and not vanish on $GZ_\lambda$ is 5. One gets the same bound in case $\lambda(\beta)=3$. The Poincar\'e series of the invariants is $1+t^3+\dots$ and for the $V[1,0]$ covariants it is $4t^5+44t^8+\dots$. Thus there are generating covariants in degree 8, which vanish on $\NN$, so that $\NN$ is not reduced.

If $V=V[5,0]$, then  the calculations of  Example~\ref{ex:v31} show that the dominant $\Lambda$ again correspond to $\lambda(\beta)=1/2$ or $3$. (The only new weights are (10/3,5/3), (-5/3,5/3) and (-5/3,-10/3) and they give rise to no new ratios.)\ Hence the highest degree in which the covariant $V[1,0]$ could occur in $S^*(V)$ and not vanish on $GZ_\lambda$ is again 5. The covariant $V[1,0]$ first occurs in degree 5, with multiplicity one. But since the principal isotropy group of $V$ is trivial, the $V[1,0]$ covariants have to have generators in higher degree, and these necessarily vanish on $\NN$. Thus $\NN$ is not reduced.
\end{example}

We now have the following result, which uses Theorem \ref{prop:mainsl3}.

\begin{theorem}\lab{thm:mainsl3.2}
Let $G=\SL_3$ and $V$ a nontrivial reducible $G$-module with $V^G=0$. Then, up to isomorphism and taking duals, we have the following list:
\begin{enumerate}
\item $kV[1,0]+\ell V[0,1]$, $k+\ell\geq 2$.
\item $V[2,0]+V[0,1]$.
\end{enumerate}
\end{theorem}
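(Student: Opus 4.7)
\emph{Reduction.} First, by Example~\ref{ex:retraction}(1) every irreducible summand of a coreduced $\SLthree$-module is coreduced, so Theorem~\ref{thm:mainsl3.1} forces each irreducible summand of $V$ to lie in the finite list $\{V[1,0],V[2,0],V[3,0],V[0,1],V[0,2],V[0,3],V[1,1]\}$. By Lemma~\ref{lem:retraction}, it then suffices to rule out each minimal non-listed combination.

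\emph{Positive direction.} The family $kV[1,0]+\ell V[0,1]$ is coreduced by Theorem~\ref{CIT.thm}(2) applied to $\SL(\C^{3})=\SLthree$, and $V[0,2]+V[1,0]$ is coreduced by Example~\ref{ex:quadraticform} with $n=3$. The dual representation $V[2,0]+V[0,1]$ is then coreduced via the canonical isomorphism $\NN(V)\cong\NN(V^{*})$.

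\emph{Negative direction.} The minimal bad combinations split into three cases.
(a) If $V$ contains $V[1,1]$, I distinguish by the other summand $W$: when $W\in\{V[1,1],V[3,0],V[0,3]\}$, the roots of $\lieg$ occur in $V[1,1]+W$ with multiplicity $\geq 2$, so Proposition~\ref{prop:rootsmult2} applies; when $W\in\{V[2,0],V[0,2]\}$, I would take the slice at the zero-weight vector $(h,0)$ with regular $h\in\lieh$ and observe the indecomposable bad $\N$-relation $2\epsilon_{1}+2(-\epsilon_{1})=0$ among the $T$-weights of the slice (since $-\epsilon_{1}=\epsilon_{2}+\epsilon_{3}$ and $2\epsilon_{1}$ are both weights of $V[2,0]$), so Proposition~\ref{prop:slice.zero.weight} applies; and when $W=V[1,0]$ (and $W=V[0,1]$ by duality), $V$ is cofree with generating invariants $\tr(X^{2}),\tr(X^{3}),\det(v,Xv,X^{2}v)$ of degrees $2,3,6$, and a direct calculation at a generic point of $\NN$ with $X$ regular nilpotent and $v\in\ker X^{2}$ gives $\rk d\pi=2<3=\codim\NN$, so Proposition~\ref{prop:Serre} forces non-coreducedness.
(b) If $V$ contains $V[3,0]$ or $V[0,3]$ but not $V[1,1]$, I would use the slice at the zero-weight vector $e_{1}e_{2}e_{3}\in V[3,0]$ (its stabilizer contains $T$), whose $T$-weights in the $V[3,0]$-slice are $\{3\epsilon_{i}\}$; combined with the weights of any additional nontrivial summand, these admit an indecomposable bad relation, for example $3\epsilon_{1}+3(-\epsilon_{1})=0$ when the extra summand contains $V[0,1]$, or $2\cdot 3\epsilon_{1}+3\cdot(-2\epsilon_{1})=0$ when it contains $V[0,2]$. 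For $V[3,0]+V[3,0]$ and $V[3,0]+V[0,3]$ the roots of $\lieg$ occur with multiplicity $\geq 2$ and Proposition~\ref{prop:rootsmult2} applies directly.
(c) If $V$ contains only summands from $\{V[1,0],V[0,1],V[2,0],V[0,2]\}$ and is not on the listed families, the remaining minimal cases are $V[2,0]+V[1,0]$, $V[2,0]+V[2,0]$, $V[2,0]+V[0,2]$, and the extensions of $V[2,0]+V[0,1]$ by $V[2,0]$, $V[0,2]$, $V[1,0]$, or one additional $V[0,1]$. Since $V^{T}=0$ in all these, the toral slice method does not apply directly; instead I would invoke the null-cone machinery of \S\ref{sec:comp-nullcone}, identifying a dominant $\Lambda\subset\Lambda(V)$ via Corollary~\ref{cor:irredsl3} and then applying either Proposition~\ref{prop:method1} (showing $\rk d\pi<\codim_{V}GZ_{\Lambda}$) or Proposition~\ref{prop:method2} (exhibiting a generating low-type covariant that vanishes on $GZ_{\Lambda}$).

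\emph{Main obstacle.} The technical heart will be case~(c), especially $V[2,0]+V[1,0]$: its invariant ring has two generators, $\det T$ in degree~$3$ and $\det(v,Tv,T^{2}v)$ in degree~$6$, and one must pinpoint a dominant $\Lambda$ on which the Jacobian of these two invariants drops rank below $\codim_{V}GZ_{\Lambda}$. The test point $(T,v)=(e_{1}^{2},e_{1})\in\NN$, where both invariants and their differentials vanish simultaneously, should point to the right component. The other subcases in~(c) are handled by analogous but shorter component analyses of \S\ref{sec:comp-nullcone}.
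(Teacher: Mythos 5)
Your skeleton (reduce to summands from Theorem~\ref{thm:mainsl3.1}, then kill the minimal bad combinations), your positive direction, and your cases (a) and (b) all track the paper's proof and are essentially correct. One caveat in (a): your rank computation for $V[1,1]+V[1,0]=\lieg\oplus\C^3$ is right at the generic point of $\NN$ (regular nilpotent $X$, $v\in\ker X^2$), but it silently assumes that $\tr X^2$, $\tr X^3$, $\det(v,Xv,X^2v)$ generate the invariant ring; without that, Lemma~\ref{lem:differential} does not apply. The paper avoids this by a slice argument at a subregular semisimple element (stabilizer $\GL_2$, slice $\theta_1+R_2+R_1\otimes\nu_1+\nu_{-2}$, quotient by $\C^*$ giving an invariant hypersurface in $\theta_1+2R_2$).

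The genuine gap is case (c), which you rightly call the technical heart but leave as a plan, and the concrete details you do supply there are wrong. For $V[2,0]+V[1,0]=S^2\C^3\oplus\C^3$ the invariants are not ``$\det T$ and $\det(v,Tv,T^2v)$ of degrees $3$ and $6$'': the expression $Tv$ is not $\SL_3$-equivariantly defined for $T\in S^2\C^3$ (you have imported the generators of $\lieg\oplus\C^3$ from case (a)). The generators are $\det Q$ and $\mathrm{adj}(Q)(v,v)$, of degrees $3$ and $4$, and the paper's one-line argument is that their differentials have rank $1$ on the codimension-$2$ null cone; your test point $(e_1^2,e_1)$ has $Q$ of rank $1$ and is not generic in $\NN$, so it cannot feed Proposition~\ref{prop:Serre}. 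More seriously, the claim that the remaining subcases are ``analogous but shorter component analyses'' fails for $V[2,0]+2V[0,1]$: there $\NN=GZ_\rho$ for $\rho=(2,-1,-1)$ is irreducible of codimension $3$, and at a generic point ($Q=e_1\cdot w$ of rank $2$, $\xi_1,\xi_2$ spanning the annihilator of $e_1$) the differentials of the four generators $\det Q$, $Q(\xi_i,\xi_j)$ already have rank exactly $3$. Hence the null cone is generically reduced and no rank-drop argument in the style of Proposition~\ref{prop:method1} or Lemma~\ref{lem:differential} can succeed. The paper resorts to a different mechanism here: the fiber $F=G\times^{\SO_3}\NN(2R_2)$ over the closed orbit of a nondegenerate $Q$ (slice representation $2R_2+\theta_1$ of $\SO_3$, whose null cone is non-reduced by Example~\ref{ex:twosltwo}) combined with the associated-cone argument from the proof of Proposition~\ref{prop:slice.zero.weight}. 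Your proposal contains no route to this case, so as it stands the proof is incomplete precisely where the new work is required. (The computations the paper does carry out with the one-parameter subgroup $(1,1,-2)$ for $2V[2,0]$ and $V[2,0]+V[0,2]$ do fit your template.)
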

 
 \begin{proof}
 We already know that the representations in (1) and (2)  are coreduced by Theorem \ref{CIT.thm} and Example \ref{ex:quadraticform}. 
 We have to show that combinations not on the list are not coreduced.
 
 Consider $V[1,1]$ together with another irreducible. For $V[1,1]$  there is a slice representation of a group (with finite cover) $\SL_2\times \C^*$ and slice representation $\theta_1+R_2$. If we add $V[1,0]$ we get an additional copy of $R_1\otimes\nu_1+\nu_{-2}$ in the slice representation. Quotienting by $\C^*$ we get the hypersurface in $\theta_1+R_2+R_2$ where the quadratic invariant of the second copy of $R_2$ vanishes. The hypersurface is not coreduced, hence  $V[1,1]+V[1,0]$ is not coreduced. The other cases involving $V[1,1]$ are similarly not coreduced (for $V[1,1]+V[2,0]$ use the slice representation of the maximal torus). For $V[3,0]$ one similarly uses the slice representation at the zero weight vector to rule out a coreduced sum involving $V[3,0]$.
 
Next consider $2V[2,0]$ (cofree) and the 1-parameter subgroup $\lambda$ with weights $(1,1,-2)$. Then one can easily see that the codimensions of $GZ_\lambda$ and $\NN$ are both 4  and that the rank of the differentials of the  invariants is 
2 on $GZ_\lambda$. Hence the representation is not coreduced. For the (cofree) representation $V[2,0]+V[0,2]$ 
 and the same $\lambda$ one computes that the rank is 
 3 while the codimensions of $\NN$ and $GZ_\lambda$ are 4, so this possibility is ruled out. 
 We cannot add $V[1,0]$ to   $V[2,0]$ 
 (the rank of the two generating invariants is only one on the null cone).
 
 Finally, we have to show that $V:=V[2,0]+2V[0,1]$ (not cofree but coregular) is not coreduced. Consider the 1-parameter subgroup with weights $(2,-1,-1)$. This clearly gives a maximal dimensional component of the null cone and it has codimension $3=\dim\quot VG-1$. The 1-parameter subgroup with weights $(1,1,-2)$ gives something of codimension 5, which is too small to be an irreducible component of $\NN$ since it is cut out by 4 functions. Hence $\NN$ is irreducible. But $V$ has the  slice representation $2R_2+\theta_1$ of $\SL_2$ whose null cone is not reduced but also has codimension three. Thus the associated cone to a fiber $F:=G\times^{\SL_2}\NN(2R_2)$ is $\NN(V)$. But $F$ is not reduced, hence neither is $\NN(V)$ by the argument of Proposition \ref{prop:slice.zero.weight}.
\end{proof}

\setcounter{section}{0}
\renewcommand{\thesection}{Appendix\ \Alph{section}}
\bigskip
\section{Computations for \texorpdfstring{$\Ffour$}{F4}}\lab{AppA}
\renewcommand{\thesection}{\Alph{section}}
Let $G$ be a simple group of type $\Ffour$ and let $V=\phi_{4}$ be the 26-dimensional representation of $G$.
The main result of this Appendix is the following proposition.
\begin{proposition}\lab{Aprop:F4}
The representation $(V^{\oplus n},G)$ is coreduced if and only if $n\leq 2$. Moreover, $V$ and $V\oplus V$ are both cofree and contain a dense orbit in the null cone. 
\end{proposition}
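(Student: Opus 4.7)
The plan is to attack the three claims (cofreeness of $V$ and $V\oplus V$, existence of a dense orbit in their null cones, and coreducedness precisely for $n\le 2$) in sequence, using the general machinery already assembled.

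\emph{Cofreeness and dense orbits.} Both $V = \phi_{4}$ and $V\oplus V$ already appear in Schwarz's classification \cite{Sc1978Representations-free} of cofree (sums of copies of) irreducible representations of simple groups; in fact, each is a $\theta$-representation (coming from $\EE_{6}$ and $\EE_{7}$ respectively), so cofreeness together with the finiteness of orbits in each fiber of $\pi$ is known. For the dense orbit claim, I would proceed via Hilbert--Mumford as in Section \ref{sec:comp-nullcone}: enumerate the generic $\rho\in Y(T)$ and identify which admissible subsets $\Lambda_{\rho}\subset\Lambda(V)$ (respectively $\Lambda(V\oplus V)$) are dominant. On each such $Z_{\rho}$ I would exhibit an explicit nilpotent element $v_{0}$ (respectively a pair $(v_{0},v_{1})$), compute the stabilizer, and verify by a dimension count that $\dim Gv_{0}=\dim V-\codim\NN(V)$, which equates the orbit closure with the component.

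\emph{Coreducedness of $V$ and $V\oplus V$.} Once the dense orbits have been established, the coreducedness is an immediate application of Corollary \ref{cor:cofree}: an orbit of maximal dimension in $V$ (respectively $2V$) lies in the regular sheet, so the regular sheet meets the null cone in a dense set and, the representations being cofree, they are coreduced.

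\emph{Non-coreducedness of $3V$.} This is the technical heart of the proposition and is where the heavy calculation enters. The strategy is the method of covariants (Proposition \ref{prop:gencovariants}): I will produce a generating covariant $\tau\colon 3V\to W$ for some small irreducible $G$-module $W$ (natural candidates are $W=V$ and $W=\phi_{1}$) that vanishes identically on $\NN(3V)$. The vanishing will be checked componentwise: by the previous step I have an explicit list of dominant $\rho$ for $3V$ and hence of the positive weights on each $Z_{\rho}$. By Remark \ref{rem:vanishingcov}, if the highest weight of $W$ cannot be written as a sum of $d$ positive $\rho$-weights for any such $\rho$, then every homogeneous covariant of type $W$ of degree $\ge d$ vanishes on $\NN(3V)$. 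A LiE tabulation of the multiplicities of $W$ in $S^{e}(3V)^{*}$ for $e$ running up to this threshold, together with a dimension count of the image of the multiplication map $\mm_{0}\otimes\Cov(3V,W)\to\Cov(3V,W)$, will then produce a degree in which a covariant of type $W$ exists but is not in $\mm_{0}\Cov(3V,W)$; by the vanishing argument, it must be a generating covariant vanishing on $\NN$.

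The main obstacle is precisely this last bookkeeping: the combinatorics of dominant admissible sets for $3V$ can be sizeable, and certifying that the relevant covariant truly is generating (rather than lying in the image of $\mm_{0}\Cov$) requires controlling a bigraded multiplication map whose dimensions grow fast with degree. I expect that the threshold degree for a suitable $W$ is small, comparable to the degrees of the first few generating invariants of $V$, making the computation feasible once the dominant $\rho$ have been identified.
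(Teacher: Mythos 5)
Your plan for $n\le 2$ has one factual error and one misplaced worry. The error: $V\oplus V$ is \emph{not} a $\theta$-representation. A grading of a simple Lie algebra $\lieg$ with $\lieg_0=\lie f_4$ and $\lieg_1=2V$ would force $\dim\lieg\ge 52+2\cdot 52$, and no simple Lie algebra fits; only the single copy $V=\phi_4$ arises this way (from $\EE_6/\FF_4$). So Vinberg's finiteness of orbits in the fibers is not available for $2V$ — this is exactly why the dense orbit in $\NN(2V)$ must be produced by hand. (Cofreeness of $2V$ does follow from the cited classification.) The misplaced worry: since the nonzero weights of $V$ are precisely the short roots, every generic $\rho$ gives the same $GZ_\rho$ up to the Weyl group, so $\NN(nV)$ is irreducible for every $n$ and there is no enumeration of dominant admissible sets to carry out; the genuine work in your dense-orbit step is the rank computation $\dim Gv=44$ for an explicit $v=(v',v'')$, which the paper does via a combinatorial ``weight support'' reduction. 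Granting that, your derivation of coreducedness of $2V$ from Corollary \ref{cor:cofree} is legitimate, and the paper explicitly endorses it (while also giving an independent rank-$8$ check of $d\pi$ at a point of $\NN$).

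The serious gap is in your step for $3V$, the heart of the proposition. You propose a direct covariant computation on the $78$-dimensional module $3V$ using the total-degree vanishing criterion of Remark \ref{rem:vanishingcov}. But the highest weight of $\phi_4$ is the highest short root $\eps_1$, which is already a sum of four positive short roots (e.g.\ $\tfrac12(\eps_1-\eps_2-\eps_3-\eps_4)+\tfrac12(\eps_1+\eps_2-\eps_3-\eps_4)+\eps_3+\eps_4$) and has height $8$; so the criterion only forces type-$\phi_4$ covariants to vanish on $\NN(3V)$ in rather high total degree (somewhere between $5$ and $9$), and for $\phi_1$ the situation is worse (height $11$). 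Certifying a \emph{generating} covariant of the required type in such degrees for a $78$-dimensional module is a far heavier computation than you suggest, and nothing guarantees one exists in the accessible range. The paper takes a different route: it slices twice, to the maximal $\Bfour\subset\FF_4$ (reducing to $3\phi_1(\Bfour)+2\phi_4(\Bfour)$) and then to $\DD_4\subset\Bfour$ (reducing to $2\phi_1+2\phi_3+2\phi_4$ of $\DD_4$), where the six-fold multigrading makes everything tractable: the covariants of adjoint type in $\bigwedge^2\phi_1\otimes\bigwedge^2\phi_3\otimes\bigwedge^2\phi_4$ occur with multiplicity $7$, only $5$ lie in the ideal of the invariants, and all of them vanish on each $Z_\rho$ by a multidegree-refined weight argument (one positive weight taken from each $\bigwedge^2$), with triality cutting the positive weight spaces to three cases. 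To make your step (3) close, you should either adopt this slice reduction or at least replace the total-degree criterion by a multigraded one; as written, it is not a proof.
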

We will use the notation introduced in section~\ref{sec:comp-nullcone}.
The nonzero weights of $V$ are the short roots of $\FF_4$. Hence $Z_\rho$ is the span of the positive short root spaces for any generic $\rho\in Y(T)$ which implies that the null cone  $\NN(V^{\oplus n})$ is irreducible for any $n$. We also know that  $V\oplus V$ is cofree with $\dim \quot {(V\oplus V)}{G} = 8$ \cite{Sc1978Representations-free}, hence $\dim \NN(V\oplus V) = 44$. Let us look at the following statements which imply the proposition.
{\it 
\be
\item[(a)] $V\oplus V \oplus V$ is not coreduced.
\item[(b)]  $V\oplus V$ is coreduced.
\item[(c)] There is a dense orbit in the null cone of $V \oplus V$.
\ee
}
Although we know that (c) implies (b) (Corollary~\ref{cor:cofree}) we will present direct proofs of all three claims. They are based on some explicit computations.

\begin{proof}[Proof of statement (a)] There is a maximal subgroup of type $\BB_{4}$ of $\Ffour$
where $(\phi_4(\FF_4),\BB_4)=\phi_1+\phi_4+\theta_1$. The slice representation of $\BB_4$ on $\phi_4(\FF_4)$ is $\phi_1(\BB_4)+\theta_1$. To prove that $3\phi_4(\FF_4)$ is not coreduced, it suffices to prove that $W:=3\phi_1(\BB_{4})+2\phi_4(\BB_{4})$ is not coreduced. Now $\DD_{4}$ is a maximal subgroup of $\BB_{4}$ and $V:=2\phi_{1}(\DD_{4})+2\phi_{3}(\DD_{4})+2\phi_{4}(\DD_{4})$ is a slice representation of $W$ at a zero weight vector. So we have to show that $V$ is not coreduced.
Since our representations are self-dual, we will deal with the symmetric algebra $\Sym(V)$ in place of $\OOO(V)$.

We have $\bigwedge^{2}\phi_{1}=\bigwedge^{2}\phi_{3}=\bigwedge^{2}\phi_{4}= \phi_{2}$, the adjoint representation. In the tensor product of three copies of $\phi_{2}$ we have 7 copies of $\phi_{2}$, but only five of them are in the ideal generated by the invariants. (This can be checked using LiE). We will show now that every covariant of type $\phi_{2}$ in $\bigwedge^{2}\phi_{1}\otimes \bigwedge^{2}\phi_{3}\otimes\bigwedge^{2}\phi_{4}\subset S(V)_{(2,2,2)}$ vanishes on the null cone, i.e., vanishes on $Z_{\rho}$ for every generic $\rho\in Y(T)$. 

Recall that the weights of $\phi_{1}$ are $\pm\eps_{i}$, those of $\phi_{3}$ are $1/2(\pm\eps_{1}\pm\eps_{2}\pm\eps_{3}\pm\eps_{4})$ where the number of minus signs is even. The weights of $\phi_{4}$ look similar, but have an odd number of minus signs. We use the 
notation $(\pm\pm\pm\pm)$ for these weights. 

There is an outer automorphism $\tau$ of $\DD_{4}$ of order 2 (coming from the Weyl group of $\BB_{4}$) which normalizes the maximal torus, fixes $\eps_{1}, \eps_{2}, \eps_{3}$ and sends $\eps_{4}$ to $-\eps_{4}$. If $G$ is of type $\DD_{4}$ and if $\rho_{i}\colon G\to \GL(V_{i})$ denotes the $i$th fundamental representation $\phi_{i}$, then $\rho_{1}\circ\tau \simeq\rho_{1}$, $\rho_{2}\circ\tau \simeq\rho_{2}$, and $\rho_{3}\circ\tau \simeq\rho_{4}$. Thus there is a linear automorphism $\mu\colon V\simto V$ which is $\tau$-equivariant, i.e., $\mu(gv) = \tau(g)\mu(v)$. It follows that $\mu$ has the following properties:
\be
\item $\mu$ sends $G$-orbits to $G$-orbits. In particular, $\mu(\NN) = \NN$.
\item $\mu(V_{\lambda}) = V_{\tau(\lambda)}$.
\item If $\psi\colon V\to U$ is a covariant of type $\phi_{2}$ in $\bigwedge^{2}\phi_{1}\otimes \bigwedge^{2}\phi_{3}\otimes\bigwedge^{2}\phi_{4}$, then so is $\psi\circ\mu\colon V\to U$.
\ee
This implies  that for every 1-PSG $\rho$ we have 
 $\mu(Z_{\rho})=Z_{\tau(\rho)}$ and 
 that if all $\psi\colon V\to U$ as in (3) vanish on $Z_{\rho}$, then they vanish on $Z_{\tau(\rho)}$, too. 

As a consequence, we can assume that $\eps_{1}>\eps_{2}>\eps_{3}>\eps_{4}>0$ and that $\eps_{1}-\eps_{2},\eps_{2}-\eps_{3},\eps_{3}-\eps_{4}>0$. This implies that the following weights are positive:
\begin{align*}
\{\eps_{1},\eps_{2},\eps_{3}, \eps_{4}\} &\subset \Lambda(\phi_{1}),\\
\{(++++), (+-+-), (++--)\} &\subset\Lambda(\phi_{3}), \\
\{(+++-), (++-+), (+-++)\} &\subset\Lambda(\phi_{4}).
\end{align*}
Since $(-++-) < (-+++)$ we see  that there are only three cases of maximal positive weight spaces to be considered.
\be
\item $\{\eps_{1},\eps_{2},\eps_{3}, \eps_{4}\}$, $\{(++++), (+-+-), (++--), (-++-)\}$ and 
\newline
$\{(+++-), (+-++), (++-+),(-+++)\}$;
\item $\{\eps_{1},\eps_{2},\eps_{3}, \eps_{4}\}$, $\{(++++), (+-+-), (++--), (+--+)\}$ and 
\newline
$\{(+++-), (+-++), (++-+),(+---)\}$;
\item $\{\eps_{1},\eps_{2},\eps_{3}, \eps_{4}\}$, $\{(++++), (+-+-), (++--), (+--+)\}$ and 
\newline
$\{(+++-), (+-++), (++-+),(-+++)\}$.
\ee
Now we have to calculate the positive weights in $\bigwedge^{2}\phi_{1}$, $\bigwedge^{2}\phi_{3}$ and $\bigwedge^{2}\phi_{4}$. For 
$\bigwedge^{2}\phi_{1}$ we get  $\{\eps_{i}+\eps_{j} \mid i<j\}$ in all three cases. For the two others we find the following sets.
\be
\item $\bigwedge^{2}\phi_{3}$: $\{\eps_{i}+\eps_{j}\mid i<j<4\} \cup \{\eps_{i}-\eps_{4}\mid i<4\}$; 
$\bigwedge^{2}\phi_{4}$: $\{\eps_{i}+\eps_{j} \mid i<j\}$.
\item $\bigwedge^{2}\phi_{3}$: $\{\eps_{1}\pm\eps_{j}\mid j>1\}$; 
$\bigwedge^{2}\phi_{4}$: $\{\eps_{1}\pm\eps_{j}\mid j>1\}$.
\item $\bigwedge^{2}\phi_{3}$: $\{\eps_{1}\pm\eps_{j}\mid j>1\}$; $\bigwedge^{2}\phi_{4}$: $\{\eps_{i}+\eps_{j} \mid i<j\}$.
\ee
Now it is easy to see that in all three cases there is no way to write the highest weight $\eps_{1}+\eps_{2}$ of $\phi_{2}$ as a sum of three positive weights, one from each $\bigwedge^{2}$.
\end{proof}
\begin{proof}[Proof of statement (b)]
Since $V \oplus V$ is cofree the null cone is (schematically) a complete intersection. Therefore it suffices to find a element $v\in V\oplus V$ such that the differential $d\pi_{v}$ of the quotient morphism $d\pi\colon V\oplus V \to Y$ at $v$ has maximal rank $8=\dim Y$.
 
The nonzero weights of $\phi_4(\FF_4)$ are $\pm\eps_i$, $i=1,\dots,4$ (the  nonzero weights of $\phi_1(\BB_4)$) and $(1/2)(\pm\eps_1\pm\eps_2\pm\eps_3\pm\eps_4)$ (the weights of $\phi_4(\BB_4)$). We will abbreviate the latter weights as $(\pm\pm\pm\pm)$ from now on. The positive weights are the $\eps_i$ and the weights of $\phi_4(\BB_4)$ where the coefficient of $\eps_1$ is positive. Let $v_{\pm i}$ denote a nonzero vector in the weight space of $\pm\eps_i$, let $v_0$ denote a zero weight vector and let $v_{++++}$ denote a nonzero vector in the weight space $(++++)$ and similarly for $v_{+++-}$, etc. 
We claim that $d\pi$ has rank 8 at the point $v=(v_{2}+v_{3}+v_{+--+}+v_{+---})\in 2\phi_1(\BB_4)+2\phi_4(\BB_4)$.

The invariants of $2\phi_4(\FF_4)$ are the polarizations of the degree 2 invariant and the degree 3 invariant of one copy of $\phi_4(\FF_4)$ together with an invariant of degree $(2,2)$. The restriction of the degree 2 invariant to $\phi_1(\BB_4)+\phi_4(\BB_4)$ is the sum of the degree two invariants there (see \cite{Sc1978Representations-reg} for descriptions of the invariants of $(2\phi_1+2\phi_4,\BB_4)$.). Clearly the differentials of the degree 2 invariants of $2\phi_4(\FF_4)$ at $v$ have rank 3 when applied to the subspace spanned by the vectors $v_{-2}$, $v_{-3}$ in the two copies of $\phi_1(\BB_4)$. There is only one degree three generator in $(\phi_1+\phi_4,\BB_4)$ 
and it is the contraction of $\phi_1$ with the copy of $\phi_1$ in  $\Sym^2(\phi_4)$. Another way to think of the invariant is as the contraction of $\phi_4$ with the copy of $\phi_4$ in $\phi_1\otimes\phi_4$.  Now the highest weight vector of the copy of $\phi_4$ in $\phi_1\otimes\phi_4$ is (up to some nonzero coefficients)
$$
v_1\otimes v_{-+++}+v_2\otimes v_{+-++}+v_3\otimes v_{++-+}+v_4\otimes v_{+++-}+v_0\otimes v_{++++}.
$$
From this one derives the form of the other weight vectors of $\phi_4\subset\phi_1\otimes\phi_4$ and restricting to $v$ one gets contributions to the weights  $(++-+)$, $(++--)$, $(+-++)$ and $(+-+-)$. Thus the differential of the degree 3 invariant of $\phi_1(\FF_4)$ at $v$ vanishes  on $\phi_4$ except on $v_{--+-}$, $v_{--++}$, $v_{-+--}$ and $v_{-+-+}$. Now polarizing it is easy to see that the four generators of degree 3 have differential of rank 4 at $v$ when applied to vectors in $2\phi_4(\BB_4)$. 

There remains the generator of degree 4. Restricted to $\BB_4$ one easily sees that the invariant is a sum of two generators (modulo products of the generators of degree 2), one of which is the invariant   which contracts the copy of $\bigwedge^2(\phi_1)\subset \Sym^2(2\phi_1)$ with the copy in $\bigwedge^2(\phi_4)\subset\Sym^2(2\phi_4)$ and the other which is of degree 4 in $2\phi_4(\BB_4)$ (and doesn't involve $2\phi_1$). Now the highest weight vector of $\bigwedge^2(\phi_1)\subset\bigwedge^2(\phi_4)$ is (up to nonzero scalars)
$$
v_{++++}\wedge v_{++--}+v_{+++-}\wedge v_{++-+}
$$
from which it follows that the weight vector of weight $-\eps_2-\eps_3$ does not vanish on $v_{+---}+v_{---+}$. Now in $\bigwedge^2(\phi_1)\subset\Sym^2(2\phi_1)$ we have $v_2\wedge v_3$ of weight $\eps_2+\eps_3$. Hence the differential of the degree 4 invariant evaluated at $v$ does not vanish on $v_{---+}$ and   the rank of the differentials of the 8 invariants is indeed 8. Thus  $2\phi_4(\FF_4)$ is coreduced.
\end{proof}

\begin{proof}[Proof of statement (c)]
Recall that the root system $R$ of $G$ has the following 3 parts $A$, $B$ and $C$:
$$
A = \{\pm\eps_{i}\}, \quad B=\{\pm\eps_{i}\pm\eps_{j}\mid i<j\}, \quad C=\{\frac{1}{2}(\pm\eps_{1}\pm\eps_{2}\pm\eps_{3}\pm\eps_{4})\}
$$
with cardinality $\#A = 8$, $\#B = 24$ and $\#C = 16$. Thus 
$$
\lieg=\Lie G = \hh \oplus \bigoplus_{\delta\in A\cup B\cup C}\lieg_{\delta}
$$ 
where $\hh = \Lie T$ is the Cartan subalgebra and $T \subset G$ a maximal torus.
The  weights $\Lambda=\Lambda_{V}$ of the representation $V$ are the short roots $A\cup C$ together with the zero weight $0$. 
The non-zero weight spaces $V_{\rho}$ are 1-dimensional, and the zero weight space $V_{0}=V^{T}$ has dimension 2. This implies the following.
\begin{lemma}
Let $\delta\in R$ be a root and $\rho\in\Lambda$ a weight of $V$. If $\delta+\rho$ is a weight of $V$, then $\lieg_{\delta}V_{\rho}$ is a non-trivial subspace of $V_{\delta+\rho}$.
\end{lemma}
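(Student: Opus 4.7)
The plan is to study $V$ as a module over the $\sltwo$-triple $\lie{s}_{\delta} := \spann\{e_{\delta},h_{\delta},f_{\delta}\}\subset\lieg$ attached to $\delta$. The decisive observation is that the $h_{\delta}$-eigenvalues on $V$ are sharply restricted by the explicit list $\Lambda = A\cup C\cup\{0\}$: for $\delta\in B$ long one has $\delta^{\vee}=\delta$ and a direct pairing shows the eigenvalues lie in $\{-1,0,+1\}$; for $\delta\in A\cup C$ short one has $\delta^{\vee}=2\delta$ and the eigenvalues lie in $\{-2,-1,0,+1,+2\}$, with $\pm 2$ attained only on the $1$-dimensional spaces $V_{\pm\delta}$. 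Consequently $V$ decomposes as an $\lie{s}_{\delta}$-module into copies of the irreducibles $V(0), V(1), V(2)$, with no $V(2)$-summand when $\delta$ is long and exactly one $V(2)$-summand when $\delta$ is short (necessarily $\spann\{V_{\delta},f_{\delta}V_{\delta},f_{\delta}^{2}V_{\delta}\}$, generated by the $1$-dimensional $V_{\delta}$).

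Suppose first that $\delta$ is long. Then $\pm\delta\notin\Lambda$, which forces $\rho\neq 0$ and $\rho+\delta\neq 0$. The identity $h_{\delta}(\rho+\delta)=h_{\delta}(\rho)+2$ combined with $h_{\delta}(\Lambda)\subset\{-1,0,+1\}$ then forces $h_{\delta}(\rho)=-1$ and $h_{\delta}(\rho+\delta)=+1$. Since $V(0)$ contributes nothing to the eigenspaces $V[\pm 1]$ of $h_{\delta}$, these eigenspaces come entirely from the $V(1)$-summands, and $e_{\delta}$ restricts to an isomorphism $V[-1]\simto V[+1]$. In particular the $1$-dimensional $V_{\rho}\subset V[-1]$ is mapped nontrivially into $V_{\rho+\delta}$, so $\lieg_{\delta}V_{\rho}\neq 0$.

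Now suppose $\delta$ is short. The same $V(1)$-argument handles any case with $h_{\delta}(\rho)=\pm 1$. This leaves $\rho=0$ (so $\rho+\delta=\delta$) and $\rho=-\delta$ (so $\rho+\delta=0$). A quick weight check shows that $f_{\delta}V_{\delta}$ has $\lieh$-weight $0$ and $f_{\delta}^{2}V_{\delta}$ has $\lieh$-weight $-\delta$, so these spaces lie in $V_{0}$ and $V_{-\delta}$ respectively; since $\dim V_{-\delta}=1$ we obtain the equality $V_{-\delta}=\C f_{\delta}^{2}V_{\delta}$. For $\rho=0$, the commutator identity $e_{\delta}(f_{\delta}V_{\delta})=[e_{\delta},f_{\delta}]V_{\delta}=h_{\delta}V_{\delta}=2V_{\delta}$ produces a nonzero element of $V_{\delta}$, so $\lieg_{\delta}V_{0}\supset V_{\delta}$. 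For $\rho=-\delta$, $e_{\delta}V_{-\delta}=e_{\delta}f_{\delta}^{2}V_{\delta}$ is the nonzero middle $h_{\delta}$-weight space of the unique $V(2)$-summand, and it sits inside $V_{0}$.

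The one step that could in principle cause trouble is the short-$\delta$, $\rho=0$ case, precisely because $V_{0}$ is $2$-dimensional and one must verify that the $1$-dimensional subspace $\C f_{\delta}V_{\delta}\subset V_{0}$ genuinely maps nontrivially under $e_{\delta}$; the commutator computation above settles this at once, after which the whole lemma follows by routine $\sltwo$-bookkeeping.
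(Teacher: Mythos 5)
Your proof is correct. The paper offers no argument here at all — it states the lemma as an immediate consequence of the preceding observation that the nonzero weight spaces are $1$-dimensional and $V_0$ is $2$-dimensional — and your $\sltwo$-string analysis (eigenvalues of $h_\delta$ confined to $\{-1,0,1\}$ for $\delta$ long and to $\{-2,\dots,2\}$ with $\pm2$ attained only on $V_{\pm\delta}$ for $\delta$ short, plus the commutator computation settling the $\rho=0$ case where $\dim V_0=2$) is exactly the standard argument the authors are implicitly invoking.
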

Note that $\lieg_{\delta}V_{0}=V_{\delta}$ and $\lieg_{\delta}V_{-\delta} \subset V_{0}$ is  1-dimensional for every short root $\delta\in A\cup C$.

The subspace $\lieg':=\hh\oplus\bigoplus_{\delta\in A\cup B}\lieg_\delta\subset \lieg$ is the Lie algebra of a maximal subgroup $G' \subset G$ of type $\Bfour$, and the representation $V$ decomposes under $G'$ into $V = \theta_{1}\oplus \phi_{1}(\Bfour)\oplus\phi_{4}(\Bfour)$ where $\phi_{4}(\Bfour)=\bigoplus_{\gamma\in C} V_{\gamma}$, $\phi_{1}(\Bfour) = V_{A}\oplus\bigoplus_{\alpha\in A}V_{\alpha}$, and $V_{A}:= \phi_{1}(\Bfour)^{T}\subset V_{0}$. It follows that   $\lieg_{\alpha}V_{-\alpha}=V_{A}$ for  $\alpha\in A$, but  $\lieg_{\gamma}V_{-\gamma}\nsubseteq V_{A}$ for $\gamma\in C$, so that  $\lieg_{\alpha}V_{-\alpha} + \lieg_{\gamma}V_{-\gamma} = V_{0}$.

The basic idea for the calculations  is the following. To every vector $v\in V$ we define its {\it weight support} $\omega(v) \subset A \cup C \cup \{0_{A},0_{C}\}$ in the following way. Write $v$ as a sum of weight vectors, $v = \sum_{A} v_{\alpha} + \sum_C v_{\gamma} + v_{0}$. Then 
$$
\omega(v) := \{\alpha\in A\mid v_{\alpha} \neq 0\}\cup \{\gamma \in C \mid v_{\gamma}\neq 0\} \cup 
\begin{cases} \emptyset & \text{if } v_{0}=0,\\
\{0_{A}\}& \text{if } v_{0}\in V_{A}\setminus\{0\},\\
\{0_{C}\}& \text{if } v_{0}\in V_0\setminus V_{A}.\\
\end{cases}
$$
This extends in an obvious way to the weight support of elements from $V \oplus V$. Now we look at a pair $v=(v',v'')=(v_{\alpha'}+v_{\gamma'}, v_{\alpha''}+v_{\gamma''})\in V\oplus V$ where $\alpha',\alpha''\in A$ and $\gamma',\gamma''\in C$ are distinct weights. Define 
$$
\Omega(v) := \{\omega(x_{\delta} v)\mid \delta \in A\cup B \cup C\} \cup \{\alpha',\alpha'',\gamma',\gamma''\}
$$
where $x_{\delta}\in\lieg_{\delta}$ is a (non-zero) root vector. This is the set of weight supports of  generators of $\lieg v$ where we use that $\lieh v = \C v_{\alpha'} \oplus \C v_{\gamma'} \oplus \C v_{\alpha''} \oplus \C v_{\gamma''}$. 

Our problem can now be understood in the following way. We are given a matrix $M$ of column vectors from which we want to calculate the rank. We replace $M$ by the ``support matrix'' $\Omega(M)$ which is obtained from $M$ by replacing each non-zero entry by 1. How can one find a lower bound for the rank of $M$ from $\Omega(M)$? 

There is an obvious procedure. We first look for a column of $\Omega(M)$ which contains a single 1, let us say in row $i$. Then we remove all other $1$'s in row $i$ and repeat this procedure as often as possible to obtain a matrix $\Omega(M)'$. It is clear that this ``reduced'' matrix $\Omega(M)'$ is again the support matrix of a matrix $M'$ which is obtained by column reduction from $M$. This first step is called ``column reduction.''

Now we apply row reduction to $M'$ which amounts to looking at rows of $\Omega(M)'$ which contain a single 1. Then we delete all other $1$'s in the corresponding column. Again we repeat this procedure as often as possible and obtain a matrix $\Omega(M)''$. We call this procedure ``row reduction.'' It is clear now that a lower bound for the rank of $M$ is given by the number of columns of $\Omega(M)''$ containing a single 1.

Now we choose $v\in V\oplus V$ as above with weights 
$$
(\alpha',\gamma',\alpha'',\gamma'') = (\eps_{3},1/2(\eps_{1}-\eps_{2}-\eps_{3}+\eps_{4}),\eps_{2}, 1/2(\eps_{1}-\eps_{2}-\eps_{3}-\eps_{4})).
$$
We obtain a set $\Omega(v)$ with 45 elements where each element's  weight support has cardinality at most two. (We use Mathematica to perform these and the following calculations.) After applying the ``column reduction'' we obtain a new set $\Omega(v)'$ which contains 44 elements where 34 of them contain a single weight. For the remaining 10 elements, the ``row'' reduction produces 10 sets with a single weight. Thus we get $\dim Gv = \dim \lieg v = 44 = 2\dim V - \dim \quot VG$,  and we are done.
\end{proof}

\begin{remark} 
We are grateful to Jan Draisma who did some independent calculations (using GAP) to show that there is a dense orbit in the null cone of  $V \oplus V$.
\end{remark}

\bigskip
\renewcommand{\thesection}{Appendix\ \Alph{section}}
\section{Computations for \texorpdfstring{$\Gtwo\times\Gtwo$}{G2xG2}}\lab{AppB}
\renewcommand{\thesection}{\Alph{section}}
The main result of this Appendix is the following proposition. We will give two proofs.

\begin{proposition}\lab{propA:g2g2}
The representation  $(\C^7\otimes\C^7, \Gtwo\times\Gtwo)$ is not coreduced. 
\end{proposition}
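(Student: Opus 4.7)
My plan is to apply Proposition~\ref{prop:slice.zero.weight} to reduce to a smaller problem. The zero weight space $V^T$ for the maximal torus $T = T_1\times T_2$ is one-dimensional, spanned by $v_0 := e_0\otimes e_0$, where $e_0$ is a nonzero zero weight vector in $\phi_1(\Gtwo)$. Since the $\Gtwo$-stabilizer of $e_0$ is $\SL_3$, the stabilizer $G_{v_0}$ contains $H := \SL_3\times\SL_3$, which has the same rank $4$ as $G$. By Lemma~\ref{associatedcone.lem} and Proposition~\ref{prop:slice.zero.weight}, it suffices to show that the slice representation $(N_{v_0}, H)$ is not coreduced.

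First I would compute $N_{v_0}$. Using the $\SL_3$-branching $\phi_1(\Gtwo) = \C^3\oplus(\C^3)^*\oplus\theta_1$, the restriction $V|_H$ decomposes into nine summands, and the tangent space $T_{v_0}(G\cdot v_0)\subset V$ equals $(\C^3+(\C^3)^*)\boxtimes\theta_1 \oplus \theta_1\boxtimes(\C^3+(\C^3)^*)$ as an $H$-submodule. Hence
\[
N_{v_0}\;\cong\;(\C^3+(\C^3)^*)\boxtimes(\C^3+(\C^3)^*)\;\oplus\;\theta_1.
\]
By Examples~\ref{ex:retraction}(1) the trivial summand splits off, so the task reduces to showing that $N' := (\C^3+(\C^3)^*)\boxtimes(\C^3+(\C^3)^*)$ is not coreduced as an $H$-module.

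For $N'$ I would invoke the Hilbert-Mumford machinery of \S\ref{sec:comp-nullcone}. The $H$-invariants of $N'$ are completely described by classical invariant theory (Theorem~\ref{CIT.thm}) applied to the four summands $\C^3\boxtimes\C^3$, $\C^3\boxtimes(\C^3)^*$, $(\C^3)^*\boxtimes\C^3$, $(\C^3)^*\boxtimes(\C^3)^*$ and their joint polarizations, so $\OOO(N')^H$ is explicit. The strategy is: enumerate the dominant admissible subsets $\Lambda\subset\Lambda(N')$ (a short list, aided by the $\Z/2$-symmetry swapping the two $\SL_3$-factors and by Proposition~\ref{prop:component}), pick a component $GZ_\Lambda\subset\NN(N')$, and either verify by a dimension count (Proposition~\ref{prop:method1} and Remark~\ref{rem:method1}) that the rank of $d\pi$ on $GZ_\Lambda$ is strictly less than $\codim GZ_\Lambda$, or exhibit a generating covariant of some type $W$ vanishing on every $GZ_\Lambda$ via the weight criterion of Proposition~\ref{prop:method2}.

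The main obstacle is that $N'$ has no zero weight and does not contain any root of $H$ among its weights (all its weights have the form $(\pm\varepsilon_i,\pm\varepsilon_j')$), so neither a further iteration of Proposition~\ref{prop:slice.zero.weight} nor Proposition~\ref{prop:rootsmult2} is available. One must therefore carry out the Hilbert-Mumford and weight analysis on $N'$ directly; this explicit and delicate verification is the heavy computation deferred to Appendix~\ref{AppB}.
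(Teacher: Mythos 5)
Your reduction is exactly the one the paper's first proof uses: the zero weight space of $\C^7\otimes\C^7$ is spanned by $e_0\otimes e_0$, its stabilizer is a finite extension of $H=\SL_3\times\SL_3$, and the nontrivial part of the slice representation is $N'=(\C^3+(\C^3)^*)\boxtimes(\C^3+(\C^3)^*)$, so by Proposition~\ref{prop:slice.zero.weight} it suffices to show $N'$ is not coreduced. Your observation that $N'$ contains neither a zero weight nor any root of $H$, so that no further slice or multiplicity argument applies, is also correct and is why the problem is hard.

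The gap is that everything after the reduction is only a plan, and one of its ingredients is wrong as stated. You claim $\OOO(N')^H$ is "explicit" from Theorem~\ref{CIT.thm}; but that theorem describes $\SL(V)$-invariants of $pV\oplus qV^*$, i.e.\ it handles only one $\SL_3$ factor at a time, and the invariants of the second factor acting on that quotient are not covered by it. In the paper this is precisely the delicate point: one needs a LiE-assisted partial Poincar\'e series together with a regular-sequence argument just to establish that there is a \emph{minimal generating} invariant $f$ of multidegree $(3,3,3,3)$ (Lemma~\ref{lem:3333generator}); the count of generators in that degree could a priori be spoiled by unexpected relations in degree $(2,2,2,2)$, and ruling this out takes a separate argument. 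One then still has to enumerate the dominant positive weight spaces (eight of them up to the Weyl group and the order-$8$ symmetry group) and check that $df$ vanishes on each $Z_\rho$ by a weight estimate. None of this is carried out in your proposal, and deferring it to "Appendix~\ref{AppB}" is circular here, since the statement you are proving \emph{is} the Appendix~B proposition. So the setup is right and matches the paper's first proof, but the proof itself --- the existence of the generating covariant that vanishes on the null cone, or alternatively a rank/codimension count on a component --- is missing.
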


\begin{proof}[First Proof of Proposition~\ref{propA:g2g2}]
The nontrivial part of the slice representation at the zero weight vector is $G:=\SL_3\times\overline{\SL}_3$ on the  four possible versions of $(W:=\C^3$ or $W^*)$ tensored with $(\overline W:=\overline{\C^3}$ or $\overline{W}^*)$. 
\begin{lemma}\lab{lem:g2g2}
The $G$-module
$$
V:=W\otimes\overline W+ W\otimes\overline W^*+ W^*\otimes\overline W+W^*\otimes\overline W^*
$$
is not coreduced.
\end{lemma}

We have a group $N$ of order 8 which acts on $V$ by interchanging $W$ and $W^*$, $\overline W$ and $\overline W^*$ as well as interchanging $W$, $W^*$ with $\overline W$, $\overline W^*$. Then $N$ normalizes the action of $G$.
Here are the steps in the proof of the proposition above.
\begin{enumerate}
\item We show that there is a minimal generator $f$ of the invariants of $(V,G)$ which is multihomogeneous of degree (3,3,3,3) in the four irreducible subspaces  of $V$.
\item We show that, up to the action of the Weyl group and $N$, there are  eight $1$-parameter subgroups $\rho$ of $G$
such that the union of the $GZ_\rho$ is $\NN(V)$.
\item We show that for each such $\rho$, the differential of $f$ vanishes on $Z_\rho$.
\end{enumerate}
It then follows from Remark \ref{rem:gendifferential} that $V$ is not coreduced.

Let $R=\C[a_1,\dots,a_n]$ be a finitely generated $\N^d$-graded   ring where the $a_i$ are homogeneous. Recall that $a_1,\dots,a_m$ are a \emph{regular sequence\/} in $R$ if $a_1$ is not a zero divisor and  $a_{j+1}$ is not a zero divisor in $R/(Ra_1+\dots+Ra_j)$, $1\leq j<m$. We may write $R$ as a quotient $R=\C[x_1,\dots,x_n]/I$ where the image of $x_i$ in $R$ is $a_i$, $i=1,\dots,n$.  Let $I_s$ denote the elements of $I$ homogeneous of degree $s:=(s_1,\dots,s_d)$, $s_j\in\N$ and let $\bar I$ denote $I/(x_1,\dots,x_m)$.
We leave the proof of the following lemma to the reader.

\begin{lemma}
Let $R$, etc.\ be as above. Then $I_s\to \bar I_s$ is an isomorphism for all $s\in\N^d$.
\end{lemma}
The lemma says that we can determine the dimension of the space of relations of the $a_i$ in degree $s$ by first setting $a_1,\dots,a_m$ to zero.

\begin{lemma}\lab{lem:3333generator}
There is a generator $f$ of $R:=\OO(V)^G$ of multidegree $(3,3,3,3)$.
\end{lemma}

We used LiE to compute a partial Poincar\'e series of $R$:
\begin{multline*}
1+(ps+qr)+2(p^2s^2+q^2r^2)+(p^3+q^3+r^3+s^3)\\
+2(p^3q^3+p^3r^3+q^3s^3+r^3s^3)+3(p^3s^3+q^3r^3)\\
+2(q^3ps+r^3ps+p^3qr+s^3qr)\\
+6(q^3p^2s^2+r^3p^2s^2+p^3q^2r^2+s^3q^2r^2)\\
+13(p^3q^3r^3+p^3q^3s^3+p^3r^3s^3+q^3r^3s^3)\\
+4pqrs+10(pq^2r^2s+p^2qrs^2)+18(pq^3r^3s+p^3qrs^3)\\
+37p^2q^2r^2s^2
+86(p^2q^3r^3s^2+p^3q^2r^2s^3)+265p^3q^3r^3s^3.
\end{multline*}
If there were no relations among the generators of $R$ of degree at most $(3,3,3,3)$, then the Poincar\'e series would indicate that we have generators in degree $(a,b,c,d)$ of a certain multiplicity which we denote by $\gen(a,b,c,d)$. We list the relevant $\gen(a,b,c,d)$, modulo symmetries. 
\begin{enumerate}
\item $\gen(0110)=1$
\item $\gen(0220)=1$
\item $\gen(3000)=1$
\item $\gen(3300)=1$
\item $\gen(0330)=0$
\item $\gen(1111)=3$
\item $\gen(3110)=1$
\item $\gen(3220)=3$
\item $\gen(3330)=3$
\item $\gen(1221)=5$
\item $\gen(1331)=2$
\item $\gen(2222)=14$
\item $\gen(3223)=13$
\item $\gen(3333)=11$
\end{enumerate}
It is easy to see that the representations $V_i+V_j$, $1\leq i<j\leq 4$ are cofree.  Now $V_2+V_3$ has generators in degrees (3,0), (0,3), (1,1) and (2,2) while $V_1+V_2$ has generators in degrees (3,0), (0,3) and (3,3). Thus it is easy to see that we have a regular sequence in  $R$ consisting of the (determinant)  invariants of degree 3 and those of  degree 
$(0110)$, $(0220)$, $(1001)$ and $(2002)$.

\begin{lemma}
Suppose that we are  in one of the cases above, except for  $(2222)$, $(3223)$ and $(3333)$. Then $R$ has $\gen(abcd)$ generators in degree $(abcd)$.
\end{lemma}

\begin{proof}
We   set the  invariants of our regular sequence equal to zero and see if we have any relations. But then there are no  nonlinear polynomials in the remaining generators  in the degrees we are worried about.
\end{proof}

 \begin{proof}[Proof of Lemma~\ref{lem:3333generator}]
As usual, we set the elements of our regular sequence equal to zero. This does not change the number of minimal generators of degree $(3333)$. Now how can we have fewer generators than $\gen(3333)$ in degree $(3333)$? This can only occur if there is a degree $(abcd)$ with an ``unexpected''  relation such that $(3333)-(abcd)$ is the degree of a generator not in our regular sequence. Thus the only problem could occur because of relations in degree $(2222)$ multiplied by the 3 generators $f_1$, $f_2$ and $f_3$ in degree $(1111)$. Moreover, modulo our regular sequence, the unexpected relations in degree $(2222)$ have the form $(r=\sum_{ij} c_{ij}f_if_j)=0$. Thus there are  unexpected relations $r_1,\dots r_d$, $d\leq 6$. For each relation $r_k$ we add an additional generator $y_k$ in degree $(2222)$ and to get the correct count of non-generators in degree $(3333)$ we have to adjust our formal count by adding $3d$ (from the product of  the $y_k$ by the $f_i$)   and subtracting  the dimension of the span of the  $f_ir_k$ in the polynomial ring $\C[f_1,f_2,f_3]$. But the correction is by less than 11:

\smallskip
Case 1. $d\leq 5$. Then  we have a correction of at most $3d-d\leq 10$.

\smallskip
Case 2. $d=6$. Then the correction is $18- \dim \Sym^3(\C^3)=8$.
\end{proof}

We now have our generator $f$ of degree (3,3,3,3). Next we need to calculate the irreducible components of the null cone, up to the action of $N$. 

Let $\rho$ be a 1-parameter subgroup of $G:=\SL_3\times\overline{\SL}_3$ whose weights for $\C^3$ are $a$, $b$ and $c$ and whose weights for $\overline \C^3$ are $\overline a$, $\overline b$ and $\overline c$. We have that $a\geq b\geq c$ and similarly for $\overline a$, etc. We also can assume that no weight of $V$ is zero.   Of course, many choices of $a$, etc. will give the same subset $Z_\rho$ in $V$. We say that a particular choice of $a$, etc. is a \emph{model\/}  if it gives the correct $Z_\rho$. 
 
The action of our group $N$ does not change the weights that occur,  just in which of the four components they occur. Thus to show that  $df$   vanishes on $\NN(V)$, we can always reduce to the case that $a>\overline a$   and that the other numbers are negative (or zero). For every possibility we will give a model such that $df$ vanishes  on $Z_\rho$. 

\begin{lemma}
We have that $c-\overline b\leq c-\overline c\leq c+\overline a\leq b+\overline a \geq b-\overline c\geq b-\overline b$. Moreover, $c-\overline b<0$ and not both $c-\overline c$ and $b-\overline b$ are positive.
\end{lemma}

\begin{proof}
The string of inequalities is obvious. If $c-\overline b>0$, then $b-\overline c>0$ and adding we get that $-a+\overline a>0$ which is a contradiction. Similarly, not both $c-\overline c$ and $b-\overline b$ can be positive. 
\end{proof}

Given the lemma, there  are eight possibilities for the signs of $c-\overline b$, $c-\overline c,\dots,b-\overline b$ which we present in matrix form. In another matrix, we present the values $a$, $b$, $c$, $\overline a$, $\overline b$ and $\overline c$ of a 1-parameter subgroup $\rho$ which is a model for the signs. Note that the signs  tell you exactly which vectors in $V$ are in the positive weight space of $\rho$.
$$
\left(\begin{matrix} -1 &-1&-1&-1&-1&-1\\
                                 -1 &-1&-1&\phm1&-1&-1\\
                                 -1 &-1&-1&\phm1&\phm1&-1\\
                                 -1 &-1&-1&\phm1&\phm1&\phm1\\
                                 -1 &-1&\phm1&\phm1&-1&-1\\
                                 -1 &-1&\phm1&\phm1&\phm1&-1\\
                                 -1 &-1&\phm1&\phm1&\phm1&\phm1\\
                                 -1 &\phm1&\phm1&\phm1&\phm1&-1 
 \end{matrix}\right)\quad
 \left(\begin{matrix}  4 & -2 & -2 &1 & \phm0 &-1\\
                                   8 & -3 & -5 & 4 &-2& -2\\
                                   4 & -1 & -3 & 2 & \phm0 & -2\\
                                   3 & \phm0 & -3 & 2 & -1 & -1\\
                                   6 & -3 & -3 & 4 & -2 & -2\\
                                   8 & -3 & -5 & 6 & -2 & -4\\
                                   7 & -2 & -5 & 6 & -3 & -3\\
                                   4 & -2 & -2 & 3 & \phm0 & -3
 \end{matrix}\right)
$$
\begin{proof}[Proof of Lemma~\ref{lem:g2g2}] Consider  signs which have the 1-parameter subgroup $\rho$ with weights $\begin{pmatrix}8 & -3 & -5 & 6 & -2 & -4\end{pmatrix}$ as model. Then the largest negative weight occurring in $V$ is $-14$ while the positive weights occurring in $V_1,\dots,V_4$ are
$$
(1,3,4,6,14),\ (1,2,10,12),\ (1,1,3,9,11),\ (7,7,5,9).
$$
Now consider a monomial $m$ in the weight vectors which occurs in $f$. If $df$ does not vanish on $Z_\rho$
then there is a monomial with only one negative weight vector. But the sum of the positive weights occurring in $m$ is at least $3+3+3+2*5=19$ which is greater than $14$. Hence this is impossible and $df$ vanishes on $GZ_\rho$. One similarly (and more easily) sees that $df$ vanishes on $GZ_\rho$ in the other 7 cases.
\end{proof}
This finishes the first proof of Proposition~\ref{propA:g2g2}.
\end{proof}

\begin{proof}[Second Proof of Proposition~\ref{propA:g2g2}]
The weights of $V$ are the short roots of $G$ together with $0$, and all weight spaces are 1-dimensional. We use the notation $\Lambda:=\{\pm\alpha,\pm\beta,\pm(\alpha+\beta),0\}$ where $\alpha+\beta$ is the highest weight. Thus the weight spaces of $V\otimes V$ are given by the tensor products $V_{\mu}\otimes V_{\nu}$, $(\mu,\nu)\in \Lambda\times \Lambda$.

We first determine the maximal positive subspaces of $W:=V\otimes V$, up to the action of the Weyl group. If $\rho$ is a  one-parameter subgroup of $G\times G$ we denote by $W_{\rho}$ the sum of the $\rho$-positive weight spaces, i.e.
$$
V_{\rho} := \bigoplus_{\rho(\mu,\nu)>0} W_{(\mu,\nu)}.
$$ 
The 1-PSG $\rho$   is defined by the  values $a:=(\rho,(\alpha,0))$, $b:=(\rho,(\beta,0))$, $a':=(\rho,(0,\alpha))$, $b':=(\rho,(0,\beta))$. Using the action of the Weyl group, we can assume that that 
$$
a,b,a',b' >0,\quad a\geq b,  \quad a'\geq b', \quad \{a,b,a+b\}\cap\{a',b',a'+b'\}=\emptyset.
$$
We can also assume that $a>a'$; we will then get the other maximal positive subspaces by the symmetry $(\mu,\nu)\mapsto (\nu,\mu)$.
Now  $V_{\rho}$ depends only on the relative position of the values $a+b > a \geq b$ and the values $a'+b' > a' \geq b'$. It is not difficult to see that there are eight cases.
\be
\item $a+b>a>b>a'+b'>a'\geq b'$ represented by $\rho=(5,4,2,1)$;
\item $a+b>a>a'+b'>b>a'\geq b'$ represented by $\rho=(6,4,3,2)$;
\item $a+b>a'+b'>a\geq b>a'\geq b'$ represented by $\rho=(6,5,4,3)$;
\item $a+b>a>a'+b'>a'>b>b'$ represented by $\rho=(5,2,3,1)$;
\item $a+b>a'+b'>a>a'>b>b'$ represented by $\rho=(6,4,5,3)$; 
\item $a+b> a >a'+b' \geq a'>b'\geq b$ represented by $\rho=(7,1,4,2)$;
\item $a+b>a'+b'>a>a' > b > b'$ represented by $\rho=(6,2,4,3)$;
\item $a'+b'>a+b>a>a' \geq b' >b$ represented by $\rho=(6,2,5,4)$.
\ee
To get the full set of maximal positive subspaces we have to add the 8 $\rho$'s obtained from the list above by replacing $(a,b,a',b')$ with $(a',b',a,b)$.

Now we used LiE to look at the covariants of type $\theta_{1}\otimes V$. The multiplicities of this covariant in degrees 1 to 9 are  $(0,0,1,1,3,5,12,18,41)$, and the dimensions of the invariants in these degrees are $(0,1,1,3,2,8,\newline 7,17,19)$. It follows that at most $37 = 1\cdot12 + 1\cdot5 + 3\cdot3 + 2\cdot1 + 8\cdot1$ covariants of degree 9 are in the ideal generated by the invariants, hence there are generating covariants of this type in degree $9$. Now we have to show that for every positive weight  space $V_{\rho}$  the highest weight $(0,\alpha+\beta)$ of $\theta\otimes V$ cannot be expressed as a sum of 9  weights from $V_{\rho}$. Because of duality, each $V_{\rho}$ has dimension $24 = (7*7 - 1)/2$. If we denote by $\Lambda_{\rho}$ the set of weights of $V_{\rho}$, this amounts to prove that the system
$$
\sum_{\lambda\in\Lambda_{\rho}} x_{\lambda}\lambda = (0,\alpha+\beta),\quad \sum_{\lambda\in\Lambda_{\rho}} x_{\lambda}
=9
$$ 
has no solution in non-negative integers $x_{\lambda}$. Note that the first condition consists in 4 linear equations in 24 variables. Now we used Mathematica to show that there are no solutions for each one of the sixteen  maximal positive weight spaces $Z_{\rho}$ given above.
\end{proof}

\bigskip

\bigskip

\end{document}